\documentclass[final,3p,times]{elsarticle}

\usepackage{lineno,hyperref}
\usepackage{amssymb,amsmath,amsfonts,amsthm}
\usepackage{caption}
\usepackage{bigints}
\usepackage{subcaption}
\journal{Journal of \LaTeX\ Templates}

\usepackage{color}
\usepackage{overpic}
\usepackage{tikz}
\usepackage{graphicx}
\usepackage{array}
\usepackage{multirow}
\usepackage{mathrsfs}
\usepackage{comment}

\usetikzlibrary{matrix,arrows.meta}

\newlength{\ml}
\usepackage{graphicx}    
\usepackage{subcaption}  
\usepackage{xcolor}
\usepackage{tikz}
\usetikzlibrary{arrows.meta,calc}
\definecolor{lightgray}{gray}{0.85}
\newtheorem{theorem}{Theorem}[section]
\newtheorem{corollary}{Corollary}[theorem]
\newtheorem{lemma}[theorem]{Lemma}
\newtheorem{lemdef}[theorem]{Lemma and Definition}
\newtheorem{proposition}[theorem]{Proposition}
\newtheorem{remark}[theorem]{Remark}
\newtheorem{example}[theorem]{Example}
\newtheorem{definition}[theorem]{Definition}

  {\begingroup
   \addtocounter{theorem}{-1}
   \let\oldthetheorem\thetheorem
   \renewcommand{\thetheorem}{\oldthetheorem$'$}
   \begin{theorem}[#1]}%
  {\end{theorem}\endgroup}

\renewcommand{\epsilon}{\varepsilon}
\renewcommand{\phi}{\varphi}

\newcommand{\sign}{\operatorname{sign}}

\DeclareMathOperator{\sn}{sn}
\DeclareMathOperator{\cn}{cn}
\DeclareMathOperator{\dn}{dn}

\def\R{\mathbb R}

\def\d{\mathrm d}    

\newcommand{\const}{\mathrm{const}}
\newcommand{\id}{\operatorname{id}}
\newcommand{\Hess}{\operatorname{Hess}}

\newcommand{\eqstar}{\mathrel{\overset{\ast}{=}}}
\newcommand{\eqdstar}{\mathrel{\overset{\star}{=}}}
\newcommand{\eqrefi}[2]{\textup{(\ref{#1})}\textsubscript{#2}}
\begin{document}

\begin{frontmatter}

\title{Classification of Smooth Alignable Voss Surfaces} 

\author{Arvin Rasoulzadeh\fnref{Arvin}} 
\address{METROM Mechatronische Maschinen GmbH\\
Schönaicher Str.\,6, 09232 Hartmannsdorf, Germany}
\fntext[Arvin]{E-mail address: arvin.rasoulzadeh@metrom.com.}

\begin{abstract}
    Alignable nets are grid structures that can collapse to a planar strip, which is in fact the real-world counterpart of a curve.
    This property simplifies on-site assembly and enables compact transport and storage. These grid structures can then be deployed by a scissor motion at each vertex in a desired location.\\
    In this article, we classify all surfaces supporting an alignable net that additionally have the geodesic and conjugate net property, namely, the alignable Voss surfaces. 
    In doing so, we use Cartan's theory of moving-frames and we obtain a coordinate-free classification of these surfaces. In the next step we express our findings in local coordinates and at the level of the fundamental forms.\\
    We show that the alignable Voss surfaces consist of two classes where each in turn consists of two two-parameter families of surfaces. A surprising feature of one of these classes is that they admit an isothermal–conjugate geodesic net, thereby providing a counterexample to Eisenhart’s earlier classification claim for Voss surfaces of this type.\\
    Finally, we derive explicit immersion formulas for one of the classes as functions of the deformation and alignability parameters. Additionally, we show that, upon disregarding certain singularities, the above immersions of alignable Voss surfaces give rise to infinitely many explicit immersions of other Voss surfaces still depending on the deformation parameter.
    Since explicit immersion formulas for Voss surfaces that include the deformation parameter are seldom obtainable, this provides a rare result in the literature. Finally, we examine several notable subclasses in detail, including the well known example of infinitely many geodesic-conjugate nets on a helicoid, and we give a kinematical explanation for why this phenomenon appears in computations.
\end{abstract}

\begin{keyword}
    Voss surfaces \sep Alignable nets \sep Isometric deformations \sep Bour family \sep Helicoidal surfaces \sep Surfaces applicable to a surface of revolution
\end{keyword}

\end{frontmatter}


\section{Introduction}
\subsection{Review on alignable surfaces}
Alignable nets originate in the broader context of deployable structures, i.e., systems designed to be fabricated and transported in a compact configuration and then deployed on site (\cite{FenciCurrie2017,Howell2001,SultanSkelton2003,YouPellegrino1997}). 
In architectural engineering, a particularly successful line of work uses bending-active gridshells: a nearly planar lattice is assembled and subsequently formed into a doubly-curved load-bearing surface. 
Early landmark realizations include the Mannheim Multihalle \cite{HappoldLiddell1975} and the experimental design culture around form-finding and lightweight structures \cite{Otto1974}; later large-scale practice is documented, for instance, by the Downland Gridshell \cite{HarrisRomerKellyJohnson2003}.\\
From the geometric viewpoint, many bending-active gridshell workflows rely on (discrete) Chebyshev parametrizations, whose continuous counterparts have a long history and subtle existence theory \cite{MassonMonasse2017}. 
Related geometric modeling work for gridshell rationalization includes, for example, planar-facet coverings of elastic gridshells via isoradial meshes \cite{DoutheEtAl2017}. 
Recent work on elastic geodesic grids further highlights the usefulness of planar-to-spatial deployment paths and their parametrized families \cite{PillweinEtAl2020,PillweinMusialski2021,PillweinEtAl2021MultiPatch}. 

In this landscape, Tellier introduced and developed the notion of \emph{alignable nets}: structures whose intrinsic geometry admits a collapse into a planar strip, thereby enabling compact storage and transport while retaining a controlled deployed shape \cite{Tellier2022a,Tellier2022b}. 
Finally, a recent computational and theoretical treatment is provided by Pellis (see \cite{Pellis2024}), who develops the Cartan moving-frame theory for alignable lamella gridshells and derives explicit shape restrictions for such nets. In particular, he shows that geodesic alignable gridshells are shown to be limited to surfaces isometric to surfaces of revolution.

\begin{figure*}[t!]
  \centering
  \begin{subfigure}[b]{0.24\textwidth}
    \centering
    \begin{overpic}[height=38mm, width=\textwidth]{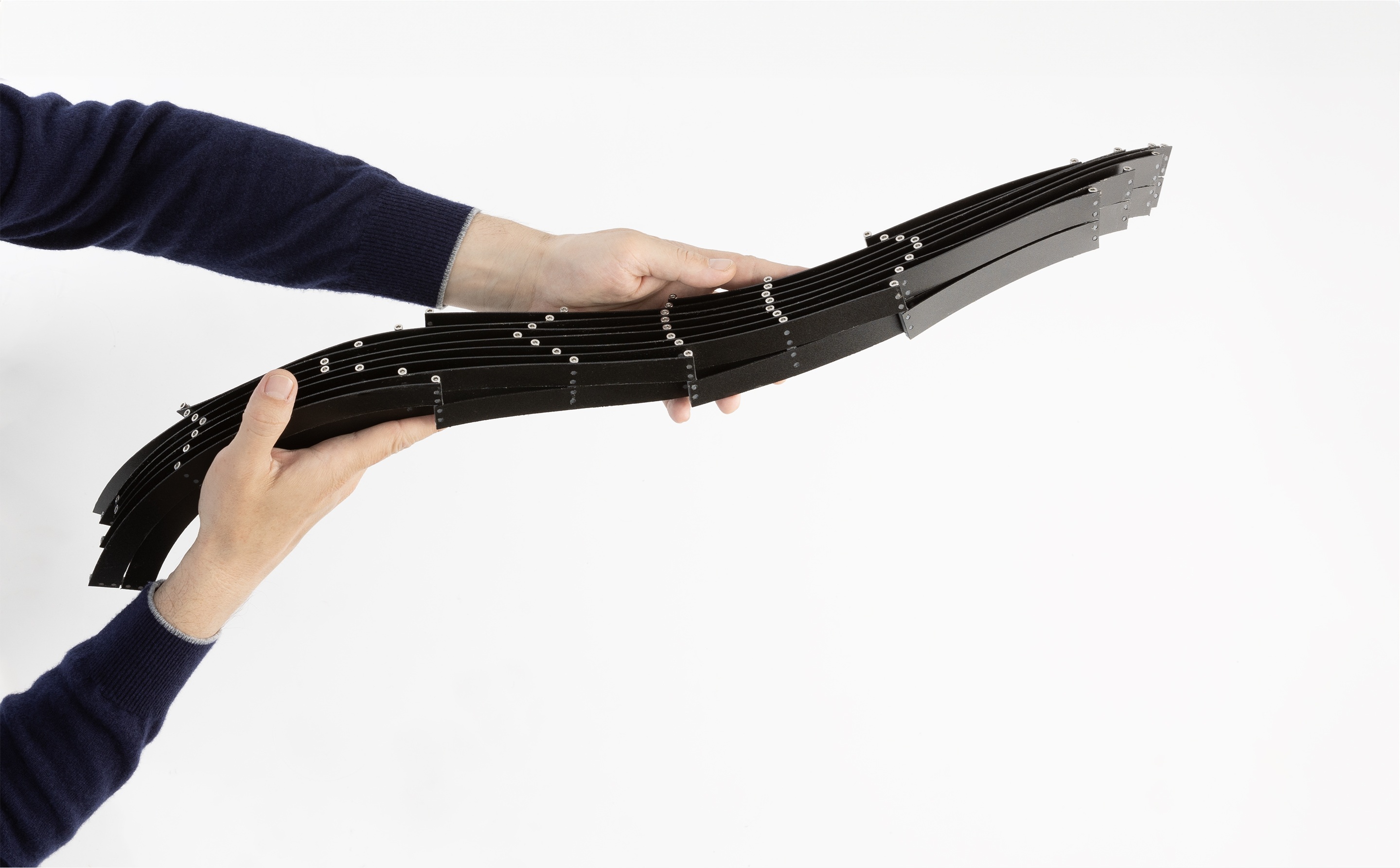}
    \end{overpic}
  \end{subfigure}
  \hfill
  \begin{subfigure}[b]{0.24\textwidth}
    \centering
    \begin{overpic}[height=38mm, width=\textwidth]{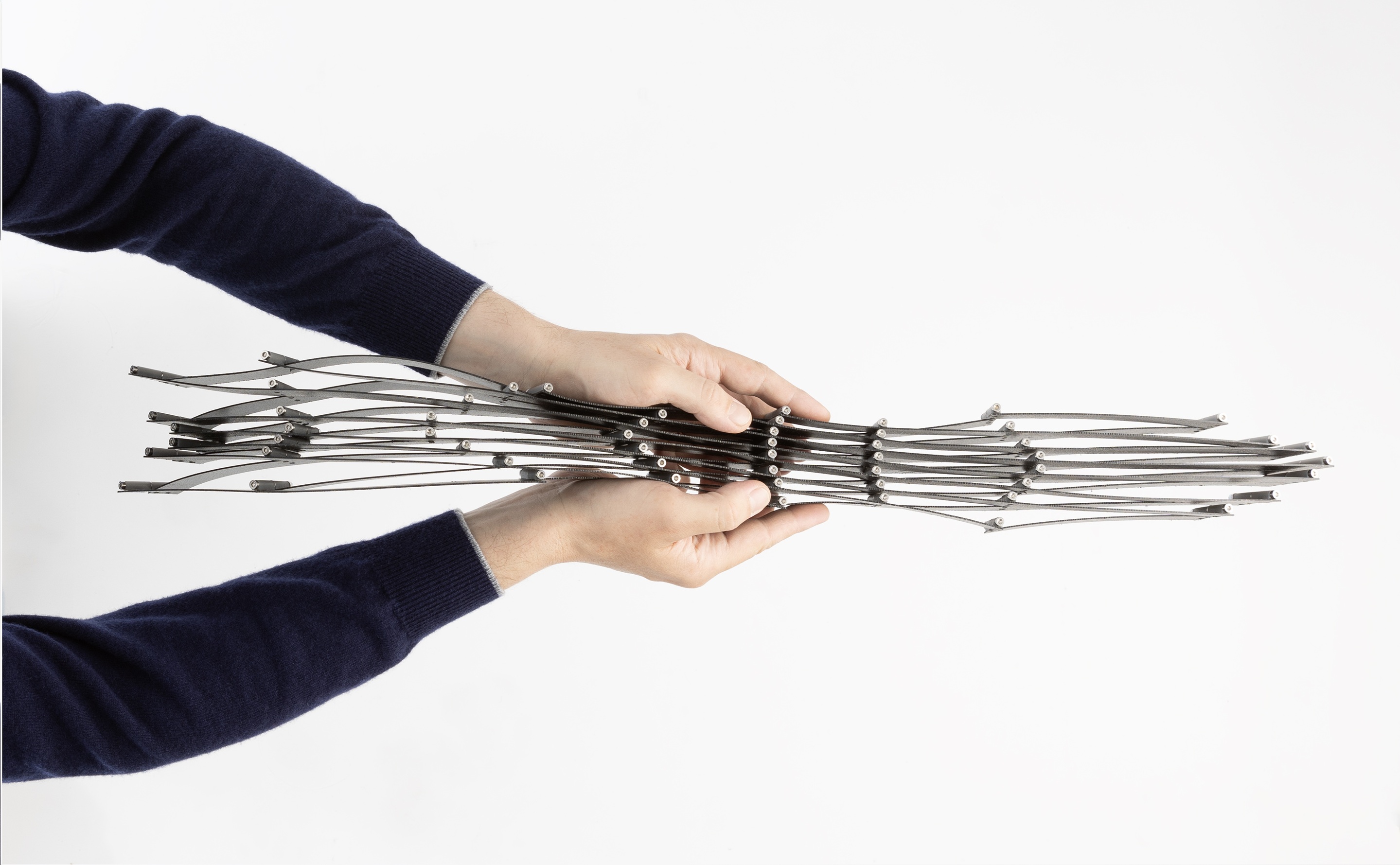}
    \end{overpic}
  \end{subfigure}
  \hfill
  \begin{subfigure}[b]{0.24\textwidth}
    \centering
    \begin{overpic}[height=38mm, width=\textwidth]{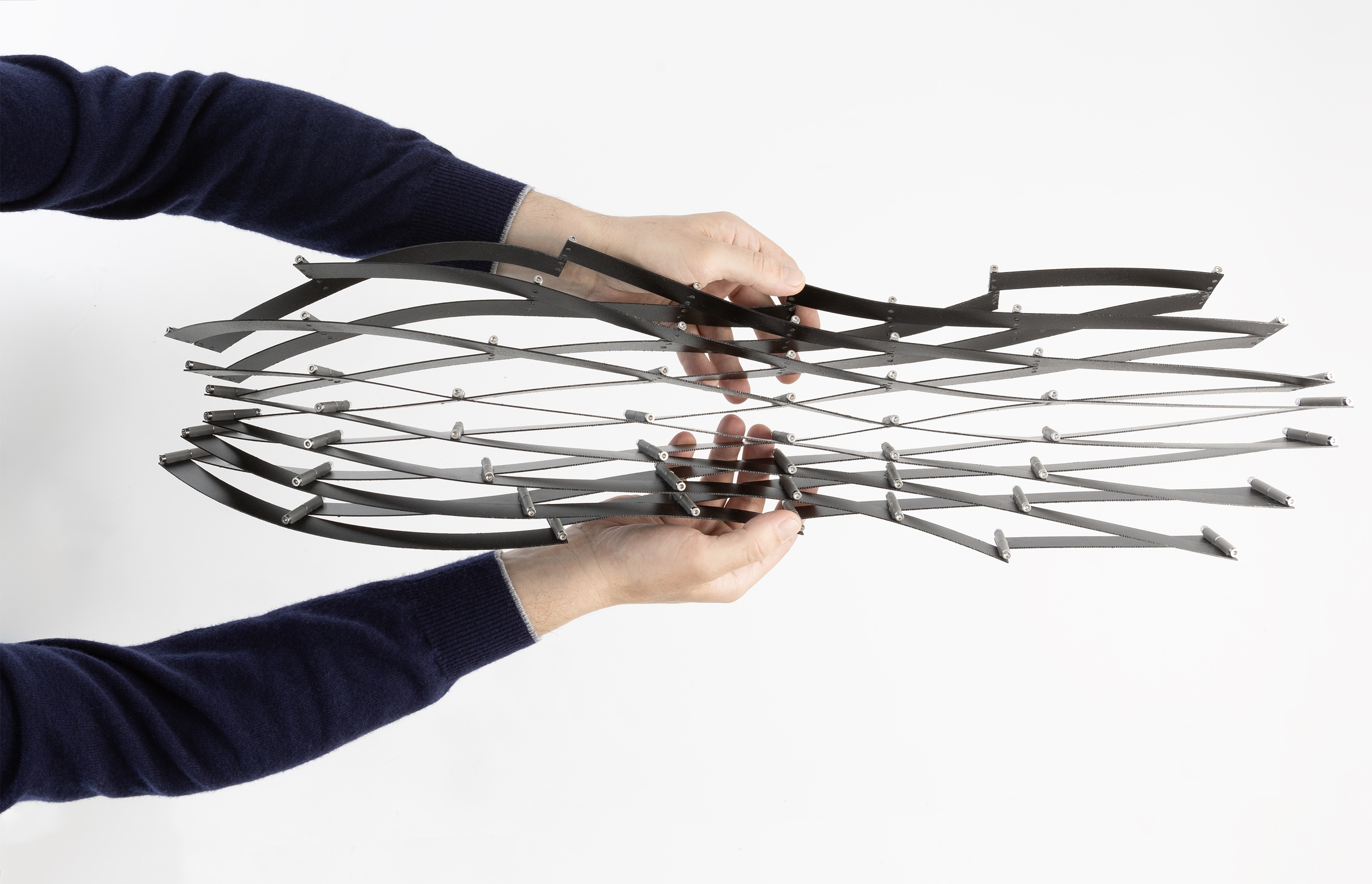}
    \end{overpic}
  \end{subfigure}
  \hfill
  \begin{subfigure}[b]{0.24\textwidth}
    \centering
    \begin{overpic}[height=38mm, width=\textwidth]{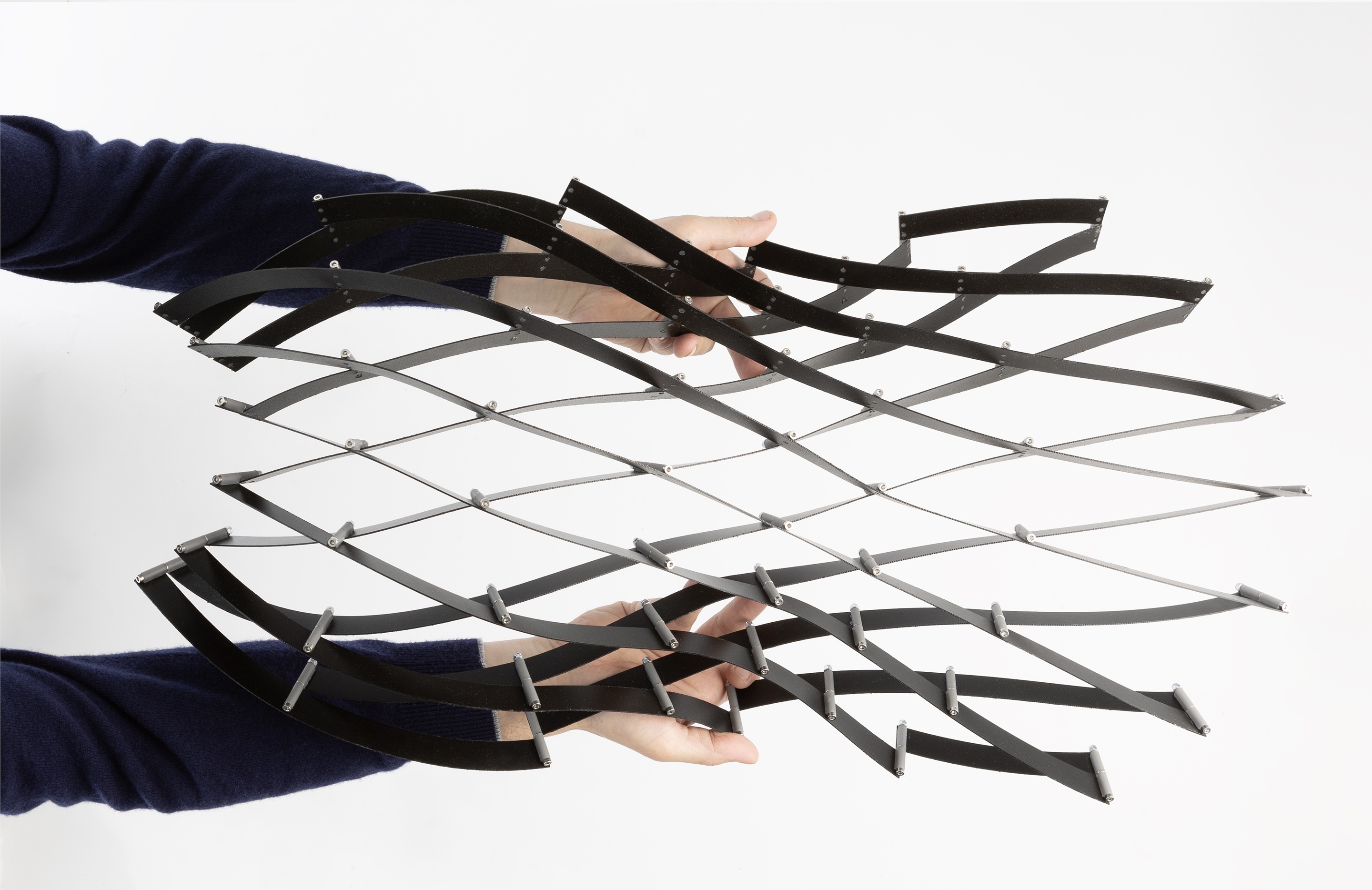}
    \end{overpic}
  \end{subfigure}
  \caption{Physical prototype of an alignable net shown in its deployment sequence. Image courtesy of Davide Pellis (see \cite{AlignablePellis}).}
  \label{fig:alignable:examples}
\end{figure*}
\subsection{Review on Voss surfaces} 
Most likely Liouville was the first to encounter the surfaces that are now called \emph{Voss surfaces}: in one of his studies he announces that, \emph{with the exception of developable surfaces, no surface supports two orthogonal families of geodesics forming a conjugate net} \cite[p.~95]{Voss}. Nevertheless, the subject did not receive sustained attention until Aurel Voss’ 1888 memoir \cite{Voss}, where these surfaces are introduced explicitly as \emph{geodesic--conjugate surfaces} and linked to pseudospherical surfaces. Beyond establishing the connection to the Gauss image and to the theory of constant negative curvature, Voss highlights the right helicoid as the prototypical example carrying infinitely many geodesic--conjugate nets. Then he derives conditions for surfaces of revolution to admit such nets, leading to a (high-order) ordinary differential equation for the profile curve \cite{Voss}.

\paragraph{Related works to this article:}
A systematic early account is due to Gambier. In \cite{Gambier18} he investigates several distinguished classes of Voss surfaces, including \emph{helicoidal} Voss surfaces and Voss surfaces \emph{applicable to a surface of revolution}, and he discusses related phenomena such as minimal Voss surfaces and associated/adjoint constructions. A further exposition of the Voss--Guichard picture appears in Gambier’s later paper \cite{Gambier19}. Several classical results around the “surface of revolution” subclass were revisited much later by Lin and Conte \cite{LinConte29}. In parallel, Eisenhart developed a remarkably detailed theory: in addition to his treatise \cite{EisenhartTreatise}, he produced a sequence of works treating Voss surfaces including the role of the \emph{Moutard equation} \cite{EisenhartAssociate,Eisenhart1914,Eisenhart1915,Eisenhart1917}. These sources form a backbone for the classical differential geometric understanding of Voss surfaces.

\paragraph{Other classical strands}
Guichard’s original work \cite{Guichard1890a,Guichard1890b} approaches the same class from the theory of congruences and introduces a transformation machinery that later became influential, with an overview also given by Salkowski \cite{Salkowski1924}. The deformation problem with a persistent conjugate net is treated by Finikoff \cite{Finikoff17}, the minimal case goes back to Razzaboni \cite{Razzaboni38}, and explicit Voss surfaces of revolution appear in works such as Tachauer’s \cite{Tachauer}; additional explicit examples related to classical surfaces occur in Diller’s study connected to Enneper-type geometry \cite{Diller}.

\paragraph{Contemporary works:}
The intrinsic link established by Voss between geodesic--conjugate nets and pseudospherical geometry naturally invites integrable systems methods. Babich constructed Voss surfaces from the $\psi$-function of the Lax pair of the sine-Gordon equation \cite{BabichVoss}, and Lin--Conte gave an integrability-based perspective while revisiting particular surfaces already isolated in the classical literature \cite{LinConte29}. A recent contribution by Marvan reinterprets Guichard's transformation scheme via recursion operators for sine-Gordon symmetries and generates Voss surfaces using their \emph{support functions} \cite{Marvan}. 

Finally, Voss nets have gained renewed attention through discretization and geometric modelling, where discrete Voss nets serve as design spaces for elastic gridshells and related structures \cite{Tellier2022a,Tellier2022b,MontagneEtAl2021VossSurfaces,sauer,Doliwa1999}. The author also extends the theory of Voss surfaces to the semi-discrete world in \cite{izmestiev}.

\subsection{Contribution and outline of the paper}

This article answers a question posed in June 2023 at a seminar at TU Wien: \emph{which Voss surfaces are alignable?} To answer this question, we provide a full classification of these surfaces and derive explicit immersion formulas for one of the two resulting classes. In particular, the contributions of the paper are as follows: 

In Section~\ref{sec:preliminary} we review the notation used throughout the paper, mainly from Cartan’s theory of moving frames, and we recall the notions of isometric deformations and infinitesimal isometric deformations. 

Section~\ref{sec:V:surface:as:Rotation:Fields:of:K:surfaces} is devoted to the analysis of Voss surfaces via the pseudospherical surfaces for which they arise as rotation fields, namely, the reciprocal-parallel pseudospherical surfaces. There we introduce the notion of a Voss surface in a coordinate-free setting and establish several auxiliary results within this framework. This viewpoint allows us to convert questions about Voss surfaces into the language of pseudospherical surfaces which is a class that has been extensively studied and for which a broad range of techniques is available. The main results of this section is Theorem~\ref{theorem:I:II:III:V:surface} which consolidates several earlier observations into a single statement within the Cartan moving frame.

In Section~\ref{sec:reciproal:parallel:K:surfaces} we determine the possible reciprocal-parallel pseudospherical surfaces for the alignable Voss surfaces. The main result, which was the key for the development of this paper, is Proposition~\ref{prop:main}.

Sections~\ref{sec:1st:kind} and \ref{sec:2nd:kind} address the classification problem directly. The principal coordinate-free results are stated in Theorem~\ref{theorem:main:first} and Theorem~\ref{theorem:classify:coordinate:free:2nd:kind}. The corresponding classification at the level of fundamental forms, expressed in geodesic-conjugate coordinates, is given in Theorem~\ref{theorem:classification:I} and Theorem~\ref{theorem:second:kind}.

Specifically, Section~\ref{sec:immersions} shifts to a coordinate-based setting with the aim of deriving explicit immersion formulas. To this end, we move from Cartan’s moving-frame formalism to the quaternionic description of surfaces and the Sym formula for pseudospherical surfaces. We first explain why immersion formulas that depend on the \emph{spectral parameter} are notoriously difficult to obtain and are only known in rare instances for Voss surfaces. This motivates the use of alternative ideas, such as the instantaneous kinematics of surface bending. The section culminates in Theorem~\ref{theorem:main:immersion:V}, which provides complete immersion formulas for one of the two classes of alignable Voss surfaces. 

Section~\ref{sec:counterexample} presents a counterexample to a classification theorem claimed by Eisenhart in \cite{EisenhartAssociate}, a result that is unexpected. Section~\ref{sec:surface:in:large} further conjectures that the two families of alignable Voss surfaces in one of the classes are not genuinely distinct, but rather arise as different patches of a single surface in the large, whose global structure remains to be investigated.

Finally, Section~\ref{sec:conclude} concludes the paper with a discussion of possible directions for future research and their significance.

\section{Preliminary}\label{sec:preliminary}

In this article we refer to a net as an \emph{isometric immersion} map \(\psi:\Omega\to\mathbb{R}^3\) of a simply connected open set \(\Omega\subset\mathbb{R}^2\). We write nets as pairs \((\psi,\Omega)\) and refer to \(\Omega\) as the \emph{parameter domain}. The basic objects of this paper are certain surface elements (see \cite{Kuhnel}). For later technical purposes, we adopt a slight variant of Kühnel’s notion of a surface element:

\begin{definition}
    A (rectangular) surface element is an equivalence class of nets whose parameter domains are open rectangles: $\Omega=I\times J$ for open intervals $I,J\subset\mathbb{R}$. Two nets 
    $(\psi,I\times J)$ and $(\tilde{\psi},\tilde{I}\times\tilde{J})$ are called equivalent if there is a diffeomorphism $\phi: I\times J\to \tilde{I}\times\tilde{J}$ such that $\tilde{\psi} = \psi \circ \phi$. 
\end{definition}
As evident from the above definition, the notion of a \emph{surface element} enables transitions between different parametrizations (in our case with rectangular parameter domains). Consequently, given a surface element $P = (\psi,\Omega)$, we refer to any other representative $(\tilde{\psi},\tilde{\Omega})$ in its equivalence class as a \emph{reparametrization} of $P$.


A \emph{frame} for $P$ is a 2-tuple $(E_1,E_2) \in \mathfrak{X}(\Omega)\times \mathfrak{X}(\Omega)$ of nowhere-vanishing, linearly independent smooth vector fields $E_1$ and $E_2$ such that $(E_1(p),E_2(p))$ is a basis for $T_p P$ at each point $p \in P$. We call a frame $(E_1,E_2)$ a \emph{coordinate frame} if and only if $E_1$ and $E_2$ commute at every point of $P$ (that is, $\mathcal{L}_{E_i} E_j = 0$ for $i,j = 1,2$) (see \cite[page~234]{Lee}). We denote by $(\theta^{\,1},\theta^{\,2})\in\mathfrak{X}^\ast(\Omega)\times\mathfrak{X}^\ast(\Omega)$ the \emph{dual coframe}, i.e. $\theta^i(E_j)=\delta^i_j$ for $i,j=1,2$. When dealing with coordinate frames, we occasionally express them in $(u,v)$ parameters and represent $(E_1,E_2) = (\partial_u, \partial_v)$ and $(\theta^{\,1},\theta^{\,2}) = (\mathrm{d}u, \mathrm{d}v)$.

\begin{definition}
    Given a surface element $P$, a Cartan frame along $P = (\psi, \Omega)$ 
    consists of an orthonormal triple $(e_1,e_2,e_3)$ of smooth vector fields along $\psi$, 
    where $e_1,e_2$ span the tangent bundle $T(\psi(\Omega))$ and $e_3=\nu$ is the unit normal.
    If $(E_1,E_2)$ is an orthogonal frame on $\Omega$, we obtain such a moving frame by setting
    \[
        e_i = d\psi(E_i), \qquad i=1,2, \qquad e_3=\nu.
    \]
    In this sense, the Cartan frame $(e_1,e_2,\nu)$ is regarded as the image under $\mathrm{d}\psi$ of the triple $(E_1,E_2)$ together with the normal $\nu$. 
\end{definition}
In this setting, the direction derivatives for every $X \in \mathfrak{X}(\Omega)$ gives
\begin{equation}\label{eq:fram:field:change:12}
        D_X e_1 = \Theta_{12}(X)\,e_2 + \Theta_{1n}(X)\,\nu,\qquad
        D_X e_2 = \Theta_{21}(X)\,e_1 + \Theta_{2n}(X)\,\nu,\qquad
        D_X \nu = \Theta_{n1}(X)\,e_1 + \Theta_{n2}(X)\,e_2,
\end{equation}
where $-\Theta_{ji} = \Theta_{ij} \in \mathfrak{X}^\ast(\Omega)$ are called the \emph{connection forms} and fulfill the followings:
\begin{equation}\label{eq:connection:forms}
       \Theta_{12} = \kappa^g_1\,\theta^1 + \kappa^g_2\,\theta^2,\qquad
       \Theta_{1n} = \kappa^n_1\,\theta^1 + \tau^g_1\,\theta^2,\qquad
       \Theta_{2n} = \tau^g_1\,\theta^1 + \kappa^n_2\,\theta^2,
\end{equation}
with $\kappa^g_i$, $\kappa^n_i$ and $\tau^g_i$ being the \emph{geodesic curvature}, \emph{normal curvature} and \emph{geodesic torsion} respectively (for details see \cite{AlignablePellis, Kuhnel}).
The Gauss equation and Codazzi-Mainradi equations take the following form respectively:
\begin{gather}
    \d\Theta_{12} = -\Theta_{1n}\wedge\Theta_{n2} = \left(\left(\tau^g_1\right)^2 - \kappa^n_1\,\kappa^n_2\right)\,\theta^1\wedge\theta^2,\label{eq:original:gauss}\\ 
    \d\Theta_{1n} = -\Theta_{12}\wedge\Theta_{2n},\qquad\qquad
    \d\Theta_{12} = -\Theta_{12}\wedge\Theta_{1n}. \label{eq:codazzi:cartan}
\end{gather}
In many instances throughout this paper we will need to work with frames that are not orthogonal. For this reason, it is convenient to introduce a notion closely related to moving frames, namely that of \emph{double moving frames} which is a slightly changed version of what appears in \cite{AlignablePellis} to fit the context of surface element.

\begin{definition}
    Let $P$ be a surface element endowed with a moving frame 
    $(e_{1},e_{3},\nu)$, where $e_{1},e_{3}$ are linearly independent unit tangent 
    vector fields along $\psi$ and $\nu$ is the unit normal.
    
    By rotating $e_{1}$ and $e_{3}$ in the tangent plane about $\nu$ by an angle $\omega$, 
    we obtain two new tangent vector fields, denoted $e_{2}$ and $e_{4}$, respectively. 
    On the parameter domain $\Omega$, we may correspondingly consider the quadruple 
    $(E_{1},E_{2},E_{3},E_{4})$, where $(E_{1},E_{3})$ is the chosen tangent frame and 
    $(E_{2},E_{4})$ arises from this rotation by $\omega$.
    
    The two triples $(e_{1},e_{2},\nu)$ and $(e_{3},e_{4},\nu)$, taken together, 
    form what we shall call a \emph{double moving frame} along $P$.  
\end{definition}

\subsection{Isometric Deformations}
\begin{definition}
    An \emph{isometric deformation} of a net $\psi \colon \Omega \to \R^3$ is a smooth family of nets
    \begin{equation}\label{eqn:IsomDeformSurf}
        \psi^t \colon \Omega \longrightarrow \R^3, \qquad t \in (-\epsilon, \epsilon), \qquad \psi^0 = \psi, 
    \end{equation}
    such that for every smooth map $\gamma \colon [a,b] \to \Omega$ the lengths of the curves $\psi \circ \gamma$ and $\psi^t \circ \gamma$ are equal for all $t$.
    We call an isometric deformation \emph{non-trivial} if no two $\psi^{t_1}$, $\psi^{t_2}$ are related by a rigid motion: there is no isometry $\phi \colon \R^3 \to \R^3$ such that $\psi^{t_2} = \phi \circ \psi^{t_1}$.
\end{definition}

Often a weaker version of non-triviality is used: there is a $t$ such that $\psi^t \ne \phi \circ \psi$.
If in \eqref{eqn:IsomDeformSurf} $t$ ranges in an interval $[0,\epsilon)$ or $(-\epsilon,0]$ only, then the deformation is called \emph{one-sided}.
In \cite{IzmestievTsurface} some examples of a one-sided deformation of T-surfaces are demonstrated.

It is well known that the condition that all corresponding curves have equal lengths is equivalent to the equality of the first fundamental forms, \(I = I^{t}\), where \(I\) is the first fundamental form of \(\psi\) and \(I^{t}\) is that of \(\psi^{t}\) (see Proposition.1 in \cite[p.~223]{DocarmoDifferential}).

In cases where more than one net is involved, we denote the corresponding fundamental forms by a subscript indicating the net (e.g., \(I_{\psi}\)).

\subsection{Infinitesimal Isometric Deformation and its Rotation field}\label{sec:IID:rotation:field:review}
Let $(\psi,\Omega)$ be any net, and let $(\psi^{\,t},\Omega)$ be its isometric deformation.
Consider the velocity field of the deformation at $t = 0$:
\begin{equation*}
    \xi(p) = \left. \frac{\d}{\d t} \right|_{t=0} \psi^t(p),\qquad\qquad p\in \Omega
\end{equation*}
viewed as a vector field along $\psi(\Omega)$. For each $t$, denote by $I^t(X,Y) = \langle \d\psi^t(X), \d\psi^t(Y)\rangle$ the first fundamental form of $\psi^t$, where $X,Y$ are vector fields on $\Omega$ and $\langle \cdot,\cdot \rangle$ is the Euclidean inner product on $\mathbb{R}^3$. The isometry condition is $I^t = I^0$ for all $t$, hence
\begin{equation}\label{eq:constant:I}
    \left. \frac{\d}{\d t} \right|_{t=0} I^{t}(X,Y) = 0,\qquad\qquad \forall\, X, Y \in \mathfrak{X}(\Omega)
\end{equation}
Using the Euclidean connection $D$ on $\mathbb{R}^3$, we have $\frac{\d}{\d t}\Big|_{t=0}\d\psi^t(X) = D_X\,\xi = \d\xi(X)$. Consequently, \eqref{eq:constant:I} implies
\begin{equation}\label{eqn:IID}
    \langle \d\xi(X), \d\psi(Y) \rangle + \langle \d\xi(Y), \d\psi(X) \rangle = 0
\end{equation}

\begin{definition}
    Any vector field $\xi \colon \Omega \to \R^3$ satisfying equations \eqref{eqn:IID} is called an \emph{infinitesimal isometric deformation}, IID for short, of the net $(\psi,\Omega)$.
    An IID is called \emph{trivial}, if it is induced by a Killing field on $\R^3$:
    \begin{equation*}
        \xi = \eta \times \psi + \zeta, \qquad\qquad  \eta,\zeta \in \R^3.
    \end{equation*}
\end{definition}

While the initial velocity field of any isometric deformation is an IID, there are examples of IIDs for which there are no isometric deformations having them as the initial velocity fields (see \cite{Spivak5, Darja}).

\begin{theorem}\label{theorem:IID:fundamnetals}
    For every infinitesimal isometric deformation $\xi$ of an immersion $\psi$ there is a unique pair of vector fields $\eta, \zeta \colon I \times J \to \R^3$ such that the following equations are satisfied:
    \begin{gather}
    \xi = \eta \times \psi + \zeta,\label{eq:gather:I}\\
    \d\xi = \eta \times \d\psi\qquad\qquad\d\zeta = \psi \times \d\eta\label{eq:gather:II}
    \end{gather}
\end{theorem}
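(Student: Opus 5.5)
The plan is to first construct the rotation field $\eta$ pointwise from the infinitesimal isometry condition, then define $\zeta$ by \eqref{eq:gather:I} and check \eqref{eq:gather:II}. Put $\xi_u=\d\xi(\partial_u)$, $\xi_v=\d\xi(\partial_v)$ and similarly $\psi_u,\psi_v$, with unit normal $\nu$. Condition \eqref{eqn:IID} says
\[
\langle \xi_u,\psi_u\rangle=0,\qquad \langle \xi_v,\psi_v\rangle=0,\qquad \langle \xi_u,\psi_v\rangle+\langle \xi_v,\psi_u\rangle=0.
\]
In other words, the linear map $A_p\colon T_p\to\R^3$, $A_p(\d\psi(X))=\d\xi(X)$, is skew when restricted to the tangent plane. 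I will find a unique $\eta(p)$ with $A_p(w)=\eta(p)\times w$ for all tangent $w$.

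Write $\eta=a\,\nu+b$ with $b$ tangent. Then $\eta\times w=a\,\nu\times w+b\times w$. The first term is tangent and the second is normal.
\begin{itemize}
\item The tangential part of $A_p$ is skew, so it is a rotation by $90^\circ$ times a scalar. This determines $a$ uniquely.
\item The normal part $w\mapsto\langle A_p w,\nu\rangle$ is a linear functional on $T_p$. A functional $w\mapsto\langle b\times w,\nu\rangle=\langle w,\nu\times b\rangle$ represents any given functional for exactly one tangent $b$.
\end{itemize}
Hence $\eta$ exists and is unique. It depends smoothly on $p$, since $a$ and $b$ are given by explicit formulas in $\xi_u,\xi_v,\psi_u,\psi_v,\nu$. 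This gives $\d\xi=\eta\times\d\psi$.

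Next define $\zeta:=\xi-\eta\times\psi$. Differentiating gives
\[
\d\zeta=\d\xi-\d\eta\times\psi-\eta\times\d\psi=-\d\eta\times\psi=\psi\times\d\eta.
\]
So \eqref{eq:gather:I} and both equations in \eqref{eq:gather:II} hold. Uniqueness of the pair follows from uniqueness of $\eta$ in the pointwise linear algebra, because $\eta$ is forced by $\d\xi=\eta\times\d\psi$ and $\zeta$ is then forced by \eqref{eq:gather:I}.

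The only remaining point is that the second equation $\d\zeta=\psi\times\d\eta$ is automatic once $\d\xi=\eta\times\d\psi$ holds. It imposes no extra compatibility condition, so no integrability obstacle arises. One could also derive the integrability relation $\d\eta\wedge\d\psi=0$, that is $\eta_u\times\psi_v=\eta_v\times\psi_u$, from $\d(\d\xi)=0$, but it is not needed for the statement.

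The main care point is therefore the pointwise existence and uniqueness argument for $\eta$. In particular, the decomposition must use the normal direction and a nondegenerate tangent plane, which holds because $\psi$ is an immersion. I would carry this out concretely in the Cartan frame $(e_1,e_2,\nu)$, writing $\eta=\eta^1e_1+\eta^2e_2+\eta^3\nu$ and solving the resulting linear equations.
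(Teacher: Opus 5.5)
Your argument is correct and complete. By \eqref{eqn:IID} the tangential part of $\d\xi\circ\d\psi^{-1}$ is skew, hence equal to $a\,\nu\times(\cdot)$. The normal part is a linear functional on the tangent plane, represented uniquely as $\langle b\times(\cdot),\nu\rangle$ with $b$ tangent. So $\eta=a\nu+b$ exists and is unique and smooth, and $\zeta:=\xi-\eta\times\psi$ then satisfies $\d\zeta=\psi\times\d\eta$ automatically.

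The paper itself gives no proof of this statement; it quotes it as a classical fact from the infinitesimal-bending literature. There is therefore no competing proof to measure yours against. The only place where the paper constructs $\eta$ from $\xi$ is the proof of Proposition~\ref{prop:rotation:field:formula}. There one first solves $\xi_x=\eta\times\psi_x$, which is possible because $\langle\xi_x,\psi_x\rangle=0$; this gives the line of solutions $\eta_0+t\,\psi_x$ with $\eta_0=(\psi_x\times\xi_x)/\|\psi_x\|^2$. One then fixes $t$ from the normal component of $\xi_y-\eta_0\times\psi_y$.

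That coordinate recipe yields an explicit formula. However, it silently needs the tangential components of $\xi_y-\eta_0\times\psi_y$ to vanish, and this is where the remaining two conditions enter: $\langle\xi_y-\eta_0\times\psi_y,\psi_y\rangle=\langle\xi_y,\psi_y\rangle=0$ and $\langle\xi_y-\eta_0\times\psi_y,\psi_x\rangle=\langle\xi_y,\psi_x\rangle+\langle\xi_x,\psi_y\rangle=0$. Your tangent/normal splitting of $\eta$ does the same bookkeeping invariantly and treats both parameter directions symmetrically, which makes existence, uniqueness and smoothness transparent.

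One cosmetic point: rename your map $A_p$, since $A$ already denotes the rotation operator $\d\psi^{-1}\circ\d\eta$ in \eqref{eq:rotation:operator}.
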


The fields $\eta$ and $\zeta$ are called the \emph{rotation field} and the \emph{translation field} of $\xi$, respectively.
\eqrefi{eq:gather:II}{1} says that $\eta$ is the axis of the infinitesimal rotation of the tangent plane of $\psi$.
The IID $\xi$ is trivial if and only if the rotation and the translation fields are constant.

%

The following lemma \& definition is adapted from the results of Chapter~4 of \cite{Darja}; closely related formulations also appear in \cite{EisenhartTreatise,EisenhartAssociate}. We cite \cite{Darja} for a modern treatment.

\begin{lemdef}\label{lem:parallel:tangent:space}
    Let $(\eta,\Omega)$ be a rotation field of $(\psi,\Omega)$.
    \begin{itemize}
        \item For every $p \in \Omega$, one has $\mathrm{Im}\,\mathrm{d}\eta_p \subseteq \mathrm{Im}\,\mathrm{d}\psi_p$,
        \item The map $A : T\Omega \rightarrow T\Omega$ defined pointwise by
        \begin{equation}\label{eq:rotation:operator}
            A_p = \mathrm{d}\psi^{-1}_p \circ \mathrm{d}\eta_p,
        \end{equation}
        is called the rotation operator and is traceless.
        \item $A$ is independent of the parametrization of $(\eta,\Omega)$ and it is self-adjoint with respect to the second fundamental form of $\psi$,
        \item The first and second fundamental forms of $\eta$ fulfill the following relations:
        \begin{equation}\label{eq:I:II:K}
            I_\eta(X,Y) = I_\psi(AX,AY),\qquad\qquad
            II_\eta(X,Y) = II_\psi(X,AY),
        \end{equation}
        where $X,Y \in \mathfrak{X}(\Omega)$.
    \end{itemize}
\end{lemdef}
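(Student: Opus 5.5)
The plan is to work throughout from Theorem~\ref{theorem:IID:fundamnetals}, namely the relations $\d\xi = \eta\times\d\psi$ and $\d\zeta = \psi\times\d\eta$. Each item of the Lemma and Definition will be extracted from the integrability of these relations.

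\emph{First item.} Apply the exterior derivative to $\d\xi = \eta\times\d\psi$. Since $\d\d\xi=0$ and $\d\d\psi=0$, this gives $\d\eta(X)\times\d\psi(Y) = \d\eta(Y)\times\d\psi(X)$ for all $X,Y$. Take $X=E_1$, $Y=E_2$ for a frame and write $\d\eta(E_i) = a_i\nu + t_i$, with $t_i$ tangent. Dotting both sides with $\nu$ gives the identity
\[
\langle t_1\times e_2,\nu\rangle = \langle t_2\times e_1,\nu\rangle .
\]
The tangential components of the cross products are $a_1\,\nu\times e_2$ and $a_2\,\nu\times e_1$. These give $a_1\,\nu\times e_2 = a_2\,\nu\times e_1$, and since $e_1,e_2$ are independent, $a_1=a_2=0$. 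Hence $\mathrm{Im}\,\d\eta_p\subseteq\mathrm{Im}\,\d\psi_p$, and $A=\d\psi^{-1}\circ\d\eta$ is a well-defined endomorphism of $T\Omega$.

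\emph{Traceless property.} The remaining normal component of the identity reads $\d\psi(AE_1)\times\d\psi(E_2) = \d\psi(AE_2)\times\d\psi(E_1)$. Write $AE_j = A^i_j E_i$. The left side is $A^1_1\,\d\psi(E_1)\times\d\psi(E_2)$ and the right side is $-A^2_2\,\d\psi(E_1)\times\d\psi(E_2)$. Therefore $A^1_1 + A^2_2 = 0$, i.e.\ $\operatorname{tr}A = 0$.

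\emph{Self-adjointness with respect to $II_\psi$.} Differentiate $\langle\d\eta(X),\nu\rangle = 0$ to get $\langle D_Y\d\eta(X),\nu\rangle = \langle\d\eta(X),\d\nu(Y)\rangle$ up to sign. The left side is symmetric in $X,Y$ when $X,Y$ are coordinate fields, because $D_Y\d\eta(X)=\partial_X\partial_Y\eta$. Hence $II_\psi(AX,Y) = -\langle\d\psi(AX),\d\nu(Y)\rangle$ is symmetric, which is exactly $II_\psi(AX,Y) = II_\psi(X,AY)$. Independence of the parametrization of $A$ follows from the chain rule: under $\tilde\psi=\psi\circ\phi$, $\tilde\eta=\eta\circ\phi$ one gets $\tilde A = \d\phi^{-1}\circ A\circ\d\phi$, which is the same tensor. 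Alternatively, $A$ is canonically the map $\d\psi^{-1}\d\eta$ on the tangent bundle.

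\emph{Fundamental forms of $\eta$.} The formula $I_\eta(X,Y) = \langle\d\psi(AX),\d\psi(AY)\rangle = I_\psi(AX,AY)$ is immediate. For $II_\eta$ we first need the normal of $\eta$. Since $\mathrm{Im}\,\d\eta\subseteq\mathrm{Im}\,\d\psi$, wherever $\eta$ is immersed ($\det A\neq 0$) its unit normal is $\pm\nu$. Then $II_\eta(X,Y) = -\langle\d\eta(X),\d\nu(Y)\rangle = II_\psi(AX,Y)$, and self-adjointness rewrites this as $II_\psi(X,AY)$. The orientation of $\nu_\eta$ must be fixed consistently, and that is the sign convention to watch.

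\textbf{Main obstacle.} I expect the normal-component argument in the first item to be the delicate step: cleanly separating the tangential and normal parts of the cross-product identity so that $a_1=a_2=0$ drops out. I also need to handle the case $\det A=0$, where $\eta$ is not an immersion. There I would state the formulas for $I_\eta$ and $II_\eta$ only on the open set where $\eta$ is regular, and extend them by continuity as bilinear forms.
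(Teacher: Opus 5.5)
Your proof is correct. The paper gives no proof of its own here (it only cites Chapter~4 of \cite{Darja} and Eisenhart), and your argument is the standard derivation of these facts: tangency of $\d\eta$ and $\operatorname{tr}A=0$ come from the tangential and normal parts of $\d\eta(X)\times\d\psi(Y)=\d\eta(Y)\times\d\psi(X)$ (the integrability of $\d\xi=\eta\times\d\psi$), the $II_\psi$-symmetry of $A$ comes from the normal component of $\d\d\eta=0$, and the formulas for $I_\eta,II_\eta$ hold with the orientation $\nu_\eta=\nu$. Your concern about $\det A=0$ is unnecessary, since rotation fields are treated as nets (immersions) in this paper, and in any case $\langle D_X\d\eta(Y),\nu\rangle=II_\psi(X,AY)$ holds pointwise without any regularity of $\eta$.
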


Sometimes it is more convenient to express the rotation operator in the matrix form. In that case, we represent it by the map $\mathbf{A}:\Omega \rightarrow \mathfrak{sl}(2)$. Then \eqref{eq:rotation:operator} can be written as 
\begin{equation}\label{eq:quadrature}
    \d \boldsymbol{\eta} = \d\boldsymbol{\psi}\,\mathbf{A},
\end{equation}
where bold symbols indicate that the quantities are viewed as matrices. Note that, under such circumstances the integration of \eqref{eq:quadrature} results in obtaining the net $\eta$. In accordance to \cite{Darja} we call $\eta$ and $\psi$ reciprocal-parallel related.

The following proposition is central to our later deductions (see \cite[\S XI]{EisenhartTreatise} and \cite{Darja} for a contemporary account):


\begin{proposition}\label{prop:cyclic}
    Let $(\psi,\Omega)$ be a surface element with instantaneous isometric deformation $(\xi,\Omega)$ and rotation field $(\eta,\Omega)$. Assume that $f_1, f_2, f_3 \in \{\psi, \eta, \xi\}$ are mutually different. Then $f_1$ and $f_2$ are conjugate nets if and only if $f_3$ is an asymptotic net.
\end{proposition}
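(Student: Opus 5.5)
The plan is to translate "conjugate" and "asymptotic" into statements about mixed second derivatives and then use the relations of Theorem~\ref{theorem:IID:fundamnetals}. Fix coordinates $(u,v)$ on $\Omega$. A net $f$ is conjugate iff $f_{uv}\in\mathrm{span}(f_u,f_v)$, i.e.\ $\det(f_u,f_v,f_{uv})=0$. It is asymptotic iff $\det(f_u,f_v,f_{uu})=\det(f_u,f_v,f_{vv})=0$. By Lemma~\ref{lem:parallel:tangent:space} and \eqref{eqn:IID}, $\psi,\eta,\xi$ have parallel tangent planes wherever the relevant differentials are nondegenerate, so all three share the normal $\nu$ of $\psi$. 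Hence for each $f\in\{\psi,\eta,\xi\}$ the conditions reduce to $\langle f_{uv},\nu\rangle=0$, respectively $\langle f_{uu},\nu\rangle=\langle f_{vv},\nu\rangle=0$.

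The key identities come from differentiating \eqref{eq:gather:II}. From $\xi_u=\eta\times\psi_u$ and $\xi_v=\eta\times\psi_v$, the compatibility $\xi_{uv}=\xi_{vu}$ gives $\eta_v\times\psi_u=\eta_u\times\psi_v$, and we also get
\[
\xi_{uv}=\eta_v\times\psi_u+\eta\times\psi_{uv},\qquad \xi_{uu}=\eta_u\times\psi_u+\eta\times\psi_{uu}.
\]
Since $\eta_u,\eta_v,\psi_u,\psi_v$ are all tangent, the cross products $\eta_v\times\psi_u$ and $\eta_u\times\psi_u$ are normal. The terms $\eta\times\psi_{uv}$ can be split using the decomposition of $\eta$ into tangential and normal parts. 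Writing $\eta_u,\eta_v$ through the rotation operator, $\mathrm d\eta=\mathrm d\psi\,\mathbf A$, makes every normal component a bilinear expression in $II_\psi$, $\mathbf A$ and $\eta$. The cleanest route is to work in the Cartan frame and use the relations \eqref{eq:I:II:K}:
\[
II_\eta(X,Y)=II_\psi(X,AY),
\]
together with the analogous relation for $\xi$, namely $II_\xi(X,Y)=\langle D_X(\eta\times \mathrm d\psi(Y)),\nu\rangle$. After cancellation this should reduce to $II_\xi(X,Y)=\langle \eta\times\mathrm d\psi (X),\ldots\rangle$, i.e.\ a form like $II_\xi(X,Y)=\langle \eta,\nu\rangle\, II_\psi(X,JY)$ plus terms coming from $A$. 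I expect to find that, up to a common nonvanishing factor, the three second fundamental forms are $II_\psi(X,Y)$, $II_\psi(X,AY)$ and $II_\psi(X,A'Y)$. Here $A'$ is a fixed traceless operator built from $A$ and the complex structure $J$. The symmetry of $II_\xi$ and $II_\eta$ is exactly the self-adjointness in Lemma~\ref{lem:parallel:tangent:space}.

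With these expressions the statement becomes linear algebra in $2\times2$ symmetric matrices. Let $h$ be the matrix of $II_\psi$ in the chosen coordinates. Then $II_\eta\sim hA$ and $II_\xi\sim hA'$, all symmetric, and $hA$ and $hA'$ are trace-free with respect to $h$. In dimension two, the symmetric matrices $S$ with $\operatorname{tr}(h^{-1}S)=0$ form a two-dimensional space; in an $h$-adapted sense, $h$ is "orthogonal" to both $hA$ and $hA'$. The condition that $(u,v)$ is conjugate for $f_1$ and $f_2$ means their matrices are both diagonal. The condition that $(u,v)$ is asymptotic for $f_3$ means its matrix has vanishing diagonal. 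I would verify the equivalence case by case, using a pointwise normal form where $A$ has a simple shape. For example, if $h$ and $hA$ are both diagonal, then $A$ is diagonal and traceless, so $A=\mathrm{diag}(a,-a)$. One then checks that $hA'$ is off-diagonal, and conversely. The other two cases follow by the same computation.

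The main obstacle is the second step: correctly computing $II_\xi$ in terms of $\eta$, $A$ and $II_\psi$ and identifying $A'$, including the handling of points where $\mathrm d\eta$ or $\mathrm d\xi$ degenerates. I would first carry this out in the Cartan frame $(e_1,e_2,\nu)$ via \eqref{eq:fram:field:change:12}, then restrict to the open dense set where all three maps are immersions and conclude there.
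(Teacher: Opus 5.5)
\textbf{There is a genuine gap.} The reduction in your first paragraph is false for $\xi$, and everything after it depends on it. Lemma~\ref{lem:parallel:tangent:space} gives parallel tangent planes only for $\eta$. Equation \eqref{eqn:IID} says that corresponding line elements of $\xi$ and $\psi$ are \emph{orthogonal}; it does not say their tangent planes are parallel. Indeed $\xi_u=\eta\times\psi_u$ and $\xi_v=\eta\times\psi_v$ are both orthogonal to $\eta$, and $\xi_u\times\xi_v=\langle\eta,\psi_u\times\psi_v\rangle\,\eta$. So wherever $\xi$ is immersed its unit normal is $\pm\eta/\|\eta\|$, which agrees with $\pm\nu$ only when $\eta$ is normal to $\psi$.

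Two consequences follow. First, $\langle\xi_{uv},\nu\rangle=0$ is not the conjugacy condition for $\xi$. Second, the quantity $\langle D_X(\eta\times\mathrm d\psi(Y)),\nu\rangle$ you plan to compute is not $II_\xi$, and it is not even tensorial: it contains $\det(\eta,\psi_{ij},\nu)$, which depends on the tangential part of $\eta$ and on Christoffel symbols. So your second step cannot produce the right form. The shape you anticipate, $\langle\eta,\nu\rangle\,II_\psi(X,JY)$ plus $A$-terms, is not what holds.

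\textbf{The repair.} Project onto the correct normal. Write $\xi_{ij}=\eta_j\times\psi_i+\eta\times\psi_{ij}$. The second term is orthogonal to $\eta$, and the first is normal to $\psi$, so
\[
\langle\xi_{ij},\eta\rangle=\langle\eta,\nu\rangle\det(\eta_j,\psi_i,\nu).
\]
With $\mathrm d\boldsymbol\eta=\mathrm d\boldsymbol\psi\,\mathbf A$ and $\nu=\psi_u\times\psi_v/\|\psi_u\times\psi_v\|$, this gives
\[
\mathbf{II}_\xi=\pm\frac{\langle\eta,\nu\rangle}{\|\eta\|}\sqrt{\det\mathbf I_\psi}\begin{pmatrix}-\mathbf A_{21}&\mathbf A_{11}\\ \mathbf A_{11}&\mathbf A_{12}\end{pmatrix}.
\]
This matrix is symmetric exactly because $\operatorname{tr}\mathbf A=0$; that is the content of the compatibility $\eta_v\times\psi_u=\eta_u\times\psi_v$, not self-adjointness. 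Note that $II_\xi$ is built from $I_\psi$ and $\mathbf A$, not from $II_\psi$. Hence $\xi$ is conjugate iff $\mathbf A$ is anti-diagonal, and asymptotic iff $\mathbf A$ is diagonal.

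Combined with $\mathbf{II}_\eta=\mathbf{II}_\psi\mathbf A$ being symmetric and $\operatorname{tr}\mathbf A=0$, each of the three equivalences is a two-line $2\times2$ check. For example, if $\mathbf{II}_\psi$ is diagonal and $(\mathbf{II}_\psi\mathbf A)_{12}=0$, then symmetry forces $\mathbf A_{12}=\mathbf A_{21}=0$, so $\xi$ is asymptotic. You must assume $\det\mathbf{II}_\psi\neq0$, $\det\mathbf A\neq0$ and $\langle\eta,\nu\rangle\neq0$.

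Invariantly, the three second fundamental forms are pairwise apolar, with $\mathrm D=0$ for each pair. You need all three relations, including $\mathrm D(\mathbf{II}_\eta,\mathbf{II}_\xi)=0$; apolarity to $\mathbf{II}_\psi$ alone does not suffice. The paper gives no proof of this statement and only refers to the classical treatment in Eisenhart's treatise.
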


Finally before closing the overview, we mention the following proposition which the reader can find in \cite{EisenhartTreatise}. 
\begin{proposition}\label{prop:rotation:field:iff}
     Let $\psi,\eta:\Omega\to\mathbb R^3$ be two nets. Furthermore, define
     \begin{equation}\label{eq:mixed:det}
        \mathrm{D}(\mathbf{A},\mathbf{B}) := \frac{1}{2}\left(\det\left(\mathbf{A} + \mathbf{B}\right) - \det\left(\mathbf{A}\right) - \det\left(\mathbf{B}\right)\right),
    \end{equation}
    where $\mathbf{A}$ and $\mathbf{B}$ are two $2\times 2$ matrices.
    Then $\eta$ is a rotation field of $\psi$ if and only if $\mathrm{D}(\mathbf{II}_\psi,\mathbf{II}_\eta)=0$ and $\mathbf{III}_\psi = \mathbf{III}_\eta$.
\end{proposition}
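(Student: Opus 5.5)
Write $\mathbf{A}$ for the rotation operator of Lemma and Definition~\ref{lem:parallel:tangent:space} in the matrix form \eqref{eq:quadrature}, so that $\d\boldsymbol{\eta}=\d\boldsymbol{\psi}\,\mathbf{A}$. I would prove the two directions of Proposition~\ref{prop:rotation:field:iff} separately, working pointwise with $2\times 2$ matrices of fundamental forms in a fixed local coordinate frame $(\partial_u,\partial_v)$ on $\Omega$.

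\emph{Necessity.} Suppose $\eta$ is a rotation field of $\psi$. By Lemma and Definition~\ref{lem:parallel:tangent:space}, $\mathrm{Im}\,\d\eta\subseteq\mathrm{Im}\,\d\psi$, so both nets have the same tangent planes. After a choice of orientation they share the unit normal $\nu$. Hence their Gauss maps coincide and $\mathbf{III}_\psi=\mathbf{III}_\eta$. From \eqref{eq:I:II:K} we have $\mathbf{II}_\eta=\mathbf{II}_\psi\mathbf{A}$, and $\mathbf{A}$ is traceless and $\mathbf{II}_\psi$-self-adjoint. Since $\mathrm{D}$ is the polarisation of $\det$, it is bilinear, and $\mathrm{D}(\mathbf{M},\mathbf{M}\mathbf{A})=\tfrac12\det\mathbf{M}\,\operatorname{tr}\mathbf{A}$ for any $2\times2$ matrix $\mathbf{M}$ (by Cayley--Hamilton, or by a direct computation). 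This gives $\mathrm{D}(\mathbf{II}_\psi,\mathbf{II}_\eta)=0$.

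\emph{Sufficiency.} Assume $\mathbf{III}_\psi=\mathbf{III}_\eta$ and $\mathrm{D}(\mathbf{II}_\psi,\mathbf{II}_\eta)=0$. The plan has four steps.

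\begin{enumerate}
\item Equal third fundamental forms mean the Gauss maps $\nu_\psi,\nu_\eta$ differ by an orthogonal transformation, by the fundamental theorem for the sphere. After composing $\eta$ with a rigid motion (possibly including a reflection), we get $\nu_\psi=\nu_\eta=\nu$. Then $\d\eta$ and $\d\psi$ take values in the same plane $\nu^\perp$, so $\d\boldsymbol{\eta}=\d\boldsymbol{\psi}\,\mathbf{A}$ for a uniquely defined field $\mathbf{A}$.
\item Since $\mathbf{II}=-\langle \d f,\d\nu\rangle$, we get $\mathbf{II}_\eta=\mathbf{II}_\psi\mathbf{A}$. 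The symmetry of $\mathbf{II}_\eta$ makes $\mathbf{A}$ self-adjoint for $\mathbf{II}_\psi$.
\item The identity from the necessity part turns $\mathrm{D}=0$ into $\operatorname{tr}\mathbf{A}=0$, provided $\det\mathbf{II}_\psi\neq0$. The degenerate case is handled by the standing non-developability or by continuity.
\item Finally I construct $\zeta$. Set $\alpha:=\psi\times\d\eta$ and show that this $\mathbb{R}^3$-valued 1-form is closed. Its differential is $\d\alpha=\d\psi\wedge\d\eta$, meaning $\psi_u\times\eta_v-\psi_v\times\eta_u$. Both terms are parallel to $\nu$, and their difference is $(\det\text{-type expression})\,\nu$, which is a multiple of $\operatorname{tr}\mathbf{A}$ and therefore vanishes. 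Since $\Omega$ is simply connected, $\alpha=\d\zeta$ for some $\zeta$. Then $\xi:=\eta\times\psi+\zeta$ satisfies $\d\xi=\eta\times\d\psi$, and $\eta\times\d\psi$ is an infinitesimal rotation of the tangent vectors, so \eqref{eqn:IID} holds. By Theorem~\ref{theorem:IID:fundamnetals}, $\eta$ is the rotation field of $\xi$.
\end{enumerate}

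The main obstacle is the closedness computation in step 4: identifying $\psi_u\times(\d\psi\mathbf{A})_v-\psi_v\times(\d\psi\mathbf{A})_u$ with $(\operatorname{tr}\mathbf{A})\,\psi_u\times\psi_v$ exactly. I would verify this by writing $\mathbf{A}=(a_{ij})$ and expanding. A second delicate point is the orientation and reflection normalisation in step 1, which I would fix by choosing the sign of $\nu_\eta$ compatibly.
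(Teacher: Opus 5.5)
The paper does not prove this proposition; it only cites Eisenhart's treatise, so there is no in-paper argument to compare against. Your proof is correct and follows the natural classical line. Parallel tangent planes give $\mathbf{A}$ with $\d\boldsymbol{\eta}=\d\boldsymbol{\psi}\,\mathbf{A}$. The identity $\mathrm{D}(\mathbf{M},\mathbf{N})=\tfrac12\operatorname{tr}\bigl(\operatorname{adj}(\mathbf{M})\,\mathbf{N}\bigr)$ for $2\times 2$ matrices turns $\mathrm{D}(\mathbf{II}_\psi,\mathbf{II}_\psi\mathbf{A})=0$ into $\tfrac12\det(\mathbf{II}_\psi)\operatorname{tr}\mathbf{A}=0$. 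Tracelessness is then exactly the integrability condition you need.

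The step you flag as the main obstacle takes one line. With $\eta_u=a_{11}\psi_u+a_{21}\psi_v$ and $\eta_v=a_{12}\psi_u+a_{22}\psi_v$ one gets $\psi_u\times\eta_v-\psi_v\times\eta_u=(a_{11}+a_{22})\,\psi_u\times\psi_v$. The same expression arises for $\eta_u\times\psi_v-\eta_v\times\psi_u$, so you could also integrate $\eta\times\d\psi$ directly to obtain $\xi$.

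Two points should be stated explicitly rather than absorbed into a normalization.

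First, Step~1 is not a without-loss-of-generality step. Replacing $\eta$ by $Q\eta$ preserves the hypotheses, since $\mathbf{III}_{Q\eta}=\mathbf{III}_\eta$, $\mathbf{II}_{Q\eta}=\pm\mathbf{II}_\eta$, and $\mathrm{D}$ is bilinear; you should say this. But being a rotation field of $\psi$ is invariant under translations of $\eta$, not under rotations. So what you actually prove is that $Q\eta$ is a rotation field of $\psi$ for some $Q\in\mathrm{O}(3)$.

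No argument can do better, because the ``if'' direction read literally is false. Suppose $\mathrm{K}_\psi\neq0$, $\eta$ is a rotation field of $\psi$, and $R\in\mathrm{SO}(3)\setminus\{\mathrm{id}\}$. Then $R\eta$ satisfies $\mathbf{III}_{R\eta}=\mathbf{III}_\psi$ and $\mathrm{D}(\mathbf{II}_\psi,\mathbf{II}_{R\eta})=0$. However, its tangent planes $R\,\nu^\perp$ differ from those of $\psi$ on an open set, contradicting the first item of Lemma and Definition~\ref{lem:parallel:tangent:space}. The proposition should therefore either assume parallel tangent planes ($\nu_\eta=\pm\nu_\psi$), which makes your Step~1 unnecessary, or conclude ``up to an orthogonal transformation of $\eta$''.

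Second, Step~1 needs the Gauss maps to be immersions, i.e.\ $\mathrm{K}_\psi\neq 0$. Since $\det\mathbf{II}=\mathrm{K}\det\mathbf{I}$ and $\det\mathbf{III}=\mathrm{K}^2\det\mathbf{I}$, this is the same condition Step~3 uses. Appealing to continuity does not rescue Step~1 where $\mathbf{III}$ degenerates. Simply assume $\mathrm{K}_\psi\neq0$, as the paper does when it restricts to $\det(I)\det(II)\neq 0$.
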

\section{V-surfaces as Rotation Fields of K-surfaces}\label{sec:V:surface:as:Rotation:Fields:of:K:surfaces}
Pseudospherical surfaces are perhaps the most studied class of surfaces in the history of differential geometry. In this manuscript we refer to them as K-surfaces. A tradition that has roots in the development of discrete differential geometry by Sauer \cite{sauer}.
\begin{definition}\label{def:K:surface}
    A surface element $K$ is called a K-surface if it has a constant Gaussian curvature. 
\end{definition}

At each point of $K$ there are two distinguished tangent directions along which the normal curvature vanishes. These are called the \emph{asymptotic directions}.  
Up to a change of sign, we may choose unit tangent vectors $E_{1}$ and $E_{3}$ along these directions; the ordered pair $(E_{1},E_{3})$ will be referred to as a \emph{K-frame}. The dual coframe $({\theta}^{1},{\theta}^{3})$ will be called the \emph{K-coframe}.  
The integral curves of the asymptotic directions are called \emph{asymptotic lines}. 
On $K$-surfaces it is a classical fact that asymptotic nets have \emph{Chebyshev net property}. Accordingly, a reparametrization of $K$ by such a net is referred to as a \emph{K-net} \cite{EisenhartTreatise}.

\begin{remark}\label{remark:K:frame:coordinates}
    The vector fields in a K-frame are \emph{coordinate vector fields}. In order to see this, consider an arbitrary region $R$ on the K-surface formed by the segments of the K-net similar to Fig~\ref{fig:alignable}-(a). Then the Chebyshev property implies
    \begin{equation*}
            0 = l^1_{AB} - l^1_{DC} = \int_{\partial R}\,{\theta}^1 \ \implies\ \mathrm{d}{\theta}^1 = 0,\qquad\qquad
            0 = l^1_{BC} - l^1_{AD} = \int_{\partial R}\,{\theta}^3 \ \implies\ \mathrm{d}{\theta}^3 = 0,
    \end{equation*}
    which according to Frobenius theorem is equivalent to the commutativity of $E_1$ and $E_3$. 
\end{remark}

The remark above allows us, whenever convenient, to work in $(u,v)$ coordinates by identifying the $K$-frame with the coordinate frame $(\partial_u,\partial_v)$.
\begin{example}
    A classical example of a K-surface is the Amsler surface, i.e., a pseudospherical surface that contains two non-parallel straight asymptotic lines (see \cite{BobenkoAmsler}).
\end{example}
A major classical result describing K-surfaces at the level of the fundamental forms, stated as follows in a tensor language (see \cite{EisenhartTreatise} for details):
\begin{theorem}\label{theorem:classify:K-nets}
Let $K$ be a K-surface with Gaussian curvature $\mathrm{K} = -1$ and the K-frame $(E_1,E_3)$. Furthermore, let $\omega$ be the angle between $E_1$ and $E_3$. Then the fundamental forms of $K$ in the aforementioned K-frame will be as follows
    \begin{equation}\label{eq:I:II:psi}
        {I} = {\theta}^{\,1}\otimes{\theta}^{\,1} + 2\cos{(\omega)}\,{\theta}^{\,1}\odot{\theta}^{\,3} + {\theta}^{\,3}\otimes{\theta}^{\,3},\qquad\qquad
        {II} = 2\sin(\omega)\,{\theta}^{\,1}\odot{\theta}^{\,3}.
    \end{equation}
    where ${\theta}^{\,1}\odot{\theta}^{\,3} := \frac{1}{2}({\theta}^{\,1}\otimes{\theta}^{\,3}+{\theta}^{\,3}\otimes{\theta}^{\,1})$. Furthermore, $\omega$ satisfies the sine-Gordon equation $\Hess \omega\,(E_1, E_3) = \sin{(\omega)}$.
\end{theorem}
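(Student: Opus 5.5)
We work in the K-frame $(E_1,E_3)$, which by Remark~\ref{remark:K:frame:coordinates} is a coordinate frame; we may therefore identify it with $(\partial_u,\partial_v)$ and the K-coframe with $(\mathrm{d}u,\mathrm{d}v)$. The proof has three steps: read off $I$, then $II$, then use the Gauss equation to obtain sine-Gordon.

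\textbf{First fundamental form.} The vectors $e_1=\mathrm{d}\psi(E_1)$ and $e_3=\mathrm{d}\psi(E_3)$ are unit vectors by the choice of the K-frame, and $\langle e_1,e_3\rangle=\cos\omega$ by the definition of $\omega$. Expanding $I$ in the coframe then gives exactly
\[
I=\theta^1\otimes\theta^1+2\cos(\omega)\,\theta^1\odot\theta^3+\theta^3\otimes\theta^3 .
\]
The Chebyshev property, $\mathrm{d}\theta^1=\mathrm{d}\theta^3=0$, is what allows both diagonal coefficients to be $1$ simultaneously in coordinates.

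\textbf{Second fundamental form.} Since $E_1$ and $E_3$ are asymptotic, $II(E_1,E_1)=II(E_3,E_3)=0$, so $II=2m\,\theta^1\odot\theta^3$ for some function $m$. The Gaussian curvature is the ratio of determinants, $\mathrm{K}=\det\mathbf{II}/\det\mathbf{I}$. Here $\det\mathbf{I}=\sin^2\omega$ and $\det\mathbf{II}=-m^2$. Setting $\mathrm{K}=-1$ gives $m^2=\sin^2\omega$, and choosing the orientation of $\nu$ (or the sign of $E_3$) yields $m=\sin\omega$.

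\textbf{Sine-Gordon equation.} This is the main step. Pass to the orthonormal Cartan frame $e_1$, $e_2=$ the rotation of $e_1$ by $\pi/2$, and $\nu$. The geodesic curvatures of the coordinate lines can be computed from Christoffel symbols or from the Cartan structure equations $\mathrm{d}\theta^i=-\Theta_{ij}\wedge\theta^j$. For a Chebyshev net with $E=G=1$ and $F=\cos\omega$ this gives
\[
\Theta_{12}=-\omega_v\,\mathrm{d}v \quad\text{(up to sign conventions).}
\]
Equivalently, use the classical Chebyshev-net formula $\mathrm{K}=-\omega_{uv}/\sin\omega$, which one checks via Brioschi's formula with $E=G=1$ and $F=\cos\omega$. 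Substituting into the Gauss equation \eqref{eq:original:gauss} gives $\mathrm{d}\Theta_{12}=\mathrm{K}\,\sin\omega\,\mathrm{d}u\wedge\mathrm{d}v$ and hence $\omega_{uv}=\sin\omega$.

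Finally, since $E_1$ and $E_3$ commute, $\Hess\omega(E_1,E_3)=E_1E_3\omega-(\nabla_{E_1}E_3)\omega$. The coordinate form needs this to equal $\omega_{uv}$, which is not automatic, because $\nabla_{E_1}E_3=\Gamma^1_{12}E_1+\Gamma^2_{12}E_3$. Computing the Christoffel symbols with $E=G=1$ and $F=\cos\omega$ gives $\Gamma^k_{12}=0$, since every term involves $E_v$, $G_u$, or combinations of $F$ that cancel. The Hessian therefore reduces to $\omega_{uv}$, and the statement follows.

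The obstacles are this Christoffel check and fixing the sign conventions consistently with \eqref{eq:fram:field:change:12}.
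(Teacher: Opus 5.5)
The paper does not prove this theorem; it quotes it as a classical result from Eisenhart. Your argument is correct overall. You read off $I$ from the unit K-frame, get $II=\pm 2\sin\omega\,\theta^1\odot\theta^3$ from asymptoticity together with $\mathrm{K}=\det\mathbf{II}/\det\mathbf{I}=-1$, and fix the sign by orienting $\nu$ or flipping $E_3$. The Brioschi computation for $E=G=1$, $F=\cos\omega$ then gives $\mathrm{K}=-\omega_{uv}/\sin\omega$, and $\Gamma^k_{12}=0$ makes $\Hess\omega(E_1,E_3)=\omega_{uv}$. That route alone establishes the sine-Gordon equation. As in the paper, the substantive input is imported rather than proved: the Chebyshev property of the asymptotic net, which classically follows from the Codazzi equations for constant $\mathrm{K}$, enters only through Remark~\ref{remark:K:frame:coordinates}. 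Your argument never uses Codazzi.

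The Cartan-form version of the last step, however, is wrong in a way that is not just a sign convention.
\begin{itemize}
\item With $e_1=\mathrm{d}\psi(E_1)$ and $e_2$ its rotation by $\pi/2$, the dual coframe is $\vartheta^1=\mathrm{d}u+\cos\omega\,\mathrm{d}v$, $\vartheta^2=\sin\omega\,\mathrm{d}v$.
\item $\mathrm{d}^2\psi=0$ with the convention \eqref{eq:fram:field:change:12} gives $\mathrm{d}\vartheta^1=\Theta_{12}\wedge\vartheta^2$ and $\mathrm{d}\vartheta^2=\vartheta^1\wedge\Theta_{12}$.
\item Your $\Theta_{12}=-\omega_v\,\mathrm{d}v$ makes the right-hand side of the first equation vanish. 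The left-hand side is $-\omega_u\sin\omega\,\mathrm{d}u\wedge\mathrm{d}v$, so this fails unless $\omega_u=0$.
\item The actual solution is $\Theta_{12}=-\omega_u\,\mathrm{d}u=-\mathrm{d}\omega(E_1)\,\theta^1$. This agrees with the paper's later use of $\kappa^g_1=-E_1(\omega)$, and with $\Theta_{12}(E_3)=0$, since $\nabla_{E_3}E_1=0$ on a Chebyshev net.
\item Your form is, up to sign, the connection form of the frame adapted to $E_3$, not $E_1$.
\end{itemize}

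You also wrote the Gauss equation as $\mathrm{d}\Theta_{12}=\mathrm{K}\sin\omega\,\mathrm{d}u\wedge\mathrm{d}v$. The final expression in \eqref{eq:original:gauss} instead gives $\mathrm{d}\Theta_{12}=-\mathrm{K}\,\vartheta^1\wedge\vartheta^2=-\mathrm{K}\sin\omega\,\mathrm{d}u\wedge\mathrm{d}v$. These two slips cancel, which is why you still land on $\omega_{uv}=\sin\omega$. With the corrected pieces, $\mathrm{d}\Theta_{12}=\omega_{uv}\,\mathrm{d}u\wedge\mathrm{d}v=\sin\omega\,\mathrm{d}u\wedge\mathrm{d}v$ follows directly. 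Either replace that paragraph with this computation or rely on the Brioschi formula alone.
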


By substituting $(\theta^1,\theta^3)$ with $(\d u, \d v)$ we express the K-net in \emph{canonical coordinates} \cite{izmestiev}.
\subsection{V-surfaces \& V-nets}

Originally, a \emph{Voss surface} is defined as an immersed surface in $\mathbb{R}^3$ that supports a \emph{geodesic-conjugate net}, i.e., a conjugate net whose coordinate lines are geodesics (see \cite{Voss} and \cite[p.~3]{izmestiev}). In this article, however, we adopt the equivalent perspective of viewing a Voss surfaces as a \emph{rotation field} of a K-surface. Since this formulation is parametrization independent, it allows us to work with different nets on the Voss surfaces throughout our analysis. Continuing our tradition, from now on, we refer to Voss surfaces simply as \emph{V-surfaces}.



\begin{definition}\label{def:V:surface}
    A surface element $V = (\eta,\Omega)$ is called a V-surface if there exists a K-surface $K = (\psi,\Omega)$ such that $V$ is a rotation field of $K$.
\end{definition}

An immediate consequence of Definition~\ref{def:V:surface} together with Lemma~\ref{lem:parallel:tangent:space} is that, for every $p \in \Omega$, one has $\mathrm{Im}\,\mathrm{d}\eta_p \subseteq \mathrm{Im}\,\mathrm{d}\psi_p$ meaning that $T_{p}\eta$ is point-wise parallel to $T_{p}\psi$.

In order to avoid repeating the sentence $V$ serves as the rotation filed of $K$ we define $\mathrm{R}(K)$ as the set of all rotation fields of the K-surface $K$. 

\begin{remark}
    Per Definition\,\ref{def:V:surface}, every $V \in \mathrm{R}(K)$ is a V-surface.
\end{remark}
If $(\eta_1,\Omega)$ and $(\eta_2,\Omega)$ are two members of $\mathrm{R}(K)$ then there are IIDs such that $\d\xi_i = \eta_i \times \d\psi$ for $i=1,2$. If $\alpha$ and $\beta$ are two constants then we have 
\begin{equation}\label{eq:vector:field:property}
    \alpha\,\d\xi_1 + \beta\,\d\xi_2 = (\alpha\,\eta_1 + \beta\,\eta_2) \times \d\psi. 
\end{equation}
Through a straight forward computation one observes $\alpha\,\xi_1 + \beta\,\xi_2$ is also an infinitesimal isometric deformation and since $\alpha\,\eta_1 + \beta\,\eta_2$ uniquely solves \eqref{eq:vector:field:property} it is a rotation field of $K$ and consequently belongs to $\mathrm{R}(K)$. Consequently, we may identify each element of $\mathrm{R}(K)$ with a triple $\eta=(\eta^1,\eta^2,\eta^3)$, where $\eta^i\in C^\infty(\Omega)$. Thus, $\mathrm{R}(K)\subset \prod_{i=1}^3 C^\infty(\Omega)$. Moreover, since $\mathrm{R}(K)$ is closed under addition and scalar multiplication, it is a vector space. There is however, a subtlety here. A linear combination of the rotation fields may create flat points or singularities. Since our aim is to avoid both, we restrict our attention to open subdomains $\Omega' \subset \Omega$ on which the $\det(I).\det(II) \neq 0$.
\begin{remark}
    In later sections we see that a V-surface can serve as the rotation field of surfaces other than a K-surface as well (see Remark~\ref{remark:V:rotation:field:V:I} and \ref{remark:V:rotation:field:V:II}).
\end{remark}

\begin{definition}\label{def:V:net}
    Let $I,J\subset\mathbb{R}$ be intervals and let $(\eta,I\times J)$ be a net. We call a non-developable net $\eta$ a $V$-net if its coordinate lines $u\mapsto \eta(u,v_0)$ and $u\mapsto \eta(u_0,v)$ satisfy:
    \begin{enumerate}
        \item $\eta_{uv}(u,v)\in \operatorname{span}\{\eta_u(u,v),\,\eta_v(u,v)\}$ for all $(u,v)\in I\times J$,
        \item Each coordinate line is a geodesic of $\eta(I\times J)$.
    \end{enumerate}
\end{definition}

\begin{remark}\label{remark:box:reparametrization}
    Note that under a \emph{box reparametrization} $(u,v) \longmapsto (f(u),g(v))$, where $f \in C^\infty(I)$ and $g \in C^\infty(J)$ a V-net explicitly remains a V-net.
\end{remark}

In \cite{Voss}, Voss shows that the Gauss map of a V-net is a Chebyshev net. On the other hand, a Chebyshev net on the sphere can arise as the Gauss map of many nets; however, among the conjugate nets, the ones with a Chebyshev Gauss map are precisely the V-nets.

\begin{theorem}\label{theorem:Gauss:conjugate:gives:V}
    Let $\eta : I\times J \rightarrow \mathbb{R}^3$ be a conjugate net and let $\nu: I\times J \rightarrow \mathbb{S}^2$ be its Gauss map. If $\nu$ is a Chebyshev net then $\eta$ is a V-net.
\end{theorem}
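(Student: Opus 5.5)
We prove the statement directly in coordinates, reducing the geodesic property of the coordinate lines to the vanishing of two Christoffel symbols. Throughout we write $\nu_u,\nu_v$ and $\eta_u,\eta_v$ for the partial derivatives of the Gauss map and of the net.

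\textbf{Step 1 (the geodesic condition).} The $u$-lines are geodesics precisely when $\eta_{uu}$ has no tangential component orthogonal to $\eta_u$. Equivalently, the Christoffel symbol $\Gamma^v_{uu}$ must vanish. By symmetry, the $v$-lines are geodesics precisely when $\Gamma^u_{vv}=0$. We will show that the Chebyshev hypothesis forces both symbols to vanish.

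\textbf{Step 2 (Rodrigues relations).} Since $\eta$ is conjugate, its second fundamental form is diagonal in $(u,v)$, with $II=L\,\d u^2+N\,\d v^2$ and $L N\neq 0$ by non-developability. The $u$- and $v$-directions are then conjugate, and the Weingarten map, $\nu$ being the unit normal of both $\eta$ and its Gauss image, gives
\begin{equation*}
\langle \nu_u,\eta_u\rangle=-L,\qquad \langle \nu_u,\eta_v\rangle=0,\qquad \langle\nu_v,\eta_u\rangle=0,\qquad \langle \nu_v,\eta_v\rangle=-N.
\end{equation*}
Both $\nu_u$ and $\nu_v$ lie in the common tangent plane. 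Hence $\nu_u$ is orthogonal to $\eta_v$ and $\nu_v$ is orthogonal to $\eta_u$.

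\textbf{Step 3 (Chebyshev condition and the $u$-lines).} The Chebyshev condition on $\nu$ means $\partial_v|\nu_u|^2=0$ and $\partial_u|\nu_v|^2=0$. Equivalently, $\langle \nu_{uv},\nu_u\rangle=0=\langle\nu_{uv},\nu_v\rangle$, so $\nu_{uv}$ is parallel to $\nu$. We now translate this into information about $\eta$.

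Fix the unit tangent field $n_u\perp \eta_v$. By Step 2 we can write $\nu_u=a\,n_u$ with $a=-L/\langle \eta_u,n_u\rangle$. Differentiating $\langle\nu_u,\eta_v\rangle=0$ in $v$ gives
\begin{equation*}
\langle \nu_{uv},\eta_v\rangle+\langle \nu_u,\eta_{vv}\rangle=0.
\end{equation*}
The first term vanishes because $\nu_{uv}\parallel\nu$. Therefore $\langle \nu_u,\eta_{vv}\rangle=0$. Since $a\neq0$, this says that $\eta_{vv}$ has no component along $n_u$. The tangent plane is spanned by $\eta_v$ and $n_u$, so the tangential part of $\eta_{vv}$ is a multiple of $\eta_v$. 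This is exactly $\Gamma^u_{vv}=0$, so the $v$-lines are geodesics.

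\textbf{Step 4 (the $v$-lines, by symmetry).} Differentiating $\langle\nu_v,\eta_u\rangle=0$ in $u$ and repeating the argument of Step 3 with the roles of $u$ and $v$ exchanged gives $\Gamma^v_{uu}=0$. Hence the $u$-lines are geodesics as well. The net was assumed conjugate and non-developable, so by Definition~\ref{def:V:net} $\eta$ is a V-net.

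The main obstacle is Step 3. One has to make sure that the Chebyshev condition, which is stated as the constancy of the lengths $|\nu_u|$ along $v$ and $|\nu_v|$ along $u$, really yields $\nu_{uv}\parallel\nu$. This follows because $\nu_{uv}$ is symmetric in $u$ and $v$, and because $\nu_u,\nu_v$ span the tangent plane wherever $LN\neq0$.
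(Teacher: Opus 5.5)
Your proof is correct. The paper does not prove this theorem itself; it only refers to \cite{izmestiev} for a modern proof, so there is no in-text argument to compare your route against.

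The core of your argument is the pair of identities obtained by differentiating the conjugacy relations: $\langle\nu_u,\eta_{vv}\rangle=-\langle\nu_{uv},\eta_v\rangle$ and $\langle\nu_v,\eta_{uu}\rangle=-\langle\nu_{uv},\eta_u\rangle$. Here $\nu_u\perp\eta_v$ and $\nu_v\perp\eta_u$ are nonzero tangent vectors. So both coordinate families are geodesic exactly when the tangential part of $\nu_{uv}$ vanishes, i.e.\ exactly when $\nu$ is Chebyshev. Every step is an equivalence, so the same computation also proves Voss's converse, quoted just before the theorem: the Gauss map of a V-net is Chebyshev.

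There are two small points to fix.

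\begin{itemize}
\item The statement does not assume non-developability, so you should not invoke it; derive it instead. Since $\nu$ is a net, it is an immersion, and $\nu_u\times\nu_v=\mathrm{K}\,\eta_u\times\eta_v$. Hence $\mathrm{K}\neq0$. With $M=0$ this gives $LN\neq0$, and it also supplies the non-developability required in Definition~\ref{def:V:net}.
\item Your step headings are swapped. Step~3 shows $\Gamma^u_{vv}=0$, so it proves the $v$-lines are geodesics. Step~4 handles the $u$-lines.
\end{itemize}
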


In \cite[page~5]{izmestiev} a short review on the history of the above theorem in addition to a modern proof is given.

\begin{lemma}\label{lem:V:net}
    Every V-surface supports a V-net.
\end{lemma}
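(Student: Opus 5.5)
Let $V=(\eta,\Omega)$ be a V-surface. By Definition~\ref{def:V:surface} there is a K-surface $K=(\psi,\Omega)$ with $V\in\mathrm{R}(K)$. The plan is to reparametrize the common domain by the K-net of $K$, show that $\eta$ becomes conjugate in these coordinates, show that its Gauss map is Chebyshev in these coordinates, and then invoke Theorem~\ref{theorem:Gauss:conjugate:gives:V}. Since $\Omega$ is only a simply connected open set, I first restrict to a neighbourhood of a point. On it the K-frame $(E_1,E_3)$ consists of commuting vector fields (Remark~\ref{remark:K:frame:coordinates}), so it integrates to coordinates $(u,v)$ on a rectangle $I\times J$ with $(E_1,E_3)=(\partial_u,\partial_v)$. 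Reparametrizing $\psi$ and $\eta$ by the same diffeomorphism keeps $\eta$ a rotation field of $\psi$, since \eqref{eq:gather:II} is coordinate-free. So I may assume $\psi$ is a K-net in canonical coordinates and $\eta$ is defined on the same rectangle.

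\textbf{Step 1: $\eta$ is conjugate.} The coordinate lines of $\psi$ are asymptotic, so by Theorem~\ref{theorem:classify:K-nets} we have $II_\psi=2\sin\omega\,\d u\odot \d v$. I would apply Proposition~\ref{prop:cyclic} to the triple $(\psi,\eta,\xi)$: $\psi$ is an asymptotic net, hence $\eta$ and $\xi$ are conjugate nets. As a direct check, Lemma~\ref{lem:parallel:tangent:space} gives $II_\eta(X,Y)=II_\psi(X,AY)$, and $A$ is self-adjoint for $II_\psi$ and traceless. Write $A=\begin{pmatrix}a&b\\ c&-a\end{pmatrix}$. Self-adjointness with respect to $II_\psi=\begin{pmatrix}0&s\\ s&0\end{pmatrix}$, where $s=\sin\omega$, forces $a=-a$, so $a=0$. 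Then $II_\eta(\partial_u,\partial_v)=II_\psi(\partial_u,A\partial_v)=II_\psi(\partial_u,b\,\partial_u)=0$, which shows that $\eta$ is conjugate in $(u,v)$.

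\textbf{Step 2: the Gauss map is Chebyshev.} By Lemma~\ref{lem:parallel:tangent:space} the tangent planes of $\eta$ and $\psi$ are parallel, so $\nu_\eta=\pm\nu_\psi$. Proposition~\ref{prop:rotation:field:iff} gives the same conclusion as $III_\eta=III_\psi$. For the K-surface, $III_\psi=-\mathrm K\, I_\psi+2H\,II_\psi$ with $\mathrm K=-1$, so $III_\psi(\partial_u,\partial_u)=I_\psi(\partial_u,\partial_u)+2H\,II_\psi(\partial_u,\partial_u)=1$, and likewise $III_\psi(\partial_v,\partial_v)=1$. A more elementary route: along an asymptotic line of a surface with $\mathrm K=-1$ the Gauss map has speed $\sqrt{-\mathrm K}=1$, since $|d\nu(e)|^2=\kappa_n^2+\tau_g^2$ with $\kappa_n=0$ and $\tau_g^2=-\mathrm K$. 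Hence $|\nu_u|=|\nu_v|=1$, so the Gauss map of $\eta$ is a Chebyshev net in $(u,v)$.

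\textbf{Step 3: conclusion.} Theorem~\ref{theorem:Gauss:conjugate:gives:V} now yields that $\eta$ is a V-net on $I\times J$. Non-developability holds because we work on the subdomain where $\det(I)\det(II)\neq0$, and nondegeneracy of $\d\eta$ follows from invertibility of $A$, i.e.\ $bc\ne0$, on this domain.

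The main obstacle I expect is not the algebra but the regularity bookkeeping. One point is that $\eta$ must be an immersion, which requires $A$ invertible; I would handle this by shrinking $\Omega$ as the paper already allows. The other is ensuring the K-net coordinates form a rectangle, which is a purely local issue resolved by restricting to a small coordinate box, consistent with the notion of surface element.
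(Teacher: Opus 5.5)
Your proposal is correct and follows essentially the same route as the paper. You pass to the K-net parametrization of $K$, show that $\eta$ is conjugate there and that its Gauss map is Chebyshev (via $\mathbf{III}_\eta=\mathbf{III}_\psi$), and conclude with Theorem~\ref{theorem:Gauss:conjugate:gives:V}. The only differences are minor. The paper gets conjugacy from the mixed-determinant condition $\mathrm{D}(\mathbf{II}_\eta,\mathbf{II}_\psi)=0$, which with $L_\psi=N_\psi=0$ gives $M_\eta M_\psi=0$, whereas you use the $II_\psi$-self-adjointness and tracelessness of $A$ (or Proposition~\ref{prop:cyclic}). Your restriction to a local coordinate rectangle is also slightly more careful than the paper's global K-net reparametrization.
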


\begin{proof}
    Let $V = (\eta, \Omega )$ be a V-surface. Then there exists a K-surface $K = (\psi,\Omega)$ such that $\eta$ is its instantaneous rotation field. There exists a K-net reparametrization $(\tilde{\psi},\tilde{\Omega})$ of $K$ by the diffeomorphism $\phi: \Omega \rightarrow \tilde{\Omega}$. Now, we claim that $\tilde{\eta} := \eta \circ \phi^{-1}$ is a V-net. Since \eqref{eq:I:II:K} are invariant with respect to a reparametrization, Lemma\,\ref{lem:parallel:tangent:space} and Proposition\,\ref{prop:cyclic}, we get $\mathbf{III}_{\tilde{\eta}} = \mathbf{III}_{\tilde{\psi}}$ and $\mathrm{D} (\mathbf{II}_{\tilde{\eta}}, \mathbf{II}_{\tilde{\psi}}) = 0$.
    Now, let $(\nu,\Omega)$ be the Gauss map of $V$. Then the first equation implies that $\tilde{\nu} := \nu \circ \phi^{-1}$ is a Chebyshev net while the second one implies $M_{\tilde{\eta}}\,M_{\tilde{\psi}} = 0$ which itself implies that $M_{\tilde{\eta}} = 0$ and consequently $\tilde{\eta}$ is a conjugate parametrization. The rest is implied by Theorem\,\ref{theorem:Gauss:conjugate:gives:V}.
\end{proof}

Lemma\,\ref{lem:V:net} states that, given a pair $V$ and $K$ with $V \in \mathrm{R}(K)$, any reparametrization of the K-surface by a K-net $\psi$ induces a corresponding reparametrization of the V-surface by a V-net $\eta$. 
In matrix notation, \eqrefi{eq:I:II:K}{2} can be written as $\mathbf{II}_\eta = \mathbf{II}_\psi\,\mathbf{A}^\top$ and as a result of that the rotation operator $\mathbf{A}$ should be an anti-diagonal matrix for which we let the entries be $\mathbf{A}_{12} = b$ and $\mathbf{A}_{21} = c$.
Now, let $(E_1,E_3)$ be the K-frame of $K$ with $e_i = \d\psi(E_i)$ for $i=1,3$, then define the unit tangent vectors to the geodesic-conjugate lines of the induced V‑net by
\begin{equation*}
    \bar{e}_1 := \frac{\d(\eta\circ\psi^{-1})(e_1)}{\|\,\d(\eta\circ\psi^{-1})(e_1)\,\|},\qquad\qquad
    \bar{e}_3 := \frac{\d(\eta\circ\psi^{-1})(e_3)}{\|\,\d(\eta\circ\psi^{-1})(e_3)\,\|},
\end{equation*}
and let call their preimage frame in $\mathfrak{X}(\Omega)\times\mathfrak{X}(\Omega)$ by $(\bar{E}_1,\bar{E}_3)$. Then we have 
\begin{equation*}
    \d\eta(\bar{E}_1) = \bar{e}_1 := \frac{\d\eta\,\d\psi^{-1}(e_1)}{\|\,\d\eta\,\d\psi^{-1}(e_1)\,\|} = \frac{\d\eta(E_1)}{\|\,\d\eta(E_1)\,\|} = \frac{\d\eta(E_1)}{c} \quad\implies\quad \bar{E}_1 = c^{-1}\,E_1.
\end{equation*}
A similar argument also holds for $\bar{E}_3$. Consequently, we get the following useful relations between the frames and their dual frames:

\begin{equation}\label{eq:e1:e3:bare1:bare3}
        E_1 = c\,\bar{E}_1,\qquad\qquad
        E_3 = b\,\bar{E}_3,\qquad\qquad
        \bar{\theta}^{\,1} = c\,{\theta}^{\,1},\qquad\qquad
        \bar{\theta}^{\,3} = b\,{\theta}^{\,3}.
\end{equation}

The rotation operator $A$ in the K-frame basis takes the form 
\begin{equation}\label{eq:A:K:frame}
    A = b\,E_1 \otimes {\theta}^{\,3} + c\,E_3 \otimes {\theta}^{\,1}.
\end{equation}

\begin{remark}
    Unlike the K-frame, the vector fields of a V-frame are not coordinate vector fields.
\end{remark}

\begin{remark}
    The converse of Lemma~\ref{lem:V:net} is also true: any surface element arising from a V-net is a V-surface. In fact, for every V-net \(V=(\eta,\Omega)\), there exists a unique K-net \( K = (\psi,\Omega)\), determined up to a global scaling\footnote{In this article, we ignore the global scaling, since we always assume that our pseudospherical surfaces have Gaussian curvature $-1$.}, such that $V \in \mathrm{R}(K)$ (see \cite{sauer} and \cite{izmestiev} for a modern proof). We refer to $K$ as the \emph{reciprocal-parallel} K-net / K-surface of $V$.
\end{remark}

In order to prove the main results of this paper, we will repeatedly pass to different parameterization of a V-surface within its surface element class. To do this in a systematic and convenient way, we rely on the classical machinery of moving frames developed by Cartan (see \cite{ONeill, Needham}). Before the main theorem, we restate the following lemma:

\begin{lemma}[\cite{AlignablePellis}]\label{lem:cartan:geodesics}
    Let $(E_1, E_2)$ and $(E_3,E_4)$ be two orthonormal moving frames on a surface element $P$ and let $\omega$ be the angle between $E_1$ and $E_3$. Then
    \begin{equation}\label{eq:cartan:geodesics}
            \kappa^g_1 = \kappa^g_3\,\cos{(\omega)} - \kappa^g_4\,\sin{(\omega)} - \mathrm{d}\omega(E_1),\qquad\qquad
            \kappa^g_3 = \kappa^g_1\,\cos{(\omega)} + \kappa^g_2\,\sin{(\omega)} + \mathrm{d}\omega(E_3),
    \end{equation}
    where $\kappa^g_i$ are the geodesic curvatures of the integral curves to $E_i$ for $i=1,\ldots ,4$.
\end{lemma}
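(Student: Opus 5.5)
The plan is to compare the two connection forms directly. I work with the orthonormal frames $(e_1,e_2,\nu)$ and $(e_3,e_4,\nu)$, where $e_i=\mathrm{d}\psi(E_i)$, and use the convention of \eqref{eq:connection:forms}. There, for the frame $(e_1,e_2)$, $\Theta_{12}=\kappa^g_1\,\theta^1+\kappa^g_2\,\theta^2$, so $\kappa^g_1=\Theta_{12}(E_1)$ and $\kappa^g_2=\Theta_{12}(E_2)$. Analogously, for the frame $(e_3,e_4)$ I write $\Theta_{34}=\kappa^g_3\,\theta^3+\kappa^g_4\,\theta^4$.

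First I check that these really are the geodesic curvatures of the integral curves, with the sign conventions used in the statement. By \eqref{eq:fram:field:change:12}, $\langle D_{E_1}e_1,e_2\rangle=\Theta_{12}(E_1)$. Also $\langle D_{E_2}e_2,-e_1\rangle=-\Theta_{21}(E_2)=\Theta_{12}(E_2)$, so each $\kappa^g_i$ is measured against the normal obtained by rotating $e_i$ positively in the tangent plane. The same holds verbatim for the second frame.

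Since both frames are orthonormal, share the normal $\nu$, and $\omega$ is the angle from $E_1$ to $E_3$, I write
\[
e_3=\cos(\omega)\,e_1+\sin(\omega)\,e_2,\qquad e_4=-\sin(\omega)\,e_1+\cos(\omega)\,e_2 .
\]
The key identity is the transformation law $\Theta_{34}=\Theta_{12}+\mathrm{d}\omega$. To obtain it, I differentiate $e_3$ along an arbitrary $X$ using \eqref{eq:fram:field:change:12}, take the inner product with $e_4$, and discard the normal components. This gives
\[
\Theta_{34}(X)=\langle D_Xe_3,e_4\rangle=\mathrm{d}\omega(X)+\Theta_{12}(X)\bigl(\cos^2\omega+\sin^2\omega\bigr).
\]
This step carries the only real content, and it is where the signs must be watched. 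The main obstacle is therefore sign bookkeeping, namely the orientation of $\omega$ and of $e_2,e_4$ relative to $\nu$, rather than any conceptual difficulty.

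Next I evaluate this identity on $E_3=\cos(\omega)E_1+\sin(\omega)E_2$. This yields $\kappa^g_3=\Theta_{34}(E_3)=\kappa^g_1\cos(\omega)+\kappa^g_2\sin(\omega)+\mathrm{d}\omega(E_3)$, which is the second formula.

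For the first formula I invert the rotation: $E_1=\cos(\omega)E_3-\sin(\omega)E_4$ and $\Theta_{12}=\Theta_{34}-\mathrm{d}\omega$. Evaluating on $E_1$ gives $\kappa^g_1=\kappa^g_3\cos(\omega)-\kappa^g_4\sin(\omega)-\mathrm{d}\omega(E_1)$. Equivalently, the first formula is the second one with the roles of the two frames exchanged and $\omega$ replaced by $-\omega$, which serves as a consistency check on the signs.
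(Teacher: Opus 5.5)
Your proof is correct. The rotation law $\Theta_{34}=\Theta_{12}+\mathrm{d}\omega$ follows correctly from $e_3=\cos(\omega)\,e_1+\sin(\omega)\,e_2$ and $e_4=-\sin(\omega)\,e_1+\cos(\omega)\,e_2$. Evaluating it on $E_3=\cos(\omega)E_1+\sin(\omega)E_2$, and on $E_1=\cos(\omega)E_3-\sin(\omega)E_4$, gives exactly the two formulas. Your reading of $\kappa^g_i$ in \eqref{eq:connection:forms} is also right: each is the signed geodesic curvature with respect to the positive rotation of $e_i$, as your check of $\kappa^g_2=\Theta_{12}(E_2)$ confirms.

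The paper does not prove this lemma; it imports it from \cite{AlignablePellis}, so there is no competing argument to compare with. Your computation is the standard Cartan moving-frame derivation. Its sign conventions agree with how the paper later uses the lemma: with $\kappa^g_1=\kappa^g_3=0$ it gives $\kappa^g_2=-\csc(\omega)\,\mathrm{d}\omega(\bar{E}_3)$, i.e.\ $\Theta_{12}=-\mathrm{d}\omega(\bar{E}_3)\,\bar{\theta}^3$, as in the proof of Theorem~\ref{theorem:I:II:III:V:surface}.

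The only tacit assumptions are the ones you already identified as sign bookkeeping. Both frames must be positively oriented with respect to the common normal $\nu$. And $\omega$ must be a smooth oriented angle function from $E_1$ to $E_3$, which exists on the simply connected parameter domain.
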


The following theorem brings together several scattered results from \cite{Gambier18,EisenhartAssociate,sauer} in a tensorial form, which we will use later in this work.

\begin{theorem}\label{theorem:I:II:III:V:surface}
    Let $K$ be a K-surface and let $V \in \mathrm{R}(K)$. Let $(E_1,E_3)$ be the K-frame on $K$ and $( \bar{E}_1, \bar{E}_3)$ the induced V-frame on $V$. Then we have:
    \begin{equation}\label{eq:I:II:sigma}
    {I}_V = c^2\,{\theta}^{\,1}\otimes{\theta}^{\,1} + 2 bc\,\cos(\omega)\,{{\theta}}^1\odot{{\theta}}^3 + b^2\,{\theta}^{\,3}\otimes{\theta}^{\,3},\qquad\qquad
    {II}_V =  \sin(\omega)\,\left(c\,{\theta}^{\,1}\otimes {\theta}^{\,1} + b\,{\theta}^{\,3}\otimes{\theta}^{\,3}\right),
    \end{equation}
    where $b$, $c$ and $\omega$ belong to $C^\infty(\Omega)$ and they solve the following system of PDEs (the first two are the {Codazzi equations} while the last one is the {Gauss equation}):
    \begin{equation}\label{eq:codazzi:V:I:II}
            -\mathrm{d}b(E_1) = c\csc{(\omega)}\,\mathrm{d}\omega(E_3) + b\cot{(\omega)}\,\mathrm{d}\omega(E_1),\qquad
            -\mathrm{d}c(E_3) = c\cot{(\omega)}\,\mathrm{d}\omega(E_3) + b\csc{(\omega)}\,\mathrm{d}\omega(E_1),
    \end{equation} 
    \begin{equation}\label{eq:gauss:V}
        \Hess \omega\,(E_1, E_3) = \sin{(\omega)}.
    \end{equation}
    Furthermore, the curvatures and torsions of corresponding V-net lines of the V-frame fulfill the following relations
    \begin{equation}\label{eq:kappa:tau}
            \kappa_1 = c^{-1}{\sin{(\omega)}},\qquad \tau_1 = -c^{-1}{\cos{(\omega)}},\qquad
            \kappa_3 = b^{-1}{\sin{(\omega)}},\qquad \tau_3 =  b^{-1}{\cos{(\omega)}}.
    \end{equation}
    Finally, the Gaussian and mean curvatures of $V$ are
    \begin{equation}\label{eq:Gaussian:mean:v:net:A:net}
        {\mathrm{H}_{V} = \frac{b + c}{2\,bc\sin(\omega)}},\quad\quad\quad\quad \mathrm{K}_{V} = \frac{1}{bc}.
    \end{equation}
\end{theorem}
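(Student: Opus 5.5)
Everything flows from Lemma~\ref{lem:parallel:tangent:space} and the anti-diagonal form \eqref{eq:A:K:frame} of the rotation operator in the K-frame. I will first compute the fundamental forms, then the integrability conditions, and then the curvatures.

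\textbf{Fundamental forms.} By \eqrefi{eq:I:II:K}{1} we have $I_V(X,Y)=I_K(AX,AY)$. From \eqref{eq:A:K:frame} we get $AE_1=cE_3$ and $AE_3=bE_1$. With $I_K$ from \eqref{eq:I:II:psi} this gives
\[
I_V(E_1,E_1)=c^2,\qquad I_V(E_3,E_3)=b^2,\qquad I_V(E_1,E_3)=bc\cos\omega,
\]
which is the first formula in \eqref{eq:I:II:sigma}. By \eqrefi{eq:I:II:K}{2} we have $II_V(X,Y)=II_K(X,AY)$, and $II_K=2\sin\omega\,\theta^1\odot\theta^3$ pairs only $E_1$ with $E_3$. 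Hence
\[
II_V(E_1,E_1)=c\sin\omega,\qquad II_V(E_3,E_3)=b\sin\omega,\qquad II_V(E_1,E_3)=0.
\]
This is the second formula. The Gauss equation \eqref{eq:gauss:V} is inherited from $K$ through Theorem~\ref{theorem:classify:K-nets}, because $\omega$ is the Chebyshev angle of $K$; $III_V=III_K$ carries the same information. Consequently $\mathrm K_V=\det II_V/\det I_V=bc\sin^2\omega/(b^2c^2\sin^2\omega)=1/(bc)$. The mean curvature is $\mathrm H_V=\tfrac12\operatorname{tr}(I_V^{-1}II_V)$, which a $2\times2$ computation turns into $(b+c)/(2bc\sin\omega)$.

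\textbf{Codazzi equations.} I derive \eqref{eq:codazzi:V:I:II} from the integrability of \eqref{eq:quadrature}, $\d\eta=\d\psi\,A$. This is cleaner than going through the Codazzi equations of $V$ directly. Write $\d\eta=c\,e_3\,\theta^1+b\,e_1\,\theta^3$ with $e_i=\d\psi(E_i)$, and impose $\d(\d\eta)=0$. We have $\d\theta^1=\d\theta^3=0$ by Remark~\ref{remark:K:frame:coordinates}. The condition therefore reduces to
\[
D_{E_1}(b\,e_1)=D_{E_3}(c\,e_3).
\]
Expand this using the Gauss formulas of the K-net. The Chebyshev net has $D_{E_1}e_1$ and $D_{E_3}e_3$ with no normal component, since these are asymptotic directions. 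Their tangential parts are expressed through $\d\omega(E_1)$ and $\d\omega(E_3)$; the Christoffel symbols of the Chebyshev metric are $\Gamma^1_{11}=\cot\omega\,\omega_u$, $\Gamma^3_{11}=-\csc\omega\,\omega_u$, and so on. Comparing coefficients of $e_1$ and $e_3$ then gives exactly the two equations in \eqref{eq:codazzi:V:I:II}. The main thing to check here is the signs and the assignment of $\csc$ versus $\cot$. I expect this to be the most error-prone step, and I will verify it by computing $\nabla_{\partial_u}\partial_u$ for $I=\d u^2+2\cos\omega\,\d u\,\d v+\d v^2$ explicitly.

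\textbf{Curvature and torsion of the V-net lines.} The lines of the V-net are geodesics, so their curvature equals the normal curvature. Along $\bar E_1=c^{-1}E_1$ this is
\[
\kappa_1=II_V(\bar E_1,\bar E_1)=c^{-2}\,c\sin\omega=c^{-1}\sin\omega,
\]
and similarly $\kappa_3=b^{-1}\sin\omega$. For the torsion, note that the principal normal of a geodesic is $\pm\nu_V$ and its binormal is tangent. Also, $\nu_V=\nu_K$ because the tangent planes are parallel (Lemma~\ref{lem:parallel:tangent:space}). So $\tau_1$ is the geodesic torsion $\langle D_{\bar E_1}\nu,\,\nu\times\bar e_1\rangle$. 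Since $D_{E_1}\nu$ is computed on $K$ along an asymptotic direction, its value is $\pm\sin\omega\,(\dots)$ times a unit vector orthogonal to $e_1$. Taking components gives $\tau_1=-c^{-1}\cos\omega$ and $\tau_3=b^{-1}\cos\omega$. The opposite signs come from orientation: $\nu\times e_1$ and $\nu\times e_3$ make angles $\omega$ in opposite senses. Fixing these signs consistently is the second delicate point, and I will settle it against the Enneper–Beltrami relation $\tau=\pm1$ on $K$, scaled by $A$.
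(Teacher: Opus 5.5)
Your proposal is correct. For the fundamental forms, $\mathrm K_V$, $\mathrm H_V$, the Gauss equation and the curvatures $\kappa_i$ it follows the paper. For the Codazzi system and the torsions you take a genuinely different route.

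The paper works intrinsically on $V$. It imports the geodesic property of the V-lines (Lemma~\ref{lem:V:net}) and sets up the double orthonormal frames $(\bar E_1,\bar E_2)$, $(\bar E_3,\bar E_4)$. It then gets $\Theta_{12}=-\mathrm d\omega(\bar E_3)\,\bar\theta^3$ from Lemma~\ref{lem:cartan:geodesics} and reads the system off the Cartan--Codazzi equations of $V$. The torsions come from conjugacy, $0=II_V(\bar E_1,\bar E_3)=\kappa^n_1\cos\omega+\tau^g_1\sin\omega$.

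You instead impose $\mathrm d(\mathrm d\psi\,A)=0$, i.e.\ $(c\,\psi_v)_v=(b\,\psi_u)_u$. With your Christoffel symbols, plus $\Gamma^1_{33}=-\csc\omega\,\omega_v$ and $\Gamma^3_{33}=\cot\omega\,\omega_v$, the $\psi_u$- and $\psi_v$-coefficients give exactly the two stated equations, signs included.

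For the Codazzi part your route is shorter and self-contained. It needs neither the geodesic property nor Lemma~\ref{lem:cartan:geodesics}. It even yields the V-net property: $\psi_{uv}=\sin\omega\,\nu$ gives $\eta_{uu}=c_u\psi_v+c\sin\omega\,\nu$ and a tangent $\eta_{uv}=c_v\psi_v+c\,\psi_{vv}$. It also exhibits the system as the exact compatibility condition of the quadrature \eqref{eq:quadrature}. So conversely every solution $(b,c)$ integrates to a rotation field of $K$, since closedness of $\eta\times\mathrm d\psi$ is automatic for anti-diagonal $A$. This is what underlies the paper's later remark that $(b,c)$ determines $V$. The paper's route instead identifies the system as the Codazzi equations of $V$ in its own V-frame, which is the reading used in Theorem~\ref{theorem:V:isometry}.

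For the torsions, make ``taking components'' explicit. You have $\bar e_1=\mathrm d\eta(\bar E_1)=e_3$ and $\bar e_3=e_1$. Also $D_{E_1}\nu=\cot\omega\,e_1-\csc\omega\,e_3$ is a unit vector, with no extra $\sin\omega$ factor. The cleanest way to reach the stated signs is the paper's convention $\tau_1=-\langle D_{\bar E_1}\nu,\bar e_2\rangle$ with $\bar e_2=\csc\omega\,\bar e_3-\cot\omega\,\bar e_1$. This gives $\tau_1=c^{-1}II_K(E_1,\csc\omega\,E_1-\cot\omega\,E_3)=-c^{-1}\cos\omega$. Analogously, with $\bar e_4=\cot\omega\,\bar e_3-\csc\omega\,\bar e_1$, it gives $\tau_3=b^{-1}II_K(E_3,\cot\omega\,E_1-\csc\omega\,E_3)=b^{-1}\cos\omega$. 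This avoids any orientation bookkeeping with cross products.
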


\begin{proof}
    Substituting \eqref{eq:A:K:frame} together with \eqref{eq:I:II:psi} into the relations \eqref{eq:I:II:K}, we find that the fundamental forms of $V$ as displayed in \eqref{eq:I:II:sigma} in the K-coframe. The aforementioned fundamental forms then yield the Gaussian and mean curvature formulas collected in \eqref{eq:Gaussian:mean:v:net:A:net}.
    It just remains to identify the nature of $b$, $c$ and $\omega$. In order to do that we resort to the concept of double moving frames. Let the two orthonormal moving frames be $(\bar{E}_1,\bar{E}_2)$ and $(\bar{E}_3,\bar{E}_4)$ and let $(\vartheta^1,\vartheta^2)$ and $(\vartheta^3,\vartheta^4)$ be their dual coframes respectively. Then we have the following decomposition relations:
    \begin{equation}\label{eq:V:frame:in:frame} 
        \bar{E}_2 = \csc{(\omega)}\,\bar{E}_3 - \cot{(\omega)}\,\bar{E}_1,\qquad
        \bar{\theta}^1 = \vartheta^1 - \cot{(\omega)}\,\vartheta^2,\qquad
        \bar{\theta}^3 = \csc{(\omega)}\,\vartheta^2.
    \end{equation}
    In the V-coframe $(\bar{\theta}^1,\bar{\theta}^3)$ the fundamental forms become
    \begin{equation*}
        {I}_V   = {\bar{\theta}}^1\otimes{\bar{\theta}}^1 + 2\,\cos(\omega)\,{\bar{\theta}}^1\otimes{\bar{\theta}}^3 + {\bar{\theta}}^3\otimes{\bar{\theta}}^3,\qquad\qquad
        {II}_V  =  \sin(\omega)\,\left(c^{-1}\,{\bar{\theta}}^1\otimes{\bar{\theta}}^1 + b^{-1}\,{\bar{\theta}}^3\otimes{\bar{\theta}}^3\right).
    \end{equation*}
    In this setting the Cartan connection forms are    
    \begin{equation*}
        \Theta_{12} = \kappa^g_2\,\vartheta^2 = -\d\omega(\bar{E}_3)\,\bar{\theta}^3,\qquad
        \Theta_{1n} = c^{-1}\,\sin(\omega)\,\bar{\theta}^1,\qquad
        \Theta_{2n} = b^{-1}\,\bar{\theta}^3 - c^{-1}\,\cos(\omega)\,\bar{\theta}^1,
    \end{equation*}
    where $\Theta_{12}$ is obtained using the fact that the integral curves to $\bar{E}_1$ and $\bar{E}_3$ are geodesics (i.e.~$\kappa^g_1 = \kappa^g_3 = 0$) and hence $\kappa^g_2$ and $\kappa^g_4$ are retrievable (see \eqref{eq:cartan:geodesics}). $\Theta_{2n}$ is simply obtained using the relation \eqrefi{eq:V:frame:in:frame}{3}. An exterior derivative of $\Theta_{1n}$ gives us the l.h.s of \eqrefi{eq:codazzi:V:I:II}{1}:
    \begin{equation}\label{eq:lhs}
        \d\Theta_{1n} = \left(c^{-2}{\sin(\omega)}\,\d c(\bar{E}_3) - c^{-1}{\cos{(\omega)}}\,\d \omega(\bar{E}_3)\right)\bar{\theta}^1 \wedge \bar{\theta}^3 + c^{-1}\,{\sin{(\omega)}}\,\d\bar{\theta}^1,
    \end{equation}
    in which, $\d\bar{\theta}^1$ is obtainable by taking exterior derivative from \eqrefi{eq:V:frame:in:frame}{2}:
    \begin{equation*}
        \d\bar{\theta}^1 = \left(\cos{(\omega)}\,\d\omega(\bar{E}_1) + \cot{(\omega)}\,\d\omega(\bar{E}_3)\right)\,\bar{\theta}^1\wedge\bar{\theta}^3.
    \end{equation*}
    Now, substituting our findings in the Codazzi relation \eqrefi{eq:codazzi:cartan}{2} we get \eqrefi{eq:codazzi:V:I:II}{1}. A very similar approach gives us \eqrefi{eq:codazzi:V:I:II}{2} as well derived from \eqrefi{eq:codazzi:cartan}{1}.
    Finally, let $S$ be the shape operator. Then through \eqrefi{eq:fram:field:change:12}{2} in $(\bar{E}_1,\bar{E}_2)$ frame and with the coframe $(\vartheta^1,\vartheta^2)$ we get
    \begin{equation*}
        S = (\kappa^n_1\,\bar{E}_1 + \tau^g_1\,\bar{E}_2)\otimes\vartheta^1 + (\tau^g_1\,\bar{E}_1 + \kappa^n_2\,\bar{E}_2)\otimes\vartheta^2.
    \end{equation*}
    Now, using \eqrefi{eq:V:frame:in:frame}{1} and the fact that $(\bar{E}_1,\bar{E}_3)$ are conjugate directions we get
    \begin{equation*}
        0 = II_V(\bar{E}_1,\bar{E}_3) = I_V(S(\bar{E}_1),\bar{E}_3) = \kappa^n_1\,\cos{(\omega)} + \tau^g_1\,\sin{(\omega)}.
    \end{equation*}
    A similar approach with the frame $(\bar{E}_3,\bar{E}_4)$ frame and with the coframe $(\vartheta^3,\vartheta^4)$ gives $\kappa^n_3\,\cos{(\omega)} - \tau^g_3\,\sin{(\omega)} = 0$. Solving both for $\tau^g_1$ and $\tau^g_3$ gives $\tau^g_1 = -\kappa^n_1\,\cot{(\omega)}$ and $\tau^g_3 = \kappa^n_3\,\cot{(\omega)}$.
    Since the integral curves to the frame $(\bar{E}_1,\bar{E}_3)$ are geodesics, the normal curvatures $\kappa^n_i$ become curvatures $\kappa_i$ and the geodesic torsions $\tau^g_i$ become torsions $\tau_i$ for $i = 1,3$ resulting in:
    \begin{equation}\label{eq:torsion:vs:curvature:smooth}
        \tau_1 = -\kappa_1\,\cot{(\omega)},\quad\quad\quad\quad
        \tau_2 = \kappa_2\,\cot{(\omega)}.
    \end{equation}.
    The rest comes from the fact that $\kappa_i = \kappa^n_i = II_V(\bar{E}_i,\bar{E}_i)$ for $i= 1,3$.
\end{proof}
Theorem~\ref{theorem:I:II:III:V:surface} shows that every member of $\mathrm{R}(K)$ satisfies the fundamental form relations \eqref{eq:I:II:sigma} and hence is determined by the coefficient functions $(b,c)\in C^\infty(\Omega)\times C^\infty(\Omega)$. Assuming $(E_1,E_3) = (\partial_u,\partial_v)$, the coefficient functions are the solutions of \eqref{eq:codazzi:V:I:II} which is a coupled system of first order PDEs. One finds a unique solution for the aforementioned system once the \emph{characteristic initial data} $c(u,v_0)$ and $b(u_0,v)$ are given (see the discussion at \cite[p.~103,125--128]{sauer}).\\  
As a consequence of this discussion and Theorem~\ref{theorem:I:II:III:V:surface}, once the K-net $\psi$ is known, the corresponding V-net $\eta$ is obtained by integrating the $1$-form in \eqref{eq:quadrature}. Therefore, the pair $(b,c)$ determines the V-surface.

\begin{corollary}\label{coro:angle}
    Let $\varphi$ and $\omega$ denote the angles between the vectors of the V-frame $(\bar{E}_1,\bar{E}_3)$ and the K-frame $(E_1,E_3)$, respectively. Then we have 
    \begin{equation}\label{eq:varphi:omega}
        \left\{
        \begin{array}{ll}
        \varphi=\omega,      & \mathrm{K}_V>0,\\[2mm]
        \varphi=\pi-\omega,  & \mathrm{K}_V<0,
        \end{array}
        \right.
    \end{equation}
    where $\mathrm{K}_V$ is the Gaussian curvature of the V-surface $V$.
\end{corollary}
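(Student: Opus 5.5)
The plan is to compute the angle $\varphi$ directly from the first fundamental form of $V$ given in Theorem~\ref{theorem:I:II:III:V:surface}, and then read off the case split from the sign of $bc$.

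\textbf{Step 1: the angle of the V-frame.} By \eqref{eq:e1:e3:bare1:bare3}, the V-frame is $\bar{E}_1 = c^{-1}E_1$ and $\bar{E}_3 = b^{-1}E_3$. Using \eqref{eq:I:II:sigma}, and noting that $\theta^1(E_1)=\theta^3(E_3)=1$ and $\theta^1(E_3)=\theta^3(E_1)=0$, we get
\[
I_V(E_1,E_1)=c^2,\qquad I_V(E_3,E_3)=b^2,\qquad I_V(E_1,E_3)=bc\cos(\omega).
\]
Here the $\odot$ term contributes exactly $bc\cos\omega$ on the mixed pair. It follows that
\[
\cos\varphi \;=\; \frac{I_V(E_1,E_3)}{\sqrt{I_V(E_1,E_1)\,I_V(E_3,E_3)}} \;=\; \frac{bc\cos(\omega)}{|b|\,|c|} \;=\; \sign(bc)\,\cos(\omega).
\]

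\textbf{Step 2: the sign of $bc$.} By \eqref{eq:Gaussian:mean:v:net:A:net}, $\mathrm{K}_V = 1/(bc)$, so $\sign(bc)=\sign(\mathrm{K}_V)$. Note that $b,c\neq 0$ because we restrict to regular, non-flat domains.

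\textbf{Step 3: fixing the branch.} Both $\varphi$ and $\omega$ are angles in $(0,\pi)$. For $\omega$ this is forced by $\sin\omega\neq0$, which is needed for $II_V$ to be nondegenerate and for $K$ to be regular. On $(0,\pi)$ the cosine is injective, so:
\begin{itemize}
\item if $\mathrm{K}_V>0$, then $\cos\varphi=\cos\omega$, hence $\varphi=\omega$;
\item if $\mathrm{K}_V<0$, then $\cos\varphi=-\cos\omega=\cos(\pi-\omega)$, hence $\varphi=\pi-\omega$.
\end{itemize}

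\textbf{Main obstacle.} The only delicate point is the convention for "angle." One must take the angle between the \emph{oriented} unit vectors $\bar e_1,\bar e_3$ as defined via the normalization by $\|\d\eta(E_i)\|$. The normalization in \eqref{eq:e1:e3:bare1:bare3} silently identifies $\|\d\eta(E_1)\|$ with $c$ rather than $|c|$. Consistency with that convention should be checked: if $b,c$ are treated as positive norms, the sign reappears through the orientation of $\bar E_i$ relative to $E_i$, and the argument above handles exactly this.
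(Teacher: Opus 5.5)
Your proposal is correct and is essentially the paper's own argument: the paper's proof is the one-line computation $\cos\varphi=\sign\bigl((bc)^{-1}\bigr)\cos\omega=\sign(\mathrm{K}_V)\cos\omega$ from \eqref{eq:I:II:sigma} and $\mathrm{K}_V=1/(bc)$, with the injectivity of cosine on $(0,\pi)$ left implicit. Your remark on normalization is apt, because the case split only appears when $\bar{e}_1,\bar{e}_3$ are normalized by $|c|$ and $|b|$; with the literal signed choice $\bar{E}_1=c^{-1}E_1$, $\bar{E}_3=b^{-1}E_3$ one would get $\varphi=\omega$ always, and the unsigned normalization is what both you and the paper tacitly use.
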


\begin{proof}
    Using \eqref{eq:I:II:sigma} and \eqrefi{eq:Gaussian:mean:v:net:A:net}{2} we get $\cos(\varphi) = \sign({(bc)}^{-1})\,\cos(\omega) = \sign(\mathrm{K}_V)\,\cos(\omega)$ and the finishes the proof.
\end{proof}

\subsection{Isometric Deformation of V-surfaces vs. Spectral Deformation of K-surfaces}
Let $V$ be a V-surface with a V-frame $(\bar{E}_1,\bar{E}_3)$. Representing the fundamental forms of \eqref{eq:I:II:sigma} in the V-coframe $(\bar{\theta}^1,\bar{\theta}^3)$  gives
\begin{equation}\label{eq:I:II:V:alt}
    {I}_V = \bar{\theta}^1\otimes\bar{\theta}^1 + 2\,\cos(\omega)\,{\bar{\theta}}^1\odot{\bar{\theta}}^3 + \bar{\theta}^3\otimes\bar{\theta}^3,\qquad\qquad
    {II}_V =  \sin(\omega)\,\left(c^{-1}\,\bar{\theta}^1\otimes\bar{\theta}^1 + b^{-1}\,\bar{\theta}^3\otimes\bar{\theta}^3\right),
\end{equation}
and similarly the Codazzi and Gauss equations become
\begin{equation}\label{eq:Codazzi:V:frame}
    -\d b(\bar{E}_1) = b\,\left(\csc{(\omega)}\,\d\omega(\bar{E}_3) + \cot{(\omega)}\,\d\omega(\bar{E}_1)\right),\qquad
    -\d c(\bar{E}_3) = c\,\left(\cot{(\omega)}\,\d\omega(\bar{E}_3) + \csc{(\omega)}\,\d\omega(\bar{E}_1)\right),
\end{equation}
\begin{equation}
    \Hess(\bar{E}_1,\bar{E}_3) = (bc)^{-1}\,\sin{(\omega)}.\label{eq:gaus:V:coframe}
\end{equation}
A direct observation shows that substituting $b$ by $\exp(t)\,b$ and $c$ by $\exp(-t)\,c$ simultaneously, the Codazzi and Gauss equations along with the first fundamental form hold while the second fundmanetal form changes for any $t\neq 0 $. Consequently, we arrive at the following theorem:

\begin{theorem}(\cite[Sec.~2.5]{izmestiev})\label{theorem:V:isometry}
    A V-surface $V$ admits a one--parameter family of isometric deformations $\{V^t\}$ in which the corresponding V-nets are preserved. Moreover, if $b$ and $c$ are the two non-vanishing components of the second fundamental form of $V$ as appear in \eqref{eq:I:II:V:alt} then the corresponding components of the family $\{V^t\}$ are having the following relation
    \begin{equation}\label{eq:b:c:lambda}
        b^t = \lambda\,b,\qquad\qquad
        c^t = \lambda^{-1}\,c,
    \end{equation}
    where $\lambda = \exp(t)$.
\end{theorem}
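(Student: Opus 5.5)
The plan is to use the fundamental theorem of surface theory together with the representation \eqref{eq:I:II:V:alt}–\eqref{eq:gaus:V:coframe} of $V$ in its V-coframe. Fix the V-net parametrization of $V$ (Lemma~\ref{lem:V:net}). We regard the data $(\omega,b,c)$, expressed relative to the fixed V-frame $(\bar E_1,\bar E_3)$, as the complete set of invariants of $V$. For $\lambda=\exp(t)$ we define new data $(\omega^t,b^t,c^t):=(\omega,\lambda b,\lambda^{-1}c)$ on the same domain $\Omega$ with the same coframe $(\bar\theta^1,\bar\theta^3)$. We then set
\[
I^t:=\bar\theta^1\otimes\bar\theta^1+2\cos(\omega)\,\bar\theta^1\odot\bar\theta^3+\bar\theta^3\otimes\bar\theta^3 ,\qquad II^t:=\sin(\omega)\bigl(\lambda c^{-1}\,\bar\theta^1\otimes\bar\theta^1+\lambda^{-1}b^{-1}\,\bar\theta^3\otimes\bar\theta^3\bigr).
\]
Note that $I^t=I_V$ holds identically.

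The first step is to check the compatibility conditions. Both Codazzi equations \eqref{eq:Codazzi:V:frame} are linear and homogeneous in $b$ (respectively in $c$), and their right-hand sides involve only $\omega$ and the fixed frame. Hence multiplying $b$ by $\lambda$ and $c$ by $\lambda^{-1}$ preserves them. The Gauss equation \eqref{eq:gaus:V:coframe} involves only the product $bc$, which is invariant. One subtlety must be handled: the V-frame vector fields themselves were defined through $b,c$ via \eqref{eq:e1:e3:bare1:bare3}. To avoid circularity, I will state the equations purely in terms of the fixed coframe $(\bar\theta^1,\bar\theta^3)$ on $\Omega$. Equivalently, I will rederive them directly from Cartan's structure equations \eqref{eq:original:gauss}–\eqref{eq:codazzi:cartan}, using the connection forms computed in the proof of Theorem~\ref{theorem:I:II:III:V:surface}:
\[
\Theta_{12}=-\d\omega(\bar E_3)\,\bar\theta^3 ,\qquad \Theta_{1n}=c^{-1}\sin(\omega)\,\bar\theta^1 ,\qquad \Theta_{2n}=b^{-1}\bar\theta^3-c^{-1}\cos(\omega)\,\bar\theta^1 .
\]
Under the scaling, $\Theta_{12}$ is unchanged because it depends only on $I$. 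The form $\Theta_{1n}$ scales by $\lambda$. The form $\Theta_{2n}$ becomes $\lambda^{-1}b^{-1}\bar\theta^3-\lambda c^{-1}\cos\omega\,\bar\theta^1$. The identity $\d\Theta_{12}=-\Theta_{1n}\wedge\Theta_{n2}$ then holds because the coefficient of $\bar\theta^1\wedge\bar\theta^3$ in the wedge is proportional to $(bc)^{-1}$. The two Codazzi identities split into terms that are homogeneous of the correct degree in $\lambda$.

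The second step is to invoke Bonnet's theorem. On the simply connected rectangle $\Omega$, the data $(I^t,II^t)$ satisfy Gauss–Codazzi, so there is an immersion $\eta^t$, unique up to a rigid motion, realizing them. Smooth dependence on $t$ follows from smooth dependence of the solution of the Frobenius system on its parameters, once we fix an initial frame at a base point. Since $I^t=I_V$, the family is isometric. Moreover $II^t(\bar E_1,\bar E_3)=0$, so $\bar E_1,\bar E_3$ remain conjugate. Their geodesic curvature depends only on $I$, so these directions remain geodesics. Hence the V-net is preserved, and the second fundamental form coefficients transform as in \eqref{eq:b:c:lambda}. Finally, for $t\neq0$ we have $II^t\neq II_V$, given that $b\neq\pm c$ generically; more simply, the ratio of the normal curvatures $\kappa_1/\kappa_3=b/c$ changes by the factor $\lambda^2$. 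Because the parametrization and hence the correspondence between points are fixed, this shows the deformation is nontrivial.

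The main obstacle I expect is the careful verification in the first step, namely that the Codazzi equations in the V-frame are genuinely homogeneous. In particular, the term $c^{-1}\sin\omega\,\d\bar\theta^1$ in \eqref{eq:lhs}, and the coefficient $\d\omega(\bar E_3)$ of $\Theta_{12}$, must depend only on $\omega$ and the coframe and not on $b,c$ separately. I would check this first by writing everything in V-net coordinates $(u,v)$ with $\bar\theta^1=\d u$ and $\bar\theta^3=\d v$, which is available after a box reparametrization as in Remark~\ref{remark:box:reparametrization}.
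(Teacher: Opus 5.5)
Your proposal is correct and is essentially the paper's argument. The paper simply observes that $b\mapsto\lambda b$, $c\mapsto\lambda^{-1}c$ leaves $I_V$, the decoupled Codazzi equations \eqref{eq:Codazzi:V:frame} and the Gauss equation \eqref{eq:gaus:V:coframe} intact while changing $II_V$; it leaves the appeal to Bonnet's theorem implicit (citing Izmestiev), and you make that step explicit.

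One correction to your planned verification: the V-frame is not a coordinate frame ($\d\bar\theta^1\neq 0$ in general), so no box reparametrization achieves $\bar\theta^1=\d u$, $\bar\theta^3=\d v$. The clean check is that the Codazzi system is equivalent to $\d(c^{-1}\bar\theta^1)=\d(b^{-1}\bar\theta^3)=0$, i.e.\ closedness of the K-coframe $\theta^1=c^{-1}\bar\theta^1$, $\theta^3=b^{-1}\bar\theta^3$. This is manifestly invariant under the scaling, and it also explains why the $\lambda$- and $\lambda^{-1}$-parts of the $\d\Theta_{2n}$ identity vanish separately.
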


The following theorem states the spectral deformation of K-surfaces, with a slight variation from \cite{BobenkoQuaternion}.
\begin{theorem}\label{theorem:spectral:deformation}
    Let $K$ be a $K$-surface equipped with a $K$-frame $(E_1,E_3)$, and let $\omega$ denote the angle between $E_1$ and $E_3$.  
    Then $K$ admits a one-parameter family of deformations $\{K^t\}$ with $K = K^0$ such that the second fundamental form, the Gaussian curvature, and the angle $\omega$ between the asymptotic lines are preserved.  
    On the level of the $K$-frame, the deformation is given by
    \begin{equation}\label{eq:spectral:def}
        E_1^t=\lambda^{-1}\,E_1,\qquad\qquad E^t_3=\lambda E_3,
    \end{equation}
    where $\lambda=\exp(t)$.
\end{theorem}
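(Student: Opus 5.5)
We check directly that the rescaled K-frame again satisfies the structure equations of a K-surface with the same $\omega$, and then invoke the fundamental theorem of surfaces. Concretely, we declare that $K^t$ has K-frame $(E^t_1,E^t_3)=(\lambda^{-1}E_1,\lambda E_3)$. The dual coframe is then $(\theta^{1}_t,\theta^{3}_t)=(\lambda\,\theta^1,\lambda^{-1}\theta^3)$. We prescribe the fundamental forms
\[
I^t=\theta^{1}_t\otimes\theta^{1}_t+2\cos(\omega)\,\theta^{1}_t\odot\theta^{3}_t+\theta^{3}_t\otimes\theta^{3}_t,\qquad II^t=2\sin(\omega)\,\theta^{1}_t\odot\theta^{3}_t ,
\]
which is exactly the form \eqref{eq:I:II:psi} of Theorem~\ref{theorem:classify:K-nets} written in the new coframe.

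Two invariances follow at once. Since $\theta^{1}_t\odot\theta^{3}_t=\theta^1\odot\theta^3$, we get $II^t=II$, so the second fundamental form is preserved. The angle between $E^t_1$ and $E^t_3$ with respect to $I^t$ is still $\omega$, because both vectors have unit length for $I^t$ and their inner product is $\cos\omega$.

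Next we verify the Gauss–Codazzi equations for $(I^t,II^t)$. We first note that $\mathrm d\theta^{i}_t=0$, since $\lambda$ is constant and $\mathrm d\theta^{i}=0$ by Remark~\ref{remark:K:frame:coordinates}. Hence $(E^t_1,E^t_3)$ is again a coordinate frame; in coordinates this is the substitution $(u,v)\mapsto(\lambda u,\lambda^{-1}v)$ in the Chebyshev net. The Gauss equation for a K-net in this form is the sine-Gordon equation $\Hess\omega(E^t_1,E^t_3)=\sin\omega$ with curvature $-1$. It holds because
\[
\Hess\omega(E^t_1,E^t_3)=\lambda^{-1}\lambda\,\Hess\omega(E_1,E_3).
\]
This uses that on a Chebyshev net the mixed Hessian reduces to $E_1E_3\,\omega$, because the coordinate fields commute and the relevant Christoffel term vanishes.

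The Codazzi equations for asymptotic Chebyshev coordinates with $L=N=0$ and $M=\sin\omega$ are known to be identities once $I$ has the Chebyshev form. Since the new data have exactly this form, they are again satisfied. Bonnet's theorem then gives a surface element $K^t$, unique up to rigid motion, with these fundamental forms. It has $\mathrm K=-1$ and asymptotic frame $(E^t_1,E^t_3)$, and it satisfies $K^0=K$.

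The main obstacle is the careful check that the Codazzi equations are indeed identities in Chebyshev asymptotic coordinates. This amounts to computing the Christoffel symbols of $I=\mathrm du^2+2\cos\omega\,\mathrm du\,\mathrm dv+\mathrm dv^2$, namely $\Gamma^1_{12}=\Gamma^2_{12}=0$ together with the remaining ones, and confirming that $M_u-M\Gamma^2_{12}\ldots$ vanishes. Equivalently, one can rely on the Lax-pair description of \cite{BobenkoQuaternion}, where $\lambda$ enters as the spectral parameter. In that picture, smoothness of the family in $t$ follows from smooth dependence of the solution of the frame equations on $\lambda$.
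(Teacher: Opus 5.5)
Your proof is correct, but it does not follow the route the paper relies on. The paper gives no argument of its own and imports the result from \cite{BobenkoQuaternion}, where the family comes from the Lax pair \eqref{eq:U:V} together with the Sym formula \eqref{eq:Sym}. In that picture the zero-curvature condition $\mathbf{U}_v-\mathbf{V}_u+[\mathbf{U},\mathbf{V}]=0$ holds for every $\lambda$ exactly when $\omega_{uv}=\sin\omega$. Moreover, $\Psi=2\lambda\,\Phi^{-1}\Phi_\lambda$ has $\|\Psi_u\|=\lambda$, $\|\Psi_v\|=\lambda^{-1}$ and a $\lambda$-independent second fundamental form, which is precisely \eqref{eq:spectral:def} and the forms \eqref{eq:corresponding:K:t}.

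You instead prescribe the rescaled data and invoke Bonnet's theorem. This uses two facts: in Chebyshev asymptotic coordinates the Codazzi equations are vacuous, and the Gauss equation is the sine-Gordon equation, which is invariant under the boost $(u,v)\mapsto(\lambda u,\lambda^{-1}v)$. Both arguments rest on this same fact.

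Your version is more elementary and fits the paper's Cartan-frame language. Written in the old coframe, your $I^t$ is exactly \eqref{eq:corresponding:K:t}. The Lax-pair route, by contrast, delivers explicit immersions depending analytically on $\lambda$ once $\Phi$ is normalized at a base point, and this is what the paper exploits in Section~\ref{sec:immersions}. In your version each $K^t$ is fixed only up to a rigid motion. A genuinely smooth family therefore requires fixing the frame at a base point and using smooth parameter dependence of the Gauss--Weingarten system, as you indicate.

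To close the step you left as a sketch, take $I=\mathrm du^2+2\cos\omega\,\mathrm du\,\mathrm dv+\mathrm dv^2$. Then $\Gamma^1_{12}=\Gamma^2_{12}=0$, $\Gamma^1_{11}=\omega_u\cot\omega$ and $\Gamma^2_{22}=\omega_v\cot\omega$. With $L=N=0$ the Codazzi equations reduce to $M_u=M\,\Gamma^1_{11}$ and $M_v=M\,\Gamma^2_{22}$, and both are identities for $M=\sin\omega$. The combination $M_u-M\Gamma^2_{12}$ you wrote is not the relevant one.

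Similarly, your displayed Hessian identity compares Hessians of two different metrics, $I$ and $I^t$. It is justified only because the mixed Christoffel symbols of both metrics vanish, as you remark.
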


The resulting family, $\{K^t\}$, is called the \emph{associate family} of $K$. 
In this article, we refer to the deformation \eqref{eq:spectral:def} as the \emph{spectral deformation}.
\begin{theorem}
    Let $\{K^t\}$ be an associate family of K-surfaces and let $\{V^t\}$ be an isometric family of V-surfaces with respect to a fixed V-net. Now, if $V^0 \in \mathrm{R}(K^0)$ then for every $t$, $V^t \in \mathrm{R}(K^t)$.
\end{theorem}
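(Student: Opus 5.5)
We verify the criterion of Proposition~\ref{prop:rotation:field:iff} for the pair $(K^t, V^t)$: we show that $\mathbf{III}_{K^t} = \mathbf{III}_{V^t}$ and $\mathrm{D}(\mathbf{II}_{K^t},\mathbf{II}_{V^t}) = 0$. All computations take place in the fixed K-coframe $(\theta^1,\theta^3)$ of $K^0$. This is legitimate because, by Lemma~\ref{lem:V:net}, the V-net of $V^0$ is induced by the K-net of $K^0$, and the relations \eqref{eq:e1:e3:bare1:bare3} tie the V-coframe to the K-coframe through the functions $b,c$.

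\textbf{Step 1: $K^t$ in the original K-coframe.} Under \eqref{eq:spectral:def} the deformed K-frame is $E_1^t=\lambda^{-1}E_1$, $E_3^t=\lambda E_3$, so the deformed K-coframe is $\theta^1_t=\lambda\theta^1$, $\theta^3_t=\lambda^{-1}\theta^3$. The angle $\omega$ is unchanged, so \eqref{eq:I:II:psi} gives
\[
I_{K^t}=\lambda^2\theta^1\otimes\theta^1+2\cos(\omega)\,\theta^1\odot\theta^3+\lambda^{-2}\theta^3\otimes\theta^3,\qquad II_{K^t}=2\sin(\omega)\,\theta^1\odot\theta^3 .
\]
Since $\mathrm K=-1$, we have $III=-\mathrm K\, I+2\mathrm H\, II$, and this reduces to $III_{K^t}=\lambda^2\theta^1\otimes\theta^1-2\cos(\omega)\,\theta^1\odot\theta^3+\lambda^{-2}\theta^3\otimes\theta^3$. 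Here we use $\mathrm H=0$ in the asymptotic coframe, because $II$ is purely off-diagonal while $I$ is not.

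\textbf{Step 2: $V^t$ in the same coframe.} Theorem~\ref{theorem:V:isometry} fixes the V-coframe $(\bar\theta^1,\bar\theta^3)$ and $\omega$, and replaces $b,c$ by $\lambda b$, $\lambda^{-1}c$ in \eqref{eq:I:II:V:alt}. Using $\bar\theta^1=c\theta^1$ and $\bar\theta^3=b\theta^3$, we obtain
\[
II_{V^t}=\sin(\omega)\bigl(\lambda c\,\theta^1\otimes\theta^1+\lambda^{-1}b\,\theta^3\otimes\theta^3\bigr),
\]
and $I_{V^t}=I_V$. The third fundamental form is then $III_{V^t}=II_{V^t}\,I_V^{-1}\,II_{V^t}$ in matrix form. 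A short $2\times2$ computation with $\det I_V=b^2c^2\sin^2\omega$ should give exactly $\lambda^2\theta^1\otimes\theta^1-2\cos\omega\,\theta^1\odot\theta^3+\lambda^{-2}\theta^3\otimes\theta^3$. As a sanity check, at $\lambda=1$ this must reproduce $III_V=III_K$, which holds by Proposition~\ref{prop:rotation:field:iff} applied to $V\in\mathrm R(K)$. The mixed determinant condition is immediate: $\mathbf{II}_{K^t}$ is anti-diagonal and $\mathbf{II}_{V^t}$ is diagonal, so $\mathrm D(\mathbf{II}_{K^t},\mathbf{II}_{V^t})=0$.

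\textbf{Step 3: conclusion.} Proposition~\ref{prop:rotation:field:iff} then gives $V^t\in\mathrm R(K^t)$. The delicate point is not the algebra but the justification that $K^t$ and $V^t$ are compared on the same parameter domain with the correct identification. The spectral deformation is defined through the K-net coordinates, while $\{V^t\}$ is defined through the V-net; these are the same coordinates $(u,v)$ by Lemma~\ref{lem:V:net} and \eqref{eq:e1:e3:bare1:bare3}. A second point to watch is that Proposition~\ref{prop:rotation:field:iff} determines $V^t$ only up to translation, which is harmless since rotation fields are defined only up to an additive constant.

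Should the sign conventions in Step 2 produce $\lambda^{\pm2}$ swapped relative to Step 1, the fix is to use the inverse convention $\lambda\mapsto\lambda^{-1}$ for one of the two families. The statement is insensitive to this, since both families are parametrized by all $t\in\mathbb R$.
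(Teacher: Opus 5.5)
Your route is valid but differs from the paper's. The paper stays inside the Codazzi framework. It rewrites the system \eqref{eq:codazzi:V:I:II} for rotation fields of $K^t$ in the frame of $K^0$, using $E_1^t=\lambda^{-1}E_1$ and $E_3^t=\lambda E_3$, and observes that $(\lambda b,\lambda^{-1}c)$ solves it. So $K^t$ has a rotation field with exactly the coefficient pair that Theorem~\ref{theorem:V:isometry} assigns to $V^t$; identifying that field with $V^t$ then rests implicitly on the fundamental theorem of surfaces. You instead take $V^t$ as given by its fundamental forms \eqref{eq:corresponding:V:t} and check Proposition~\ref{prop:rotation:field:iff} pointwise. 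This is purely algebraic, needs no differentiation and no separate identification step, and shows why the scalings match: the $\lambda^{\pm1}$ in $\mathbf{II}_{V^t}$ cancel against $b,c$ from $\mathbf I_V^{-1}$ to give the $\lambda^{\pm2}$ of $\mathbf I_{K^t}$. The price is reliance on that black-box criterion. Like the paper's argument, it gives the rotation-field relation only up to a rigid motion of $V^t$: equal third fundamental forms make the Gauss maps agree only up to an element of $\mathrm{O}(3)$. So ``up to translation'' in your Step~3 should read ``up to a rigid motion''.

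There is an error in Step~1 that you must fix. The claim $\mathrm H=0$ is false: for \eqref{eq:I:II:psi} one has $\mathrm H=\tfrac12\operatorname{tr}(\mathbf I^{-1}\mathbf{II})=-\cot\omega$. An off-diagonal $\mathbf{II}$ does not make $\mathbf I^{-1}\mathbf{II}$ traceless, because $\mathbf I$ is not diagonal. With $\mathrm H=0$ your own identity would give $III_{K^t}=I_{K^t}$, whose cross term is $+2\cos\omega$, contradicting the formula you display. That displayed formula is nevertheless correct, since $2\mathrm H\,II=-4\cos\omega\,\theta^1\odot\theta^3$ flips the sign of the cross term. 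A cleaner way is to use $\mathbf{III}=\mathbf{II}\,\mathbf I^{-1}\mathbf{II}$ in the coframe $(\theta^1_t,\theta^3_t)$, which gives $\theta^1_t\otimes\theta^1_t-2\cos\omega\,\theta^1_t\odot\theta^3_t+\theta^3_t\otimes\theta^3_t$, and then substitute $\theta^1_t=\lambda\theta^1$, $\theta^3_t=\lambda^{-1}\theta^3$. Your Step~2 computation, which you only assert, does check out: the entries of $\mathbf{II}_{V^t}\mathbf I_V^{-1}\mathbf{II}_{V^t}$ are $\lambda^2$, $-\cos\omega$ and $\lambda^{-2}$.

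Your closing hedge is unnecessary, because the conventions match as stated. Its claim of insensitivity is also inaccurate: swapping $\lambda\mapsto\lambda^{-1}$ in one family would prove $V^t\in\mathrm R(K^{-t})$, which is not the pairing asserted.
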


\begin{proof}
    Let $V$ ($:= V^0$) be generated from $K$ ($:= K^0$) through the solution $(b,c)$. Then for every admissible $t$, the solution $(b^t,c^t) = (\lambda\,b,\lambda^{-1}\,c)$ gives us an isometric deformation of $V$, namely $V^t$. Now, let $(E_1,E_3)$ and $(E_1^t,E_3^t)$ be the K-frames of $K$ and $K^{t}$. Then, due to spectral deformation, we have the following relations between these two K-frames and their coframes
    \begin{equation}\label{eq:K:frame:coframe:lambda}
            E_1^t = \lambda^{-1}\,E_1,\qquad\qquad \left(\theta^1\right)^t = \lambda\,\theta^1,\qquad\qquad
            E_3^t = \lambda\,E_3,     \qquad\qquad \left(\theta^3\right)^t = \lambda^{-1}\,\theta^3.
    \end{equation}
    As a result of \eqref{eq:K:frame:coframe:lambda}, one can write the Codazzi equations of \eqref{eq:codazzi:V:I:II} as follows
    \begin{equation}\label{eq:sys:b:c:t:finder}
        \begin{aligned}
           -\lambda^{-1}\mathrm{d}b^{t}(E_1) &=
               \lambda\,c^{t}\csc{(\omega)}\,\mathrm{d}\omega(E_3) + \lambda^{-1} b^{t}\cot{(\omega)}\,\mathrm{d}\omega(E_1),\\
           -\lambda\,\mathrm{d}c^{t}(E_3) &=  
               \lambda\,c^{t}\cot{(\omega)}\,\mathrm{d}\omega(E_3) + \lambda^{-1} b^{t}\csc{(\omega)}\,\mathrm{d}\omega(E_1),
        \end{aligned}
    \end{equation}
    Now, from Theorem\,\ref{theorem:spectral:deformation} we know that an isometric deformation of $V$ is obtained through \eqref{eq:b:c:lambda}. It solves the the system of \eqref{eq:sys:b:c:t:finder} and therefore gives rise to $V^t \in \mathrm{R}(K^t)$.
\end{proof}

The theorem above allows us to make a correspondence between the members of the assocaite family of a K-surface and the isometric family of the V-surfaces that are rotation fields of them. If we denote the K-frame of $K^0$ with $(E_1,E_3)$ then the members of the associate family $\{K^t\}$ have the following fundamnetal forms
\begin{equation}\label{eq:corresponding:K:t}
    {I}_{K}^t = \lambda^{2}\,\theta^1\otimes\theta^1 + 2\cos(\omega)\,{{\theta}}^1\odot{{\theta}}^3 + \lambda^{-2}\,\theta^3\otimes\theta^3,\qquad\qquad
    II_K^t= II_K, 
\end{equation}
while the corresponding members of the isometric family of V-surfaces fulfill the following fundamental forms
\begin{equation}\label{eq:corresponding:V:t}
    {I}_{V}^t = I_V,\qquad\qquad
    II_V^t = \sin(\omega)\,\left(\lambda\,c\,\theta^1\otimes\theta^1 + \lambda^{-1}\,b\,\theta^3\otimes\theta^3\right),
\end{equation}
where $(b,c)$ is the solution corresponding to $V^0$.
\section{Identification of the Reciprocal-parallel K-surfaces}\label{sec:reciproal:parallel:K:surfaces}
\subsection{Bour Frames, Bour Family \& Bour Isometries}
In this section we introduce a concept that is called the Bour frame. We intentionally name it after Edmund Bour who used the integral curves to this frame in 1815 to introduce a two--parameter family of isometric deformations of a surface of revolution (see \cite{Bour}).
In doing so, let $P = (\psi,\Omega)$ be a surface element and let $Y\in\mathfrak{X}(\Omega)$ be a nowhere–vanishing killing field. Define a unit vector field $X \in \mathfrak{X}(\Omega)$ in such a way that at each point it is orthogonal to $Y$ and that the frame $(X,Y)$ is positively oriented. Then $(X,Y)$ makes a coordinate frame. We define the Bour frame then by 
\begin{equation*}
        E_1 = X,\qquad\qquad E_2 = \frac{Y}{\sqrt{I(Y,Y)}}.
\end{equation*}
Naturally we call its dual coframe the Bour coframe and show it by $(\theta^1,\theta^2)$.

\begin{lemma}\label{lem:parallel:geodesics}
    Let $P = (\psi,\Omega)$ be a surface element and let $Y\in\mathfrak{X}(\Omega)$ be a nowhere–vanishing killing vector field. Furthermore, let $(E_1,E_2)$ be its Bour frame. Then the integral curves of $E_1$ are geodesics while the integral curves of $E_2$ are curves with constant geodesics.
\end{lemma}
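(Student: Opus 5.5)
Since $Y$ is a Killing field, I will work in adapted coordinates and reduce the statement to a computation with Lemma~\ref{lem:cartan:geodesics}, or more directly with the Cartan connection form $\Theta_{12}$.

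\textbf{Step 1: adapted coordinates.} The flow of the nowhere-vanishing Killing field $Y$ acts by local isometries. Since $(X,Y)$ is a coordinate frame, I can choose coordinates $(u,v)$ on $\Omega$ with $\partial_u = X$ and $\partial_v = Y$. The Killing condition $\mathcal{L}_Y I = 0$ says the coefficients of $I$ do not depend on $v$. Orthogonality of $X$ and $Y$ together with $I(X,X)=1$ then gives
\begin{equation*}
  I = \d u\otimes \d u + G(u)\,\d v\otimes\d v, \qquad G = I(Y,Y)>0 .
\end{equation*}
In these coordinates $E_1=\partial_u$, $E_2 = G^{-1/2}\partial_v$, $\theta^1=\d u$ and $\theta^2 = \sqrt{G}\,\d v$.

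\textbf{Step 2: the connection form.} I will compute $\Theta_{12}$ from the first structure equations $\d\theta^1 = -\Theta_{12}\wedge\theta^2$ and $\d\theta^2 = \Theta_{12}\wedge\theta^1$. With the sign conventions of \eqref{eq:fram:field:change:12}, these may need a sign adjustment, but that does not affect the conclusion. We have $\d\theta^1 = 0$ and $\d\theta^2 = (\sqrt G)'\,\d u\wedge \d v$. From these one reads off
\begin{equation*}
  \Theta_{12} = -\frac{(\sqrt G)'}{\sqrt G}\,\theta^2 ,
\end{equation*}
up to sign. Comparing with \eqref{eq:connection:forms}, $\Theta_{12} = \kappa^g_1\theta^1 + \kappa^g_2\theta^2$, this gives
\begin{equation*}
  \kappa^g_1 = 0, \qquad \kappa^g_2 = \mp\,\frac{(\sqrt G)'(u)}{\sqrt{G(u)}} .
\end{equation*}

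\textbf{Step 3: conclusion.} The integral curves of $E_1$ therefore have vanishing geodesic curvature, so they are geodesics. The geodesic curvature of the integral curves of $E_2$, which are the curves $u=\mathrm{const}$, depends only on $u$. Hence it is constant along each such curve, which is the intended meaning of ``constant geodesic curvature.''

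Alternatively, the geodesic claim can be seen without computing $\Theta_{12}$. The first variation of arc length shows that unit-speed curves orthogonal to the orbits of a Killing field, which here are the lines $v=\mathrm{const}$ in the coordinates of Step~1, are geodesics.

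\textbf{Main obstacle.} The only delicate point is Step~1, justifying that $v$-independence of the metric coefficients follows from the Killing property. I would handle it via $\mathcal{L}_Y I = 0$ and $[X,Y]=0$:
\begin{equation*}
  Y\big(I(X,X)\big) = (\mathcal{L}_Y I)(X,X) + 2I([Y,X],X) = 0 ,
\end{equation*}
and the same argument applies to $I(X,Y)$ and $I(Y,Y)$. Everything else is routine.
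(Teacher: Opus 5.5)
Your proof is correct and essentially the paper's. Both combine $\mathcal{L}_Y I=0$ with the Bour-coframe structure equations $\mathrm{d}\theta^1=\kappa^g_1\,\theta^1\wedge\theta^2$, $\mathrm{d}\theta^2=\kappa^g_2\,\theta^1\wedge\theta^2$ to get $\kappa^g_1=0$ and $\kappa^g_2=G_1/(2G)$ with $G$ invariant along $Y$; the paper does this via Cartan's magic formula for $\mathcal{L}_Y\theta^i$, you via the adapted form $I=\mathrm{d}u^2+G(u)\,\mathrm{d}v^2$, and the remaining differences (your opposite sign convention, so $\kappa^g_2=+G'/(2G)$, and your deriving $\kappa^g_1=0$ from $[X,Y]=0$ as asserted in the Bour-frame definition, true since the flow of $Y$ is an orientation-preserving isometry fixing $Y$ and hence $X$, rather than from the $\theta^1\otimes\theta^1$ component of the Killing equation) are cosmetic.
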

\begin{proof}
    The Bour frame creates a trihedron $(e_1,e_2,\nu)$ along $\psi$ with $e_i := \mathrm{d}\sigma(E_i)$ for $i = 1,2$. Then, using \emph{Cartan's magic formula} (see \cite[page~.372]{Lee}) we have
    \begin{equation*}
            \mathcal{L}_Y \theta^1 = \iota_Y\,\big(\kappa^g_1\,\theta^1\wedge\theta^2\big) = -\kappa^g_1\sqrt{G}\,\theta^1, \qquad
            \mathcal{L}_Y \theta^2 = \iota_Y\,\big(\kappa^g_2\,\theta^1\wedge\theta^2\big) + \mathrm{d}\sqrt{G} = -\kappa^g_2\sqrt{G}\,\theta^1 + \frac{1}{2\sqrt{G}}\mathrm{d}G.
    \end{equation*}
     With the above observations, we get
    \begin{equation*}
        0 = \frac{1}{2}\mathcal{L}_Y I = \mathcal{L}_Y\big(\theta^1\big)\otimes\theta^1 + \mathcal{L}_Y\big(\theta^2\big)\otimes\theta^2 = -\kappa^g_1\sqrt{G}\,\theta^1\otimes\theta^1 + \left( \frac{\mathrm{d}G(E_1)}{2\sqrt{G}} - \kappa^g_2\sqrt{G} \right)\,\theta^1\odot\theta^2 + \frac{\mathrm{d}G(E_2)}{2\sqrt{G}}\,\theta^2\otimes\theta^2.
    \end{equation*}
    Consequently, we get $\kappa^g_1 = 0$ and $\kappa^g_2 = \mathrm{d}G(E_1)/(2G)$ implying what we were looking for. In addition to what we wanted we also got $\mathrm{d}G(E_2) = 0$.
\end{proof}

Recently, this class of surfaces represented with the net corresponding to the integral curves of $(E_1,E_2)$ are addressed as \emph{geodesic-parallel nets} (see \cite{geodesicparallelnets}).

For the moment we restrict our attention to the more restricted subclass of them namely the helicoidal surfaces and we extract more details about them that we need in dealing with alignable V-surfaces.

\subsection{Helicoidal Surfaces and Bour Isometries}\label{sec:helicoidal:surface}

A natural generalization of \emph{helicoids} and \emph{surfaces of revolution} are surfaces known as \emph{helicoidal surfaces}. These can be constructed synthetically by taking a planar curve $\mathscr{C}$ that does not intersect a fixed axis $\mathscr{E}$ in the plane. The curve $\mathscr{C}$ is then moved around $\mathscr{E}$ via a \emph{rigid screw motion}, so that each of its points traces a helix with $\mathscr{E}$ as the axis. The resulting surface $\mathscr{S}$ is called a \emph{helicoidal surface}. In particular, if the screw motion consists solely of a rotation around $\mathscr{E}$, then $\mathscr{S}$ is a surface of revolution. On the other hand, if $\mathscr{C}$ is a straight line perpendicular to $\mathscr{E}$, then $\mathscr{S}$ reduces to a standard helicoid (see \cite[page~103]{DocarmoDifferential}).

\begin{definition}\label{def:screw:motion}
    A screw motion is a one--parameter subgroup of $\mathrm{SE}(3)$, generated by $\bar{\xi} \in \mathfrak{se}(3)$ which is of the form $\bar{\xi}_p = \eta \times p + \zeta$ for every point $p$ of $\mathbb{R}^3$ and with $\eta, \zeta \in \mathbb{R}^3$. In the special case where $\zeta = 0$, the screw motion is called rotation.
\end{definition}

\begin{definition}
    A surface in Euclidean space $\mathbb{R}^3$ is called a \emph{helicoidal surface} if it is invariant under a screw motion. In the special case where the screw motion is a rotation the aforementioned surface is called a surface of revolution.
\end{definition}

The following theorem and corollary, which improve and reformulate Exercise 23 in \cite[page~188]{EisenhartTreatise} into a coordinate-free shape, facilitate the identification of helicoidal surfaces.
\begin{theorem}(\cite[page~188, Exercise 23]{EisenhartTreatise})\label{theorem:B:nets}
    A surface element $P$ is a helicoidal surface if and only if 
    \begin{equation}\label{eq:I:II:killing}
        \mathcal{L}_Y I = 0,\quad\quad\quad\quad
        \mathcal{L}_Y II = 0,
    \end{equation}
    where $Y$ is a nowhere vanishing vector field in $\mathfrak{X}(\Omega)$. In particular, if $(E_1,E_2)$ is the corresponding Bour frame, then $P$ locally parametrizes a surface of revolution if and only if $II(E_1,E_2)=0$, and it locally parametrizes a helicoid if and only if $I(E_1,E_1)=I(E_2,E_2)=0$.
\end{theorem}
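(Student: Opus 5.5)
The plan is to prove the equivalence by integrating the intrinsic Killing symmetry $Y$ into an ambient screw motion, using the fundamental theorem of surfaces (Bonnet), and then to read off the two special cases from the screw parameters.

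\emph{Necessity.} Suppose $P$ is invariant under the screw motion generated by $\bar\xi_p=\eta\times p+\zeta$ with flow $g_s\in\mathrm{SE}(3)$. Invariance means $g_s\circ\psi=\psi\circ\phi_s$ for a local flow $\phi_s$ on $\Omega$. Let $Y$ be its generator, so $\mathrm{d}\psi(Y)=\bar\xi\circ\psi$. Since $g_s$ is a rigid motion it preserves lengths and carries the unit normal to the unit normal, up to orientation; by continuity in $s$ the orientation is preserved. Hence $\phi_s^*I=I$ and $\phi_s^*II=II$, and differentiating at $s=0$ gives \eqref{eq:I:II:killing}. We must also see that $Y$ is nowhere vanishing. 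A zero of $Y$ is a point where $\bar\xi$ vanishes, i.e. a point on the axis in the case $\zeta\parallel\eta$ with vanishing pitch. The synthetic definition excludes the axis, since $\mathscr C$ does not meet $\mathscr E$, so this case does not occur.

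\emph{Sufficiency.} Let $\phi_s$ be the local flow of $Y$. By \eqref{eq:I:II:killing}, the immersions $\psi\circ\phi_s$ and $\psi$ have the same first and second fundamental forms. By Bonnet's uniqueness theorem there is a unique $g_s\in\mathrm{SE}(3)$ with $\psi\circ\phi_s=g_s\circ\psi$ on the common domain. From uniqueness and $\phi_{s+t}=\phi_s\circ\phi_t$ we get $g_{s+t}=g_s g_t$. Smoothness of $g_s$ in $s$ follows because $g_s$ is determined by the frame $(\mathrm{d}\psi,\nu)$ at one point. Thus $\{g_s\}$ is a one-parameter subgroup of $\mathrm{SE}(3)$, generated by some $\bar\xi=\eta\times p+\zeta$. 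By Chasles' theorem every such subgroup is a screw motion, a pure translation, or a rotation. If $\bar\xi$ were zero, then $Y$ would vanish, which is excluded. The pure translation case ($\eta=0$) must be treated as a degenerate helicoidal surface, namely a cylinder; this is the convention implicit in Definition~\ref{def:screw:motion}, which allows $\eta=0$.

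\emph{Special cases.} Work in the Bour frame. By Lemma~\ref{lem:parallel:geodesics}, $E_1$ generates the meridian geodesics and $E_2=Y/\sqrt G$. Write $\bar\xi=\mathrm{d}\psi(Y)$ and differentiate along $E_1$. Because $Y$ commutes with $X=E_1$, we get $D_{E_1}\bar\xi=\eta\times e_1$. Pairing with $\nu$ and using $II(E_1,Y)=\langle D_{E_1}\mathrm{d}\psi(Y),\nu\rangle$ yields $\sqrt G\,II(E_1,E_2)=\langle\eta\times e_1,\nu\rangle$.

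For a rotation ($\zeta=0$ after translating the axis), the vectors $\eta$, $e_1$, $\nu$ all lie in the meridian plane through the axis. Hence this mixed product vanishes, i.e. $II(E_1,E_2)=0$.

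Conversely, suppose $II(E_1,E_2)=0$. Then $\eta$ lies in $\mathrm{span}(e_1,\nu)$, so $\langle \eta,e_2\rangle=0$. Also $\langle\bar\xi,\eta\rangle=\langle\zeta,\eta\rangle$ is constant, and the pitch equals $\langle\zeta,\eta\rangle/|\eta|^2$. Since $\bar\xi$ is a multiple of $e_2$ and $\eta\perp e_2$, we get $\langle\bar\xi,\eta\rangle=0$. So the pitch vanishes and the motion is a rotation.

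For the helicoid clause as literally stated, $I(E_i,E_i)=1$ for the unit Bour frame. So the condition $I(E_1,E_1)=I(E_2,E_2)=0$ cannot be meant literally. I would interpret it as the vanishing of the normal curvatures, $II(E_1,E_1)=II(E_2,E_2)=0$, i.e. the Bour lines are asymptotic.

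For the helicoid, straight rulings through the axis give $II(E_1,E_1)=0$, and the helices, being asymptotic, give $II(E_2,E_2)=0$. Conversely, $II(E_1,E_1)=0$ together with $\kappa^g_1=0$ makes the meridians straight lines. Then $II(E_2,E_2)=0$ forces $\langle \eta\times\mathrm{d}\psi(Y),\nu\rangle=0$ along each line, which pins the line to meet the axis orthogonally.

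The main obstacle I expect is this last identification. The line must be perpendicular to $\mathscr E$ and must intersect it, and the argument has to exclude the hyperboloid-like ruled helicoidal surfaces. I would handle this by expressing $\nu$ along the ruling explicitly via $\eta\times\psi+\zeta$.
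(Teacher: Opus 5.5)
The paper does not prove this statement. It is quoted from Eisenhart's treatise (Exercise~23, p.~188) and used as a black box, so there is no proof to compare against and your argument has to stand on its own.

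Your main equivalence is the standard argument and is sound: integrate $Y$, use Bonnet rigidity to obtain a local one-parameter subgroup $g_s\subset\mathrm{SE}(3)$, then apply Chasles. Your revolution computation is also a useful consistency check. With $e_2=\bar\xi/\sqrt{G}$ it gives $\sqrt{G}\,II(E_1,E_2)=\langle\eta\times e_1,\nu\rangle=-\langle\eta,e_2\rangle$, i.e. $II(E_1,E_2)=-\langle\eta,\zeta\rangle/G$ up to orientation sign. This is exactly the form $M=t/G$ of Proposition~\ref{prop:bour:family}, with the Bour constant $t$ identified as the screw invariant $\langle\eta,\zeta\rangle$. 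The commutation of $E_1$ with $Y$ is not needed there: $D_{E_1}(\bar\xi\circ\psi)=\eta\times e_1$ is just the chain rule for the affine field $\bar\xi$.

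Two points need repair.

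\textbf{The case $\eta=0$ in the revolution converse.} You deliberately admit pure translations ($\eta=0$) as degenerate helicoidal surfaces. That choice is needed for sufficiency, since a cylinder over a non-circular planar curve has an $I$--$II$ Killing field but no genuine screw symmetry. But in the converse of the revolution clause you conclude ``the pitch vanishes and the motion is a rotation'', which silently assumes $\eta\neq0$. For $\eta=0$ your own formula gives $II(E_1,E_2)=0$. So under your convention, generalized cylinders satisfy $II(E_1,E_2)=0$ without being surfaces of revolution. You must either add the hypothesis $\eta\neq0$ or count such cylinders as degenerate revolution surfaces. The clause must also be read relative to the given $Y$: a circular cylinder with a screw-type $Y$ has $II(E_1,E_2)\neq0$.

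\textbf{The helicoid clause.} Reading it as $II(E_1,E_1)=II(E_2,E_2)=0$ is the sensible interpretation, and the step you flag as the main obstacle closes with the same trick you used above. Compute
\[
II(Y,Y)=\langle\eta\times\bar\xi,\nu\rangle=\sqrt{G}\,\langle\eta\times e_2,\nu\rangle=\sqrt{G}\,\langle\eta,e_1\rangle,
\]
so $II(E_2,E_2)=\langle\eta,e_1\rangle/\sqrt{G}$. Along a straight $E_1$-line with direction $w$, this forces $w\perp\eta$. Now normalise so the axis passes through the origin and $\zeta\parallel\eta$. The orthogonality $\langle w,\bar\xi(p_0)\rangle=0$ then reduces to $\det(w,\eta,p_0)=0$. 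Hence the ruling lies in a plane containing the axis and, being perpendicular to it, meets it. This excludes the hyperboloid- and cone-type ruled examples without computing $\nu$ explicitly. Alternatively, $II(E_1,E_1)=II(E_2,E_2)=0$ means the surface is minimal and ruled, and Catalan's theorem gives a plane or a helicoid.
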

We call a vector field $Y$ satisfying \eqref{eq:I:II:killing} an \emph{$I$--$II$ Killing vector field}, and if only \eqrefi{eq:I:II:killing}{2} holds, a \emph{$II$ Killing vector field}.
\subsection{The reciprocal-parallel related K-surfaces}
The intuition behind an alignable surface can be described as follows. Consider a net on a surface element. By choosing one curve from each of the two one-parameter coordinate lines, we obtain a quadrilateral region bounded by the curve segments \(AB\), \(BC\), \(CD\), and \(DA\) (see Fig.~\ref{fig:alignable}). 
In accordance to \cite{AlignablePellis}, we call such a region a \emph{net loop}. We say that a net loop is \emph{alignable} if, starting from a vertex \(A\) and going to the opposite vertex \(C\), the total length of the broken line \(A\!-\!B\!-\!C\) equals the total length of the broken line \(A\!-\!D\!-\!C\). 
If all net loops of a given net are alignable in this sense, we call the net itself an \emph{alignable net}. In such a situation by controlling the angle at one of the vertices and changing it the surface can go through a process that in kinematics is called a scissor motion resulting the surface to collapse to a curve.
The following definition formalizes this notion.

\begin{definition}[\cite{AlignablePellis}]\label{def:smooth:alignability}
    A net-loop $\partial R$ is called alignable if $l^1_{AB} + l^3_{BC} = l^3_{AD} + l^1_{DC}$. A net is alignable if all of its net-loops are alignable.
\end{definition}
\begin{example}
    A trivial example of an alignable net is the Chebyshev net (see Remark~\ref{remark:K:frame:coordinates} and \cite{Pellis2024}). 
\end{example}
To determine whether a given net is alignable, Pellis introduces the following criterion in \cite{AlignablePellis}. It is derived by directly applying Stokes' theorem to Definition~\ref{def:smooth:alignability}:

\begin{lemma}\label{lem:mahak:davide}
    Let $\sigma$ be a net with the moving frame $(E_1,E_3)$ and dual coframe $(\theta^1, \theta^3)$. Then $\sigma$ is an alignable net if and only if $\mathrm{d}\theta^1 + \mathrm{d}\theta^3 = 0$.
\end{lemma}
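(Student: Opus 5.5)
The plan is to apply Stokes' theorem to the $1$-form $\theta^1+\theta^3$ on net loops, and then let the loops shrink to a point.

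\emph{Rewriting the condition.} Along a segment that is an integral curve of $E_1$, only $\theta^1$ contributes to arc length, since $\theta^3(E_1)=0$ and $\theta^1(E_1)=1$. Thus $l^1_{AB}=\int_{AB}\theta^1$ and, in the same way, $l^3_{BC}=\int_{BC}\theta^3$. On an $E_1$-segment $\theta^3$ vanishes identically, and on an $E_3$-segment $\theta^1$ vanishes. Hence every side length can be written as the integral of the single form $\theta^1+\theta^3$ along that side.

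Now orient $\partial R$ as $A\to B\to C\to D\to A$. The alignability identity $l^1_{AB}+l^3_{BC}-l^1_{DC}-l^3_{AD}=0$ becomes
\[
\oint_{\partial R}(\theta^1+\theta^3)=0 .
\]
We may assume the curves have the orientation induced by the frame; this is justified because the net loops are images of coordinate rectangles in $\Omega$. Stokes' theorem then turns the identity into $\int_R \mathrm{d}(\theta^1+\theta^3)=0$.

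\emph{The two implications.} If $\mathrm{d}\theta^1+\mathrm{d}\theta^3=0$, the integral vanishes for every $R$, so every net loop is alignable. Conversely, suppose all net loops are alignable. Write $\mathrm{d}(\theta^1+\theta^3)=f\,\theta^1\wedge\theta^3$ with $f$ smooth. If $f(p)\neq 0$ at some point $p$, continuity gives $f$ a fixed sign on a neighbourhood of $p$. A sufficiently small net loop around $p$ has positive area in the form $\theta^1\wedge\theta^3$, so its integral is nonzero, a contradiction. Hence $f\equiv 0$.

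\emph{Main obstacle.} The delicate step is the converse: we need arbitrarily small net loops around any point. This follows from the local flow-box picture of the net. Since the net is a parametrization of a rectangle, its coordinate curves bound coordinate rectangles around every point. Care is needed because $(E_1,E_3)$ consists of unit vectors along the coordinate directions rather than coordinate vector fields. The loops should therefore be taken as images of coordinate rectangles, while arc lengths are measured with $\theta^1$ and $\theta^3$.
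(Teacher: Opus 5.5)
Your proposal is correct and takes essentially the same route as the paper: write the alignability identity as $\oint_{\partial R}(\theta^1+\theta^3)=0$ over net loops, apply Stokes' theorem, and get the converse by shrinking coordinate rectangles around an arbitrary point. The sign bookkeeping holds because $E_1,E_3$ point in the positive coordinate directions, as in Fig.~\ref{fig:alignable}(a), not merely because net loops are coordinate rectangles; reversing $E_1$ turns the same identity into the negative-alignable condition of Remark~\ref{remark:negative:alignability}.
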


In this article our focus is on V-surfaces supporting alignable V-nets. 
\begin{definition}
    A V-surface is called alignable if it supports an alignable V-net.
\end{definition}

A tailored version of Lemma~\ref{lem:mahak:davide} for our purpose is as follows:

\begin{lemma}\label{lem:alignability:cond:II}
    Let $V= (\eta, \Omega) \in \mathrm{R}(K)$ be a V-surface parametrized by the induced V-net $\eta$. Furthermore, denote the K-frame on $K$ by $(E_1,E_3)$ and the induced V-frame by $(\bar{E}_1,\bar{E}_3)$. Then the following statements are equivalent:
    \begin{enumerate}
        \item $\eta$ is alignable,
        \item $\eta$ is corresponding to a pair of solutions $(b, c)$ of \eqref{eq:codazzi:V:I:II}, such that
          \begin{equation}\label{eq:alignability:cond:III}
                \mathrm{d}b(E_1) = \mathrm{d}c(E_3).
            \end{equation}
        \item $\eta$ is corresponding to a pair of solutions $(b, c)$ of \eqref{eq:codazzi:V:I:II}, such that
          \begin{equation}\label{eq:alignability:cond:II}
                b\,\mathrm{d}\omega(E_1) = c\,\mathrm{d}\omega(E_3).
          \end{equation}
    \end{enumerate}    
\end{lemma}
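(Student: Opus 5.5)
We prove the cycle (1)$\Leftrightarrow$(2) and (2)$\Leftrightarrow$(3) separately; in both cases the work reduces to an identity between $1$-forms.

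For (1)$\Leftrightarrow$(2) we apply Lemma~\ref{lem:mahak:davide} to the V-net $\eta$ with its V-coframe $(\bar{\theta}^{\,1},\bar{\theta}^{\,3})$. By that lemma, $\eta$ is alignable if and only if $\d\bar{\theta}^{\,1}+\d\bar{\theta}^{\,3}=0$. By \eqref{eq:e1:e3:bare1:bare3} we have $\bar{\theta}^{\,1}=c\,\theta^1$ and $\bar{\theta}^{\,3}=b\,\theta^3$. The K-coframe is closed, since by Remark~\ref{remark:K:frame:coordinates} $\d\theta^1=\d\theta^3=0$. Therefore
\begin{equation*}
\d\bar{\theta}^{\,1}+\d\bar{\theta}^{\,3} = \d c\wedge\theta^1+\d b\wedge\theta^3 = \bigl(\d c(E_3)\,\theta^3\bigr)\wedge\theta^1+\bigl(\d b(E_1)\,\theta^1\bigr)\wedge\theta^3 = \bigl(\d b(E_1)-\d c(E_3)\bigr)\,\theta^1\wedge\theta^3 .
\end{equation*}
Because $\theta^1\wedge\theta^3$ vanishes nowhere, this $2$-form is zero exactly when \eqref{eq:alignability:cond:III} holds. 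A sign or orientation convention in Lemma~\ref{lem:mahak:davide} could turn the sum into a difference. In that case we would recheck the statement against Definition~\ref{def:smooth:alignability}, using $l^1$ and $l^3$ as integrals of $\bar\theta^1$ and $\bar\theta^3$ over the edges and Stokes' theorem on the net loop. The sum is the correct form because the alignability condition, read as $\oint$ of $\bar{\theta}^1$ and $\bar{\theta}^3$ around $\partial R$ with opposite edge orientations, gives $\int_{\partial R}(\bar{\theta}^1+\bar{\theta}^3)=0$ up to a fixed global sign.

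For (2)$\Leftrightarrow$(3) we subtract the two Codazzi equations \eqref{eq:codazzi:V:I:II}, which hold for every pair $(b,c)$ generating a member of $\mathrm{R}(K)$:
\begin{equation*}
\d c(E_3)-\d b(E_1) = \bigl(\csc\omega-\cot\omega\bigr)\bigl(c\,\d\omega(E_3)-b\,\d\omega(E_1)\bigr).
\end{equation*}
This follows by regrouping the four terms: the $c\,\d\omega(E_3)$ terms carry $\csc\omega$ and $\cot\omega$, and the $b\,\d\omega(E_1)$ terms carry $\cot\omega$ and $\csc\omega$. Next we note $\csc\omega-\cot\omega=\tan(\omega/2)$, which is nonzero because $\omega\in(0,\pi)$ is the angle between independent asymptotic directions. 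Hence \eqref{eq:alignability:cond:III} holds if and only if \eqref{eq:alignability:cond:II} holds.

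The only step that needs care is the first: matching the coframe conventions, namely the scalings $c,b$ from \eqref{eq:e1:e3:bare1:bare3} and the orientation in Lemma~\ref{lem:mahak:davide}. The second step is pure algebra, and its regrouping confirms that the sign of the alignability condition in (2) is consistent.
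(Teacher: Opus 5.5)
Your proposal is correct and follows the paper's proof. For (1)$\Leftrightarrow$(2) it is the same computation $\d\bar{\theta}^{\,1}+\d\bar{\theta}^{\,3}=(\d b(E_1)-\d c(E_3))\,\theta^1\wedge\theta^3$, via Lemma~\ref{lem:mahak:davide} and \eqref{eq:e1:e3:bare1:bare3}. For (2)$\Leftrightarrow$(3) both arguments compare the two Codazzi equations. The only difference is cosmetic: you subtract them directly in the K-frame and make the nonvanishing factor $\csc\omega-\cot\omega=\tan(\omega/2)$ explicit (it suffices that $\sin\omega\neq 0$), whereas the paper rewrites them in the V-frame as \eqref{eq:codazzi:alt} and uses that factor only implicitly.
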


\begin{proof}
    \item[$(1)\!\Leftrightarrow\!(2)$] Using Lemma~\ref{lem:mahak:davide} and the relations \ref{eq:e1:e3:bare1:bare3} we get: $\eta$ is an alignable net if and only if $0 = \d\bar{\theta}^1 + \d\bar{\theta}^3 = \left( \d b(E_1) - \d c(E_3)\right)\,\theta^1\wedge\theta^3$.     
    \item[$(2)\!\Leftrightarrow\!(3)$] Rewriting \eqref{eq:codazzi:V:I:II} in the V-frame $(\bar{E}_1,\bar{E}_3)$ using relations \ref{eq:e1:e3:bare1:bare3} yields
        \begin{equation}\label{eq:codazzi:alt}
                -\d(\log b)(\Bar{E}_1) = \csc{(\omega)}\,\d\omega(\Bar{E}_3) + \cot{(\omega)}\,\d\omega(\Bar{E}_1),\qquad
                -\d(\log c)(\Bar{E}_3) = \csc{(\omega)}\,\d\omega(\Bar{E}_1) + \cot{(\omega)}\,\d\omega(\Bar{E}_3).
        \end{equation}
        Then using \eqref{eq:codazzi:alt} and \eqref{eq:e1:e3:bare1:bare3} we have
        \begin{equation*}
            b\,\mathrm d\omega(E_1) = c\,\mathrm d\omega(E_3)
            \quad\overset{\eqref{eq:e1:e3:bare1:bare3}}{\Leftrightarrow}\quad
            \mathrm d\omega(\bar{E}_1) = \mathrm d\omega(\bar{E}_3)
            \quad\overset{\eqref{eq:codazzi:alt}}{\Leftrightarrow}\quad
            \mathrm d(\log b)(\bar{E}_1) = \mathrm d(\log c)(\bar{E}_3)
            \quad\overset{\eqref{eq:e1:e3:bare1:bare3}}{\Leftrightarrow}\quad
            \mathrm d b(E_1) = \mathrm d c(E_3). \qedhere
        \end{equation*}
\end{proof}

We now present the first main result of this paper: the identification of the reciprocal-parallel K-surfaces associated with a given alignable V-surface. This result initiates the classification of alignable V-surfaces and underpins the subsequent developments.

\begin{proposition}\label{prop:main}
    If a V-surface is alignable, then it arises as the rotation field of either a helicoidal K-surface or an Amsler K-surface.
\end{proposition}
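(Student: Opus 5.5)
We work in canonical coordinates of the reciprocal-parallel K-surface. By Remark~\ref{remark:K:frame:coordinates}, the K-frame is a coordinate frame, so we put $(E_1,E_3)=(\partial_u,\partial_v)$. By Theorem~\ref{theorem:classify:K-nets}, $\omega$ then satisfies $\omega_{uv}=\sin\omega$. Lemma~\ref{lem:alignability:cond:II} gives alignability in the form $b\,\omega_u=c\,\omega_v$, and \eqref{eq:codazzi:V:I:II} is available.

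The plan is to \emph{decouple} the Codazzi system with this condition. In \eqrefi{eq:codazzi:V:I:II}{1} we replace $c\,\omega_v$ by $b\,\omega_u$ and get
\begin{equation*}
    -b_u = b\,\omega_u\,(\csc\omega+\cot\omega) = b\,\omega_u\cot(\omega/2),
\end{equation*}
that is, $(\log b)_u = -2\,(\log\sin(\omega/2))_u$. Symmetrically, in \eqrefi{eq:codazzi:V:I:II}{2} we replace $b\,\omega_u$ by $c\,\omega_v$. Integrating both,
\begin{equation*}
    b=\frac{\beta(v)}{\sin^2(\omega/2)},\qquad\qquad c=\frac{\gamma(u)}{\sin^2(\omega/2)},
\end{equation*}
with $\beta,\gamma$ nowhere zero, since $\mathrm{K}_V=(bc)^{-1}$ must be finite. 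Alignability now reduces to the single condition $\beta(v)\,\omega_u=\gamma(u)\,\omega_v$ on the sine-Gordon solution $\omega$.

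Next we perform the box reparametrization $s=\int\gamma\,\d u$, $t=\int\beta\,\d v$, which is allowed by Remark~\ref{remark:box:reparametrization}. The condition becomes $\omega_s=\omega_t$, so $\omega=f(s+t)$. The sine-Gordon equation then reads
\begin{equation*}
    \gamma(u)\beta(v)\,f''(s+t)=\sin f(s+t).
\end{equation*}
Note that $\omega$ cannot be constant, because $\sin\omega\neq0$. Hence $\gamma(u)\beta(v)=h(s+t)$ for some function $h$.

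Taking logarithms, the left side is a sum of a function of $s$ and a function of $t$. Applying $\partial_s\partial_t$ gives $(\log h)''=0$, so $h=C\,e^{k(s+t)}$, and correspondingly $\gamma=A\,e^{ks}$, $\beta=B\,e^{kt}$. We then split into two cases.

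\emph{Case $k=0$.} Here $\gamma$ and $\beta$ are constants, so $\omega=f(\gamma u+\beta v)$ is a travelling wave. The field $Y=\beta\partial_u-\gamma\partial_v$ satisfies $\mathcal L_Y I_K=\mathcal L_Y II_K=0$, because both forms in \eqref{eq:I:II:psi} have constant coefficient tensors multiplied by functions of $\omega$ only. Theorem~\ref{theorem:B:nets} then shows that $K$ is helicoidal.

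\emph{Case $k\neq0$.} Solving $s'=A e^{ks}$ gives $\gamma\propto(u-u_0)^{-1}$, and likewise $\beta\propto(v-v_0)^{-1}$. Then $s+t=-k^{-1}\log\big((u-u_0)(v-v_0)\big)+\mathrm{const}$. After a translation of the coordinates, $\omega$ is therefore a function of $uv$ alone. This is the self-similar sine-Gordon reduction, which characterizes the Amsler surface: $\omega$ is constant along $u=0$ and $v=0$, so those asymptotic lines have vanishing geodesic curvature and are straight lines.

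The main obstacle is this last step together with the degenerate bookkeeping. One must justify the classical fact that $\omega=\omega(uv)$ forces the Amsler K-surface, for instance by showing via \eqref{eq:cartan:geodesics} that the lines $u=0$ and $v=0$ are asymptotic geodesics, hence straight, and then quoting \cite{BobenkoAmsler}. One must also check that the divisions by $\omega_u$, $\omega_v$, $\gamma$ and $\beta$ are legitimate on an open dense set and extend by analyticity. I would first verify that $\omega_u\equiv0$ on an open set contradicts $\omega_{uv}=\sin\omega\ne0$, and then handle the remaining cases on subdomains $\Omega'$ as the paper already does.
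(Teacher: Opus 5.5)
Everything up to the split into $k=0$ and $k\neq0$ is the paper's argument, in the form used in the proof of Corollary~\ref{coro:b:c}. Alignability decouples \eqref{eq:codazzi:V:I:II} and integration gives $b=\beta(v)\csc^2(\omega/2)$, $c=\gamma(u)\csc^2(\omega/2)$. The sine--Gordon equation then forces $1/\gamma$ and $1/\beta$ to be affine with a common slope, which is the paper's $\d U(E_1)=\d V(E_3)=\mathrm{const}$. Your direct integration has the small advantage of never dividing by $\omega_u$ or $\omega_v$. The helicoidal case $k=0$ is handled as in the paper and is fine.

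The genuine gap is the case $k\neq0$, and it is more than bookkeeping.

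First, the lines $u=u_0$ and $v=v_0$ never meet the domain, because $c=\gamma\csc^2(\omega/2)$ and $b=\beta\csc^2(\omega/2)$ blow up there. So ``$\omega$ is constant along $u=0$'' is a statement about a continuation of the K-net beyond $\Omega$.

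Second, that continuation need not exist. With $z=(u-u_0)(v-v_0)$ the sine--Gordon equation becomes $zF''+F'=\sin F$, i.e.\ $F_{\tau\tau}=e^{\tau}\sin F$ for $z=e^{\tau}$. Hence every solution behaves like $F=A+B\log z+o(1)$ as $z\to0^{+}$. Only the one-parameter subfamily $B=0$, $F(0)=\alpha$, is regular at $z=0$ and gives Amsler surfaces (cf.\ \eqref{eq:P:III}). For $B\neq0$ one has $\omega\to\pm\infty$, with cusp curves accumulating at the axes. The resulting K-surface is then neither of the two types:
\begin{itemize}
\item It is not Amsler. Canonical coordinates are unique up to translations and sign changes, so being a piece of an Amsler surface would force $F(z)=F_\alpha(\pm z)$ with $F_\alpha$ regular at $0$.
\item It is not helicoidal. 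A nonconstant $\omega$ cannot be annihilated both by $(u-u_0)\partial_u-(v-v_0)\partial_v$ and by a nonzero constant field.
\end{itemize}
Yet $b=(v-v_0)^{-1}\csc^2(\omega/2)$ and $c=(u-u_0)^{-1}\csc^2(\omega/2)$ still solve \eqref{eq:codazzi:V:I:II} and satisfy $b_u=c_v$, because only $(u-u_0)\omega_u=(v-v_0)\omega_v$ is used. So these data give alignable V-surfaces over K-surfaces of neither type.

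Consequently the ``classical fact'' you want to quote fails in this local setting. Your route via \eqref{eq:cartan:geodesics} cannot work without an extra hypothesis. One option is to assume $\omega$ extends continuously to the lines $u=u_0$, $v=v_0$, in the spirit of the continuity assumption in Proposition~\ref{prop:V:general:helix}. Without such a hypothesis, the case $k\neq0$ only yields a self-similar K-surface governed by Painlev\'e~III.

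The paper's proof makes the same leap. Its lines $L_1,L_2$ are the zero sets of $V$ and $U$, which by Corollary~\ref{coro:b:c} also lie outside $\Omega$. So you will not find the missing step there.
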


\begin{proof}
    Using \eqref{eq:alignability:cond:II} rewrite the \emph{Codazzi equations} of \eqref{eq:codazzi:V:I:II} in the following way
    \begin{equation}\label{eq:codazzi:II}
        \begin{aligned}
            -\mathrm{d}b(E_1) &= b\,\mathrm{d}\omega(E_3)\,\Bigl(\csc{(\omega)} + \cot{(\omega)}\Bigr) \quad\implies\quad
             &\mathrm{d}\omega(E_1)\,\cot{\left(\frac{\omega}{2}\right)} &= -\frac{1}{b}\,\mathrm{d}b(E_1),\\
            -\mathrm{d}c(E_3) &= c\,\mathrm{d}\omega(E_1)\,\Bigl(\csc{(\omega)} + \cot{(\omega)}\Bigr) \quad\implies\quad
             &\mathrm{d}\omega(E_3)\,\cot{\left(\frac{\omega}{2}\right)} &= -\frac{1}{c}\,\mathrm{d}c(E_3).
        \end{aligned}
    \end{equation} 
    Simplifying \eqref{eq:codazzi:II} further gives
    \begin{equation}\label{eq:codazzi:III}
            \mathrm{d}\left(\log b\right)(E_1) = -\mathrm{d}\left(\log{\left(1 + \cos{(\omega)}\right)}\right)(E_1),\qquad\qquad
            \mathrm{d}\left(\log c\right)(E_3) = -\mathrm{d}\left(\log{\left(1 + \cos{(\omega)}\right)}\right)(E_3).
    \end{equation}
    On the other hand, from \eqref{eq:alignability:cond:II} we have
    \begin{equation}\label{eq:alignability:cond:II:log}
        \log(b) + \log\left(\mathrm{d}\omega(E_1)\right) = \log(c) + \log\left(\mathrm{d}\omega(E_3)\right).
    \end{equation}
    Now, solving \eqref{eq:alignability:cond:II:log} for $\log(b)$ and substituting it in the l.h.s of \eqref{eq:codazzi:III} implies
    \begin{equation}\label{eq:codazzi:V}
        \begin{aligned}
            {E_1}\left(\log c\right) + {E_1}\Bigl(\log{\left(\mathrm{d}\omega(E_3)\right)}-\log{\left(\mathrm{d}\omega(E_1)\right)}\Bigr) &= -{E_1}\left(\log{\left(1 + \cos{(\omega)}\right)}\right),\\
            {E_3}\left(\log c\right) &= -{E_3}\left(\log{\left(1 + \cos{(\omega)}\right)}\right).
        \end{aligned}
    \end{equation}
    Taking the derivative $E_3$ from the first and $E_1$ from the second of the equations of \eqref{eq:codazzi:V} and putting the r.h.s equal gives
    \begin{equation}\label{eq:main:cond:major}
        {E_3} {E_1}\,\Bigl(\log{\left(\mathrm{d}\omega(E_3)\right)} - \log{\left(\mathrm{d}\omega(E_1)\right)}\Bigr) = 0.
    \end{equation}
    Let $\phi := \log{\left(\mathrm{d}\omega(E_3)\right)} - \log{\left(\mathrm{d}\omega(E_1)\right)}$, then we have $\iota_{E_1}\d\phi = \d\left(\log U\right)$ for some nowhere-vanishing function $U$ that is basic for the $E_3$-foliation (i.e. $E_3(U) = 0$). By the symmetry resulted from the commutativity of $E_1$ and $E_3$ we also have $\iota_{E_3}\d\phi = \d\left(\log V\right)$ for some nowhere-vanishing $V$ with $E_1(V) = 0$. Integrating along the commuting flows of $E_1$, $E_3$, and relabeling $U$ or $V$ if necessary, yields $\phi = \log(V) - \log(U)\Leftrightarrow {\d\omega(E_3)}/{\d\omega(E_1)} = {U}/{V}$.
    Equivalently, 
    \begin{equation}\label{eq:Y:omega}
        U\,\mathrm{d}\omega(E_1) - V\,\d\omega(E_3) = 0,\qquad\Leftrightarrow\qquad
        \d\omega(U E_1 - V E_3) = 0.
    \end{equation}
    This means that $\omega$ along the vector field $Y := \left(U E_1 - V E_3\right)$ is constant. Now, consider the vector field $X := \left(U E_1 + V E_3\right)$\footnote{Note that $X$ and $Y$ are orthogonal with respect to the second fundamental form of the K-net.}. Since $\omega$ is a solution of the sine-Gordon equation, we rewrite the aforementioned equation according to these two vector fields and since $\d\omega(Y) = 0$ we get:
    \begin{equation*}
        \sin{(\omega)} = \Hess\omega\,(E_1,E_3) = \Hess\omega\,\left(\frac{X+Y}{2U},\frac{X-Y}{2V}\right) = \frac{1}{4UV}\Hess\omega\,(X,X).
    \end{equation*}
    Now, since $X$ and $Y$ commute (i.e. $\mathcal{L}_XY = 0$), applying $Y$ to the sides of the above equation results in $0 = Y\,\left(UV\right)$ which itself gives $\d U(E_1) = \d V(E_3)$. Now, one of the following two scenarios can happen:
    \begin{enumerate}
        \item If locally we have $\d U(E_1) = \d V(E_3) \neq 0$ then according to Frobenius theorem through every point $p$ there are two smooth embedded curves whose tangents at each point are $\ker\d U$ and $\ker\d V$ respectively. Equivalently, they are the level sets of $U$ and $V$. In particular, pick any $p \in \Omega$. Let $U(p) = c$ where $c$ is constant. Now, define $L_2 = U^{-1}(U-c) = U^{-1}(0)$. We can repeat the same process for $V$. Hence, we get two lines $L_1$ and $L_2$ in $\Omega$ such that $U|_{L_2} = V|_{L_1} = 0$. Consequently, for every $p \in L_1$ we get
        \begin{equation*}
            U(p)\,\d\omega_p(E_1) = V(p)\,\d\omega_p(E_3) = 0,\qquad\implies\qquad \kappa^g_1 = -\d\omega_p(E_1) = 0,
        \end{equation*}
        In a similar way, by choosing $p \in L_2$ we get $\kappa^g_3 = \d\omega_p(E_3) = 0$. This observation in addition to the fact that $L_1$ and $L_2$ were integral curves to asymptotic directions implies that $L_1$ and $L_2$ are straight lines. On the other hand our assumptions give $T_p L_2 = \ker\d U_p \supset \space\{E_1(p)\}$ and $T_p L_1 = \ker\d V_p \supset \space\{E_3(p)\}$.
        Therefore, we get 
        \begin{equation*}
            \d U \wedge \d V (E_1,E_3) = \d U(E_1)\,\d V(E_3) = c^2 >0, 
        \end{equation*}
        which means that the two lines $L_1$ and $L_2$ just meet transversely and there fore are non-parallel. There our K-surface contains two non-parallel straight lines making it an \emph{Amsler surface} (see Example \cite{BobenkoAmsler}).
        \item If locally we have $\d U(E_1) = \d V(E_3) = 0$ then as we had $\d U(E_3) = \d V(E_1) = 0$ we get $\d U = \d V = 0$ meaning that $U$ and $V$ are simply constants on $\Omega$. Since $\d\omega(Y) = 0$ we get
        \begin{equation*}
            \begin{aligned}
                \frac{1}{2}\mathcal{L}_Y I &= \d U\otimes\theta^1 + 
                \cos{(\omega)}\,\left(\d U \otimes \theta^3 - \theta^1 \otimes \d V\right)
                -\d V\otimes\theta^3 - \d\omega(Y)\,\sin{(\omega)}\,\theta^1 \otimes\theta^3 = 0,\\
                \frac{1}{2}\mathcal{L}_Y II &= \d\omega(Y)\,\cos{(\omega)}\,\theta^1\otimes\theta^3 + \sin{(\omega)}\,\d U\otimes \theta^3 - \sin{(\omega)}\,\theta^1\otimes \d V = 0,
            \end{aligned}
        \end{equation*}
        meaning that by Theorem\,\ref{theorem:B:nets} the corresponding K-surface is a helicoidal surface. \qedhere
    \end{enumerate}
\end{proof}
At this stage we differentiate between alignable V-surfaces (V-nets) that serve as rotation fields of helicoidal K-surfaces and those that serve as rotation fields of Amsler surfaces and we call them \emph{alignable V-surfaces (V-nets) of the first} and \emph{second kind} respectively.
\begin{figure}[t!]
    \centering
    \begin{subfigure}[b]{0.32\textwidth} 
        \centering
        \resizebox{!}{45mm}{%
        \begin{tikzpicture}[x=1cm,y=1cm,line cap=round,line join=round]
            \fill[lightgray] (1,1) rectangle (2,2);
    
            \draw (1,0) -- (1,3);
            \draw (2,0) -- (2,3);
            \draw (0,1) -- (3,1);
            \draw (0,2) -- (3,2);
    
            \def\a{0.20}  
            \def\b{0.80}  
    
            \draw[<->, >=Latex, line width=0.5pt]
                ($(1,2)!\a!(2,2)$) -- ($(1,2)!\b!(2,2)$); 
            \draw[<->, >=Latex, line width=0.5pt]
                ($(1,1)!\a!(1,2)$) -- ($(1,1)!\b!(1,2)$); 
            \draw[<->, >=Latex, line width=0.5pt]
                ($(1,1)!\a!(2,1)$) -- ($(1,1)!\b!(2,1)$); 
            \draw[<->, >=Latex, line width=0.5pt]
                ($(2,1)!\a!(2,2)$) -- ($(2,1)!\b!(2,2)$); 
    
            \begin{scope}[shift={(-0.2,-0.2)}]
                \draw[-{Latex[length=2.2mm]}] (0,0) -- (1.0,0) node[below] {$E_1$};
                \draw[-{Latex[length=2.2mm]}] (0,0) -- (0,1.0) node[left]  {$E_3$};
            \end{scope}
    
            \node at (1.5,1.5) {$R$};
            \node at (.8,.8) {$A$};
            \node at (.8,2.2) {$D$};
            \node at (2.2,.8) {$B$};
            \node at (2.2,2.19) {$C$};
        \end{tikzpicture}%
        }
        \caption{Chebyshev}
    \end{subfigure}
    \hfill
    \begin{subfigure}[b]{0.32\textwidth} 
        \centering
        \resizebox{!}{45mm}{%
        \begin{tikzpicture}[x=1cm,y=1cm,line cap=round,line join=round]
            \fill[lightgray] (1,1) rectangle (2,2);
            \draw (1,0) -- (1,3);
            \draw (2,0) -- (2,3);
            \draw (0,1) -- (3,1);
            \draw (0,2) -- (3,2);
            \usetikzlibrary{calc}
            \draw[->, >=Latex, line width= 0.5pt]
                ($(1,2)!0.35!(2,2)$) -- ($(1,2)!0.65!(2,2)$);
            \draw[->, >=Latex, line width= 0.5pt]
                ($(1,1)!0.35!(1,2)$) -- ($(1,1)!0.65!(1,2)$);
            \draw[->, >=Latex, line width= 0.5pt]
                ($(1,1)!0.35!(2,1)$) -- ($(1,1)!0.65!(2,1)$);
            \draw[->, >=Latex, line width=0.5pt]
                ($(2,1)!0.35!(2,2)$) -- ($(2,1)!0.65!(2,2)$);
            \begin{scope}[shift={(-0.2,-0.2)}]
                \draw[-{Latex[length=2.2mm]}] (0,0) -- (1.0,0) node[below] {$\bar{E}_1$};
                \draw[-{Latex[length=2.2mm]}] (0,0) -- (0,1.0) node[left]  {$\bar{E}_3$};
            \end{scope}
            \node at (1.5,1.5) {$R$};
            \node at (.8,.8) {$A$};
            \node at (.8,2.2) {$D$};
            \node at (2.2,.8) {$B$};
            \node at (2.2,2.19) {$C$};
        \end{tikzpicture}%
        }
        \caption{Positive-alignable}
    \end{subfigure}
    \hfill
	\begin{subfigure}[b]{0.32\textwidth} 
        \centering
        \resizebox{!}{45mm}{%
        \begin{tikzpicture}[x=1cm,y=1cm,line cap=round,line join=round]
            \fill[lightgray] (1,1) rectangle (2,2);
            \draw (1,0) -- (1,3);
            \draw (2,0) -- (2,3);
            \draw (0,1) -- (3,1);
            \draw (0,2) -- (3,2);
            \usetikzlibrary{calc}
            \draw[<-, >=Latex, line width= 0.5pt]
                ($(1,2)!0.35!(2,2)$) -- ($(1,2)!0.65!(2,2)$);
            \draw[->, >=Latex, line width= 0.5pt]
                ($(1,1)!0.35!(1,2)$) -- ($(1,1)!0.65!(1,2)$);
            \draw[<-, >=Latex, line width= 0.5pt]
                ($(1,1)!0.35!(2,1)$) -- ($(1,1)!0.65!(2,1)$);
            \draw[->, >=Latex, line width=0.5pt]
                ($(2,1)!0.35!(2,2)$) -- ($(2,1)!0.65!(2,2)$);
            \begin{scope}[shift={(3.2,-0.2)}]
                \draw[-{Latex[length=2.2mm]}] (0,0) -- (-1.0,0) node[below] {$\bar{E}_1$};
                \draw[-{Latex[length=2.2mm]}] (0,0) -- (0,1.0)  node[right]  {$\bar{E}_3$};
            \end{scope}
            \node at (1.5,1.5) {$R$};
            \node at (.8,.8) {$A$};
            \node at (.8,2.2) {$D$};
            \node at (2.2,.8) {$B$};
            \node at (2.2,2.19) {$C$};
        \end{tikzpicture}%
        }
        \caption{Negative-alignable}
    \end{subfigure}
    \caption{Illustration of Chebyshev, positive-alignable and negative-alignable nets in the parameter domain $\Omega$.}
    \label{fig:alignable}
    \end{figure}
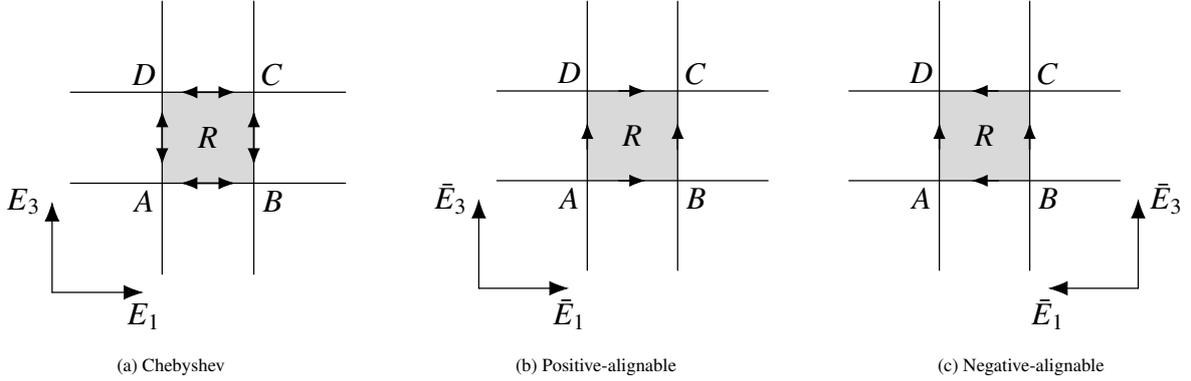
\begin{remark}\label{remark:negative:alignability}
    For a fixed K-frame $(E_1,E_3)$ it can be that a Voss net instead of being alignable with the condition $l^1_{AB} + l^3_{BC} = l^3_{AD} + l^1_{DC}$ would be alignable with $l^1_{BA} + l^3_{AD} = l^3_{BC} + l^1_{CD}$. W.l.o.g such an alignability is corresponding to $(-E_1,E_3)$. As a result of this we call the former a positive-alignable V-net and the latter a negative-alignable V-net (see Fig~\ref{fig:alignable}).
\end{remark}
In the proof of Proposition~\ref{prop:main}, we observed that \(Y\) is an \(I\)–\(II\) Killing field on helicoidal K-surfaces. Based on these observations, we obtain the following crucial corollary:
\begin{corollary}\label{coro:b:c}
    Let $V$ be an alignable V-surface and let $Y = U E_1 - V E_3$ be as before. Then the pair $(b^+,c^+)$ and $(b^-,c^-)$ corresponding to the positive- and negative-alignable respectively have the following form:
    \begin{equation}\label{eq:b:c:general}
            (b^+,c^+ ) = \left(\frac{C}{V}\,\csc^2{\left(\frac{\omega}{2}\right)},\frac{C}{U}\,\csc^2{\left(\frac{\omega}{2}\right)}\right),\qquad\qquad
            (b^-,c^-) = \left(\frac{C}{V}\,\sec^2{\left(\frac{\omega}{2}\right)},-\frac{C}{U}\,\sec^2{\left(\frac{\omega}{2}\right)}\right),
    \end{equation}
    where $C$ is a constant and $E1(U) = E_3(V) = k = \const$ and $E_3(U) = E_1(V) = 0$. Furthermore, if $V$ is of the 
    \begin{itemize}
        \item first kind then $k = 0$ and $\mathcal{L}_Y I_V = \mathcal{L}_Y II_V = 0$,
        \item second kind then $k \neq 0$ and $\mathcal{L}_Y I_V = 0$.
    \end{itemize}
\end{corollary}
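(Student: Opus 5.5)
Start from the relations already obtained in the proof of Proposition~\ref{prop:main}. Alignability gives $b\,\d\omega(E_1)=c\,\d\omega(E_3)$ (Lemma~\ref{lem:alignability:cond:II}). Together with \eqref{eq:Y:omega}, i.e.\ $\d\omega(E_3)/\d\omega(E_1)=U/V$, this yields $b/c=U/V$ on the open set where $\d\omega(E_1)\neq0$, and hence everywhere by continuity. The plan is to integrate \eqref{eq:codazzi:III} in the positive case to get explicit formulas for $b$ and $c$, then fix the free functions using $b/c=U/V$ together with the facts about $U,V$ established in that proof.

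\textbf{Integration in the positive case.} Since $1+\cos\omega=2\cos^2(\omega/2)$, equation \eqref{eq:codazzi:III} says that $b(1+\cos\omega)$ is annihilated by $E_1$, and $c(1+\cos\omega)$ is annihilated by $E_3$. Because $E_1,E_3$ commute (Remark~\ref{remark:K:frame:coordinates}), we may work in canonical coordinates $(u,v)$. This gives $b=\beta(v)\sec^2(\omega/2)$ and $c=\gamma(u)\sec^2(\omega/2)$. Substituting into $b/c=U/V$, with $U=U(u)$ and $V=V(v)$, gives $\beta(v)V(v)=\gamma(u)U(u)$, a separated equation. Hence both sides equal a constant $C$, so $b=\frac{C}{V}\sec^2(\omega/2)$ and $c=\frac{C}{U}\sec^2(\omega/2)$.

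\textbf{Matching the stated normalisation.} The statement assigns $\csc^2(\omega/2)$ to the positive case. Replacing $\omega$ by $\pi-\omega$, i.e.\ reversing $E_1$, swaps $\sec^2(\omega/2)$ and $\csc^2(\omega/2)$ (cf.\ Corollary~\ref{coro:angle}). So the convention must be fixed once and checked against \eqref{eq:codazzi:II}. For the other sign, pass to the frame $(-E_1,E_3)$ of Remark~\ref{remark:negative:alignability}. This flips the signs of $\theta^1$, of $U$ and of $\d\omega(E_1)$, and replaces $\omega$ by $\pi-\omega$. Redoing the same integration, now using $\csc\omega-\cot\omega=\tan(\omega/2)$, produces the other trigonometric factor and the minus sign on $c$. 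The main obstacle is this sign and trigonometric bookkeeping: I would verify it directly by plugging both candidate pairs into \eqref{eq:codazzi:V:I:II} and \eqref{eq:alignability:cond:II}.

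\textbf{The conditions on $U$, $V$ and $k$.} The facts $E_3(U)=E_1(V)=0$ and $E_1(U)=E_3(V)$ come from the proof of Proposition~\ref{prop:main}. This common value $k$ must then be shown constant, which I would do as follows. Since $U=U(u)$, $E_1(U)=U'(u)$ depends only on $u$; since $V=V(v)$, $E_3(V)=V'(v)$ depends only on $v$. Their equality forces both to be a constant $k$. In the first kind we are in case 2 of that proof, so $k=0$ and $U,V$ are constants. Then $\mathcal{L}_Y$ kills $\theta^1$, $\theta^3$ and $\omega$, as well as $b$ and $c$, which are functions of $\omega$ alone. Applying this to \eqref{eq:I:II:sigma} gives $\mathcal{L}_YI_V=\mathcal{L}_YII_V=0$. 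In the second kind $k\neq0$. Writing $I_V$ via $b,c$, we use $\mathcal{L}_Y\theta^1=\d U$ and $\mathcal{L}_Y\theta^3=-\d V$, and differentiate $1/U$ and $1/V$ along $Y$. For the coefficient of $\theta^1\otimes\theta^1$, the $C^2/U^2$ factor and the $2\theta^1\otimes \d U/U$ term cancel, since $Y(1/U^2)=-2k/U^2$ and $\d U=k\,\theta^1$. The mixed and $\theta^3\otimes\theta^3$ terms cancel similarly, giving $\mathcal{L}_YI_V=0$. For $II_V$ the analogous terms scale only to first order in $1/U$ and fail to cancel, which is why only $I_V$ is $Y$-invariant in this case.
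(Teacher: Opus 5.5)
The overall plan matches the paper's, but the positive-case integration fails as written. The plan is to integrate the alignable Codazzi system along the two flows, fix the free functions through the separated relation $\beta(v)V(v)=\gamma(u)U(u)$, and compute $\mathcal{L}_Y$ of the fundamental forms.

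The failure comes from a misprint. You integrated \eqref{eq:codazzi:III} at face value, and that display is misprinted in the paper. From \eqref{eq:codazzi:II} one has $\d(\log b)(E_1)=-\cot(\omega/2)\,\d\omega(E_1)$. Since $\d\log(1-\cos\omega)=\cot(\omega/2)\,\d\omega$ whereas $\d\log(1+\cos\omega)=-\tan(\omega/2)\,\d\omega$, the $E_1$-invariant quantity is $b\,(1-\cos\omega)$, not $b\,(1+\cos\omega)$. So the correct outcome is $b=\beta(v)\csc^2(\omega/2)$ and $c=\gamma(u)\csc^2(\omega/2)$, and your positive pair with $\sec^2(\omega/2)$ is wrong.

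Your proposed reconciliation does not repair this. Replacing $\omega$ by $\pi-\omega$, i.e.\ reversing $E_1$, is not a free normalisation inside the positive case. By Remark~\ref{remark:negative:alignability} it is exactly what converts positive into negative alignability, so it cannot turn a $\sec^2$ into a $\csc^2$ for the same alignability type.

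The clean route is to work directly from \eqref{eq:codazzi:V:I:II}:
\begin{itemize}
\item Under $b\,\d\omega(E_1)=c\,\d\omega(E_3)$ the coefficient is $\csc\omega+\cot\omega=\cot(\omega/2)$, which gives $\csc^2(\omega/2)$.
\item Under the negative condition $b\,\d\omega(E_1)=-c\,\d\omega(E_3)$ it is $\csc\omega-\cot\omega=\tan(\omega/2)$, which gives $\sec^2(\omega/2)$.
\item In the negative case, the separation step together with $U\,\d\omega(E_1)=V\,\d\omega(E_3)$ forces $\beta=C/V$ and $\gamma=-C/U$.
\end{itemize}
This is essentially what the paper does; it integrates afresh in the V-frame rather than via \eqref{eq:codazzi:III}. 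The direct check you mention at the end would have caught the error, but as written your derivation contradicts the statement.

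Everything else is sound, and in two places it is more explicit than the paper. First, your separation argument ($U'(u)=V'(v)$, hence a constant $k$) supplies the constancy of $k$, which the paper leaves implicit. Second, your direct computation of $\mathcal{L}_Y I_V$ from \eqref{eq:I:II:sigma} actually exhibits the cancellation. It uses $\mathcal{L}_Y\theta^1=\d U=k\,\theta^1$, $\mathcal{L}_Y\theta^3=-\d V=-k\,\theta^3$, $Y(1/U)=-k/U$ and $Y(1/V)=k/V$.

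The paper instead goes through the pull-back identities \eqref{eq:LY:IV}--\eqref{eq:LY:IIV} with $\mathcal{L}_Y A$ and only states that the terms cancel. In the second kind $\mathcal{L}_Y I_K\neq 0$; for instance it equals $2(\d u\otimes\d u-\d v\otimes\d v)$ when $Y=u\partial_u-v\partial_v$. So the cancellation there is genuine, and your coordinate computation is the more convincing one. In the first kind, $k=0$ makes $\theta^1,\theta^3,\omega,b,c$ all $Y$-invariant, exactly as you say.
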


\begin{proof}
    $V$ being alignable implies that $\d \omega(\Bar{E}_1) = \d \omega(\Bar{E}_3)$ (see Lemma\,\ref{lem:alignability:cond:II}). As a result of this, the Codazzi equations of \eqref{eq:Codazzi:V:frame} in the V-frame are
    \begin{equation*}
        \d(\log b)(\Bar{E}_1) = -\left(\csc{(\omega)} + \cot{(\omega)}\right)\,\d\omega(\Bar{E}_1),\qquad\qquad
        \d(\log c)(\Bar{E}_3) = -\left(\csc{(\omega)} + \cot{(\omega)}\right)\,\d\omega(\Bar{E}_3).
    \end{equation*}
    solving them along $\Bar{E}_1$-flow and $\Bar{E}_3$-flow respectively gives $b = \tilde{V}\,\csc^2{\left({\omega}/{2}\right)}$ and $c = \tilde{U}\,\csc^2{\left({\omega}/{2}\right)}$ where $\Bar{E}_1(\tilde{V}) = \Bar{E}_3(\tilde{U}) = 0$. Equivalently, due to \eqref{eq:e1:e3:bare1:bare3} gives $E_1(\tilde{V}) = E_3(\tilde{U}) = 0$. Substituting $b$ and $c$ back to the relation $\d \omega(\Bar{E}_1) = \d \omega(\Bar{E}_3)$ gives
    \begin{equation}\label{eq:Vomega_u:Uomega_v}
        \tilde{V}\,\d\omega(E_1) = \tilde{U}\,\d\omega(E_3).
    \end{equation}
    On the other hand we have $\d\omega(Y) = 0$. Substituting it in \eqref{eq:Vomega_u:Uomega_v} gives $V\,\tilde{V} = U\,\tilde{U}$ which is a separable equation. Consequently, we get $\tilde{V} = C\,V^{-1}$ and $\tilde{U} = C\,U^{-1}$. Considering the fact that the negative alignable ones correspond to $(-E_1,E_3)$ multiplying $c^+$ by $-1$ and substituting $\omega$ by $\pi-\omega$ in $(b^+,c^+)$ we get the negative alignable pair $(b^-,c^-)$.
    
    Now, using \eqrefi{eq:I:II:K}{1} and \eqrefi{eq:I:II:K}{2} we get
    \begin{align}
        \left(\mathcal{L}_Y\,I_V\right)\left(Z_1,Z_2\right) &= \left(\mathcal{L}_Y\,I_K\right)\left(A\,Z_1,A\,Z_2\right) + I_K\left((\mathcal{L}_Y A)\,Z_1,A\,Z_2\right) + I_K\left(A\,Z_1,(\mathcal{L}_Y A)\,Z_2\right),\label{eq:LY:IV}\\
        \left(\mathcal{L}_Y\,II_V\right)\left(Z_1,Z_2\right) &= \left(\mathcal{L}_Y\,II_K\right)\left(A\,Z_1,Z_2\right) + II_K\left((\mathcal{L}_Y A)\,Z_1,Z_2\right),\label{eq:LY:IIV}
    \end{align}
    for arbitrary $Z_1, Z_2 \in \mathfrak{X}(\Omega)$. W.l.o.g we show the calculations for $(b^+,c^+)$. We have $Y(b) = k\,b$ and $Y(c) = -k\,c$ and hence it gives
    \begin{equation}\label{eq:LY:A:+}
        \mathcal{L}_Y A^+ = k\,b^+\left(-E_1\otimes\theta^3 +E_3\otimes\theta^1 \right).
    \end{equation}
    Now, by putting $(Z_1,Z_2) = (E_1,E_1)$, $(E_1,E_3)$ and $(E_3,E_3)$ one can easily observe that \eqref{eq:LY:IV} vanishes. If the V-surface is of the first kind then $k = 0$ and hence $\mathcal{L}_Y A^+ = 0$, and since $\mathcal{L}_Y II_K = 0$ then $\eqref{eq:LY:IIV}$ vanishes.
\end{proof}

Finally, plugging \eqref{eq:b:c:general} to \eqrefi{eq:e1:e3:bare1:bare3}{1} and \eqrefi{eq:e1:e3:bare1:bare3}{2} we get that the coordinate frame
\begin{equation}\label{eq:X:Y}
    X = U\,E_1 + V\,E_3,\qquad\qquad
    Y = U\,E_1 - V\,E_3,
\end{equation}
in the K-frame $(E_1,E_3)$ corresponds to
\begin{equation}\label{eq:X:Y:pm}
    (X^+,Y^+) = C\,\csc^2\!\left(\frac{\omega}{2}\right)\left(\bar{E}_1 + \bar{E}_3,\bar{E}_1 - \bar{E}_3\right),\qquad
    (X^-,Y^-) = -C\,\sec^2\!\left(\frac{\omega}{2}\right)\left(\bar{E}_1 - \bar{E}_3,\bar{E}_1 + \bar{E}_3\right).
\end{equation}
in the V-frame $(\bar{E}_1,\bar{E}_3)$. This implies that the Bour frame (which is obtained after normalizing $(X^\pm,Y^\pm)$) on an alignable V-surface is the bisector of the V-frame.

\section{Alignable V-surfaces of the First Kind}\label{sec:1st:kind}
Before turning to the classification of alignable V-surfaces of the first kind, we extend the notion of \emph{Bour isometries} and describe the \emph{Bour family} in greater detail at the level of the fundamental forms.
\begin{proposition}\label{prop:bour:family}
    Let \(P\) be a helicoidal surface element, and let \((E_1,E_2)\) be its Bour frame. Then \(P\) is contained in a two-parameter family of mutually isometric surface elements \(\{P^{s,t}\}\), called the \emph{Bour family}. The members of this family are called \emph{Bour isometries} and are characterized by the following fundamental forms:
    \begin{equation}\label{eq:I:II:u}
        I = \theta^1\otimes\theta^1 + \theta^2\otimes\theta^2,\quad\quad\quad\quad
        {II} = L\,\theta^1\otimes\theta^1 + 2\,M\,\theta^1 \odot \theta^2 + N\,\theta^2\otimes\theta^2,
    \end{equation}
    with the components of the second fundamental form being:
    \begin{equation}\label{eq:L:N:u}
        L=\frac{G_1^2+4t^2-2G\,G_{11}}{2G\,\sqrt{\,4sG-4t^2-G_1^2\,}},\qquad
        M=\frac{t}{G},\qquad
        N=\pm\frac{1}{2G}\sqrt{\,4sG-4t^2-G_1^2\,}.
    \end{equation}
    where $G = I(Y,Y)$, $G_1 := \d G(E_1)$ and $G_{11} = \Hess G(E_1,E_1)$.
\end{proposition}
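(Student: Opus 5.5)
Every helicoidal surface element, and every surface isometric to one, is to be treated through the Gauss--Codazzi system written in the Bour coframe. Lemma~\ref{lem:parallel:geodesics} gives $\kappa^g_1=0$, $\kappa^g_2=\mathrm{d}G(E_1)/(2G)$ and $\mathrm{d}G(E_2)=0$, so $G$ depends only on the $E_1$-direction. The first fundamental form is $I=\theta^1\otimes\theta^1+\theta^2\otimes\theta^2$ with $\theta^2=\sqrt{G}\,\mathrm{d}v$ in coordinates $(X,Y)=(\partial_u,\partial_v)$. By Theorem~\ref{theorem:B:nets} the condition $\mathcal{L}_YII=0$ means that $L,M,N$ are constant along $Y$, i.e.\ functions of $u$ alone.

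The plan is to write the Codazzi equations \eqref{eq:codazzi:cartan} and the Gauss equation \eqref{eq:original:gauss} with connection forms $\Theta_{12}=\kappa^g_2\theta^2$, $\Theta_{1n}=L\theta^1+M\theta^2$ and $\Theta_{2n}=M\theta^1+N\theta^2$. Using $\mathrm{d}\theta^1=0$ and $\mathrm{d}\theta^2=\kappa^g_2\,\theta^1\wedge\theta^2$, and the fact that all coefficients depend only on $u$, the two Codazzi equations should reduce to ordinary differential equations in $u$. The first should read $(\sqrt{G}\,M)'+\tfrac{G_1}{2\sqrt{G}}M=0$, i.e.\ $(GM)'=0$ (in terms of the derivative along $E_1$), which yields $M=t/G$ with a first constant $t$. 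The second should read $N'+\tfrac{G_1}{2G}(N-L)=0$, i.e.\ $L=N+2GN'/G_1$. Substituting this into the Gauss equation $LN-M^2=\mathrm{K}=-(\sqrt{G})''/\sqrt{G}$ gives a first-order linear ODE for $N^2$:
\[
(N^2)'\,\frac{G}{G_1}+N^2=\frac{t^2}{G^2}-\frac{2GG_{11}-G_1^2}{4G^2}.
\]
Multiplying by the integrating factor $G$, which turns the left side into $G\,(GN^2)'/G_1$, should make it exact. Integrating, with a second constant $s$, should produce $4G^2N^2=4sG-4t^2-G_1^2$, which is \eqref{eq:L:N:u}$_3$. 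Back-substituting into $L=N+2GN'/G_1$ should then give the stated $L$.

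Conversely, for each admissible $(s,t)$ the triple $(I,II)$ satisfies Gauss--Codazzi by construction. The fundamental theorem of surfaces then yields a surface element $P^{s,t}$ isometric to $P$. Its second fundamental form is invariant along $Y$, so it is again helicoidal by Theorem~\ref{theorem:B:nets}, and $P$ itself corresponds to some particular $(s,t)$.

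The main obstacle I expect is bookkeeping: getting the signs and factors of $\sqrt{G}$ right when converting between $\theta^2$ and $\mathrm{d}v$. I will check the result against the surface-of-revolution case $t=0$, where $N=\pm\sqrt{4sG-G_1^2}/(2G)$ should reproduce the classical profile relation. The integrating-factor step also needs care, since a wrong power of $G$ there would spoil exactness.
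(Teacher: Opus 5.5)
Your proposal is correct and follows essentially the paper's route: you write the Gauss--Codazzi system in the Bour coframe, get $M=t/G$ from the first Codazzi equation, and combine the second Codazzi equation with the Gauss equation to reach the first integral $GN^2=s-t^2/G-G_1^2/(4G)$. The differences are cosmetic. The paper multiplies the second Codazzi equation by $2GN$ to get $E_1(GN^2)=G_1LN$ and then reads $L$ off the Gauss equation, so it never divides by $G_1$ as your formula $L=N+2GN'/G_1$ does. You also sketch the converse (existence of $P^{s,t}$ via the fundamental theorem of surfaces), which the paper leaves implicit.
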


\begin{proof}
    From Lemma\,\ref{lem:parallel:geodesics} we have
    \begin{equation}\label{eq:g1:g2}
        \d\theta^1 = \kappa^g_1\,\theta^1\wedge\theta^2 = 0,\qquad\qquad
        \d\theta^2 = \kappa^g_2\,\theta^1\wedge\theta^2 = \frac{\d G(E_1)}{2 G}\,\theta^1\wedge\theta^2,
    \end{equation}
    Since $\Theta_{1n} = L\,\theta^1 + M\,\theta^2$ and $\Theta_{2n} = M\,\theta^1 + N\,\theta^2$, using \eqref{eq:g1:g2} we get the following from the Codazzi relations (see \eqref{eq:codazzi:cartan}):
    \begin{equation*}
        \begin{aligned}
            \left(\d M(E_1) + M\,\frac{\d G(E_1)}{2 G}\right)\,\theta^1\wedge\theta^2 = \d\Theta_{1n} &= -\Theta_{12}\wedge\Theta_{2n} = - \frac{\d G(E_1)}{2 G}\,M\,\theta^1\wedge\theta^2,\\
            \left(\d N(E_1) + N\,\frac{\d G(E_1)}{2 G}\right)\,\theta^1\wedge\theta^2 = \d\Theta_{2n} &= \Theta_{12}\wedge\Theta_{1n} = L\,\frac{\d G(E_1)}{2 G}\,\theta^1\wedge\theta^2.
        \end{aligned}
    \end{equation*}
    On the other hand, the Gauss equation gives (see \eqref{eq:original:gauss}):
    \begin{equation*}
        -\left(\d\left(\frac{\d G(E_1)}{2\,G}\right)(E_1) + \left(\frac{\d G(E_1)}{2\,G}\right)^2\right)\,\theta^1\wedge\theta^2= 
        \d\Theta_{12} = -\Theta_{1n}\wedge\Theta_{2n} = (LN - M^2)\,\theta^1\wedge\theta^2.
    \end{equation*}
    Consequently, we arrive at the following system
    \begin{equation}\label{eq:sys:bour}
        M_1 + \frac{G_1}{G}\,M = 0,\qquad
        N_1 + \frac{G_1}{2\,G}\,N = \frac{G_1}{2\,G}\,L,\qquad
        LN - M^2 = \frac{G_1^2 - 2\,G\,G_{11}}{4\,G^2}.
    \end{equation}
    The equation \eqrefi{eq:sys:bour}{1} is a linear ODE in $M$ which upon solving yields
    \begin{equation}\label{eq:M:u}
        M = \frac{t}{{G}}.
    \end{equation} 
    Expand \eqrefi{eq:sys:bour}{2} by multiplying the sides with $2\,G\,N$, then we get $E_1\left(G\,N^2\right) = G_1\,L\,N$. This implies $E_1\left(G\,N^2\right) = G_1\,\left({G_1^2 + 4t^2 -2\,G\,G_{11}}/{(4G^2)}\right)$ which is the result of substituting \eqref{eq:M:u} into \eqrefi{eq:sys:bour}{3} and getting an expression for $LN$. At this point one convenient first integral trick is to observe that $E_1\left(G\,N^2\right) = E_1\left( -{G_1^2}/{(4 G)} - {t^2}/{G} \right)$. Integrating it gives a constant $s \in \mathbb{R}$ and we get $GN^2 = s - {G_1^2}/{(4G)} - {t^2}/{G}$.
    Solving it for $N$, we get the expression for $N$ in \eqref{eq:L:N:u}. Finally, having $N$ and $M$, from Gauss equation of \eqrefi{eq:sys:bour}{3} we get the expression for $L$ as in \eqref{eq:L:N:u}.
\end{proof}

We call the the members of $\{P^{s,0}\}$ and $\{P^{1,t}\}$, \emph{wrapping} and \emph{sliding} (Bour) isometries respectively. Geometrically, the wrapping isometries preserve the geodesic torsion of the integral curves of the $I-II$ vector field, whereas the sliding isometries preserve the normal turning speed of the Gauss map along those curves. The latter is the sum of squared normal curvature and geodesic torsion.
\begin{lemma}[\cite{izmestiev}]\label{lem:DotII}
    Let $\xi$ be an infinitesimal isometric deformation of an immersion $\sigma$ with the rotation field $\eta$, and let $\sigma^t$ be any (not necessarily isometric) deformation of $\sigma$. Then the derivatives at $t=0$ of the fundamental forms of $\sigma^t$ are as follows:
    \begin{equation}\label{eq:It:IIt}
        \left. \frac{d}{dt} \right|_{t=0}\mathbf{I}_{\sigma^t} = 0, \qquad\qquad \left. \frac{d}{dt} \right|_{t=0}\mathbf{II}_{\sigma^t} = - \sqrt{\det (\mathbf{I}_\sigma)}\,\mathbf{A}\,\mathbf{J},
    \end{equation}
    where $\mathbf{A}$ is the rotation operator, and $\mathbf{J}$ is the matrix of rotation by $\frac{\pi}2$.
\end{lemma}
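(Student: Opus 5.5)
The plan is to differentiate the basic relations of Theorem~\ref{theorem:IID:fundamnetals} along the deformation. Write $\psi^t$ for the deformation and use the frame $(\partial_u,\partial_v)$ with normal $\nu$. Everything reduces to computing $\dot{\mathbf{II}} := \tfrac{d}{dt}\big|_{t=0}\mathbf{II}_{\sigma^t}$, since the first-order change of the first fundamental form is governed by \eqref{eqn:IID}.

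\textbf{First fundamental form.} The entries of $\mathbf{I}_{\sigma^t}$ are $\langle \d\sigma^t(X),\d\sigma^t(Y)\rangle$. Their $t$-derivative at $0$ is $\langle \d\xi(X),\d\sigma(Y)\rangle+\langle \d\sigma(X),\d\xi(Y)\rangle$, and this vanishes by \eqref{eqn:IID}. The deformation need only have $\xi$ as its velocity field to first order, so nothing beyond the IID property is needed here.

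\textbf{Second fundamental form.} Write $\mathbf{II}_{ij}=\langle \sigma_{ij},\nu\rangle$ and differentiate:
\[
\dot{\mathbf{II}}_{ij}=\langle \xi_{ij},\nu\rangle+\langle \sigma_{ij},\dot\nu\rangle .
\]
From \eqref{eq:gather:II} we have $\xi_i=\eta\times\sigma_i$, hence $\xi_{ij}=\eta_j\times\sigma_i+\eta\times\sigma_{ij}$.

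Next, determine $\dot\nu$. Differentiating $\langle\nu,\nu\rangle=1$ and $\langle\nu,\sigma_i\rangle=0$ gives $\langle\dot\nu,\nu\rangle=0$ and $\langle\dot\nu,\sigma_i\rangle=-\langle\nu,\eta\times\sigma_i\rangle$. Since $\langle\nu,\eta\times\sigma_i\rangle=\langle \eta\times\nu,\sigma_i\rangle$, this yields $\dot\nu=\eta\times\nu$: the tangent plane rotates infinitesimally about $\eta$.

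Substituting, the terms $\langle\eta\times\sigma_{ij},\nu\rangle+\langle\sigma_{ij},\eta\times\nu\rangle$ cancel by antisymmetry of the triple product. What remains is
\[
\dot{\mathbf{II}}_{ij}=\langle \eta_j\times\sigma_i,\nu\rangle=\det(\eta_j,\sigma_i,\nu).
\]

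\textbf{Matrix form.} By Lemma~\ref{lem:parallel:tangent:space}, $\eta_j=\sum_k \sigma_k A_{kj}$. Therefore
\[
\det(\eta_j,\sigma_i,\nu)=\sum_k A_{kj}\det(\sigma_k,\sigma_i,\nu).
\]
The factor $\det(\sigma_k,\sigma_i,\nu)$ equals $\sqrt{\det\mathbf{I}}\,\epsilon_{ki}$, where $\epsilon$ is the Levi-Civita symbol. Thus $\dot{\mathbf{II}}_{ij}=\sqrt{\det\mathbf{I}}\sum_k \epsilon_{ki}A_{kj}$, which is, up to sign, $\sqrt{\det\mathbf{I}}\,(\mathbf{J}\mathbf{A})_{ij}$ or its transpose.

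This is the step where care is needed: the exact placement of $\mathbf{A}$ relative to $\mathbf{J}$ and the overall sign must come out as $-\sqrt{\det\mathbf I}\,\mathbf A\mathbf J$. I would resolve it as follows.

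1. Use that $\dot{\mathbf{II}}$ is symmetric, because it is the derivative of a symmetric form.
2. Use that $\operatorname{tr}\mathbf{A}=0$. For a traceless $2\times2$ matrix, $\mathbf{J}\mathbf{A}$ and $(\mathbf{A}\mathbf{J})^\top$ agree up to sign.
3. Fix the convention $\mathbf{J}=\begin{pmatrix}0&-1\\1&0\end{pmatrix}$ and check one entry explicitly, e.g. $(i,j)=(1,1)$, which gives $-\sqrt{\det\mathbf I}\,A_{21}$. This pins down the claimed form.
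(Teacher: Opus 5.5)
Your computation up to $\dot{\mathbf{II}}_{ij}=\det(\eta_j,\sigma_i,\nu)$ is correct and is the natural argument. The paper itself gives no proof; it quotes the lemma from \cite{izmestiev}. There is one slip along the way: in fact $\langle\nu,\eta\times\sigma_i\rangle=-\langle\eta\times\nu,\sigma_i\rangle$, with a minus sign. Your conclusion $\dot\nu=\eta\times\nu$ is nevertheless the right one. There is also no residual ``up to sign or transpose'' ambiguity at the end. Since $\sum_k\epsilon_{ki}A_{kj}=(\epsilon^{\top}\mathbf A)_{ij}$ and $\epsilon^{\top}=\mathbf J$, you have proved exactly $\dot{\mathbf{II}}=\sqrt{\det\mathbf I}\,\mathbf J\mathbf A$. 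This uses $\nu=\sigma_u\times\sigma_v/\|\sigma_u\times\sigma_v\|$ and your convention $\eta_j=\sum_k\sigma_kA_{kj}$.

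The gap is the step that is supposed to turn this into $-\sqrt{\det\mathbf I}\,\mathbf A\mathbf J$; as proposed, it fails.
\begin{itemize}
\item \textbf{Claim 2 is false.} For $\mathbf A=\begin{pmatrix}a&b\\c&-a\end{pmatrix}$ one gets $\mathbf J\mathbf A=\begin{pmatrix}-c&a\\a&b\end{pmatrix}$ and $-\mathbf A\mathbf J=\begin{pmatrix}-b&a\\a&c\end{pmatrix}$. These agree only when $b=c$. The correct identity for traceless $\mathbf A$ is $\mathbf J\mathbf A=-\mathbf A^{\top}\mathbf J$.
\item \textbf{Step 1 cannot decide.} Symmetry does not discriminate, because both candidates are symmetric once $\operatorname{tr}\mathbf A=0$.
\item \textbf{Step 3 refutes the target.} Your own check gives $\dot{\mathbf{II}}_{11}=-\sqrt{\det\mathbf I}\,A_{21}$, whereas $(-\sqrt{\det\mathbf I}\,\mathbf A\mathbf J)_{11}=-\sqrt{\det\mathbf I}\,A_{12}$.
\end{itemize}

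A concrete instance: take the plane $\sigma=(u,v,0)$ and $\xi=\tfrac12u^2(0,0,1)$.
\begin{itemize}
\item Then $\eta=(0,-u,0)$, so $\eta_u=-\sigma_v$ and $\eta_v=0$. Hence $\mathbf A=\begin{pmatrix}0&0\\-1&0\end{pmatrix}$.
\item The deformation $\sigma+t\xi$ gives $\dot{\mathbf{II}}=\mathrm{diag}(1,0)=\mathbf J\mathbf A$.
\item On the other hand, $-\mathbf A\mathbf J=\mathrm{diag}(0,-1)$. Reversing $\nu$ does not help.
\end{itemize}

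So in the column convention you use, what holds is $\dot{\mathbf{II}}=\sqrt{\det\mathbf I}\,\mathbf J\mathbf A=-\sqrt{\det\mathbf I}\,\mathbf A^{\top}\mathbf J$. This is also the paper's convention: $\d\boldsymbol\eta=\d\boldsymbol\psi\,\mathbf A$ with $\mathbf A_{12}=b$, $\mathbf A_{21}=c$. The displayed formula holds verbatim only if $\mathbf A$ denotes the row matrix, $\eta_i=\sum_k\mathbf A_{ik}\sigma_k$. To finish, fix that convention explicitly, or record the transpose. Do not try to extract $-\mathbf A\mathbf J$ from symmetry and tracelessness.
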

\begin{proposition}\label{prop:categorizing:IIDs}
    Let $P = (\sigma,\Omega)$ be a surface of revolution with $\sigma$ as its curvature line net. Furthermore, let $Q = (\xi,\Omega)$ be and infinitesimal isometric deformation of $P$ with $\xi$ as the induced net on $Q$ from $\sigma$. Then if $\xi$ is a(n)
    \begin{itemize}
        \item asymptotic net then it is the instantaneous velocity field of the sliding Bour isometry,
        \item conjugate net then it is the instantaneous velocity field of the wrapping Bour isometry.
    \end{itemize}
\end{proposition}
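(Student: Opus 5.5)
The plan is to compare the rotation operator $\mathbf{A}$ of the given IID with the derivatives of the Bour family of Proposition~\ref{prop:bour:family}, using Lemma~\ref{lem:DotII} and Proposition~\ref{prop:cyclic}. Since $P$ is a surface of revolution with curvature line net $\sigma$, its Bour frame $(E_1,E_2)$ is aligned with the principal directions. So $P$ is the member $P^{s_0,0}$ of its Bour family for some $s_0$, with $M=0$, and in the Bour coframe $\mathbf{I}$ and $\mathbf{II}_\sigma$ are diagonal. Throughout I write $t$ both for the Bour parameter and for the deformation parameter of Lemma~\ref{lem:DotII}.

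\textbf{Step 1: compute $\mathbf{A}$ for the two Bour curves.} For the sliding curve $t\mapsto P^{s_0,t}$, differentiate \eqref{eq:L:N:u} at $t=0$. Since $L$ and $N$ depend on $t$ only through $t^2$, we get $\dot L=\dot N=0$ and $\dot M=1/G$, so $\dot{\mathbf{II}}$ is purely off-diagonal. For the wrapping curve $s\mapsto P^{s,0}$, we have $\dot M=0$ while $\dot L,\dot N$ are generically nonzero, so $\dot{\mathbf{II}}$ is diagonal. Both curves are isometric deformations, so Lemma~\ref{lem:DotII} applies and gives $\mathbf{A}=-(\det\mathbf{I})^{-1/2}\,\dot{\mathbf{II}}\,\mathbf{J}^{-1}$. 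Right multiplication by $\mathbf{J}^{\pm1}$ swaps the diagonal and off-diagonal patterns. Hence $\mathbf{A}$ is diagonal (traceless, $\mathbf{A}=a\,\mathrm{diag}(1,-1)$) for sliding, and anti-diagonal for wrapping.

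\textbf{Step 2: read off the net type of $\xi$.} By \eqrefi{eq:I:II:K}{2}, $\mathbf{II}_\eta=\mathbf{II}_\sigma\mathbf{A}^\top$, and $\mathbf{II}_\sigma$ is diagonal.
\begin{itemize}
\item If $\mathbf{A}$ is diagonal, then $\mathbf{II}_\eta$ is diagonal, so $\eta$ is conjugate. Since $\sigma$ is conjugate too, Proposition~\ref{prop:cyclic} makes $\xi$ asymptotic.
\item If $\mathbf{A}$ is anti-diagonal, then $\mathbf{II}_\eta$ is off-diagonal, so $\eta$ is asymptotic. Proposition~\ref{prop:cyclic} then makes $\sigma$ and $\xi$ conjugate.
\end{itemize}
The same equivalences run backwards: given $\xi$ asymptotic, Proposition~\ref{prop:cyclic} forces $\eta$ conjugate, hence $\mathbf{A}$ diagonal. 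Given $\xi$ conjugate, $\eta$ must be asymptotic, hence $\mathbf{A}$ anti-diagonal.

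\textbf{Step 3 (main obstacle): uniqueness of the IID of each type.} To conclude that a given $\xi$ \emph{is} the sliding or wrapping velocity field, I must show the IIDs of each type are essentially unique, modulo trivial IIDs (Killing fields) and scaling.
\begin{itemize}
\item \emph{Diagonal case.} The integrability condition $\mathrm{d}(\mathrm{d}\boldsymbol\sigma\,\mathbf{A})=0$, i.e.\ the Codazzi-type compatibility in the Bour frame, is a pair of first-order linear equations, one for each of $E_1(a)$ and $E_2(a)$. These should determine $a$ up to a multiplicative constant. The sliding field supplies a nonzero solution, which identifies $\xi$ with its velocity field.
\item \emph{Anti-diagonal case.} The compatibility system for the two entries is hyperbolic, analogous to \eqref{eq:codazzi:V:I:II}, and may carry characteristic data. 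Here I would first try to show that rotational invariance, $\mathcal{L}_Y\mathbf{A}=0$ with $Y$ the Killing field of $P$, is forced. Since $\mathcal{L}_Y$ maps IIDs of a given type to IIDs of the same type, one can try to reduce to $Y$-invariant solutions. Under invariance the system becomes ODEs in the $E_1$ direction, whose solution space I expect to match the wrapping direction $\partial_s$ of the Bour family.
\end{itemize}
If invariance cannot be forced, the statement should be read as identifying the $Y$-invariant IIDs of each type.
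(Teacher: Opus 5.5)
Your Steps 1 and 2 follow the paper. The paper likewise gets the rotation operators of the sliding and wrapping velocity fields from Lemma~\ref{lem:DotII}, and then uses Proposition~\ref{prop:cyclic} to force $\mathbf{A}$ to be diagonal, respectively anti-diagonal, in the Bour (principal) frame. Your diagonal case in Step~3 is also what the paper does: $\mathrm{d}^2\eta=0$ gives $E_1(a)=-a\,G_1/G$ and $E_2(a)=0$, so $a$ is a constant multiple of the sliding coefficient.

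\textbf{The gap is the anti-diagonal (wrapping) case.} You treat the compatibility conditions there as a hyperbolic system like \eqref{eq:codazzi:V:I:II} with characteristic data. In doing so you have dropped one of the three components of $\mathrm{d}(\mathrm{d}\sigma\circ A)=0$. Write $AE_1=c\,E_2$ and $AE_2=b\,E_1$.
\begin{itemize}
\item The normal component gives the pointwise algebraic constraint $cN=bL$. This is the self-adjointness of $A$ with respect to $II_\sigma$ from Lemma~\ref{lem:parallel:tangent:space}. In the asymptotic K-frame, where $II$ is off-diagonal, this condition is vacuous, which is why the V-surface Codazzi system really is hyperbolic. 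In the principal frame $II=L\,\theta^1\otimes\theta^1+N\,\theta^2\otimes\theta^2$ is diagonal, so the condition is a genuine constraint.
\item The tangential components give $E_2(c)=0$, because $\kappa^g_1=0$ along the meridians (Lemma~\ref{lem:parallel:geodesics}). They also give $E_1(b)+(b+c)\,G_1/(2G)=0$.
\end{itemize}
On a surface of revolution $L/N$ is constant along the parallels. So on the nondegenerate part $LN\neq0$ (always the case on K-surfaces), $c=bL/N$ yields $E_2(b)=0$. It also yields the linear ODE $E_1(b)+b\,(1+L/N)\,G_1/(2G)=0$ along the meridians. Hence $(b,c)$ is determined up to a single constant and no characteristic data survive. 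This gives $b=C\tilde b$ and $c=C\tilde c$, so $\xi=C\tilde\xi$ up to a trivial IID, exactly as in the diagonal case.

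Your proposed substitute does not close this gap. That $\mathcal{L}_Y$ maps IIDs of a given type to IIDs of the same type does not force any individual solution to be $Y$-invariant. An $\mathcal{L}_Y$-stable solution space can contain non-invariant modes depending on the rotation angle. Indeed, if the system were hyperbolic as you claim, arbitrary characteristic data would produce such non-invariant solutions, and the second bullet would be false. Falling back to ``the $Y$-invariant IIDs'' would prove a strictly weaker statement. In fact $Y$-invariance is a consequence of the constraint $cN=bL$, not something that has to be imposed.
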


\begin{proof}
    Denote by $\eta$ the corresponding rotation field of $\xi$ and let $A$ be the rotation operator. Furthermore, let the infinitesimal isometric deformation arising as the velocity field of the Bour isometries be $\tilde{\xi}$ with the corresponding rotation field $\tilde{\eta}$ and rotation operator $\tilde{\mathbf{A}}$.
    \begin{itemize}
        \item Let $\sigma^t$ be a sliding Bour isometry such that $\sigma^0 = \sigma$. Then the fundamental forms of $\sigma^t$ in the Bour frame are \eqref{eq:I:II:u} with $s=1$. In matrix form, using Lemma~\ref{lem:DotII}, we get
        \begin{equation*}
            \frac{1}{G}\,\left(\begin{array}{cc}
                0 & 1 \\
                1 & 0
            \end{array}\right) = \frac{\partial}{\partial t}\mathbf{II}\Big|_{(s,t) = (1,0)} = \tilde{\mathbf{A}}\,\mathbf{J} =  
            \left(\begin{array}{cc}
                \tilde{b}   & -\tilde{a} \\
                -\tilde{a}  & -\tilde{c}
            \end{array}\right)
        \end{equation*}
        The above equation instantly results in $\tilde{b} = \tilde{c} = 0$ and $\tilde{a} = -G^{-1}$.
        On the other hand, since $\sigma$ is a conjugate net and $\xi$ is an asymptotic net, Proposition\,\ref{prop:cyclic} implies that $\eta$ should be a conjugate net. Now, using \eqrefi{eq:I:II:K}{2} which states $II_\eta(X,Y) = II_\sigma(AX,Y) = II_\sigma(X,AY)$ by putting $X = E_1$ and $Y= E_2$ and using their linear independency we get $b = c = 0$ and consequently, we get
        \begin{equation*}
            A = a\,E_1 \otimes {\theta}^{\,1} - a\,E_2 \otimes {\theta}^{\,2}.
        \end{equation*}
        for some $a \in C^\infty(\Omega)$. This allows us to write $\d\eta = \d\sigma \circ A = a\,\d\sigma(E_1)\,\theta^1 - a\,\d\sigma(E_2)\theta^2$. As before, denote $\d\sigma(E_1)$ and $\d\sigma(E_2)$ by $e_1$ and $e_2$ respectively. Then the integrability of $\sigma$ and $\eta$ gives
        \begin{equation*}
            0 = \d^2\eta = (\d a\,e_1 + a\,\d e_1)\wedge\theta^1 + a\,e_1\d\theta^1 - (\d a\,e_2 + a\,\d e_2)\wedge\theta^2 - ae_2\d\theta^2,
        \end{equation*}
        Now, expanding the result using $\d\sigma = e_1\theta^1 + e_2\theta^2$, $\d e_1 = \Theta_{12}e_2 + \Theta_{13}\nu$, $\d e_1 = -\Theta_{12}e_1 - \Theta_{13}\nu$ and Lemma\,\ref{lem:parallel:geodesics} we get the following system
        \begin{equation*}
            \begin{aligned}
                E_1(a) &= -2a\,\Theta_{12}(E_2) = -a\left(G^{-1}{\d G(E_1)}\right) \quad&\implies\quad & \d(\log a)(E_1) = -\d(\log G)(E_1),\\
                E_2(a) &=  2a\,\Theta_{12}(E_1) =0                                   \quad&\implies\quad & \d a(E_2) = 0,
            \end{aligned}
        \end{equation*}
        Solving the system results in
        \begin{equation}\label{eq:A:1}
            a = -{C}{G} = C\,\tilde{a},
        \end{equation}
        for some constant $C$.
        \item Let $\sigma^s$ be a sliding Bour isometry such that $\sigma^1 = \sigma$. Then the fundamental forms of $\sigma^s$ in the Bour frame are \eqref{eq:I:II:u} with $t=0$. In matrix form we get
        \begin{equation*}
            {(4G - G_1^2)}^{-\frac{1}{2}}\,\left(\begin{array}{cc}
                -(4G - G_1^2)^{-1}(G_1^2 - 2 G G^2_{11}) & 0 \\
                0 & \pm 1
            \end{array}\right) = \frac{\partial}{\partial s}\mathbf{II}\Big|_{(s,t) = (1,0)} = \tilde{\mathbf{A}}\,\mathbf{J} =  
            \left(\begin{array}{cc}
                \tilde{b}   & -\tilde{a} \\
                -\tilde{a}  & -\tilde{c}
            \end{array}\right),
        \end{equation*}
        This gives 
        \begin{equation*}
             \tilde{a} = 0,\qquad\qquad
             \tilde{b} = -(4G - G_1^2)^{-\frac{3}{2}}(G_1^2 - 2 G G^2_{11})\qquad        
             \tilde{c} = \mp\,{(4G - G_1^2)}^{-\frac{1}{2}}.
        \end{equation*}
        On the other hand, since $\sigma$ is a conjugate net and $\xi$ is a conjugate net, Proposition\,\ref{prop:cyclic} implies that $\eta$ should be an asymptotic net. In a similar way to the previous case we get 
        \begin{equation*}
            A = b\,E_1 \otimes {\theta}^{\,3} + c\,E_3 \otimes {\theta}^{\,1}.
        \end{equation*}
        for some $b,c \in C^\infty(\Omega)$. Taking exterior derivative of $\d\eta = \d\sigma \circ A$ and using the integrability of $\eta$ and $\sigma$ we get
        \begin{equation*}
            E_1(b) + (b + c) \Theta_{12}(E_2) = 0,\qquad
            E_2(c) - (b + c) \Theta_{12} (E_1) = 0,\qquad
            c\,N - b\,L = 0.
        \end{equation*}
        Using Lemma\,\ref{lem:parallel:geodesics} we get
        \begin{equation}\label{eq:A:II}
            b = C\,\tilde{b},\qquad
            c = C\,\tilde{c},
        \end{equation}
        for some constant $C$.
    \end{itemize}
    As a result of \eqref{eq:A:1},\eqref{eq:A:II} and the relation $\d\eta = \d\sigma \circ A$ we get $\mathrm{d}\eta = C\,\mathrm{d}\tilde{\eta}$ which itself implies that $\eta = C\,\tilde{\eta} + C^\prime$ for some $C^\prime \in\mathbb{R}^3$. Consequently, due to the relations of Theorem\,\ref{theorem:IID:fundamnetals} we get  
    \begin{equation}
        \tilde{\xi} = C\,\xi + \left(C^\prime \times \sigma + C^{\prime\prime}\right)
    \end{equation}
    for some $ C^{\prime\prime} \in \mathbb{R}^3$. The constant $C$ comes in fact from a time reparametriztion while the term $C^\prime \times \sigma + C^{\prime\prime}$ implies a trivial infinitesimal isometric deformation. Consequently, this means that up to a rigid motion $\phi :\mathbb{R}^3 \rightarrow\mathbb{R}^3$ we can write 
    \begin{equation*}
        \tilde{\xi} = \left. \frac{\d}{\d t} \right|_{t=0} \phi \circ \sigma^{Ct}.
    \end{equation*}
    As a result of this $\xi$ and $\tilde{\xi}$ up to a rigid motion of $\sigma$ are identical.\qedhere
\end{proof}
We refer to the aforementioned instantaneous velocity fields by \emph{instantaneous sliding} and \emph{instantaneous wrapping}.
\subsection{Classification Problem}

\begin{lemma}\label{lem:K:surf:revol}
    Let $K$ be a K-surface and let $\kappa^g_i$ for $i=1,3$ be the geodesic curvatures of its asymptotic lines. Now, $K$ is a K-surface of revolution if and only if $\kappa^g_1 = -\kappa^g_3$.
\end{lemma}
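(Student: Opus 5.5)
The plan is to translate the condition $\kappa^g_1=-\kappa^g_3$ into a statement about derivatives of the sine-Gordon angle $\omega$. Then the Killing-field argument from case~2 of the proof of Proposition~\ref{prop:main} together with Theorem~\ref{theorem:B:nets} gives one direction, and the symmetry of a surface of revolution gives the other.

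\textbf{Step 1: identify the geodesic curvatures.} In the K-frame $(E_1,E_3)$, which is a coordinate frame by Remark~\ref{remark:K:frame:coordinates}, I would first check that
\[
\kappa^g_1=-\mathrm{d}\omega(E_1),\qquad \kappa^g_3=\mathrm{d}\omega(E_3).
\]
These are exactly the identifications already used in the proof of Proposition~\ref{prop:main}. They can be verified from Lemma~\ref{lem:cartan:geodesics} applied to the two orthonormal frames built on $E_1$ and $E_3$, together with $\mathrm{d}\theta^1=\mathrm{d}\theta^3=0$. Alternatively, they follow classically from the Christoffel symbol $\Gamma^2_{11}$ of the Chebyshev metric \eqref{eq:I:II:psi}, which gives $\kappa^g_1=\sqrt{\det I}\,\Gamma^2_{11}=-\omega_u$. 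Consequently
\[
\kappa^g_1=-\kappa^g_3 \iff \mathrm{d}\omega(E_1)=\mathrm{d}\omega(E_3) \iff \mathrm{d}\omega(Y)=0, \qquad Y:=E_1-E_3 .
\]

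\textbf{Step 2 ($\Leftarrow$).} Assume $\mathrm{d}\omega(Y)=0$. This is exactly the situation of case~2 in the proof of Proposition~\ref{prop:main} with $U=V=1$. The computation of $\mathcal{L}_Y I$ and $\mathcal{L}_Y II$ given there, with $\mathrm{d}U=\mathrm{d}V=0$, shows that both vanish, so $K$ is helicoidal by Theorem~\ref{theorem:B:nets}. It then remains to show that $K$ is in fact a surface of revolution. Since $X=E_1+E_3$ and $Y$ commute and are $I$-orthogonal (because $|E_1|=|E_3|=1$), normalising them gives the Bour frame. By \eqref{eq:I:II:psi},
\[
II(X,Y)=\sin(\omega)\bigl(\theta^1(X)\theta^3(Y)+\theta^3(X)\theta^1(Y)\bigr)=\sin(\omega)(-1+1)=0 .
\]
The criterion $II(E_1,E_2)=0$ of Theorem~\ref{theorem:B:nets} therefore shows that $K$ is a surface of revolution.

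\textbf{Step 3 ($\Rightarrow$).} Conversely, let $K$ be a K-surface of revolution and let $Y'$ be the $I$--$II$ Killing field generated by the rotation. Its flow preserves $I$ and $II$, so it preserves the asymptotic directions and the angle $\omega$; hence $\mathrm{d}\omega(Y')=0$. Moreover $Y'$ is tangent to the parallels, which are principal directions. Principal directions bisect the asymptotic directions, and the unit vectors $E_1,E_3$ can be chosen (up to the sign convention on the frame) so that $Y'=f\,(E_1-E_3)$ for a nowhere-vanishing $f$. Thus $\mathrm{d}\omega(E_1)=\mathrm{d}\omega(E_3)$, and Step~1 gives $\kappa^g_1=-\kappa^g_3$.

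\textbf{Main obstacle.} The delicate points are sign bookkeeping: the orientation of $E_1,E_3$ and which bisector is the parallel versus the meridian. Changing $E_1\mapsto -E_1$ flips the sign of $\kappa^g_1$ and exchanges the two bisectors. I would handle this by fixing the K-frame so that $Y'$ is a positive multiple of $E_1-E_3$, in the same spirit as Remark~\ref{remark:negative:alignability}. Once that is fixed, the lemma reduces to the case-2 computation already carried out in the proof of Proposition~\ref{prop:main}.
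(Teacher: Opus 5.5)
Your proposal is correct and takes essentially the same route as the paper. For ($\Rightarrow$), the paper writes the $I$--$II$ Killing field as $Y=\alpha E_1+\beta E_3$ and computes $II(X,Y)=\sin(\omega)(\beta^2-\alpha^2)=0$ to get $Y\propto E_3-E_1$ up to a sign flip of one asymptotic direction; your observation that principal directions bisect the asymptotic ones is the same fact. It then uses $\mathrm{d}\omega(Y)=0$, as you do. For ($\Leftarrow$), it sets $Y=E_3-E_1$, $X=E_3+E_1$ and checks $\mathcal{L}_YI=\mathcal{L}_YII=II(X,Y)=0$ via Theorem~\ref{theorem:B:nets}, exactly as in your Step~2.
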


\begin{proof}
    Let $K$ be a K-surface of revolution with the K-frame $(E_1,E_3)$. Let $Y = \alpha E_1 + \beta E_3$ be the $I-II$ killing vector field. Define $X := (\alpha\cos{(\omega)} + \beta)\,E_1 - (\alpha + \beta\cos{(\omega)})\,E_3$. In this way $(X,Y)$ is a parallel frame to the Bour frame on $K$ and we get
    \begin{equation*}
        0 = II(X,Y) = \sin(\omega)\,(\beta^2 - \alpha^2) \qquad\implies\qquad \beta = \pm\alpha
    \end{equation*}
    After changing the sign of one asymptotic direction if necessary, we may write $Y = \mu (E_3 - E_1)$ for some smooth no-where vanishing function $\mu$. 
    Now, since $Y$ is a $I-II$ killing filed, from \eqref{eq:I:II:psi} we get $\d\omega(Y) = 0$ which gives \begin{equation}\label{eq:g1:g2:K:surf:revol} 
        0 = \d\omega(E_3 - E_1) = \d\omega(E_3)-\d\omega(E_1) = \kappa^g_3 + \kappa^g_1. 
    \end{equation}
    The converse is straight forward; use \eqref{eq:g1:g2:K:surf:revol} and set $Y := E_3 - E_1$ and $X := E_3 + E_1$. Since $\d\theta^1 = \d\theta^3 = 0$, a straight forward computations using Cartan's magic formula and Theorem~\ref{theorem:B:nets} shows $\mathcal{L}_Y I = \mathcal{L}_Y II = II(X,Y) = 0$.
\end{proof}

\begin{theorem}\label{theorem:main:first}
    Let $K$ be a K-surface and $Q$ its IID. Denote the corresponding instantaneous rotation field of the two by $V = (K, Q)$. Then we have:
    \begin{enumerate}
        \item $V^+$ is a positive-alignable V-surface of the first kind if and only if up to a Bour isometry it is isometric to $(K, Q^{+})$, where $K$ is a K-surface of revolution and $Q^{+}$ is the instantaneous sliding of $K$,
        \item $V^-$ is a negative-alignable V-surface of the first kind if and only if up to a Bour isometry it is isometric to $(K, Q^{-})$, where $K$ is a K-surface of revolution and $Q^{-}$ is the instantaneous wrapping of $K$.
    \end{enumerate}
\end{theorem}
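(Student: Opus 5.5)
The plan is to reduce both directions to three earlier results: Corollary~\ref{coro:b:c}, Proposition~\ref{prop:bour:family} and Proposition~\ref{prop:categorizing:IIDs}. Lemma~\ref{lem:K:surf:revol} provides the link between the helicoidal K-surface and a K-surface of revolution.

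\textbf{Forward direction.} Let $V$ be an alignable V-surface of the first kind, so $V\in\mathrm{R}(K)$ with $K$ helicoidal. By the proof of Proposition~\ref{prop:main} and Corollary~\ref{coro:b:c}, $U,V$ are constants and $Y=UE_1-VE_3$ is an $I$--$II$ Killing field with $\d\omega(Y)=0$.

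We first normalize $U=V$. Since only the ratio $U/V$ matters for $Y$, this is done by the spectral deformation of Theorem~\ref{theorem:spectral:deformation}: replacing $(E_1,E_3)$ by $(\lambda^{-1}E_1,\lambda E_3)$ rescales $U/V$ by $\lambda^{2}$.

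On the V-side, by Theorem~\ref{theorem:V:isometry} and the theorem relating $\{K^t\}$ and $\{V^t\}$, this corresponds to an isometric deformation $V^t$ of $V$ which keeps the V-net. It also keeps alignability, since $b$ and $c$ are only rescaled by constants. This is the claimed ``up to a Bour isometry'': $V^t$ should be a member of the Bour family of the helicoidal surface $V$, by Corollary~\ref{coro:b:c} ($\mathcal{L}_YI_V=\mathcal{L}_YII_V=0$).

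After normalization $Y\propto E_1-E_3$, so $\d\omega(E_1)=\d\omega(E_3)$, i.e.\ $\kappa^g_1=-\kappa^g_3$. Lemma~\ref{lem:K:surf:revol} then shows that $K$ is a K-surface of revolution, with curvature-line (Bour) frame $X\propto E_1+E_3$, $Y\propto E_1-E_3$.

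\textbf{Identifying the IID.} Let $Q$ be the IID with rotation field $V$. By Proposition~\ref{prop:cyclic}, since $K$ and $V$ are conjugate with respect to the K-net, $Q$ is asymptotic along the K-net, and we must locate $Q$ relative to the curvature-line net $(X,Y)$. I would compute the rotation operator $A^\pm$ of \eqref{eq:A:K:frame} with $(b^\pm,c^\pm)$ from \eqref{eq:b:c:general}, taking $U=V$, and express it in the frame $(X,Y)$.

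For $(b^+,c^+)$ we have $b=c$, so $A^+=b(E_1\otimes\theta^3+E_3\otimes\theta^1)$. This is diagonal in $(X,Y)$ with opposite eigenvalues, which is the form $aE_1\otimes\theta^1-aE_2\otimes\theta^2$ of the sliding case of Proposition~\ref{prop:categorizing:IIDs}. For $(b^-,c^-)$ we have $b=-c$, so $A^-$ is antidiagonal in $(X,Y)$, which is the wrapping case.

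One check is needed: Proposition~\ref{prop:categorizing:IIDs} classifies by whether the induced net $\xi$ on the curvature lines is asymptotic or conjugate. Equivalently, via Proposition~\ref{prop:cyclic}, by whether $\eta$ is conjugate or asymptotic there. Since $II_\eta(Z_1,Z_2)=II_K(Z_1,AZ_2)$, diagonal $A$ makes $\eta$ conjugate on $(X,Y)$ and antidiagonal $A$ makes it asymptotic. Proposition~\ref{prop:categorizing:IIDs} then identifies $Q^+$ with the instantaneous sliding and $Q^-$ with the instantaneous wrapping, up to time scaling and trivial IIDs; the constant $C$ in \eqref{eq:b:c:general} is the time scaling.

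\textbf{Converse.} Start with a K-surface of revolution and its instantaneous sliding or wrapping. The uniqueness part of Proposition~\ref{prop:categorizing:IIDs} yields rotation operators of the above form with $b=\pm c=C\csc^2(\omega/2)$ or $C\sec^2(\omega/2)$. One verifies \eqref{eq:alignability:cond:II} directly using $\d\omega(E_1)=\d\omega(E_3)$, so Lemma~\ref{lem:alignability:cond:II} gives alignability. Applying a Bour isometry (spectral rescaling) then reaches an arbitrary first-kind surface.

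\textbf{Main obstacle.} The delicate step is justifying that the normalization $U=V$, performed on $K$ by a spectral deformation, corresponds on $V$ to an isometry that genuinely belongs to the Bour family. I would verify this by comparing $II_V^t$ from \eqref{eq:corresponding:V:t} with the parametrized forms \eqref{eq:L:N:u} in the Bour frame bisecting the V-frame, using \eqref{eq:X:Y:pm}.
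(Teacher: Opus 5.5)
Your proposal is correct and follows the paper's argument. In the forward direction you normalize $U=V$ with the spectral parameter $\lambda=\sqrt{V/U}$, paired with the V-net preserving isometry. You then show that the normalized K-surface is a surface of revolution, via Lemma~\ref{lem:K:surf:revol} where the paper uses $II_K(X,Y)=\sin(\omega)(\lambda^2-\lambda^{-2})$. Finally you identify $Q^\pm$ through Propositions~\ref{prop:cyclic} and \ref{prop:categorizing:IIDs}, according to whether the bisecting curvature-line frame is conjugate or asymptotic on $V$; your diagonal versus antidiagonal rotation operator in $(X,Y)$ is equivalent to the paper's diagonal versus off-diagonal $II_V$ in the bisector coframe. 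The converse is the paper's $b=\pm c$ together with the $\mathrm{d}\omega(E_1)=\mathrm{d}\omega(E_3)$ alignability check, and the Bour-family membership you flag is what the paper asserts here and computes in Corollary~\ref{coro:s:t}.

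One slip: on the K-net, $K$ is asymptotic, not conjugate, so Proposition~\ref{prop:cyclic} makes $Q$ conjugate (not asymptotic) there; this does not affect the proof, since you never use that remark.
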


\begin{proof}
    Assume $K$ to be a K-surface of revolution with the K-frame $(E_1,E_3)$ and let $Q^{\pm}$ be the IIDs corresponding to the instantaneous sliding / wrapping. Furthermore, denote their corresponding rotation fields by $V^{\pm}$. Since on $V^\pm$ the induced V-frame consists of conjugate directions we get $II^{\pm}_{V} = L^{\pm}\,\theta^1\otimes\theta^1 + N^{\pm}\,\theta^3\otimes\theta^3$.
    Now, pass to the bisecting frame $(E_5,E_6)$ of $(E_1,E_3)$. In the dual coframe $(\theta^5,\theta^6)$ we get
    \begin{equation}\label{eq:II:eta:theta:56}
       II^{\pm}_{V} = \alpha^2\,(L^{\pm} + N^{\pm})\,\theta^5\otimes\theta^5 + 2\alpha\beta\,(L^{\pm} - N^{\pm})\,{\theta}^{\,5}\odot{\theta}^{\,6} + \beta^2\,(L^{\pm} + N^{\pm})\,\theta^6\otimes\theta^6,
    \end{equation}
    where $\alpha = 2\cos{({\omega}/{2})}$ and $\beta = 2\sin{({\omega}/{2})}$.
    Since $(E_5,E_6)$ bisect the asymptotic directions they are the principal directions of $K$. Considering that, we have the followings: 
    \begin{enumerate}
        \item $Q^+$ is an instantaneous sliding and hence $(E_5,E_6)$ are mapped to asymptotic directions on $Q^+$. Consequently, based on Proposition~\ref{prop:cyclic}, $(E_5,E_6)$ are mapped to the conjugate directions on $V^+$. As a result of this, \eqref{eq:II:eta:theta:56} gives $L^{+} = N^{+}$.
        \item $Q^-$ is an instantaneous wrapping and hence $(E_5,E_6)$ are mapped to conjugate directions on $Q^-$. Consequently, based on Proposition~\ref{prop:cyclic}, $(E_5,E_6)$ are mapped to asymptotic directions on $V^-$. As a result of this, \eqref{eq:II:eta:theta:56} gives $L^{-} = -N^{-}$.
    \end{enumerate}
    Using \eqrefi{eq:I:II:sigma}{2}, the above results get simplified to $b^{\pm} = \pm c^{\pm}$. Now, from Lemma\,\ref{lem:alignability:cond:II} we know that a V-surface is alignable if and only if \eqref{eq:alignability:cond:II} is fulfilled: Therefore, considering the Remark~\ref{remark:negative:alignability} regarding the negative-alignable case we get
    \begin{equation}\label{eq:alignability:check}
        b^{\pm}\,\mathrm{d}\omega(\pm E_1) - c^{\pm}\,\mathrm{d}\omega(E_3)
        = \pm\, b^{\pm}\left(\mathrm{d}\omega(E_1) - \mathrm{d}\omega(E_3)\right)
        = \mp\, b^{\pm}\,(\kappa_1^g + \kappa_3^g)
        = 0.
    \end{equation}
    The last equation follows from the fact that $K$ is a K-surface of revolution (see Lemma~\ref{lem:K:surf:revol}). 
    Since $K$ is a K-surface of revolution, through relations \eqref{eq:I:II:K} for any $Z_1, Z_2 \in \mathfrak{X}(\Omega)$ we get
    \begin{equation}\label{eq:LY:A}
            (\mathcal{L}_Y I_V)(Z_1,Z_2)
            = I_K\big((\mathcal{L}_Y A)\,Z_1,\; A\,Z_2\big)
               + I_K\big(A\,Z_1,\; (\mathcal{L}_Y A)\,Z_2\big),\qquad
            (\mathcal{L}_Y II_V)(Z_1,Z_2)
            = II_K\big(Z_1,\; (\mathcal{L}_Y A)\,Z_2\big).
    \end{equation}
    But, we have $b^{\pm} = \pm c^\pm$ which through \eqref{eq:alignability:cond:III} gives $\d b^{\pm}(E_1\mp E_3) = 0$. This implies 
    \begin{equation*}
        \mathcal{L}_Y A = \mathcal{L}_Y \left(b^\pm\Bigl(E_1\otimes\theta^{\,3}\ \pm\ E_3\otimes\theta^{\,1}\Bigr)\right) = 0
    \end{equation*}
    where $Y = E_1\mp E_3$, making $V$ a helicoidal surface.
    Every V-surface isometric to this one through the corresponding preservation of V-nets is obtained through \eqref{eq:corresponding:V:t}. A direct observation shows $\mathcal{L}_Y I^{\pm}_V = \mathcal{L}_Y II^{\pm}_V = 0$, making all members of $\{V^t\}$ helicoidal surfaces. Hence $\{V^t\}$ with $V^0 = V$ is a one--parameter subfamily of the Bour family of $V$. Since alignability is preserved through isometric deformation, all members of $\{V^t\}$ are alignable.\\
    
    Let $V$ be an alignable V-surface of the first kind. By Proposition~\ref{prop:main} and Corollary~\ref{coro:b:c}, $V \in \mathrm{R}(K)$ where $V$ and $K$ are helicoidal V- and K-surfaces respectively. Now, the alignable V-net of $V$ is induced from the K-net of $K$ through \eqref{eq:b:c:general} where the constant $U$, $V$ and $C$ locate the V-surface and the K-surface in their isometric deformation family and associate family respectively. In fact, by fixing the scaling $C = \sqrt{UV}$ and introducing the parameter $\lambda = \sqrt{V/U}$ we obtain the canonical forms in \eqref{eq:corresponding:K:t} and \eqref{eq:corresponding:V:t} for $(b,c) = (b^+,b^+)$ and $(b,c) = (b^-,-b^-)$ where $b^+ = \csc^2(\omega/2)$ and $b^- = \sec^2(\omega/2)$.
    Now, we claim that at $\lambda = 1$, $K$ is a K-surface of revolution. To see that we form the Bour frame by introducing 
    \begin{equation*}
        X = (\lambda^{-1} - \lambda \cos{(\omega)})\,E_1 + (\lambda - \lambda^{-1}\cos{(\omega)})\,E_3,\qquad\qquad 
        Y = \lambda\,E_1 - \lambda^{-1}\,E_3,
    \end{equation*}
    where $(E_1,E_3)$ is the K-frame of $K$. Consequently, $II_K(X,Y) = \sin{(\omega)}\,(\lambda^2 - \lambda^{-2})$ which at $\lambda = 1$ vanishes and hence by Theorem~\ref{theorem:B:nets} there is a K-surface of revolution $K^0$ in the associate family of $K$ with $V^0$ in the corresponding isometric family of the V-surface $V$ such that $V^0 \in \mathrm{R}(K^0)$. Since alignability is a metric property $V^0$ is also alignable. Since $V$ and $V^0$ are isometric with the corresponding Killing field $Y$ they are Bour isometric and therefore the isometric family is a one--parameter subfamily of the of the Bour family of $V^0$.
    Now, the fundamental forms of $V^0$ and $K^0$ are $II^\pm_{V^0} = b^\pm\sin{(\omega)}\,\left({{\theta}}^1\otimes{{\theta}}^1 \pm {{\theta}}^3\otimes{{\theta}}^3\right)$ and $II_{K^0} = 2\sin{(\omega)}\,{\theta}^{\,1}\odot{\theta}^{\,3}$. Let $(E_5,E_6)$ be the bisectors of the K-frame $(E_1^0,E_3^0)$. Writing the fundamental forms in its dual coframe we get:
    \begin{equation*}
            II^+_{V^0} = L^+\,\left(2\alpha^2 \theta^5\otimes\theta^5 + 2\beta^2 \theta^6\otimes\theta^6\right),\quad
            II^-_{V^0} = L^-\,\left(2\alpha\beta\,\theta^5\odot\theta^6\right),\quad
            II_{K^0} = 2\sin{(\omega)}\,\left(\alpha^2\,\theta^5\otimes\theta^5 - \beta^2\,\theta^6\otimes\theta^6\right),
    \end{equation*}
    where $\alpha$ and $\beta$ are as before while $L^\pm = b^\pm \sin{(\omega)}$. Since $(E_5,E_6)$ bisect the asymptotic directions they are the principal directions of $K^0$. Considering that, $(E_5,E_6)$ are 
    \begin{enumerate}
        \item conjugate directions on ${(V^0)}^+$ and consequently, due to Proposition\,\ref{prop:cyclic} they are mapped to asymptotic directions on $Q^+$. As a result of this, Proposition\,\ref{prop:categorizing:IIDs} implies that $Q^+$ is the sliding Bour IID of $K^0$ with ${(V^+)}^0$ as its rotation field, 
        \item asymptotic directions on ${(V^0)}^-$ and consequently, due to Proposition\,\ref{prop:cyclic} they are mapped to conjugate directions on $Q^-$. As a result of this, Proposition\,\ref{prop:categorizing:IIDs} implies that $Q^-$ is the wrapping Bour IID of $K^0$ with ${(V^-)}^0$ as its rotation field.\qedhere
    \end{enumerate}
\end{proof}

\begin{remark}\label{remark:isothermal:conjugate}
    Evidently, from the proof of Theorem\,\ref{theorem:main:first} we have that the member the alignable V-net of $V^0 \in \{V^t\}$ is an isothermal-conjugate net (i.e. $L = \pm N$). This class of V-surfaces are mentioned in \cite[page~390, Exercise 11]{EisenhartTreatise} and partially investigated in \cite{EisenhartAssociate}. 
\end{remark}

Theorem~\ref{theorem:main:first} provides strong geometric input that enables a complete classification of alignable V-surfaces of the first kind at the level of fundamental forms. To carry this out, we will relate the problem to the sine--Gordon solutions corresponding to K-surfaces of revolution.

In what follows, let $k>0$ denote the elliptic modulus, $k^\prime := k^{-1}$ and $\operatorname{am}(\cdot \mid k)$ the Jacobi amplitude. We write $\mathcal{K}(k)$ / $\mathcal{F}(\cdot \mid k)$  for \emph{complete / incomplete elliptic integral of the first kind}. The following proposition is closely related to a result of Voss \cite{VossChebyshev}, obtained in his study of explicit immersion formulas for Chebyshev nets on the sphere which happen to be the Gauss map of the K-surfaces of revolution.

\begin{proposition}\label{prop:K:surfLrevolution}
    The $K$-nets of pseudospherical surfaces of revolution are the pairs \((\psi_k,\Omega_k)\) with parameter domain
    \begin{equation}\label{eq:Omega:u:v}
        \Omega_k = 
        \left\{
        \begin{array}{ll}
        \bigl\{\, (u,v)\in\mathbb{R}^2 \,\bigm|\, 2m\,\mathcal{K}(k) < x < 2(m + 1)\,\mathcal{K}(k)\,\bigr\}, & \quad k \in (0,1) \\[0.3cm]
        \bigl\{\, (u,v)\in\mathbb{R}^2 \,\bigm|\, 0 < x\,\bigr\}\quad\text{or}\quad \bigl\{\, (u,v)\in\mathbb{R}^2 \,\bigm|\, 0 > x\,\bigr\}, & \quad k = 1 \\[0.3cm]
        \bigl\{\, (u,v)\in\mathbb{R}^2 \,\bigm|\, 2m\,k^\prime\mathcal{K}\left(k^\prime\right) < x < (2m+1)\,k^\prime\mathcal{K}\left(k^\prime\right)\,\bigr\}, & \quad k \in (1,\infty)
        \end{array}
        \right.
    \end{equation}
    where $x = u + v$. The angle function $\omega_k : \Omega_k \rightarrow \mathbb{R}$ given by
    \begin{equation}\label{eq:elliptic:modulus}
        \omega_k(u,v) = \arccos{\left(2\,k^2\,\mathrm{sn}^2\left(u+v\mid k\right) -1\right)},
    \end{equation}
    where $\mathrm{sn}\left(u+v \mid k\right) = \sin\left(\mathrm{am}\left(u+v \mid k\right)\right)$, solves the sine-Gordon equation and thus generates the K-net $\psi_k$. Moreover, \((\sigma_k,\Omega_k)\) corresponds to the hyperbolic pseudosphere for \(0<k<1\), to the parabolic pseudosphere for \(k=1\), and to the elliptic pseudosphere for \(k>1\).
\end{proposition}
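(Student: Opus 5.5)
The plan is to reduce the problem to an ODE by Lemma~\ref{lem:K:surf:revol}, integrate it by a first integral, and then read off the three pseudospheres from the sign structure.

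\textbf{Reduction to an ODE.} Let $K$ be a K-surface of revolution with K-frame $(E_1,E_3)=(\partial_u,\partial_v)$ in canonical coordinates. By Lemma~\ref{lem:K:surf:revol} we have $\kappa^g_1=-\kappa^g_3$. With the geodesic curvatures of the asymptotic lines given by $\kappa^g_1=-\omega_u$ and $\kappa^g_3=\omega_v$ (as used in the proof of Proposition~\ref{prop:main}), this gives $\omega_u=\omega_v$. Hence $\omega=\omega(x)$ with $x=u+v$, after possibly flipping the sign of one asymptotic direction. Conversely, any such $\omega$ gives $\kappa^g_1+\kappa^g_3=0$, so it generates a K-surface of revolution by the same lemma. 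The sine-Gordon equation $\omega_{uv}=\sin\omega$ of Theorem~\ref{theorem:classify:K-nets} then reduces to the pendulum equation $\omega''(x)=\sin\omega$.

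\textbf{First integral and Jacobi form.} Multiplying by $\omega'$ and integrating gives $\tfrac12(\omega')^2=C-\cos\omega$. I set $C=2k^2-1$, so $(\omega')^2=2(2k^2-1-\cos\omega)$. Next I substitute $\cos\omega=2k^2\,\mathrm{sn}^2(x\mid k)-1$, i.e. $\sin^2(\omega/2)=1-k^2\mathrm{sn}^2$ and $\cos^2(\omega/2)=k^2\mathrm{sn}^2$. Differentiating with $\mathrm{sn}'=\mathrm{cn}\,\mathrm{dn}$ gives
\[
-\sin\omega\,\omega' = 4k^2\,\mathrm{sn}\,\mathrm{cn}\,\mathrm{dn}.
\]
Combined with $\sin\omega=2\sin(\omega/2)\cos(\omega/2)=2k\,\mathrm{sn}\,\mathrm{dn}$ (up to sign), this yields $\omega'=\mp 2k\,\mathrm{cn}$. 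Then $(\omega')^2=4k^2(1-\mathrm{sn}^2)=2(2k^2-1-\cos\omega)$, as required. This is a routine check of \eqref{eq:elliptic:modulus}. The additive constant in $x$ is absorbed by translating $(u,v)$, and the degenerate constant solutions are excluded because they give a singular or flat net.

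\textbf{Domains and sign of $\sin\omega$.} The parameter domain is the maximal strip on which $\omega\in(0,\pi)$, so that the immersion is regular with $\sin\omega\neq 0$. For $0<k<1$, the function $\mathrm{sn}^2$ oscillates in $[0,1]$ with zeros at $2m\mathcal K(k)$. Since $2k^2\mathrm{sn}^2-1<1$, we have $\omega\in(0,\pi]$ away from $x=2m\mathcal K$, where $\omega=\pi$ and $\sin\omega=0$. This gives the strips $2m\mathcal K<x<2(m+1)\mathcal K$.

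For $k>1$, I use the reciprocal-modulus transformation $\mathrm{sn}(x\mid k)=k^{-1}\mathrm{sn}(kx\mid k^{-1})$. Then $\cos\omega=2\,\mathrm{sn}^2(kx\mid k')-1$, which reaches $\pm1$ at multiples of $k'\mathcal K(k')$, giving strips of length $k'\mathcal K(k')$. For $k=1$, $\mathrm{sn}=\tanh$, so $\cos\omega=2\tanh^2x-1$, which is singular only at $x=0$.

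\textbf{Identification and main obstacle.} The identification with hyperbolic, parabolic and elliptic pseudospheres is where most of the care goes. I would compute the Killing field $Y=E_3-E_1$ and read off $G=I(Y,Y)=2-2\cos\omega$, which is proportional to $\mathrm{cn}^2$ up to constants. The profile radius is $r=\sqrt G$ (up to scaling). Comparing with the classical radius functions, I expect $r\propto\cosh$-type behaviour (no axis crossing) for $k<1$, the tractrix $r=e^{s}$ for $k=1$, and $\sinh$-type behaviour (meridians hitting the axis) for $k>1$. This matches Voss's parametrization \cite{VossChebyshev}. Getting the case labels to agree with the paper's convention, and the $k>1$ domain bookkeeping, is where I expect errors to creep in.
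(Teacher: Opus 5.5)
Your proposal is correct and follows essentially the paper's route. You reduce to $\omega_u=\omega_v$; the paper phrases this via the Killing field $Y=\partial_u-\partial_v$, which is the content of Lemma~\ref{lem:K:surf:revol}. You then pass to the pendulum equation in $x=u+v$ and integrate once, with the energy constant encoded by $k$. Finally you cut out the strips bounded by $\cos\omega_k=\pm1$, i.e.\ $\mathrm{sn}(x\mid k)=0$ or $\mathrm{sn}(x\mid k)=\pm k^{-1}$, using the real periodicity of $\mathrm{sn}$ and the reciprocal-modulus transformation for $k>1$. Your strips of length $k^\prime\mathcal{K}(k^\prime)$ also include the odd-indexed ones; the paper omits these, but they are mirror images of the listed ones.

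The one slip is in your identification step, which the paper does not derive at all but only labels. You have $G=I(Y,Y)=2-2\cos\omega_k=4\sin^2(\omega_k/2)=4\,\mathrm{dn}^2(x\mid k)$, not a multiple of $\mathrm{cn}^2$. Taken literally, $\mathrm{cn}^2$ would make the radius vanish for $k<1$, contradicting the $\cosh$-type behaviour you predict.

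With $\mathrm{dn}$ the trichotomy follows directly:
\begin{itemize}
\item For $k<1$ the radius $\mathrm{dn}(x\mid k)/k$ stays between $\sqrt{1-k^2}/k$ and $1/k$. It is maximal on the cuspidal edges $\mathrm{sn}=0$, which is the hyperbolic type.
\item For $k=1$ the radius is $\mathrm{sech}\,x$, the tractrix.
\item For $k>1$ one has $\mathrm{dn}(x\mid k)=\mathrm{cn}(kx\mid k^{-1})$. This vanishes exactly at $x=(2m+1)k^\prime\mathcal{K}(k^\prime)$, where $\cos\omega_k=1$, so the meridian reaches the axis (elliptic type).
\end{itemize}
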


\begin{proof}
    Since the K-frame $(E_1,E_3)$ is a coordinate frame we can substitute it with $(\partial_u,\partial_v)$. In this way, we write the killing vector field of a K-surface of revolution by $Y = \partial_u - \partial_v$. Consequently, the solitons responsible for K-surfaces of revolution become the solutions of the following system:
    \begin{equation}\label{eq:sys:omega}
        {\omega_{uv} = \sin{(\omega)}},\quad\quad\quad\quad
        \omega_u = \omega_v.
    \end{equation}
    Introduce the coordinates $x = u + v$ and $y = u - v$ (equivalently, the coordinate frame $(X, Y) = (\partial_u + \partial_v, \partial_u - \partial_v$). The condition $\omega_u = \omega_v$ becomes $\partial_y\omega = 0$, so $\omega = \omega(x)$ and therefore the sine-Gordon equations reduces reduces to the \emph{pendulum equation}: 
    \begin{equation}\label{eq:pendulum}
        \frac{\mathrm{d}^2\omega}{\mathrm{d}x^2} = \sin{(\omega(x))}.
    \end{equation}
    Multiplying the ODE by $\frac{\mathrm{d}}{\mathrm{d}x}\omega$ and integrating once gives    
    \begin{equation*}
        \frac{1}{2}\,\left(\frac{\mathrm{d} \omega}{\mathrm{d}x}\right)^2 = C - \cos{\left(\omega\right)},\quad\quad\implies\quad\quad \frac{\mathrm{d}\omega}{\mathrm{d}x} = \sqrt{2C - 2\cos{(\omega)}}.
    \end{equation*}
    for some constant $C$. Rewriting the resulting equation in standard elliptic integral form yields an inverse Jacobi sine. Introducing a modulus $k$ (encoded by the integration constant $C$), one obtains the explicit solution of \eqref{eq:elliptic:modulus}.
     The singular points of the K-net $\sigma_k$ are characterized by $\det(I) = 0$, where $I$ is the matrix form of \eqref{eq:I:II:psi} written in the coframe $(\theta^1,\theta^3) = (\d x, \d y)$. The singular points are those that correspond to $\cos{(\omega_k)} = \pm 1$ with $+$/$-$ standing for the fold-type / cusp-type singularities (see Fig.\,\ref{fig:alignable:deformation:all}). Using \eqref{eq:elliptic:modulus}, this splits into the following two singular conditions:
    \begin{equation}\label{eq:sing}
        \mathrm{sn}\left(x \mid k\right) = 0,\qquad\qquad
        \mathrm{sn}\left(x \mid k\right) = \pm\,k^\prime.
    \end{equation}
    Now, use the standard real periodicity facts for $\sn( - \mid k)$ we get the following cases:
    \begin{itemize}
        \item Hyperbolic case ($1 > k > 0$):  
            Since \(|\sn(\cdot\mid k)|\le 1\) for real arguments and \(k^{-1}>1\), the equation
            \(\sn(x\mid k)=\pm k^{-1}\) has no real solutions. Thus only fold-type singularities occur (see \eqrefi{eq:sing}{1}), given by
            \(\sn(x\mid k)=0\), i.e.
            \[
            x=2m\,\mathcal K(k),\qquad m\in\mathbb Z.
            \]
            Hence each parameter strip between two consecutive fold curves is
            \[
            2m\,\mathcal K(k)<x=u+v<2(m+1)\,\mathcal K(k),
            \]
        \item Parabolic case ($k = 1$): Using \(\sn(x\mid 1)=\tanh x\), the fold condition \eqrefi{eq:sing}{1} gives \(x=0\), while \(\tanh x=\pm1\) occurs only as
                \(x\to\pm\infty\). Thus the admissible domains are \(x>0\) or \(x<0\), i.e. \(u+v>0\) or \(u+v<0\).
        \item Elliptic case ($k > 1$): 
        Let \(k':=1/k\in(0,1)\) be the reciprocal modulus. Using the reciprocal-modulus identity for \(\sn\),
        the fold and cusp conditions in \eqref{eq:sing}{1} and \eqref{eq:sing}{2} become \(x=2m\,k'\mathcal K(k')\) and \(x=(2m+1)\,k'\mathcal K(k')\), respectively, hence
        \[
        2m\,k'\mathcal K(k')<x=u+v<(2m+1)\,k'\mathcal K(k').\qedhere
        \]
    \end{itemize}
\end{proof}
Using the proof of Theorem~\ref{theorem:classification:I} solution pairs of Corollary~\ref{coro:b:c} with $(b^+,c^+) = (\csc^2(\omega/2),\csc^2(\omega/2))$ and $(b^-,c^-) = (\sec^2(\omega/2),-\sec^2(\omega/2))$ and substituting it in \eqref{eq:corresponding:V:t}, by considering $(E_1,E_3) = (\partial_u,\partial_v)$, we obtain the following classification of alignable V-nets of the first kind at the level of their fundamental forms.
\begin{theorem}\label{theorem:classification:I}
    The alignable V-nets of the first kind comprise a positive and a negative two-parameter families, denoted by $(\eta^{t,k},\Omega_{k})$ where $\eta^{t,k}$ is being the corresponding alignable V-net:
    \begin{equation*}
         \eta: \Omega_k \times \mathbb{R} \times \mathbb{R}^+ \longrightarrow \mathbb{R}^3,\qquad\text{with}\qquad \eta^{t,k} := \eta(u,v,t,k).
    \end{equation*}    
    Here, $t$ is the isometric deformation parameter, and $k$ is the alignability parameter. Up to a box reparametrization, these V-nets are characterized by the following fundamental forms. For the positive-alignable case, the forms are: 
    \begin{equation}\label{eq:fundamentalform:positive:first}
            {I}^{\,+}_{V} = \csc^4{\left(\frac{\omega_k}{2}\right)}\,\left(\mathrm{d}u^2 + 2\,\cos(\omega_k)\,\mathrm{d}u\,\mathrm{d}v + \mathrm{d}v^2\right),\qquad
            {II}^{\,+}_{V} =  2\,\cot{\left(\frac{\omega_k}{2}\right)}\,\left(\lambda\,\mathrm{d}u^2 + \frac{1}{\lambda}\,\mathrm{d}v^2\right),
    \end{equation}
    and for the negative-alignable they are given by:
    \begin{equation}\label{eq:fundamentalform:negative:first}
            {I}^{\,-}_{V} = \sec^4{\left(\frac{\omega_k}{2}\right)}\,\left(\mathrm{d}u^2 - 2\,\cos(\omega_k)\,\mathrm{d}u\,\mathrm{d}v + \mathrm{d}v^2\right),\qquad
            {II}^{\,-}_{V} = 2\,\tan{\left(\frac{\omega_k}{2}\right)}\,\left(\lambda\,\mathrm{d}u^2 - \frac{1}{\lambda}\,\mathrm{d}v^2\right),
    \end{equation}
    where $\lambda = \exp(t)$, while $\omega_k(u,v)$ and $\Omega_k$ fulfill the relations of \eqref{eq:elliptic:modulus} and \eqref{eq:Omega:u:v} respectively. Finally, the corresponding Gaussian and mean curvatures are as follows. 
    \begin{equation}
        \mathrm{K}^{+} = \sin^{4}\left(\frac{\omega_k}{2}\right),\quad 
        \mathrm{K}^{-} = -\cos^{4}\left(\frac{\omega_k}{2}\right),\quad
        \mathrm{H}^{+} = \frac{1}{4}\,\left(\lambda + \frac{1}{\lambda}\right)\,\tan\left(\frac{\omega_k}{2}\right),\quad
        \mathrm{H}^{-} = \frac{1}{4}\,\left(\lambda - \frac{1}{\lambda}\right)\,\cot\left(\frac{\omega_k}{2}\right).
    \end{equation}
\end{theorem}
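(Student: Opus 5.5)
The plan is to assemble Theorem~\ref{theorem:main:first}, Corollary~\ref{coro:b:c}, Proposition~\ref{prop:K:surfLrevolution} and the isometric-family formulas \eqref{eq:corresponding:V:t}, and then evaluate the fundamental forms explicitly.

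First, by Theorem~\ref{theorem:main:first}, every alignable V-surface of the first kind lies, via a Bour isometry, in the isometric family $\{V^t\}$ of a rotation field $V^0\in\mathrm{R}(K^0)$ with $K^0$ a K-surface of revolution. By Proposition~\ref{prop:K:surfLrevolution}, the K-net of $K^0$ in canonical coordinates $(E_1,E_3)=(\partial_u,\partial_v)$ is $(\psi_k,\Omega_k)$ with angle $\omega_k$ from \eqref{eq:elliptic:modulus}. This gives the parameter $k$ and the domain. In the proof of Theorem~\ref{theorem:main:first} the constants of Corollary~\ref{coro:b:c} were normalized to $C=\sqrt{UV}$ and $\lambda=\sqrt{V/U}$. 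Any leftover constant rescaling of $u$ and $v$ is absorbed by a box reparametrization, which is allowed by Remark~\ref{remark:box:reparametrization}. After this normalization the solution pairs are $(b^+,c^+)=(\csc^2(\omega_k/2),\csc^2(\omega_k/2))$ and $(b^-,c^-)=(\sec^2(\omega_k/2),-\sec^2(\omega_k/2))$. Before using them, I would check directly that these pairs satisfy the Codazzi system \eqref{eq:codazzi:V:I:II} and the alignability condition \eqref{eq:alignability:cond:II}. Since $\omega_k$ depends only on $u+v$, we have $\omega_u=\omega_v$, and both checks reduce to the half-angle identity $\csc\omega+\cot\omega=\cot(\omega/2)$.

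Next, I substitute these pairs into \eqref{eq:I:II:sigma}, with the deformation inserted through \eqref{eq:corresponding:V:t}. For the first fundamental form, $I_V=b^2\,\mathrm du^2+2bc\cos\omega\,\mathrm du\,\mathrm dv+c^2\,\mathrm dv^2$. In the positive case $b=c$, so $I^+_V=\csc^4(\omega_k/2)(\mathrm du^2+2\cos\omega_k\,\mathrm du\,\mathrm dv+\mathrm dv^2)$. In the negative case $c=-b$, so $I^-_V=\sec^4(\omega_k/2)(\mathrm du^2-2\cos\omega_k\,\mathrm du\,\mathrm dv+\mathrm dv^2)$. For the second fundamental form, $II^t_V=\sin\omega(\lambda c\,\mathrm du^2+\lambda^{-1}b\,\mathrm dv^2)$. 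Using $\sin\omega\csc^2(\omega/2)=2\cot(\omega/2)$ and $\sin\omega\sec^2(\omega/2)=2\tan(\omega/2)$ gives the stated $II^\pm_V$.

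There is one point to handle carefully: the labelling of $\lambda$ versus $\lambda^{-1}$ on $\mathrm du^2$ and $\mathrm dv^2$, and the position of the minus sign in the negative case. Under the convention of \eqref{eq:I:II:sigma}, the $\mathrm du^2$ coefficient is $\lambda c$. Since $b=\pm c$, swapping $u$ and $v$ exchanges these coefficients. That swap, together with $t\mapsto -t$ (that is, $\lambda\mapsto\lambda^{-1}$), brings the forms into the stated shape. Both operations preserve $\Omega_k$ because $\omega_k$ is symmetric in $u,v$, so this is compatible with ``up to a box reparametrization''.

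Finally, I compute the curvatures from \eqref{eq:Gaussian:mean:v:net:A:net} in the deformed form. Here $\mathrm K_V=1/(bc)$, which is unchanged by $t$ since $b^tc^t=bc$. This gives $\mathrm K^+=\sin^4(\omega_k/2)$ and $\mathrm K^-=-\cos^4(\omega_k/2)$. For the mean curvature, $\mathrm H=(b^t+c^t)/(2b^tc^t\sin\omega)$ with $(b^t,c^t)=(\lambda^{-1}b,\lambda c)$, which follows from matching \eqref{eq:corresponding:V:t}. In the positive case this gives $\mathrm H^+=(\lambda+\lambda^{-1})\sin^2(\omega/2)/(2\sin\omega)=\frac14(\lambda+\lambda^{-1})\tan(\omega_k/2)$. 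The negative case gives $\frac14(\lambda-\lambda^{-1})\cot(\omega_k/2)$ in the same way, with the overall sign fixed by the orientation choice.

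The main obstacle is not the algebra but the completeness claim: showing that these two families with $(t,k)\in\mathbb R\times\mathbb R^+$ exhaust all alignable V-nets of the first kind. For this I rely on Theorem~\ref{theorem:main:first}, which reduces everything to surfaces of revolution up to Bour isometry, and on Corollary~\ref{coro:b:c}, which forces the form of $(b,c)$ up to the constants absorbed above. I also use that, by Proposition~\ref{prop:K:surfLrevolution}, $k$ runs over all K-surfaces of revolution.
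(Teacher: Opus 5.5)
Your proposal is correct and follows essentially the paper's route. The paper likewise takes the normalized pairs $(b^\pm,c^\pm)$ of Corollary~\ref{coro:b:c} (with $C=\sqrt{UV}$, $\lambda=\sqrt{V/U}$ from the proof of Theorem~\ref{theorem:main:first}), substitutes them into \eqref{eq:corresponding:V:t} with $(E_1,E_3)=(\partial_u,\partial_v)$ and $\omega_k$ from Proposition~\ref{prop:K:surfLrevolution}, and relies on Theorem~\ref{theorem:main:first} for completeness; your extra checks contain only cosmetic slips. For the negative pair, the Codazzi check uses $\csc\omega-\cot\omega=\tan(\omega/2)$ and the alignability condition must be read with $-E_1$ as in Remark~\ref{remark:negative:alignability}, and \eqref{eq:I:II:sigma} puts $c^2$ (not $b^2$) on $\mathrm du^2$, which is harmless since $b^2=c^2$.
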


Theorem~\ref{theorem:classification:I} shows that alignable \(V\)-nets of the first kind form a two-parameter family. One parameter, \(t\), governs the isometric deformation; since alignability is a metric property, it is preserved along this deformation. The other parameter, \(k\), plays the role of an \emph{alignability parameter}: varying \(k\) changes the angle between the coordinate curves of the \(V\)-net and thereby produces new surfaces. In particular, the surface obtained by changing \(k\) arises as the instantaneous rotation field of spectrally deformed elliptic, parabolic, and hyperbolic K-surfaces of revolution.

\begin{remark}\label{remark:V:rotation:field:V:I}
    Computing the third fundamental form of $V^\pm = (\eta^\pm_k,\Omega_k)$ one observes that they fulfill the criteria of Proposition\,\ref{prop:rotation:field:iff} and consequently $V^+$ serves as a rotaion field of $V^-$ and vice-versa.
\end{remark}

It remains to relate the spectral parameter \(\lambda\) to the Bour deformation parameters \((s,t)\), thereby locating each isometrically deformed \(V\)-surface within the Bour family.

\begin{corollary}\label{coro:s:t}
    Let $\eta$ be an alignable V-net of the first kind with the fundamental forms of \eqref{eq:fundamentalform:positive:first} and \eqref{eq:fundamentalform:negative:first} and the alignability parameter $k$. Then each member of the isometric family $\{V^\lambda\}$ is obtained by  
    \begin{equation*}
        s^+(\lambda) = t(\lambda)^2 + k^2,\qquad
        t^+(\lambda) = \frac{1}{2}(\lambda - \lambda^{-1}),\qquad   
        s^-(\lambda) = t(\lambda)^2 + k^2 - 1,\qquad
        t^-(\lambda) = \frac{1}{2}(\lambda + \lambda^{-1}),
    \end{equation*}
    where $(s^\pm,t^\pm)$ is the Bour deformation parameter pair of the positive-/ negative-alignable.
\end{corollary}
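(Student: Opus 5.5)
The plan is to compute, for each member $V^\lambda$ of the isometric family, the fundamental forms in the Bour frame and then read off $(s,t)$ from Proposition~\ref{prop:bour:family}. By Theorem~\ref{theorem:main:first} and \eqref{eq:X:Y:pm}, the Bour frame of an alignable V-surface of the first kind bisects the V-frame. With $(E_1,E_3)=(\partial_u,\partial_v)$ and $Y=\partial_u-\partial_v$ (we use $\lambda=1$ as the reference member, where $K$ is of revolution), the pair $x=u+v$, $y=u-v$ gives coordinate vector fields $\partial_x=\tfrac12(\partial_u+\partial_v)$ and $\partial_y=\tfrac12(\partial_u-\partial_v)$. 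Since $\omega_k=\omega_k(x)$, all metric coefficients depend only on $x$.

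First, rewrite $I^\pm_V$ from \eqref{eq:fundamentalform:positive:first} and \eqref{eq:fundamentalform:negative:first} in $(\d x,\d y)$. Using $\d u=\tfrac12(\d x+\d y)$ and $\d v=\tfrac12(\d x-\d y)$, we get for the positive case
\[
I^+ = \tfrac12\csc^4(\omega_k/2)\bigl((1+\cos\omega_k)\,\d x^2+(1-\cos\omega_k)\,\d y^2\bigr).
\]
Normalise $\theta^1$ as the unit coframe along $\partial_x$, so that $\theta^1=\csc^2(\omega_k/2)\cos(\omega_k/2)\,\d x$. The Killing field $Y=\partial_y$ then has $G=I(Y,Y)=\csc^2(\omega_k/2)$ in the positive case, and $G=\sec^2(\omega_k/2)$ in the negative case, up to the factor from $\partial_y$. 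The negative case is handled the same way, with the roles of $\cos$ and $\sin$ interchanged.

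Next, express $II^\pm_V$ in the Bour coframe $(\theta^1,\theta^2)$ and extract $M=II(E_1,E_2)$ and $N=II(E_2,E_2)$. For the positive case the $\d x\,\d y$ coefficient of $\lambda\,\d u^2+\lambda^{-1}\d v^2$ is $\tfrac12(\lambda-\lambda^{-1})$, and the $\d y^2$ coefficient is $\tfrac14(\lambda+\lambda^{-1})$. For the negative case these two expressions trade places. Comparing $M$ with $M=t/G$ from \eqref{eq:L:N:u} identifies $t^\pm(\lambda)$ as $\tfrac12(\lambda\mp\lambda^{-1})$; the factors of $G$ cancel because the Bour frame normalisation brings in exactly $\sqrt G$ per $\theta^2$. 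Then $s$ follows from the first integral $GN^2=s-G_1^2/(4G)-t^2/G$ derived in the proof of Proposition~\ref{prop:bour:family}:
\[
s = GN^2+\frac{G_1^2}{4G}+\frac{t^2}{G}.
\]

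The main obstacle is showing that this combination is constant and equals $t^2+k^2$ (resp.\ $t^2+k^2-1$). To compute $G_1=\d G(E_1)$ we need $\omega_k'(x)$, and for that we use the pendulum first integral $\tfrac12(\omega')^2=C-\cos\omega$ from the proof of Proposition~\ref{prop:K:surfLrevolution}. From \eqref{eq:elliptic:modulus} the constant is $C=2k^2-1$, giving $(\omega')^2=4k^2-4\cos^2(\omega/2)$, up to the scaling between the derivative in $x$ and the derivative along $E_1$. Substituting this into the expression for $s$, the trigonometric terms should collapse. What remains is $k^2$ plus the term $(\lambda+\lambda^{-1})^2/4$ coming from $N$, minus something that turns it into $t^2$ with $t^+=\tfrac12(\lambda-\lambda^{-1})$. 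This works because $\tfrac14(\lambda+\lambda^{-1})^2=\tfrac14(\lambda-\lambda^{-1})^2+1$, and the extra $1$ is absorbed by the $-\cos^2(\omega/2)$ term. In the negative case the analogous identity leaves the extra $-1$, which gives $s^-=t^2+k^2-1$.

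Careful bookkeeping of the factor $\tfrac12$ in $\partial_x,\partial_y$ and of the sign conventions is the delicate part. Finally, we check consistency with Theorem~\ref{theorem:main:first}: at $\lambda=1$ the positive family has $t=0$, so it is a wrapping-type point of the Bour family, which agrees with the isothermal–conjugate observation in Remark~\ref{remark:isothermal:conjugate}.
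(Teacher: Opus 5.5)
Your proposal is correct, but it takes a different route from the paper's.

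\textbf{The paper's route.} The paper never computes $M$ and $N$ for the individual members $V^\lambda$. It takes the general Bour-family form \eqref{eq:I:II:u}--\eqref{eq:L:N:u} with $(s,t)$ left free and $G=I(\partial_y,\partial_y)$, and writes it in the coframe $(\mathrm{d}x,\mathrm{d}y)$. It then imposes that the V-net directions $\partial_x\pm\partial_y$ stay conjugate, i.e.\ $II(\partial_x+\partial_y,\partial_x-\partial_y)=0$. This single constraint involves $L$, hence $G_{11}$, and it cuts out the curves $s=k^2+t^2$ and $s=k^2+t^2-1$ in the $(s,t)$-plane. The parametrization $t(\lambda)$ is asserted there rather than derived.

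\textbf{Your route.} You locate each $V^\lambda$ directly, and the bookkeeping you sketch closes exactly:
\begin{itemize}
\item In the positive resp.\ negative case, $M=\tfrac12(\lambda-\lambda^{-1})\sin^2(\omega_k/2)$ resp.\ $M=\tfrac12(\lambda+\lambda^{-1})\cos^2(\omega_k/2)$. Via $M=t/G$ this gives $t^\pm$.
\item One finds $GN^2=(t^2+1)\cos^2(\omega_k/2)$ resp.\ $GN^2=(t^2-1)\sin^2(\omega_k/2)$.
\item In both cases $G_1^2/(4G)=\tfrac14\omega_k'(x)^2=k^2-\cos^2(\omega_k/2)$.
\item The first integral therefore returns $s=t^2+k^2$ resp.\ $s=t^2+k^2-1$.
\end{itemize}

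\textbf{Comparison.} Your approach gives the explicit $\lambda$-dependence of $(s,t)$. It also needs only first derivatives of $G$, supplied by the pendulum first integral, and never touches $L$. The paper's approach instead shows that the V-net-preserving members of the Bour family are exactly those lying on the curve, without needing $II^\lambda$ explicitly.

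\textbf{One point to fix.} Take the Killing field to be $\partial_y=\tfrac12(\partial_u-\partial_v)$, not $\partial_u-\partial_v$ as you first wrote. Under $Y\mapsto cY$ one has $(s,t)\mapsto(c^2s,c^2t)$. The stated formulas hold only in the normalization $G=I(\partial_y,\partial_y)=\csc^2(\omega_k/2)$ resp.\ $\sec^2(\omega_k/2)$, which is the one the paper uses and the one you actually computed with.
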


\begin{proof}
    Consider the reparametrization $(x,y) = (u+v,u-v)$. Then \eqref{eq:fundamentalform:positive:first} and \eqref{eq:fundamentalform:negative:first} are written as 
    \begin{equation*}
            I^+ = C^2\,\left(\csc^2{\left(\frac{\omega(x)}{2}\right)}\,\cot^2{\left(\frac{\omega(x)}{2}\right)}\,\mathrm{d}x^2 + \csc^2{\left(\frac{\omega(x)}{2}\right)}\,\mathrm{d}y^2\right),\quad
            I^- = C^2\,\left(\sec^2{\left(\frac{\omega(x)}{2}\right)}\,\tan^2{\left(\frac{\omega(x)}{2}\right)}\,\mathrm{d}x^2 + \sec^2{\left(\frac{\omega(x)}{2}\right)}\,\mathrm{d}y^2\right),
    \end{equation*}
    in the dual frame $(\d x,\d y)$. Since $(\partial_x,\partial_y)$ is a parallel frame to the canonical Bour frame we can write the Bour frame according to it and get
    \begin{equation*}
        \theta^1 = C\,\csc{\left(\frac{\omega(x)}{2}\right)}\,\cot{\left(\frac{\omega(x)}{2}\right)}\,\d x,\qquad\qquad
        \theta^2 = C\,\csc{\left(\frac{\omega(x)}{2}\right)}\,\d y,
    \end{equation*}
    with $G = I^+(\partial_y,\partial_y) = C^2\,\csc^2{\left(\frac{\omega(x)}{2}\right)}$ for the positive alignable and 
    \begin{equation*}
        \theta^1 = C\,\sec{\left(\frac{\omega(x)}{2}\right)}\,\tan{\left(\frac{\omega(x)}{2}\right)}\,\d x,\qquad\qquad
        \theta^2 = C\,\sec{\left(\frac{\omega(x)}{2}\right)}\,\d y,
    \end{equation*}
    with $G = I^-(\partial_y,\partial_y) = C^2\,\sec^2{\left({\omega(x)}/{2}\right)}$ for the negative alignable.
    Consequently, writing \eqrefi{eq:I:II:u}{2} in $(\d x, \d y)$ and using the fact the Bour isometry should preserve the V-frame which is parallel to $(\partial_x+\partial_y,\partial_x-\partial_y)$ we have
    \begin{align*}
        II^+(\partial_x+\partial_y,\partial_x-\partial_y) &\propto (2\,k^2 - 2\,s)\,C^2 + 2\,t^2 = 0,\quad\implies\quad s = k^2 + C^{-2}{t^2},\\\
        II^-(\partial_x+\partial_y,\partial_x-\partial_y) &\propto (2\,k^2 - 2\,s - 2)\,C^2 + 2\,t^2 = 0,\quad\implies\quad s = k^2 + {C^{-2}}{t^2} -1.\qedhere
    \end{align*}
\end{proof}
\subsection{Formulas of Immersion}\label{sec:immersions}

In \cite{BabichVoss, izmestiev} an immersion formula for V-surfaces is expressed in terms of a support function $h(u,v)$ solving the Moutard equation, and \cite{Marvan} uses this representation to produce many explicit V-surface immersions. However, as already observed in \cite{BabichVoss}, the dependence on the spectral parameter $\lambda$ is not transparent: starting from a single solution $h(u,v)$, it is generally nontrivial to recover the full isometric family.\\
Indeed, Theorem~\ref{theorem:V:isometry} shows that if the second fundamental form is $II = L\,\mathrm{d}u^{2}+N\,\mathrm{d}v^{2}$, then along the associated isometric deformation one has the scalings $L\mapsto \lambda L$ and $N\mapsto \lambda^{-1}N$. Since $L$ and $N$ can be written explicitly in terms of $h$, enforcing these scalings leads to a system for $h^\lambda:=h(u,v,\lambda)$, still expected to satisfy the Moutard equation and the system:
\begin{equation*}
    \begin{aligned}
        h^\lambda_{uu}-\omega_u\,h^\lambda_u\,\cot(\omega)-\lambda^{2}\omega_u\,h^\lambda_v\,\csc(\omega)+h^\lambda
        &=\lambda\Bigl(h^1_{uu}-\omega_u\,h^1_u\,\cot(\omega)-\omega_u\,h^1_v\,\csc(\omega)+h^1\Bigr),\\
        h^\lambda_{vv}-\omega_u\,h^\lambda_u\,\cot(\omega)-\lambda^{-2}\omega_u\,h^\lambda_v\,\csc(\omega)+h^\lambda
        &=\lambda^{-1}\Bigl(h^1_{vv}-\omega_u\,h^1_u\,\cot(\omega)-\omega_u\,h^1_v\,\csc(\omega)+h^1\Bigr).
    \end{aligned}
\end{equation*}
Consequently, even if \(h^{1}\) is known, varying \(\lambda\) still requires solving the induced PDE system for \(h^{\lambda}\), which is typically intractable. Thus explicit immersions for an entire isometric family \(\{V^{t}\}\) are highly nontrivial, even when one member is known.\\
Despite this observation, we show that the kinematic insight underlying Theorem~\ref{theorem:main:first} enables the reconstruction of the full isometric family of alignable V-nets of the first kind. The theorem states that an alignable V-surface of the first kind can be constructed by starting from a K-surface of revolution. Infinitesimally sliding or wrapping this K-surface about its axis of symmetry produces a rotation field, which when its vectors are treated as position vectors determines an alignable V-surface. This procedure gives a single surface \(V^0\). The theorem then asserts that all remaining V-surfaces of $\{V^t\}$ arise from \(V^0\) by a one-parameter subfamily of Bour isometric deformation of $V^0$. This identification is then already carried out in Corollary~\ref{coro:s:t}.

\subsection{Immersions of alignable K-nets of revolution}
To make progress, we recall Sym’s immersion formula for K-nets \cite{Sym} and its quaternionic reformulation \cite{BobenkoQuaternion}. Although no genuine Sym formula for V-nets is currently known, we combine Sym’s construction for K-nets of revolution with the correspondence in Theorem~\ref{theorem:main:first} to derive immersion formulas for alignable V-nets of the first kind.\\
From this point onward, we adopt the quaternionic description of surfaces, which provides a convenient setting for the immersion formulas. Consider the following isomorphism between $\mathbb{R}^3$ and $\mathrm{Im}\,\mathbb{H}$ the space of imaginary quaternions:
\begin{equation}\label{eq:isomorphism} 
    (x,y,z) \longmapsto x\,i\,\boldsymbol{\sigma}_1 + y\,i\,\boldsymbol{\sigma}_2 + z\,i\,\boldsymbol{\sigma}_3. 
\end{equation}
In this description we assign the imaginary quaternion $\Psi$ to a K-net $\psi: \Omega \rightarrow \mathbb{R}^3$. 
Now, let $(E_1,E_3)$ be the K-frame with the angle $\omega$ in between. W.l.o.g, we work in \(\mathbb{R}^2\) and regard \(E_1\) and \(E_3\) as vector fields on the parameter domain, with \((0,0,1)^{\mathsf T}\) as the unit normal of the plane of the parameter domain. In particular, we can write the trio of $E_1$, $E_3$ and the unit normal as $\left(A(\cos{({\omega}/{2})}\,\mathbf{i} + \sin{({\omega}/{2})}\,\mathbf{j}),\; B(\cos{({\omega}/{2})}\,\mathbf{i} - \sin{({\omega}/{2})}\,\mathbf{j}),\;\mathbf{k}\right)$. 
Let $\Phi \in \mathrm{SU}(2)$ be a unitary quaternion transforming the above basis to the following basis on the surface $K$ 
\begin{gather}\label{eq:frame:psix:psiy:N}
    \Psi_u = -i\,\Phi^{-1}\,{\left(\begin{array}{cc}
    0                                   & \mathrm{e}^{\frac{-i \omega}{2}} \\
    \mathrm{e}^{\frac{i \omega}{2}}     & 0
    \end{array}\right)}\,\Phi,\qquad
    \Psi_v = -i\,\Phi^{-1}\,{\left(\begin{array}{cc}
    0                                   & \mathrm{e}^{\frac{i \omega}{2}} \\
    \mathrm{e}^{\frac{-i \omega}{2}}     & 0
    \end{array}\right)}\,\Phi,\qquad
    \mathbf{N} = -i\,\Phi^{-1}\,\sigma_3\,\Phi.
\end{gather}
To derive linear differential equations for $\Phi$ introduce
\begin{equation}\label{eq:lax:pair}
    \mathbf{U} := \Phi_u\,\Phi^{-1},\quad\quad\quad\quad
    \mathbf{V} := \Phi_v\,\Phi^{-1}.
\end{equation}
We recite the following theorem from \cite{BobenkoQuaternion}:
\begin{theorem}
    Let $\Phi(u,v,\lambda)$ be a solution of \eqref{eq:lax:pair} with
    \begin{equation}\label{eq:U:V}
        \mathbf{U} = \left(\begin{array}{>{\displaystyle}c >{\displaystyle}c}
         \frac{i\omega_u}{4}                                         & -\frac{i \lambda}{2}\,\mathrm{e}^{-i\,\frac{\omega}{2}} \\[.25cm]
         -\frac{i \lambda}{2}\,\mathrm{e}^{i \frac{\omega}{2}}    & -\frac{i \omega_u}{4} 
        \end{array}\right),\qquad\qquad
        \mathbf{V} = \left(\begin{array}{>{\displaystyle}c >{\displaystyle}c}
         -\frac{i \omega_v}{4}                                            & \frac{i \lambda^{-1}}{2} \mathrm{e}^{i\frac{\omega}{2}} \\[.25cm]
          \frac{i \lambda^{-1}}{2}\,\mathrm{e}^{-i \frac{\omega}{2}}  &  \frac{i \omega_v}{4} 
        \end{array}\right),
    \end{equation}
    Then $\Psi$ defined by
    \begin{equation}\label{eq:Sym}
        \Psi = 2\lambda\,\Phi^{-1}\,\Phi_{\lambda},
    \end{equation}
    is the immersion of a K-net $\Psi$ with Gaussian curvature $\mathrm{K} = -1$ and the fundamental forms of \eqref{eq:I:II:psi} with $(E_1,E_3) = (\partial_u,\partial_v)$.
\end{theorem}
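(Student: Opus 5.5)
The plan is to verify the statement directly from the Lax pair by differentiating the Sym formula \eqref{eq:Sym}, and then reading off the fundamental forms with the quaternionic inner product $\langle \mathbf{a},\mathbf{b}\rangle=-\tfrac12\operatorname{tr}(\mathbf{a}\mathbf{b})$ on $\mathrm{Im}\,\mathbb{H}\cong\mathbb{R}^3$ (via \eqref{eq:isomorphism}).

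\textbf{Step 1: existence of $\Phi$.} I first check that the system \eqref{eq:lax:pair} with $\mathbf{U},\mathbf{V}$ as in \eqref{eq:U:V} is solvable. Its compatibility condition is the zero-curvature equation $\mathbf{U}_v-\mathbf{V}_u+[\mathbf{U},\mathbf{V}]=0$.

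1. The diagonal part of this equation gives $-\tfrac{i}{2}\omega_{uv}+\tfrac{i}{2}\sin\omega=0$, which is the sine-Gordon equation of Theorem~\ref{theorem:classify:K-nets}.
2. The off-diagonal parts cancel identically, for every value of $\lambda$.

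Since $\mathbf{U},\mathbf{V}\in\mathfrak{su}(2)$, a solution $\Phi(\cdot,\cdot,\lambda)\in\mathrm{SU}(2)$ exists on the simply connected domain $\Omega$ and depends smoothly on $\lambda$.

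\textbf{Step 2: tangent vectors.} Differentiating the Sym formula gives
\[
(\Phi^{-1}\Phi_\lambda)_u=-\Phi^{-1}\mathbf{U}\Phi_\lambda+\Phi^{-1}(\mathbf{U}\Phi)_\lambda=\Phi^{-1}\mathbf{U}_\lambda\Phi ,
\]
and hence $\Psi_u=2\lambda\,\Phi^{-1}\mathbf{U}_\lambda\Phi$ and $\Psi_v=2\lambda\,\Phi^{-1}\mathbf{V}_\lambda\Phi$.

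1. Because the diagonal entries of $\mathbf{U},\mathbf{V}$ are $\lambda$-independent, $2\lambda\mathbf{U}_\lambda$ and $2\lambda\mathbf{V}_\lambda$ are exactly the off-diagonal matrices of \eqref{eq:frame:psix:psiy:N}, carrying the factors $\lambda$ and $\lambda^{-1}$ respectively. This is the spectral deformation \eqref{eq:spectral:def}, and at $\lambda=1$ we recover \eqref{eq:frame:psix:psiy:N} exactly.
2. Conjugation by $\Phi\in\mathrm{SU}(2)$ preserves the inner product, so a trace computation on these $2\times2$ matrices gives $\langle\Psi_u,\Psi_u\rangle=\langle\Psi_v,\Psi_v\rangle=1$ and $\langle\Psi_u,\Psi_v\rangle=\cos\omega$ (with the $\lambda^{\pm2}$ factors for general $\lambda$, as in \eqref{eq:corresponding:K:t}).
3. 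Both tangent matrices are off-diagonal, so they are orthogonal to $\mathbf{N}=-i\Phi^{-1}\boldsymbol{\sigma}_3\Phi$. Hence $\mathbf{N}$ is the unit normal, and the first fundamental form agrees with \eqref{eq:I:II:psi}.

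\textbf{Step 3: second fundamental form.} Write $\Psi_u=\Phi^{-1}\mathbf{P}\Phi$. Then $\Psi_{uu}=\Phi^{-1}(\mathbf{P}_u+[\mathbf{P},\mathbf{U}])\Phi$, and $\Psi_{uv}$, $\Psi_{vv}$ have the analogous forms. The coefficients of the second fundamental form are the $\boldsymbol{\sigma}_3$-components of these brackets.

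1. For $\Psi_{uu}$: $\mathbf{P}_u$ is off-diagonal, and the diagonal part of $[\mathbf{P},\mathbf{U}]$ vanishes because $\mathbf{P}$ is proportional to the off-diagonal part of $\mathbf{U}$. So $L=0$; the same argument gives $N=0$, and the coordinate lines are asymptotic.
2. For the mixed term, $[\mathbf{P},\mathbf{V}]$ has diagonal part proportional to $\mathrm{e}^{-i\omega}-\mathrm{e}^{i\omega}$. This yields $M=\langle\Psi_{uv},\mathbf{N}\rangle=\sin\omega$ with a sign fixed by the orientation convention for $\mathbf{N}$. This is $\mathrm{II}=2\sin\omega\,\d u\,\d v$.
3. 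Finally, $\mathrm{K}=(LN-M^2)/\det\mathrm{I}=-\sin^2\omega/\sin^2\omega=-1$.

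\textbf{Main obstacle.} The hardest part is Step 3. It is pure bookkeeping of commutators, signs and factors of $i$ and $2$, both in the identification \eqref{eq:isomorphism} and in the normalization of $\mathbf{N}$, and also in how the factor $\lambda$ is absorbed. I would first do the full computation at $\lambda=1$. I would then obtain general $\lambda$ via the rescaling $u\mapsto\lambda u$, $v\mapsto\lambda^{-1}v$, which Theorem~\ref{theorem:spectral:deformation} shows preserves $\mathrm{II}$ and $\omega$.
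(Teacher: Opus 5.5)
Your proposal is correct. The paper gives no proof of its own here and simply quotes the result from \cite{BobenkoQuaternion}. Your direct verification is the standard argument of that reference, and it is the same differentiation of \eqref{eq:Sym} the paper uses when proving \eqref{eq:K:net:immersion}: compatibility $\mathbf U_v-\mathbf V_u+[\mathbf U,\mathbf V]=0$ reducing to sine--Gordon, $\Psi_u=2\lambda\,\Phi^{-1}\mathbf U_\lambda\Phi$, and the $\boldsymbol{\sigma}_3$-components of $\mathbf P_u+[\mathbf P,\mathbf U]$ etc.\ giving $L=N=0$ and $M=\sin\omega$. With exactly the paper's $\mathbf N$, no orientation choice is needed for the sign of $M$.

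The only caveat, which you already flag, concerns $\lambda\neq1$. There the first fundamental form is $\lambda^{2}\,\d u^2+2\cos\omega\,\d u\,\d v+\lambda^{-2}\,\d v^2$, so \eqref{eq:I:II:psi} with $(E_1,E_3)=(\partial_u,\partial_v)$ holds literally only at $\lambda=1$. Your rescaling fallback is unnecessary: Step~3 is already uniform in $\lambda$, because the factors $\lambda^{\pm1}$ cancel in the mixed commutator.
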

Immersion formulas for K-nets of revolution, classically given via their profile curves, are a standard topic in differential geometry (see, e.g., \cite{DocarmoDifferential}). Here, however, we seek immersion formulas for the entire isometric family of V-nets of the first kind as a function of the alignability parameter \(k\), and this requires a K-net parametrization in which the dependence on \(k\) is explicit.

\begin{proposition}
    The (alignable) $K$-nets of revolution are given by a one--parameter family \((\psi_k,\Omega_k)\) with the parameter domain $\Omega_k$ as in \eqref{eq:Omega:u:v} and 
    \begin{equation}\label{eq:K:net:immersion}
    \begin{array}{l@{\qquad\qquad}l}
    \displaystyle
    \psi_{k}(u,v)
    = \frac{1}{k}\,
    \begin{pmatrix}
    \dn(x\mid k^2)\,\cos(k\,y)\\[.15cm]
    \dn(x\mid k^2)\,\sin(k\,y)\\[.15cm]
    \mathcal{E}\left(\mathrm{am}(x\mid k^2)\mid k^2\right) - x
    \end{pmatrix},
    &
    \left\{
    \begin{array}{l}
    x = u + v,\\[.3cm]
    y = u - v,
    \end{array}
    \right.
    \end{array}
    \end{equation}
    where $\mathcal{E}(-,-)$ is the incomplete elliptic integral of the second kind and $k \in \mathbb{R}^+$.
\end{proposition}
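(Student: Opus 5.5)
The plan is to verify directly that \eqref{eq:K:net:immersion} has exactly the fundamental forms \eqref{eq:I:II:psi} in the coordinate frame $(E_1,E_3)=(\partial_u,\partial_v)$, with $\omega=\omega_k$ as in \eqref{eq:elliptic:modulus}. Once this is done, the fundamental theorem of surface theory gives the identification up to rigid motion with the K-nets of Proposition~\ref{prop:K:surfLrevolution}. The surface is visibly a surface of revolution about the third axis, since $y$ enters only through the rotation $(\cos(ky),\sin(ky))$. So it suffices to work in the Bour-type coordinates $(x,y)=(u+v,u-v)$ and use $\partial_u=\partial_x+\partial_y$, $\partial_v=\partial_x-\partial_y$.

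First I compute the tangent vectors. I use the standard derivative rules for Jacobi functions with parameter $k^2$: $\partial_x\dn=-k^2\sn\cn$, and $\partial_x\mathcal{E}(\mathrm{am}(x\mid k^2)\mid k^2)=\dn^2$, together with $\dn^2-1=-k^2\sn^2$. This gives
\[
\psi_x=-\sn\,\bigl(k\cn\cos(ky),\,k\cn\sin(ky),\,k\sn\bigr),\qquad \psi_y=\dn\,\bigl(-\sin(ky),\,\cos(ky),\,0\bigr).
\]
Hence $\langle\psi_x,\psi_y\rangle=0$, $|\psi_x|^2=k^2\sn^2$ and $|\psi_y|^2=\dn^2$. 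It follows that $|\psi_u|^2=|\psi_v|^2=k^2\sn^2+\dn^2=1$, so the coordinate lines are parametrized by arc length. Moreover $\langle\psi_u,\psi_v\rangle=k^2\sn^2-\dn^2=2k^2\sn^2-1=\cos\omega_k$, which is exactly the first fundamental form in \eqref{eq:I:II:psi}.

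Next I compute the unit normal $\nu=\psi_x\times\psi_y/(k\,|\sn|\,\dn)$ and the second-order derivatives $\psi_{xx},\psi_{xy},\psi_{yy}$. By rotational symmetry $II(\partial_x,\partial_y)=0$. Therefore the $u$- and $v$-lines are asymptotic, i.e. $II(\partial_u,\partial_u)=II(\partial_v,\partial_v)=0$, if and only if $II(\partial_x,\partial_x)+II(\partial_y,\partial_y)=0$. In that case $II(\partial_u,\partial_v)=2\,II(\partial_x,\partial_x)$, and this should equal $\sin\omega_k=2k|\sn|\dn$ (using $\cos\omega_k=2k^2\sn^2-1$). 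Both identities reduce, after inserting the derivative rules again (with $\partial_x\sn=\cn\dn$ and $\partial_x\cn=-\sn\dn$), to polynomial identities in $\sn,\cn,\dn$ modulo $\sn^2+\cn^2=1$ and $\dn^2+k^2\sn^2=1$. Given $I$ and $II$ of the form \eqref{eq:I:II:psi}, the Gauss equation forces $\mathrm{K}=-1$. Alternatively this follows from $\det II/\det I=-\sin^2\omega_k/\sin^2\omega_k$. Thus $\psi_k$ is a K-net, and it is of revolution with angle function $\omega_k$.

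Finally, I restrict to the domains $\Omega_k$ of \eqref{eq:Omega:u:v}, on which $\sin\omega_k\neq0$, so the immersion is regular. The case $k>1$ is handled by the reciprocal-modulus identities, exactly as in the proof of Proposition~\ref{prop:K:surfLrevolution}; note that $\dn$ with parameter $k^2>1$ can vanish, which is where the cusp singularities appear. Conversely, every K-net of revolution has angle $\omega_k$ for some $k$ by Proposition~\ref{prop:K:surfLrevolution}. By Bonnet's theorem it is therefore congruent to $\psi_k$, which gives the claimed one-parameter family.

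I expect the main obstacle to be the second fundamental form step. There one has to track the orientation of $\nu$ and the sign of $\sn$ so that $II(\partial_u,\partial_v)=+\sin\omega_k$ rather than its negative. One also has to make sure the modulus versus parameter convention ($k$ versus $k^2$) in \eqref{eq:elliptic:modulus} and \eqref{eq:K:net:immersion} is used consistently. I would fix this by checking the sign once on a single strip and then switching orientation by $y\mapsto-y$ if needed.
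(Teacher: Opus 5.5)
Your proposal is correct, but it runs in the opposite direction from the paper's proof. You take the formula as given and verify it. From $\psi_x=-k\,\sn\,(\cn\cos(ky),\cn\sin(ky),\sn)$ and $\psi_y=\dn\,(-\sin(ky),\cos(ky),0)$ you get $I$ exactly. Taking $n=(-\sn\cos(ky),-\sn\sin(ky),\cn)$, one finds $II_{xx}=-II_{yy}=-k\,\sn\,\dn$ and $II_{xy}=0$. So the $u$- and $v$-lines are asymptotic, and $II(\partial_u,\partial_v)=-2k\,\sn\,\dn=\pm\sin\omega_k$. Completeness then follows from Proposition~\ref{prop:K:surfLrevolution} together with Bonnet's theorem.

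The paper instead \emph{derives} the formula from the Sym construction. In the coordinates $(x,y)$ and at $\lambda=1$ it observes that $\mathbf U+\mathbf V=\sin(\omega/2)\mathbf J$ and $\mathbf U-\mathbf V=i\mathbf M(x)$ with $\mathbf M(x)^2=k^2\,\id$. It then integrates the frame explicitly as $\Phi=\exp(\tfrac{i}{2}y\mathbf M(x))\exp(\tfrac12\mathrm{am}(x\mid k^2)\mathbf J)$. Finally it integrates the tangent formulas $\Psi_x=-i\cos(\omega/2)\Phi^{-1}\boldsymbol\sigma_1\Phi$ and $\Psi_y=\sin(\omega/2)\Phi^{-1}\mathbf J\Phi$.

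The two routes buy different things:
\begin{itemize}
\item The paper's route explains where the formula comes from and places it inside the Lax-pair framework that is later used for the spectral parameter. It needs the explicit solution of the frame equation, though, and leaves uniqueness and the sign conventions implicit.
\item Your route needs only the derivative rules for $\sn,\cn,\dn$ and $\partial_x\mathcal E(\mathrm{am}(x\mid k^2)\mid k^2)=\dn^2$, which the paper also uses. It gives regularity on $\Omega_k$ directly and makes the ``every K-net of revolution is of this form'' direction explicit. The price is that it presupposes the formula.
\end{itemize}

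The sign issue you anticipate is harmless. The ratio $\det II/\det I=-1$ holds regardless of sign. The sign of $II(\partial_u,\partial_v)$ is fixed by the choice of normal or by $y\mapsto -y$. The paper's own identities $\cos(\omega/2)=-k\,\sn$ and $\sin(\omega/2)=\dn$ simply select the branch $\sin\omega=-2k\,\sn\,\dn$.

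One small slip: for $k>1$ the function $\dn(\cdot\mid k^2)$ changes sign. You should therefore write $\sin\omega_k=2k\,|\sn\,\dn|$ and $|\psi_y|=|\dn|$.
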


\begin{proof}
    Our goal is to obtain the formulas of immersion of the K-net of the pseudospherical surfaces of revolution in dependence of $k$. In order to do so we reparametrize in the following way $x = u + v$, $y = u - v$. Therefore, our Lax pair becomes
    \begin{equation}\label{eq:lax:pair:X:Y}
        \Phi_x=\frac12(\mathbf U+\mathbf V)\Phi,\qquad\qquad
        \Phi_y=\frac12(\mathbf U-\mathbf V)\Phi.
    \end{equation}
    with $\mathbf{U}$ and $\mathbf{V}$ now being evaluated at $(x,y)$ points. In our new coordinates $(x,y)$ we have $\omega_y = 0$. Consequently, for $\lambda=1$, we get
    \begin{equation}
    \mathbf U+\mathbf V=\sin\!\left(\frac{\omega(x)}{2}\right)\mathbf J,\qquad
    \mathbf U-\mathbf V=i\,\mathbf M(x).
    \end{equation}
    where
    \begin{equation*}
        \mathbf{J} = \left(\begin{array}{>{\displaystyle}c >{\displaystyle}c}
                     0  & -1 \\[.25cm]
                     1  &  0 
                  \end{array}\right),\quad\quad\quad\quad
        \mathbf{M}(x) = \left(\begin{array}{>{\displaystyle}c >{\displaystyle}c}
                      \frac{\omega_x}{2}                   & -\cos{\left(\frac{\omega}{2}\right)} \\[.25cm]
                     -\cos{\left(\frac{\omega}{2}\right)}  &  -\frac{\omega_x}{2} 
                  \end{array}\right).
    \end{equation*}
    Now, using the following identities.
    \begin{equation*}
        \sin{\left(\frac{\omega}{2}\right)} = \dn(x \mid k^2),\quad\quad
        \cos{\left(\frac{\omega}{2}\right)} = -k\,\sn(x \mid k^2),\quad\quad
        \omega_x = 2\,k\,\cn(x \mid k^2),
    \end{equation*}
    we have 
    \begin{equation}\label{eq:jacobi:identities}
        \mathbf{M}(x) = k\,\left(\begin{array}{cc}
        \cn(x \mid k^2)  & \sn(x \mid k^2) \\
        \sn(x \mid k^2)  & -\cn(x \mid k^2)
        \end{array} \right),\quad\quad\quad\quad
        \mathbf{M}(x)^2 = k^2\,\id,
    \end{equation}
    using these findings we can now solve \eqrefi{eq:lax:pair:X:Y}{2} with ease as $\mathbf{M}(x)$ has no dependency on $y$:
    \begin{equation}\label{eq:Phi}
        \Phi(x,y)=\exp\!\left(\frac{i}{2}\,y\,\mathbf M(x)\right)\,\Phi(x,0).
    \end{equation}
    It just remains to fix $\Phi(x,0)$. simplifying \eqrefi{eq:lax:pair:X:Y}{1} using the identities of \eqref{eq:jacobi:identities} gives
    \begin{equation}\label{eq:phix:II}
        \Phi_x=\frac12(\mathbf U+\mathbf V)\Phi
        =\frac12\sin\!\left(\frac{\omega(x)}{2}\right)\mathbf J\,\Phi
        =\frac12\dn(x\mid k^2)\,\mathbf J\,\Phi.
    \end{equation}
    Now, solving \eqref{eq:phix:II} along $y=0$ line gives
    \begin{equation}\label{eq:Phi:x:0}
        \Phi(x,0)=\exp\!\big(\upsilon(x)\mathbf J\big)\,\Phi(0,0),\qquad
        \upsilon(x)=\frac12\int_0^x \dn(z\mid k^2)\,dz=\frac12\,\mathrm{am}(x\mid k^2).
    \end{equation}
    We fix the base point $\Phi(0,0) = \id$. On the other hand, differentiating the Sym formula \eqref{eq:Sym} with respect to $(u,v)$ and using $\Phi_u=U\Phi$, $\Phi_v=V\Phi$ and $\Psi_x=\frac12(\Psi_u+\Psi_v)$, $\Psi_y=\frac12(\Psi_u-\Psi_v)$ we get the following tangent formulas at $\lambda = 1$
    \begin{align}
        \Psi^{1}_x
        &=\Phi^{-1}\,\big((U_\lambda+V_\lambda)|_{\lambda=1}\big)\,\Phi
        =-i\cos\!\left(\frac{\omega(x)}{2}\right)\,\Phi^{-1}\boldsymbol{\sigma}_{1}\Phi,
        \\[2mm]
        \Psi^{1}_y
        &=\Phi^{-1}\,\big((U_\lambda-V_\lambda)|_{\lambda=1}\big)\,\Phi
        =\sin\!\left(\frac{\omega(x)}{2}\right)\,\Phi^{-1}\,\mathbf{J}\,\Phi.
    \end{align}
    Since the Lax system is compatible, the $\mathfrak{su}(2)$-valued 1-form $\Psi_x\,\d x+\Psi_y\,\d y$ is closed; hence $\Psi(x,y)$ is obtainable (up to translation) by integrating $\Psi_x$ and $\Psi_y$ from a base point.
    Now, plugging in \eqref{eq:Phi} with the earlier step \eqref{eq:Phi:x:0}, setting $\Psi(0,0) = 0$ and using 
    \begin{equation*}
        \frac{\d}{\d x}\mathcal{E}\left(\mathrm{am}(x\mid k^2)\mid k^2\right) = \dn^2(x\mid k^2),    
    \end{equation*} 
    gives us the immersion $\Psi^1(x,y)$. Its pre-image under the isomorphism of \eqref{eq:isomorphism} gives us the immersion formula for the K-surface $\psi(x,y)$. Upon reparametization to $(u,v)$ coordinates we get the K-net.
\end{proof}

\begin{remark}
    Since every K-net is a Chebyshev net and hence alignable (see \cite{AlignablePellis} and Remark~\ref{remark:K:frame:coordinates}), \eqref{eq:K:net:immersion} in fact yields immersion formulas for alignable K-nets of revolution that depend on the alignability parameter $k$.
\end{remark}

\subsection{Immersions of alignable V-nets of the first kind}
\begin{figure*}[t!]
    \centering
    \begin{subfigure}[b]{.24\textwidth}
        \centering
        \includegraphics[height = 35mm]{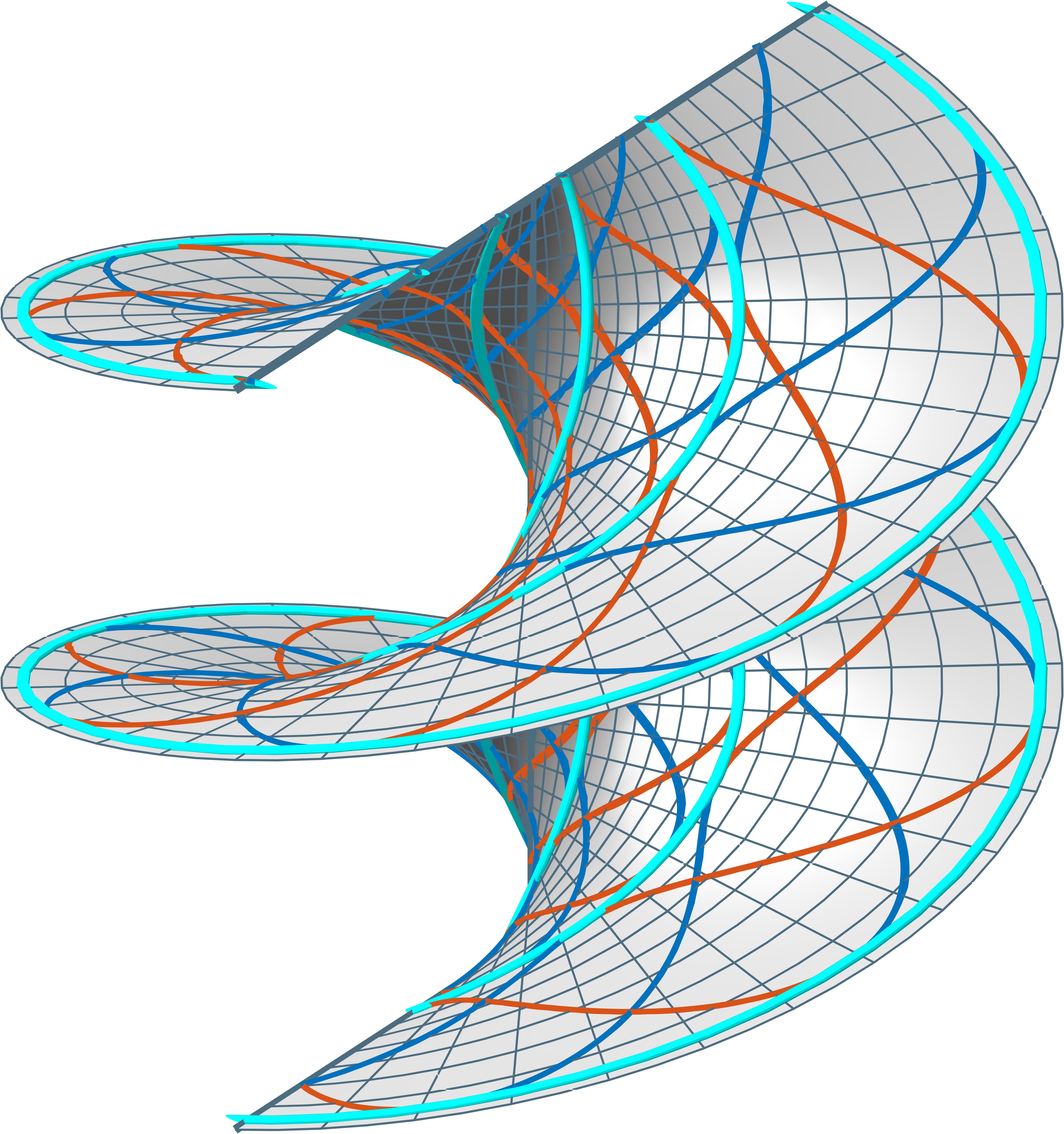}
        \caption{}
    \end{subfigure}\hfill
    \begin{subfigure}[b]{.24\textwidth}
        \centering
        \includegraphics[height = 35mm]{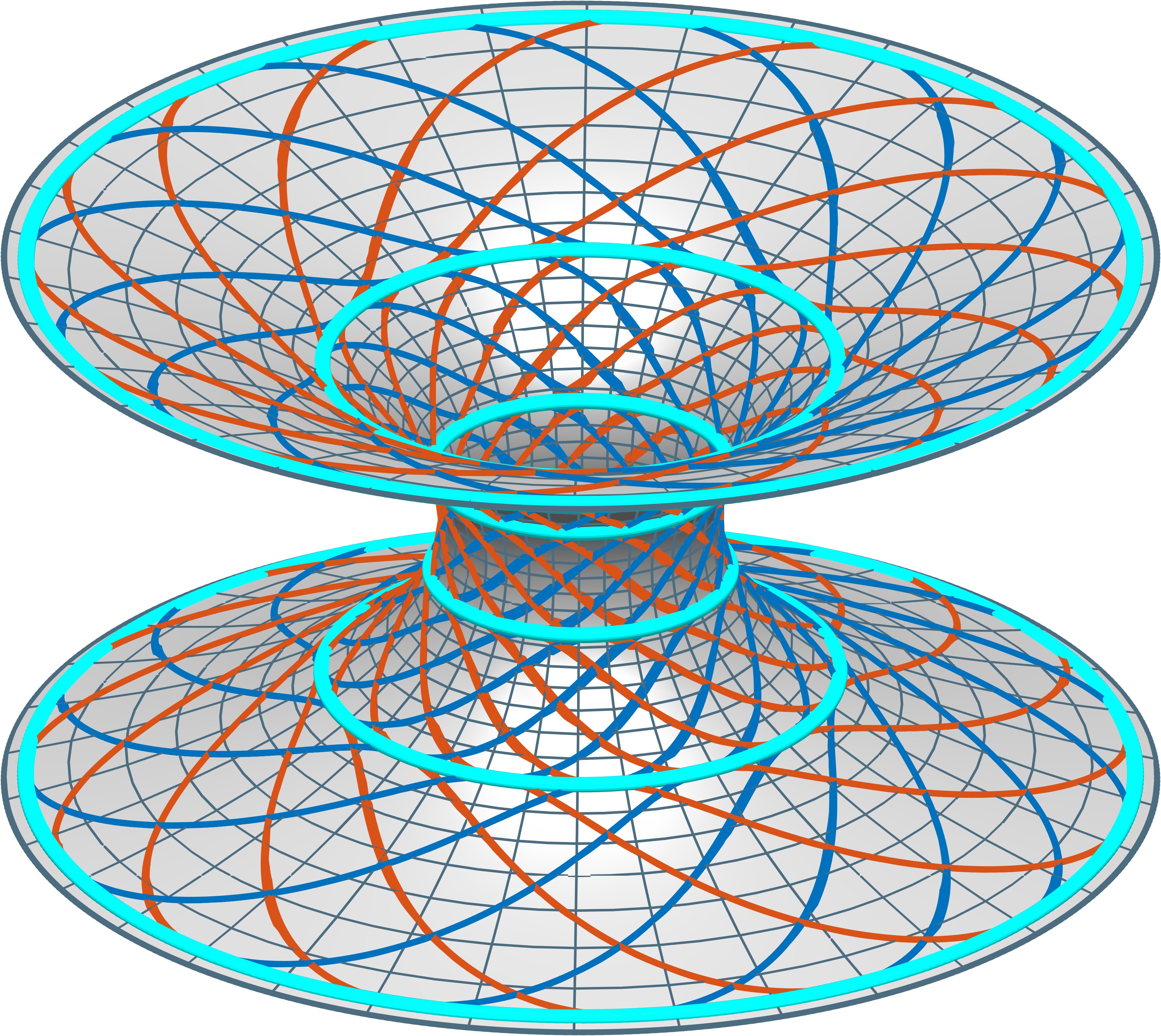}
        \caption{}
    \end{subfigure}\hfill
    \begin{subfigure}[b]{.24\textwidth}
        \centering
        \includegraphics[height = 35mm]{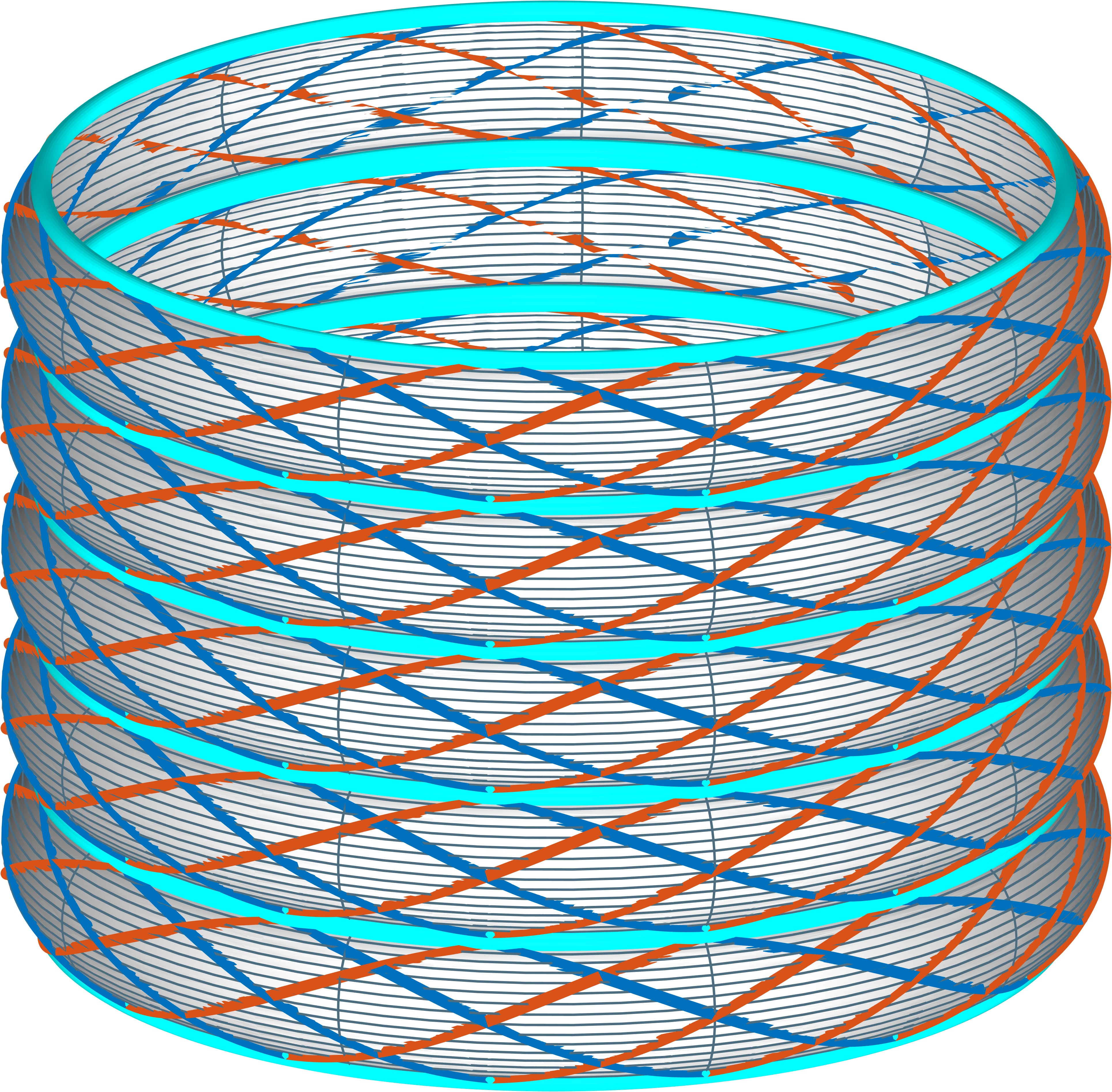}
        \caption{}
    \end{subfigure}\hfill
    \begin{subfigure}[b]{.24\textwidth}
        \centering
        \includegraphics[height = 35mm]{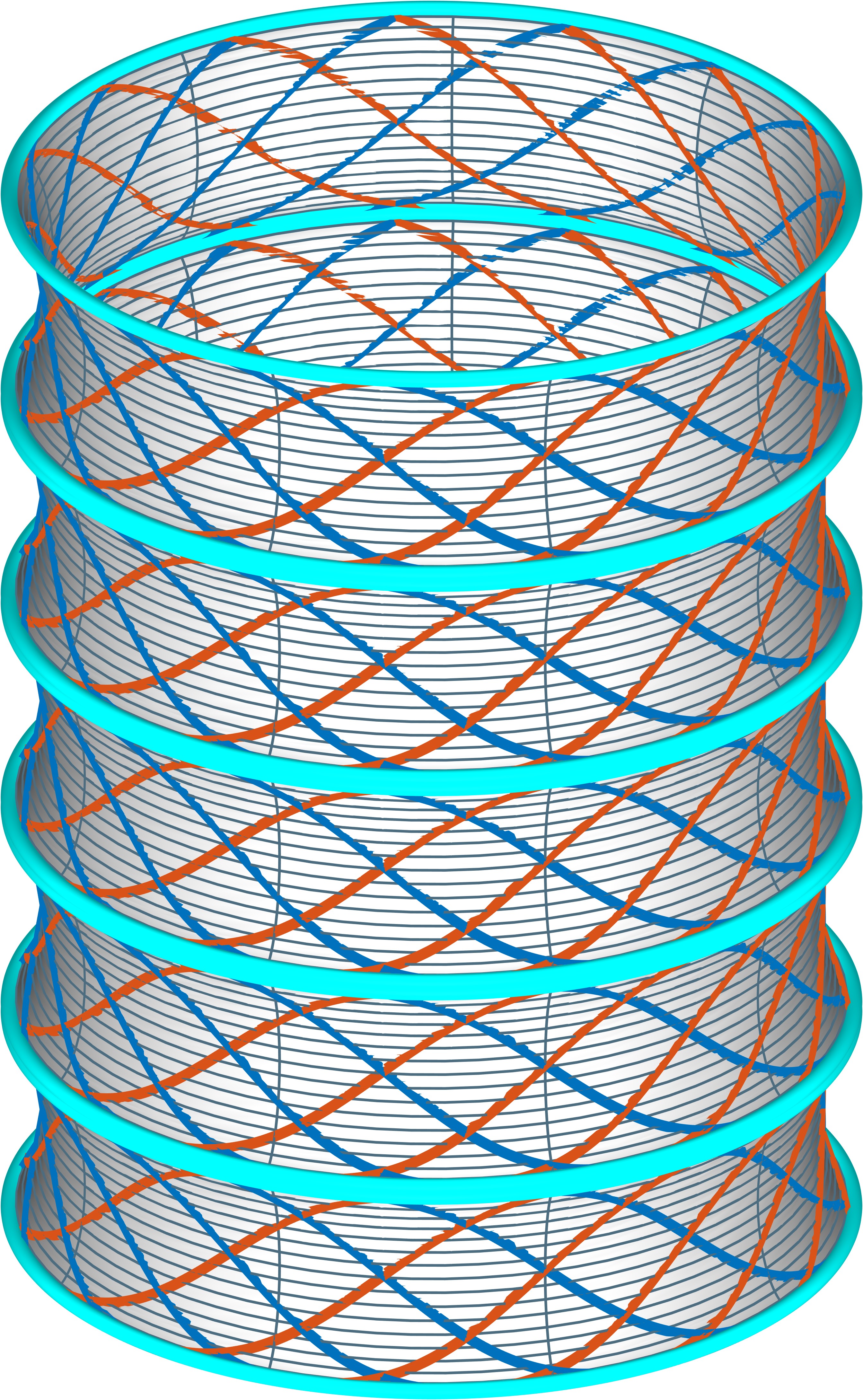}
        \caption{}
    \end{subfigure}

    \begin{subfigure}[b]{.24\textwidth}
        \centering
        \includegraphics[height = 35mm]{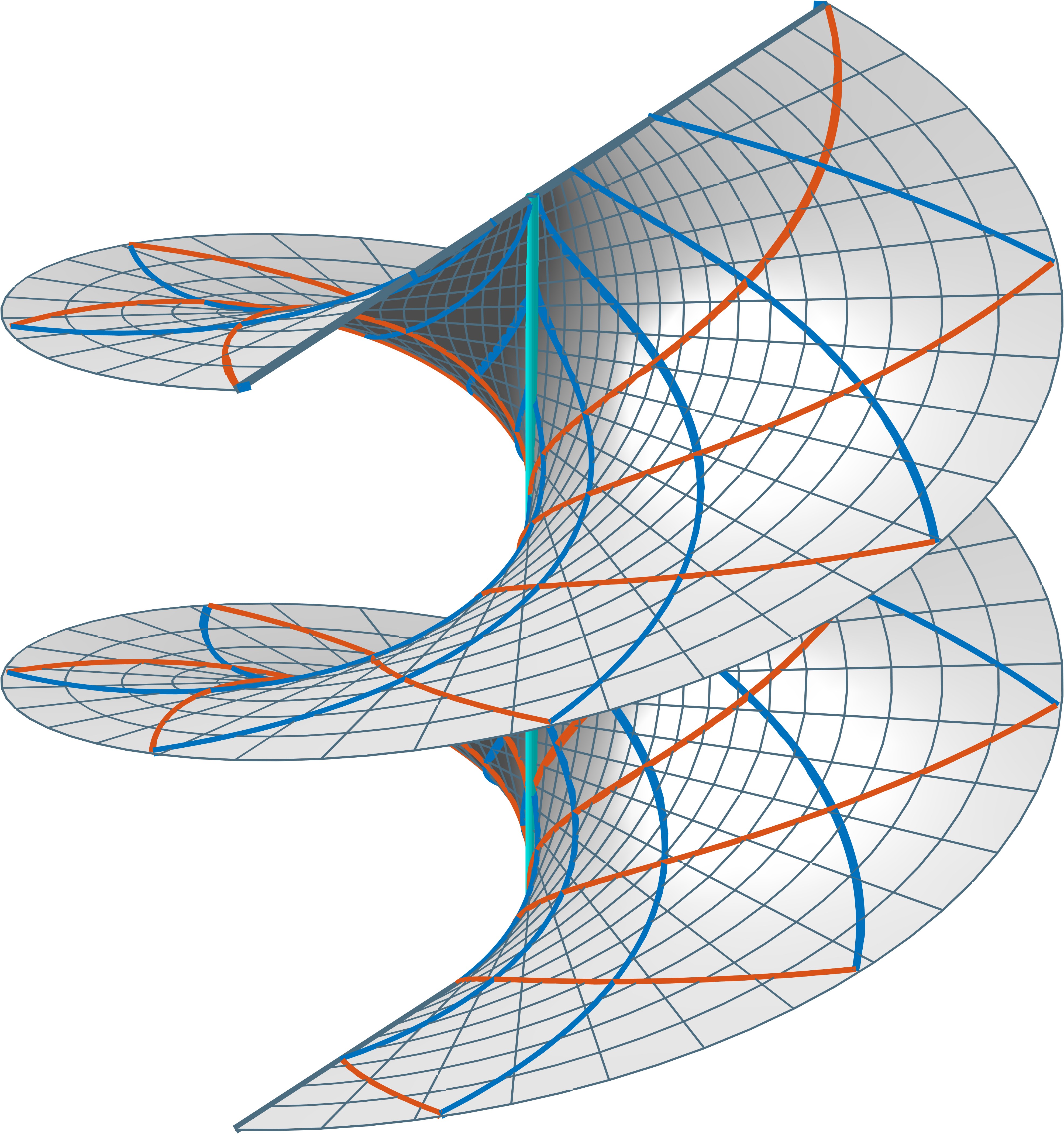}
        \caption{}
    \end{subfigure}\hfill
    \begin{subfigure}[b]{.24\textwidth}
        \centering
        \includegraphics[height = 35mm]{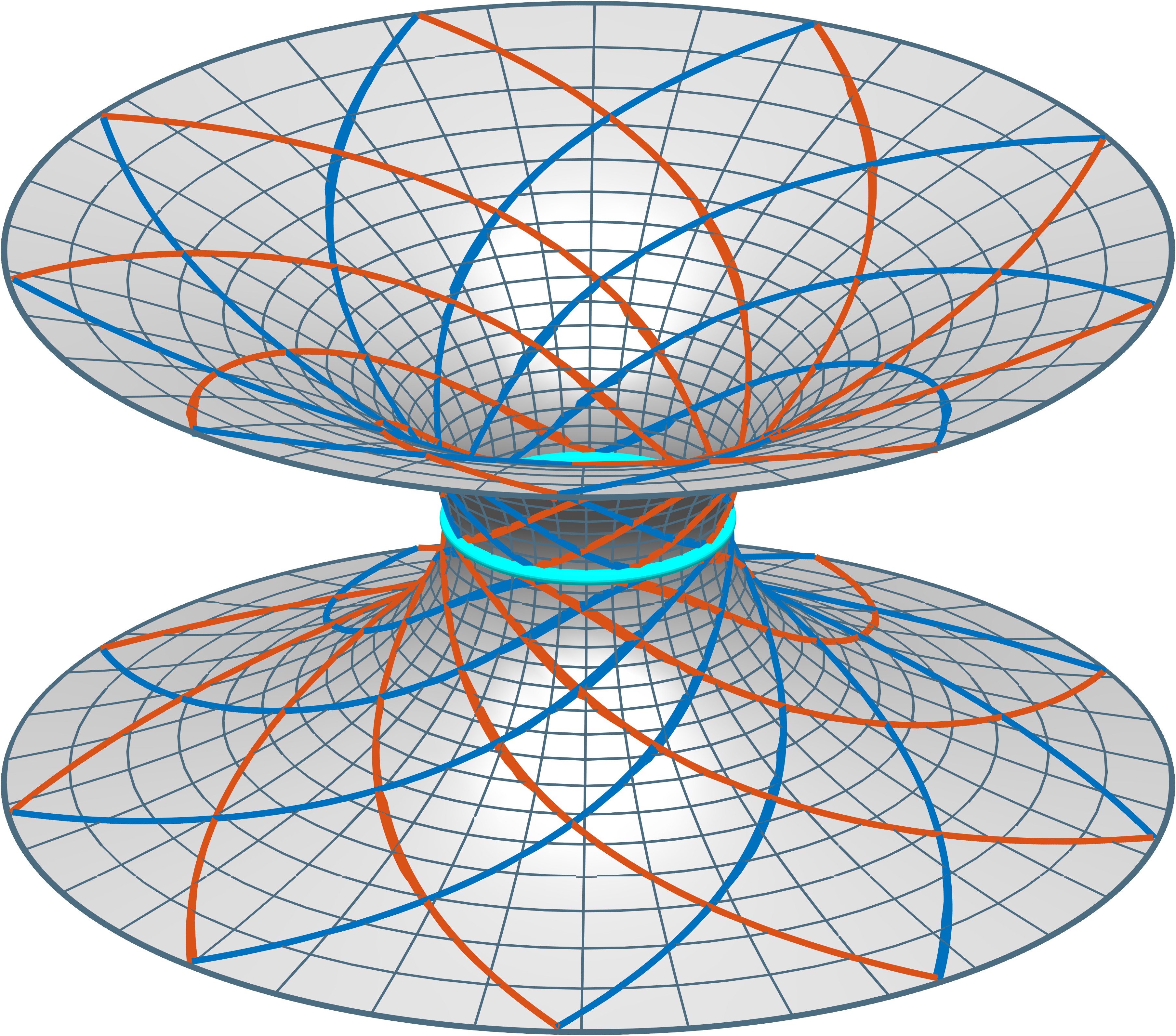}
        \caption{}
    \end{subfigure}\hfill
    \begin{subfigure}[b]{.24\textwidth}
        \centering
        \includegraphics[height = 35mm]{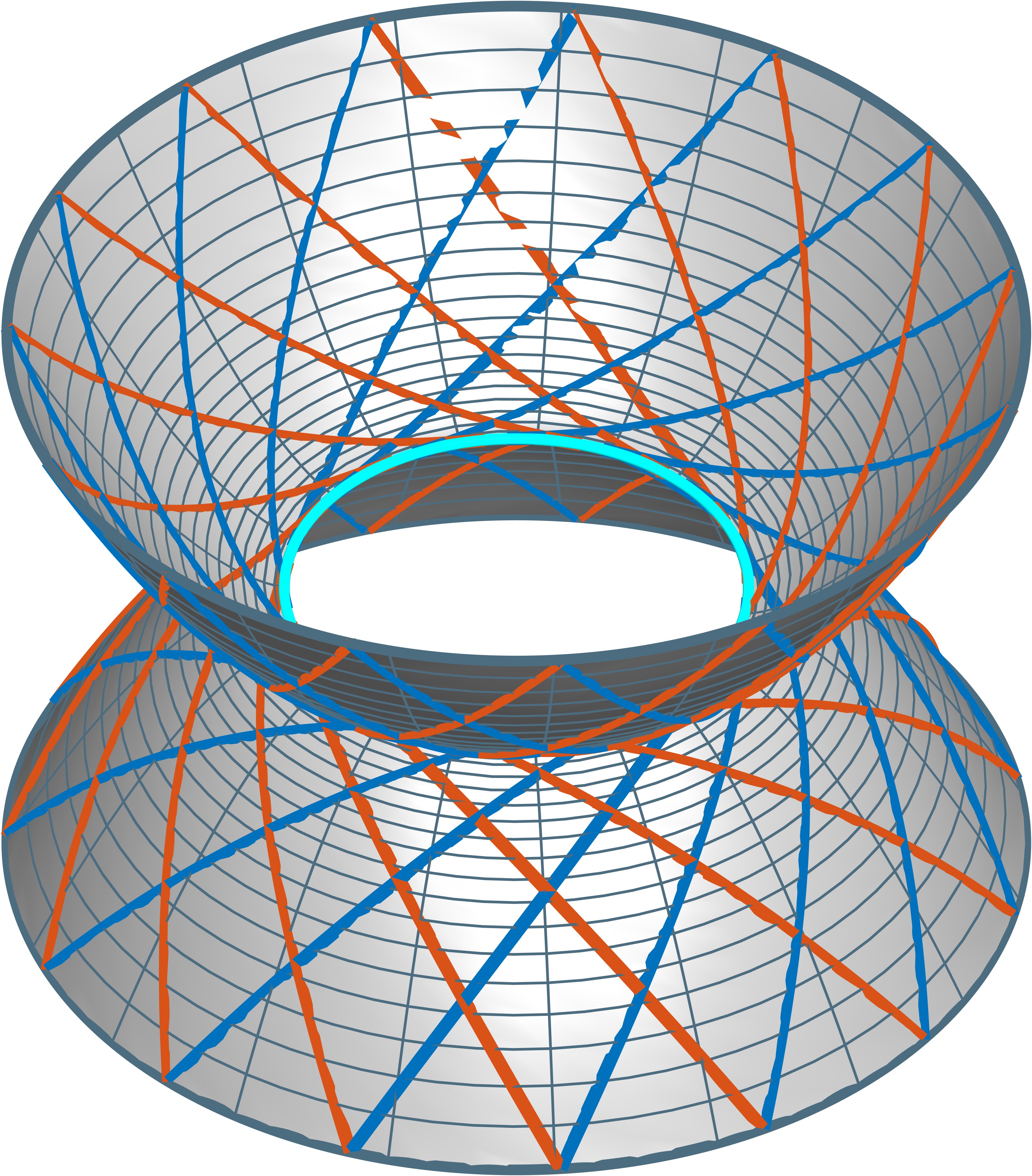}
        \caption{}
    \end{subfigure}\hfill
    \begin{subfigure}[b]{.24\textwidth}
        \centering
        \includegraphics[height = 35mm]{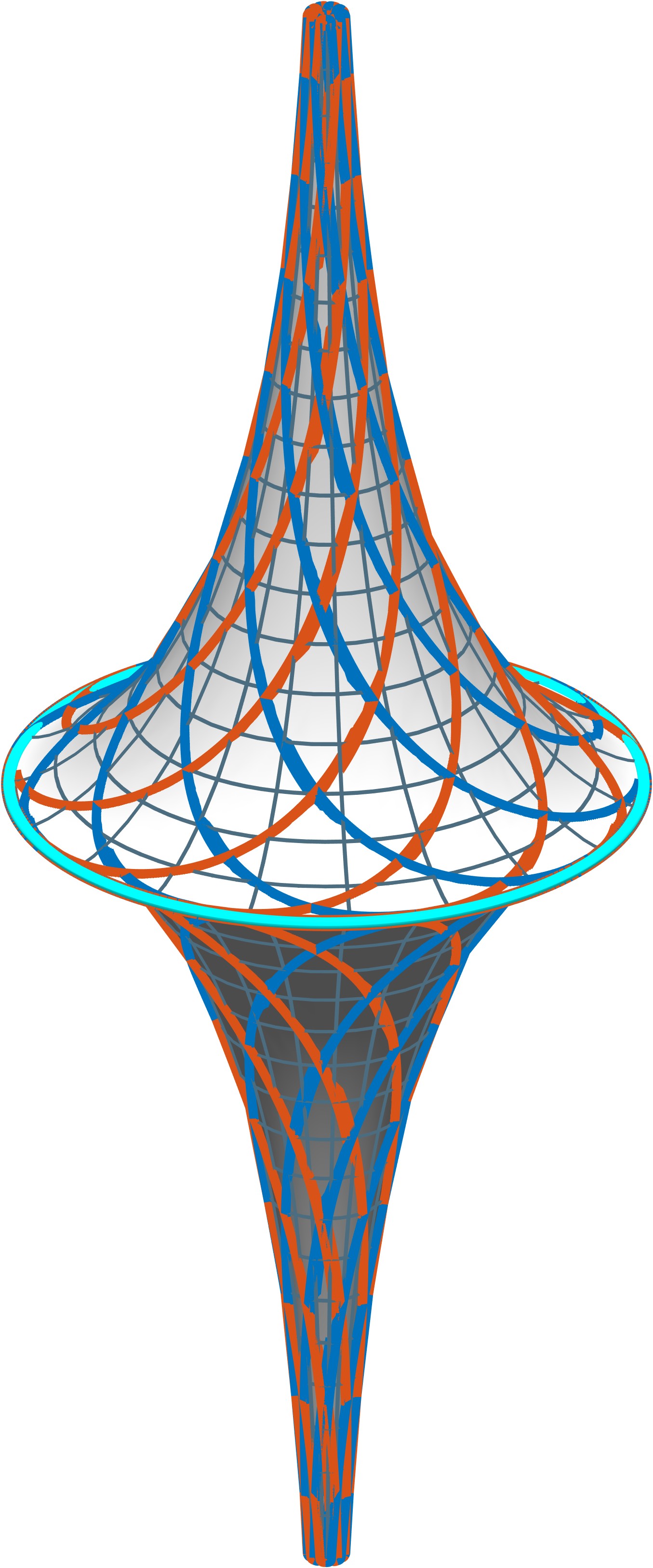}
        \caption{}
    \end{subfigure}

    \begin{subfigure}[b]{.24\textwidth}
        \centering
        \includegraphics[height = 35mm]{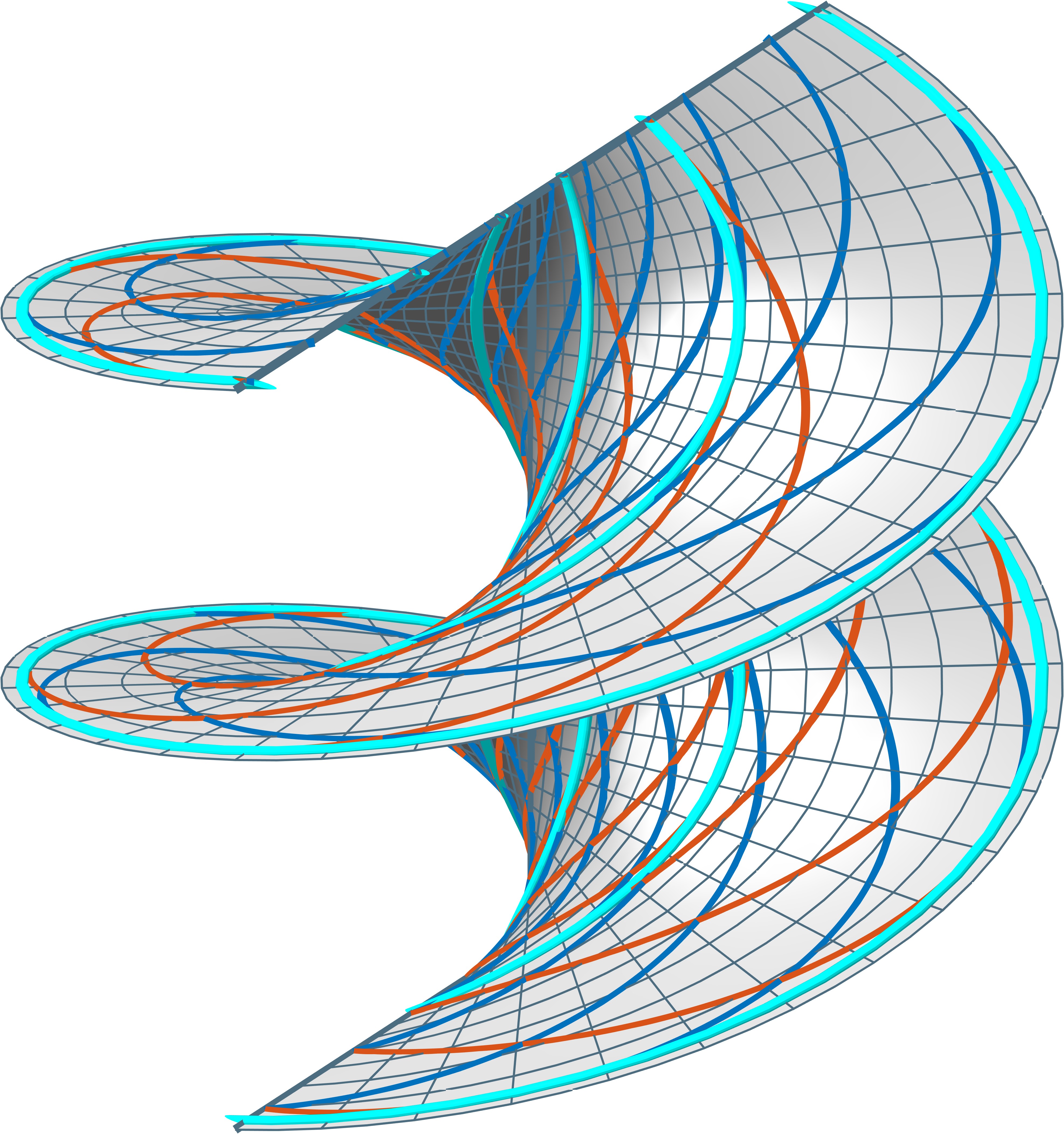}
        \caption{}
    \end{subfigure}\hfill
    \begin{subfigure}[b]{.24\textwidth}
        \centering
        \includegraphics[height = 35mm]{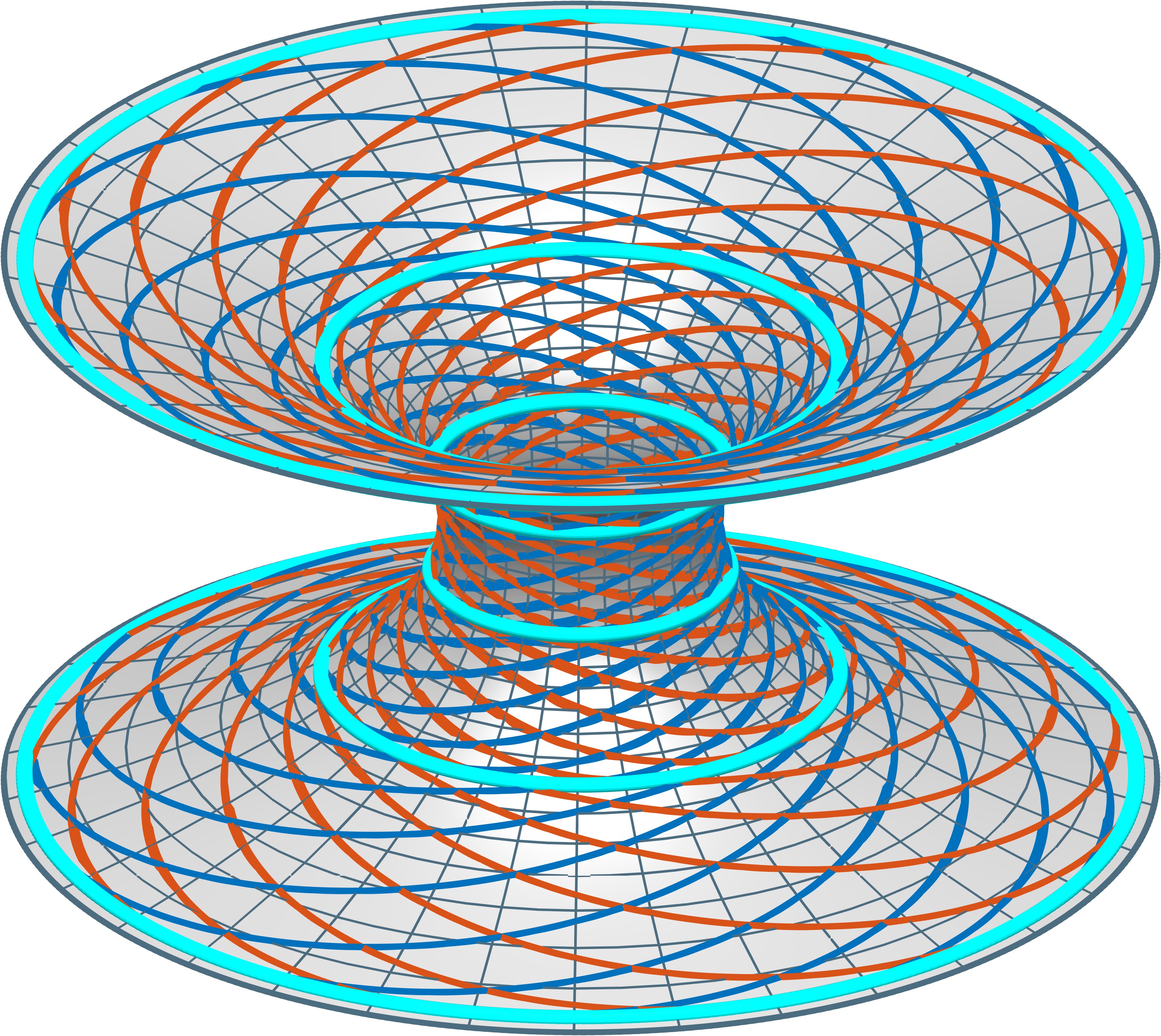}
        \caption{}
    \end{subfigure}\hfill
    \begin{subfigure}[b]{.24\textwidth}
        \centering
        \includegraphics[height = 35mm]{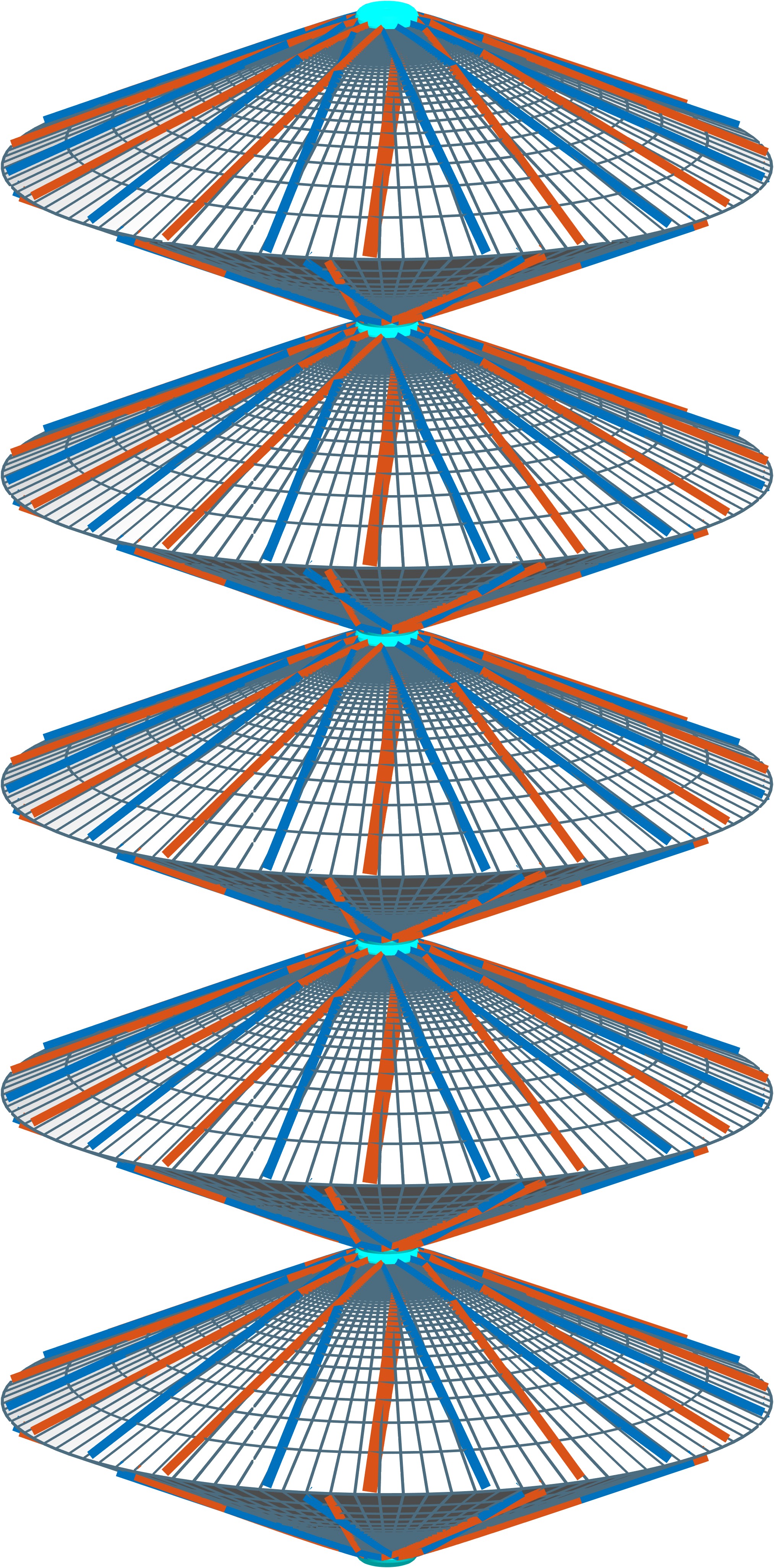}
        \caption{}
    \end{subfigure}\hfill
    \begin{subfigure}[b]{.24\textwidth}
        \centering
        \includegraphics[height = 35mm]{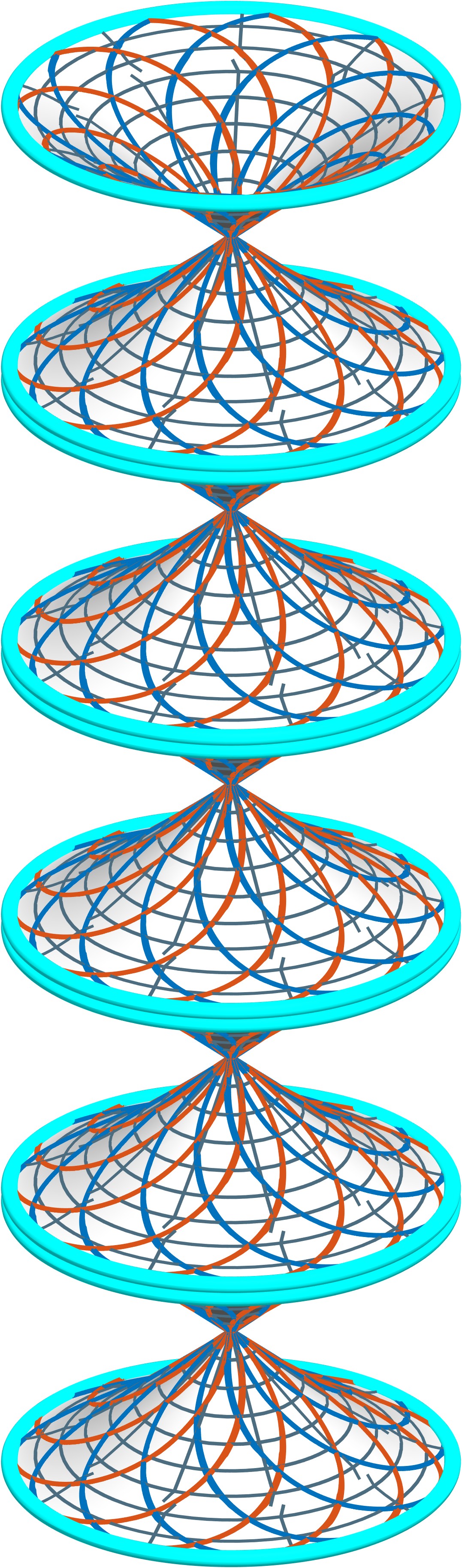}
        \caption{}
    \end{subfigure}
    \caption{Illustration of the alignable deformation of alignable V-nets of the first kind with $\lambda = 1$, of negative-alignable type (a,e,i) and its corresponding catenoid (b,f,j), the positive-alignable type (c,g,k) along with the corresponding K-nets of revolution (d,h,l).}
    \label{fig:alignable:deformation:all}
\end{figure*}
Below, we use an immersion formula for Bour isometries (see \cite{Bour}), with a slight modification that removes the need to assume the profile curve is parametrized by arc length.
\begin{theorem}\label{theorem:T-net:II_X:preserving:ID}
    Let $\psi$ be a surface of revolution with the profile curve $\gamma(u) = (f(u),0,g(u))^T$. If one assumes w.l.o.g that $\phi_v > 0$ and $g_u > 0$, then the formula of immersion for the Bour family $\{\psi^{s,t}\}$ is of the following form:
    \begin{equation}\label{eq:bour:immersion}
       \psi^{s,t}(u,v) =\left(
            \begin{array}{c}
                \sqrt{s\,f(u)^2-t^2}\,\cos\,\bigl(\phi^{\,s,\,t}(u,v)\bigr)\\[1mm]
                \sqrt{s\,f(u)^2-t^2}\,\sin\,\bigl(\phi^{\,s,\,t}(u,v)\bigr)\\[1mm]
                \sqrt{s}\displaystyle\int_{0}^{u}\frac{f(w)\,\sqrt{\bigl(s\,f(w)^2-t^2\bigr)\,\|\gamma_w\|^2-f(w)^2\,f_w^2}}{s\,f(w)^2-t^2}\,\d w\;-\;t\,\phi^{\,s,\,t}(u,v)
            \end{array}\right),
    \end{equation}
    where $u$ and $v$ are the surface parameters and
    \begin{equation*}
        \phi^{\,s,\,t}(u,v) =\frac{1}{\sqrt{s}} \left(v +t\int_{0}^{u}\frac{\sqrt{\bigl(s\,f(w)^2-t^2\bigr)\,\|\gamma_w\|^2-f(w)^2\,f_w^2}}{\,f(w)\,\bigl(s\,f(w)^2-t^2\bigr)}\,dw\right).
    \end{equation*}
\end{theorem}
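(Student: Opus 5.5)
The strategy is to verify directly that $\psi^{s,t}$ is a helicoidal surface, isometric to $\psi=\psi^{1,0}$, and that its second fundamental form in the Bour frame has the shape \eqref{eq:L:N:u}. By Proposition~\ref{prop:bour:family}, such a surface is the member of the Bour family with parameters $(s,t)$.

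First, the surface of revolution is $\psi(u,v)=(f\cos v, f\sin v, g)$, with $I_\psi=\|\gamma_u\|^2\,\d u^2+f^2\,\d v^2$ and Killing field $Y=\partial_v$, so $G=f^2$. Next, I observe that \eqref{eq:bour:immersion} has the form $(r(u)\cos\phi,\ r(u)\sin\phi,\ h(u)-t\phi)$. Here $r=\sqrt{sf^2-t^2}$, $\phi=(v+t\,a(u))/\sqrt{s}$, and $h$, $a$ are the displayed integrals. Such a map is invariant under the screw motion $v\mapsto v+\sqrt{s}\,\tau$, which rotates by angle $\tau$ and translates by $-t\tau$ along the axis. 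So $\partial_v$ is an $I$--$II$ Killing field for it, and Theorem~\ref{theorem:B:nets} shows that every $\psi^{s,t}$ is helicoidal with the same $Y$.

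The core of the proof is the isometry check $I_{\psi^{s,t}}=I_\psi$. I compute $\psi^{s,t}_v=s^{-1/2}(-r\sin\phi,\ r\cos\phi,\ -t)$, which gives $|\psi^{s,t}_v|^2=(r^2+t^2)/s=f^2$. Then I write $\psi^{s,t}_u=(r'\cos\phi-r\phi_u\sin\phi,\ r'\sin\phi+r\phi_u\cos\phi,\ h'-t\phi_u)$ and impose two conditions. The first is $\langle\psi^{s,t}_u,\psi^{s,t}_v\rangle=0$, i.e.\ $r^2\phi_u=t(h'-t\phi_u)$. The second is $|\psi^{s,t}_u|^2=r'^2+r^2\phi_u^2+(h'-t\phi_u)^2=\|\gamma_u\|^2$.

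Writing $W:=\sqrt{r^2\|\gamma_u\|^2-f^2f_u^2}$, the choices $\phi_u=tW/(\sqrt{s}\,f r^2)$ and $h'=\sqrt{s}\,fW/r^2$ make the orthogonality hold. Indeed, $h'-t\phi_u=fW(s-t^2/f^2)/(\sqrt{s}\,r^2)$, and multiplying by $t$ gives $r^2\phi_u$. For the length condition I use $r'^2=s^2f^2f_u^2/r^2$ and $r^2\phi_u^2+(h'-t\phi_u)^2=(h'-t\phi_u)^2 s f^2/r^2$, since $\phi_u=t(h'-t\phi_u)/r^2$. The resulting identity reduces to $s^2f^2f_u^2+sf^2W^2/r^2\cdot(r^2/f^2)\cdot(\dots)$, and I expect it to collapse to $\|\gamma_u\|^2 r^2$ after substituting $W^2$. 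This algebra is the main obstacle: one has to track the factors of $s$ carefully, because the stated integrand might need an extra normalisation. If it does not close, I would first redo the computation with $\|\gamma_u\|$ set to $1$ (Bour's arclength version) and then reinsert the general $\|\gamma_u\|$ by the reparametrisation $u\mapsto$ arclength, since $W$ then scales consistently. The sign assumptions $\phi_v>0$ and $g_u>0$ fix the branches of the square roots.

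Finally, I identify the parameters. From $\psi^{s,t}_v$ and the normal $\nu\propto\psi^{s,t}_u\times\psi^{s,t}_v$, I compute the mixed coefficient $II(E_1,E_2)$ in the Bour frame $E_2=\partial_v/f$. Because of the $-t\phi$ axial term it should equal $t/f^2=t/G$, matching $M$ in \eqref{eq:L:N:u}. I also check $N=II(E_2,E_2)$ against $\pm\sqrt{4sG-4t^2-G_1^2}/(2G)$ by computing $\langle\psi^{s,t}_{vv},\nu\rangle/f^2$. Together with $I=I_\psi$, the uniqueness built into the integration in Proposition~\ref{prop:bour:family} and the fundamental theorem of surfaces show that $\psi^{s,t}$ is the Bour member with parameters $(s,t)$, up to rigid motion; for $(s,t)=(1,0)$ one recovers $\psi$.
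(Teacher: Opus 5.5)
Your route is the one the paper indicates right after the statement: compute the fundamental forms of \eqref{eq:bour:immersion} and compare them with \eqref{eq:L:N:u} in the Bour coframe. Your screw-invariance, $|\psi^{s,t}_v|^2=f^2$ and orthogonality checks are correct. The gap is the step you left open. It does not close, and no normalisation of $\|\gamma_u\|$ repairs it. From $\phi_u=tW/(\sqrt{s}\,fr^2)$ and $h'=\sqrt{s}\,fW/r^2$ you get $h'-t\phi_u=W/(\sqrt{s}\,f)$, hence $r^2\phi_u^2+(h'-t\phi_u)^2=W^2/r^2$ and
\[
|\psi^{s,t}_u|^2=\frac{s^2f^2f_u^2+W^2}{r^2}=\|\gamma_u\|^2+\frac{(s^2-1)\,f^2f_u^2}{sf^2-t^2}
\]
for the printed radicand $W^2=(sf^2-t^2)\|\gamma_u\|^2-f^2f_u^2$. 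So the printed map is isometric to $\psi$ only for $s=1$, or where $f_u=0$. The case $t=0$ shows this at once. The printed formula gives $h'=\sqrt{\|\gamma_u\|^2-f_u^2/s}$, whereas an isometric wrapping $(\sqrt{s}f\cos(v/\sqrt{s}),\sqrt{s}f\sin(v/\sqrt{s}),h)$ requires $h'=\sqrt{\|\gamma_u\|^2-s f_u^2}$. The defect $(s^2-1)f^2f_u^2/r^2$ does not involve $\|\gamma_u\|$, so your fallback of passing to arclength and back changes nothing. Your computation closes only if both integrals use the radicand $W^2=(sf^2-t^2)\|\gamma_w\|^2-s^2f^2f_w^2$. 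Then $s^2f^2f_u^2+W^2=r^2\|\gamma_u\|^2$. Your orthogonality argument is unaffected, since it only uses $sf^2\phi_u=t\,h'$, which holds for any $W$. What you can actually prove is the statement with this corrected integrand; the printed version is right only on the sliding subfamily $s=1$.

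Your final identification step is also off. A direct computation gives $\langle\psi^{s,t}_{uv},\psi^{s,t}_u\times\psi^{s,t}_v\rangle=t\,|\psi^{s,t}_u|^2/s$ and $|\psi^{s,t}_u\times\psi^{s,t}_v|=f\,|\psi^{s,t}_u|$. Hence, up to the choice of normal and for any $W$, $M=II(E_1,E_2)=t/(sG)$, not $t/G$. With the corrected $W$ one also finds $\langle\psi^{s,t}_{vv},\psi^{s,t}_u\times\psi^{s,t}_v\rangle=r^2h'/s^{3/2}$, and therefore
\[
GN^2=\frac{1}{s}-\frac{(t/s)^2}{G}-\frac{G_1^2}{4G}.
\]
Compare this with the first integral $GN^2=s-t^2/G-G_1^2/(4G)$ from the proof of Proposition~\ref{prop:bour:family}. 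The surface labelled $(s,t)$ in \eqref{eq:bour:immersion} is the Bour member with parameters $(1/s,\,t/s)$ there, not $(s,t)$. Your closing uniqueness argument must therefore include this relabelling, which is the identity only at $s=1$.
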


Substituting \(\theta^1=\sqrt{f_u^2+g_u^2}\,du\) and \(\theta^2=\sqrt{G(u)}\,dv\), with \(G(u)=f(u)^2\), into \eqref{eq:L:N:u}, we obtain coefficients that agree with the fundamental forms computed from \eqref{eq:bour:immersion}.

\begin{example}\label{example:bour}
    The following are two well-known instances of Bour isometries:
    \begin{itemize}
        \item At $t = 0$, the Bour isometry coincides with a curvature line preserving isometric deformation of a surface of revolution, in which the surface is wrapped around its axis of symmetry while remaining a surface of revolution (see \cite[page~25]{IzmestievTsurface} for a contemporary description).
        \item Consider the profile curve of a catenoid, namely, $(f(u),0,g(u))^T = (a\,\cosh(u), 0, au)^T$ for a constant value $a$. Then with $s=1$ the Bour isometry corresponds to the isometric deformation that transforms a helicoid into a catenoid through sliding. Particularly at $t = a$, the immersion \eqref{eq:bour:immersion} corresponds to the aforementioned helicoid (see \cite{DocarmoDifferential}).
    \end{itemize}
\end{example}

\begin{proposition}\label{prop:rotation:field:formula}
    Let $\psi$ be a surface of revolution with the profile curve $\gamma(x) = (f(x),0,g(x))$. As before let the instantaneous sliding and wrapping be denoted by $\xi^{+}$ and $\xi^{-}$ respectively. Finally, let the corresponding rotation fields also be called $\eta^{+}$ and $\eta^{-}$ respectively. Then we have 
    \begin{equation}\label{eq:rotation:fields:SR:H}
        \begin{aligned}
            \eta^{+}(x,y) = \left(\begin{array}{cc}
                -f(x)^{-1}\,\cos{y}\\
                -f(x)^{-1}\,\sin{y}\\
                \int_{0}^x\,{g_z}{f(z)^{-2}}\,\mathrm{d}z
            \end{array}\right),\qquad\qquad
            &\eta^{-}(x,y) = \left(\begin{array}{cc}
            f_x\,(g_x)^{-1}\sin{(y)}\\
            -f_x\,(g_x)^{-1}\cos{(y)}\\
            y
            \end{array}\right),
        \end{aligned}
    \end{equation}
\end{proposition}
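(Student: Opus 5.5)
The plan is to work directly with the immersion $\psi(x,y) = (f(x)\cos y, f(x)\sin y, g(x))^T$ in the curvature line net $(x,y)$, which is a conjugate net. First I compute the velocity fields $\xi^\pm$ of the two Bour isometries by differentiating \eqref{eq:bour:immersion} at the base point. For the sliding family this means $\partial_t|_{(s,t)=(1,0)}$, and for the wrapping family $\partial_s|_{(s,t)=(1,0)}$. Only first-order terms in $t$ or in $s-1$ survive, so the integrals in \eqref{eq:bour:immersion} and in $\phi^{s,t}$ can be expanded under the integral sign. I expect $\xi^+$ to be, up to a trivial IID, of the form $-y\,\mathbf{e}_z + (\text{rotation about } \mathbf{e}_z \text{ by an } x\text{-dependent angle})$. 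For $\xi^-$ I expect a radial stretching term $\tfrac12 f\,(\cos y,\sin y,0)^T$ together with a vertical correction $-\tfrac12\int f f_x^2/g_x\,\d x$ and an angular term $-\tfrac12 y\,(-\sin y,\cos y,0)^T$.

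Rather than relying on those expansions, I would then recover $\eta^\pm$ from the defining relation \eqrefi{eq:gather:II}{1}, $\d\xi = \eta\times\d\psi$, applied to the two coordinate directions. Since $\psi_x$ and $\psi_y$ span the tangent plane, $\eta$ is uniquely determined by the two linear equations $\xi_x = \eta\times\psi_x$ and $\xi_y = \eta\times\psi_y$. Writing $\eta = \alpha\,\psi_x + \beta\,\psi_y + \gamma\,\nu$ and taking inner products with the orthonormal Bour frame gives $\alpha$, $\beta$ and $\gamma$ algebraically.

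As an independent check I would also use the structure already established in the paper. By Proposition~\ref{prop:categorizing:IIDs}:
\begin{itemize}
\item the sliding rotation operator is $A = a(E_1\otimes\theta^1 - E_2\otimes\theta^2)$ with $a = C/G = C f^{-2}$, so $\d\eta^+ = a\,(e_1\theta^1 - e_2\theta^2)$;
\item the wrapping rotation operator is antidiagonal, with coefficients proportional to $\tilde b$ and $\tilde c$.
\end{itemize}
Integrating $\d\eta^+ = f^{-2}(\psi_x\,\d x - \psi_y\,\d y)$ coordinatewise should give the following. The $z$-component is $\int g_x f^{-2}\,\d x$. The horizontal part is $f^{-2}f_x(\cos y,\sin y)\,\d x - f^{-1}(-\sin y,\cos y)\,\d y = \d\bigl(-f^{-1}(\cos y,\sin y)\bigr)$. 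Together these reproduce $\eta^+$ in \eqref{eq:rotation:fields:SR:H} with $C=1$.

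For $\eta^-$ the analogous integration uses $\d\eta^- = b\,\psi_x\,\d y + c\,\psi_y\,\d x$ in suitable normalisation. With $b$ and $c$ fixed by the compatibility conditions $E_1(b)+(b+c)\Theta_{12}(E_2)=0$ and $cN-bL=0$, I expect $b$ to be proportional to $1/g_x$ and $c$ to be constant. Integration then yields $f_x g_x^{-1}(\sin y,-\cos y)$ horizontally plus $y\,\mathbf{e}_z$.

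The main obstacle is this last step: pinning down the normalisation of $b$ and $c$ and verifying that the horizontal derivative in $x$, namely $(f_x/g_x)_x(\sin y,-\cos y)$, matches $b\,\psi_x$-type terms. This matching requires using the profile-curve normalisation implicit in the statement (e.g.\ arc length or $\|\gamma_x\|$ factors). I would first try the arc-length gauge $f_x^2+g_x^2=1$, where $\kappa = (f_x/g_x)_x\,g_x^2$ simplifies. Throughout, constants of integration and overall scalars are absorbed, as in the proof of Proposition~\ref{prop:categorizing:IIDs}, into trivial IIDs and time reparametrisation.
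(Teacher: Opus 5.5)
Your main line is exactly the paper's: differentiate the Bour family at $(s,t)=(1,0)$, then solve $\xi_x=\eta\times\psi_x$, $\xi_y=\eta\times\psi_y$ pointwise. The sliding half works as you describe. Indeed $\xi^+=L(x)\,\mathbf{e}_z\times\psi-y\,\mathbf{e}_z$ with $L(x)=\int_0^x g_z f(z)^{-2}\,\mathrm{d}z$ yields the stated $\eta^+$, and your check $\mathrm{d}\eta^+=f^{-2}(\psi_x\,\mathrm{d}x-\psi_y\,\mathrm{d}y)$ is correct.

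The wrapping half fails as written, in two places.

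First, your predicted $\xi^-$ is not an IID, since it gives $\langle\xi^-_y,\psi_y\rangle=\tfrac12 f(f-1)\neq 0$. The factor $f$ belongs on the angular term, not the vertical one. With radius $\sqrt{s}\,f$ the correct velocity is
\[
\xi^-=\tfrac12\, f\,\bigl(\cos y+y\sin y,\ \sin y-y\cos y,\ 0\bigr)^T-\tfrac12\Bigl(0,\,0,\,\textstyle\int_0^x f_z^2\,g_z^{-1}\,\mathrm{d}z\Bigr)^T,
\]
and the stated $\eta^-$ is the rotation field of $-2\xi^-$, which is a harmless rescaling.

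Second, be careful if you differentiate \eqref{eq:bour:immersion} literally in $s$. As printed it is not isometric for $s\neq 1$: at $t=0$ it has $I=(\|\gamma_u\|^2+(s-s^{-1})f_u^2)\,\mathrm{d}u^2+f^2\,\mathrm{d}v^2$. Keeping the radius $\sqrt{sf^2-t^2}$, the radicands need $s^2f^2f_w^2$ in place of $f^2f_w^2$. The literal $s$-derivative puts the wrong sign on the vertical term. Then $\langle\xi_x,\psi_x\rangle=f_x^2\neq0$, and since $\eta\times\psi_x\perp\psi_x$, the system $\xi_x=\eta\times\psi_x$ has no solution.

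In your independent check, the prediction ``$c$ constant'' is wrong, and the obstacle you anticipate does not exist: no arc-length gauge is needed. For any regular profile with $g_x\neq 0$, the stated formula satisfies
\[
\eta^-_y=g_x^{-1}\,\psi_x,\qquad\qquad \eta^-_x=-f^{-1}\bigl(f_x/g_x\bigr)_x\,\psi_y .
\]
So in your normalisation $b=1/g_x$ and $c=-f^{-1}(f_x/g_x)_x$. The $x$-derivative $(f_x/g_x)_x(\sin y,-\cos y,0)^T$ is parallel to $\psi_y$, so it is matched by the $c\,\psi_y\,\mathrm{d}x$ term, not by a $b\,\psi_x$ term. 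This is exactly what $\mathrm{d}(\mathrm{d}\eta)=0$ gives:
\begin{itemize}
\item its normal part yields $c\,fg_x=-b\,(f_{xx}g_x-f_xg_{xx})$;
\item its $\psi_x$-part then reduces to $b_x+b\,g_{xx}/g_x=0$.
\end{itemize}

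With this correction, your check is more than a check: it proves the statement on its own, modulo Proposition~\ref{prop:categorizing:IIDs} and the usual freedom of scale and trivial IIDs. That proposition fixes the rotation operator up to a constant (diagonal for sliding, antidiagonal for wrapping). Integrating $\mathrm{d}\eta=\mathrm{d}\psi\circ A$ then produces $\eta^\pm$ without ever computing $\xi^\pm$, so it bypasses the Bour immersion formula entirely. The paper's route needs $\xi^\pm$ explicitly, but in exchange it also delivers the IIDs themselves.
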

\begin{proof}
    From Proposition\,\ref{prop:categorizing:IIDs} we know that the induced asymptotic net and conjugate net infinitesimal isometric deformations of a surface of revolution are the velocity vector fields of the sliding and wrapping Bour isometries at $(s,t) = (1,0)$. Calculating the velocity fields of $\psi^{s,t}(x,y)$ with respect to $s$ and $t$ separately at $(s,t) = (1,0)$ after some manipulations yields yields:
    \begin{equation*}
        \left(\frac{\partial \psi^{s,t}}{\partial s}\right)\Big|_{(s,t) = (1,0)} = \xi^{+}(x,y) = 
        \left(\begin{array}{cc}
                -f(x)\,L(x)\,\sin{y}\\
                 f(x)\,L(x)\,\cos{y}\\
                -y
              \end{array}\right),\qquad
        \left(\frac{\partial \psi^{s,t}}{\partial s}\right)\Big|_{(s,t) = (1,0)} =  \xi^{-}(x,y) = \left(\begin{array}{cc}
        \mu(y)\,f(x)\,\cos{\theta(y)}\\
        \mu(y)\,f(x)\,\sin{\theta(y)}\\
        \tilde{g}(x)
        \end{array}\right),
    \end{equation*}
    where 
    \begin{align*}
        \mu(y) &= \sqrt{y^2 + 1},\quad
        &\theta(y) &= \arctan{\left(\frac{\sin(y) - y\,\cos{(y)}}{\cos(y) + y\,\sin{(y)}}\right)} + \pi,\\
        \tilde{g}(x) &= g(x) + \int_{0}^x\,\frac{f_z^2 - g_z^2}{g_z}\,\mathrm{d}z,\quad
        &L(x) &= \int_{0}^x\,\frac{g_z}{f(z)^2}\,\mathrm{d}z.
    \end{align*}
    Having the infinitesimal isometric deformations, from Section\,\ref{sec:IID:rotation:field:review} using the cross product matrix we have the following system
    \begin{equation}\label{eq:sys:nondeg}
        \xi^{\pm}_x = \eta^{\pm} \times \psi_x,\qquad\qquad
        \xi^{\pm}_y = \eta^{\pm} \times \psi_y,
    \end{equation}
    in $\eta_1^{\pm},\eta_2^{\pm},\eta_3^{\pm}$ variables which are the coordinates of $\eta^{\pm}$. Since $\xi^{\pm}_x \perp \sigma_x$ the \eqrefi{eq:sys:nondeg}{1} has a one-parameter solution of the form $\eta^{\pm} = \eta^{\pm}_0 + t\,\psi_x$ with $\eta^{\pm}_0 = {(\psi_x\times \xi^\pm_x)}/{\|\psi_x\|^2}$ and $t \in \mathbb{R}$. Plugging it into \eqrefi{eq:sys:nondeg}{2} fixes the parameter $t$, implying $t = {\langle\xi_y - \eta_0\times \psi_y, \psi_x \times \psi_y\rangle}/{(\|\psi_x \times \psi_y\|^2)}$.
\end{proof}

\begin{remark}
    The immersion formulas for the IIDs \(\xi^{\pm}\) and the corresponding rotation fields \(\eta^{\pm}\) lead to the following geometric picture: \(\xi^{+}\) and \(\eta^{-}\) are helicoids, \(\xi^{-}\) is an axial \(T\)-net (see \cite{IzmestievTsurface}), and \(\eta^{+}\) is a surface of revolution. The profiles of the latter two depend on the profile curve of the surface of revolution \(\psi\). 
\end{remark}
\begin{figure*}[t!]
    \centering
    \begin{subfigure}[b]{.24\textwidth}
        \centering
        \includegraphics[height = 36mm]{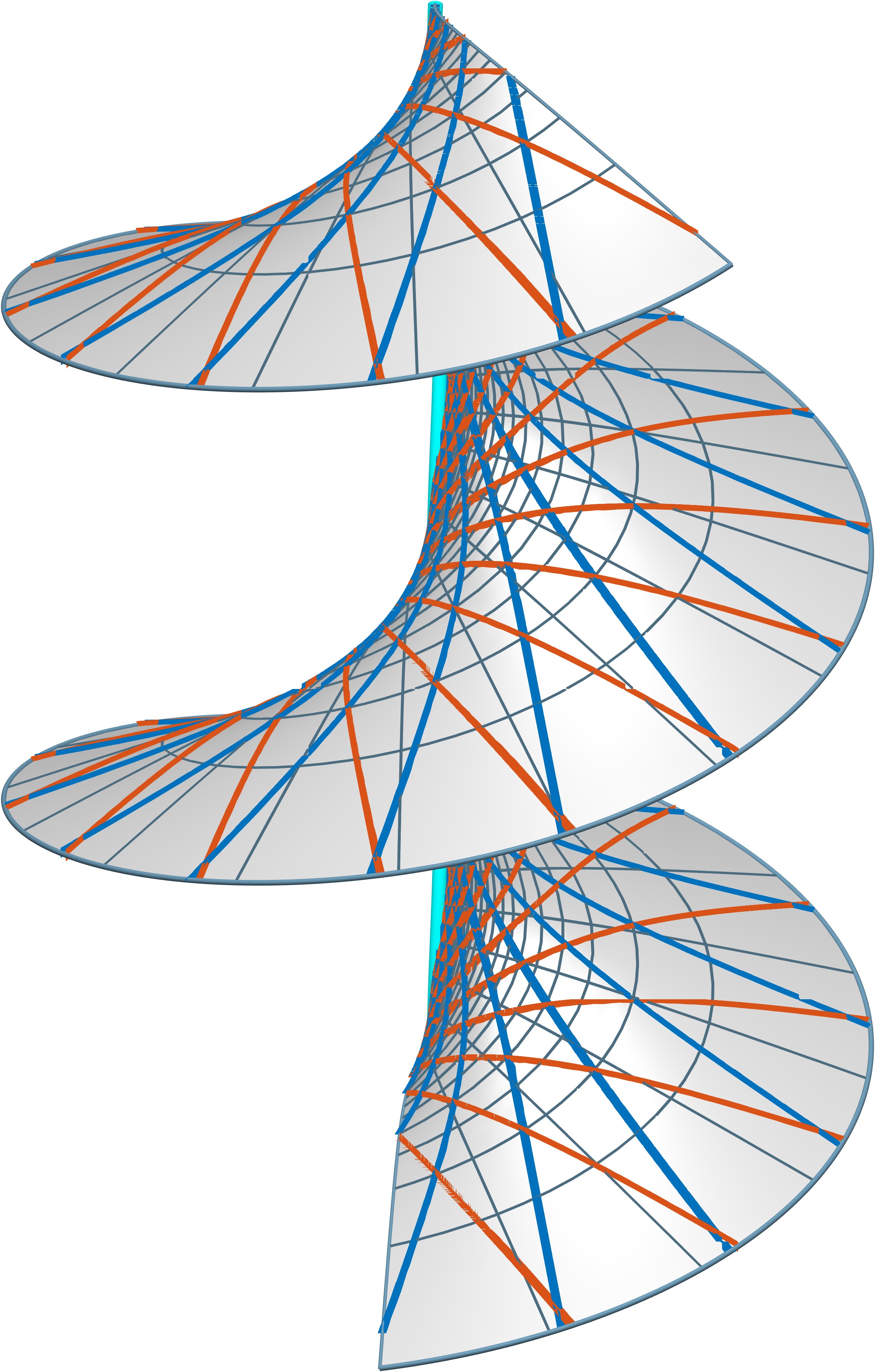}
        \caption{$\lambda = 1.45$}
    \end{subfigure}\hfill
    \begin{subfigure}[b]{.24\textwidth}
        \centering
        \includegraphics[height = 30mm]{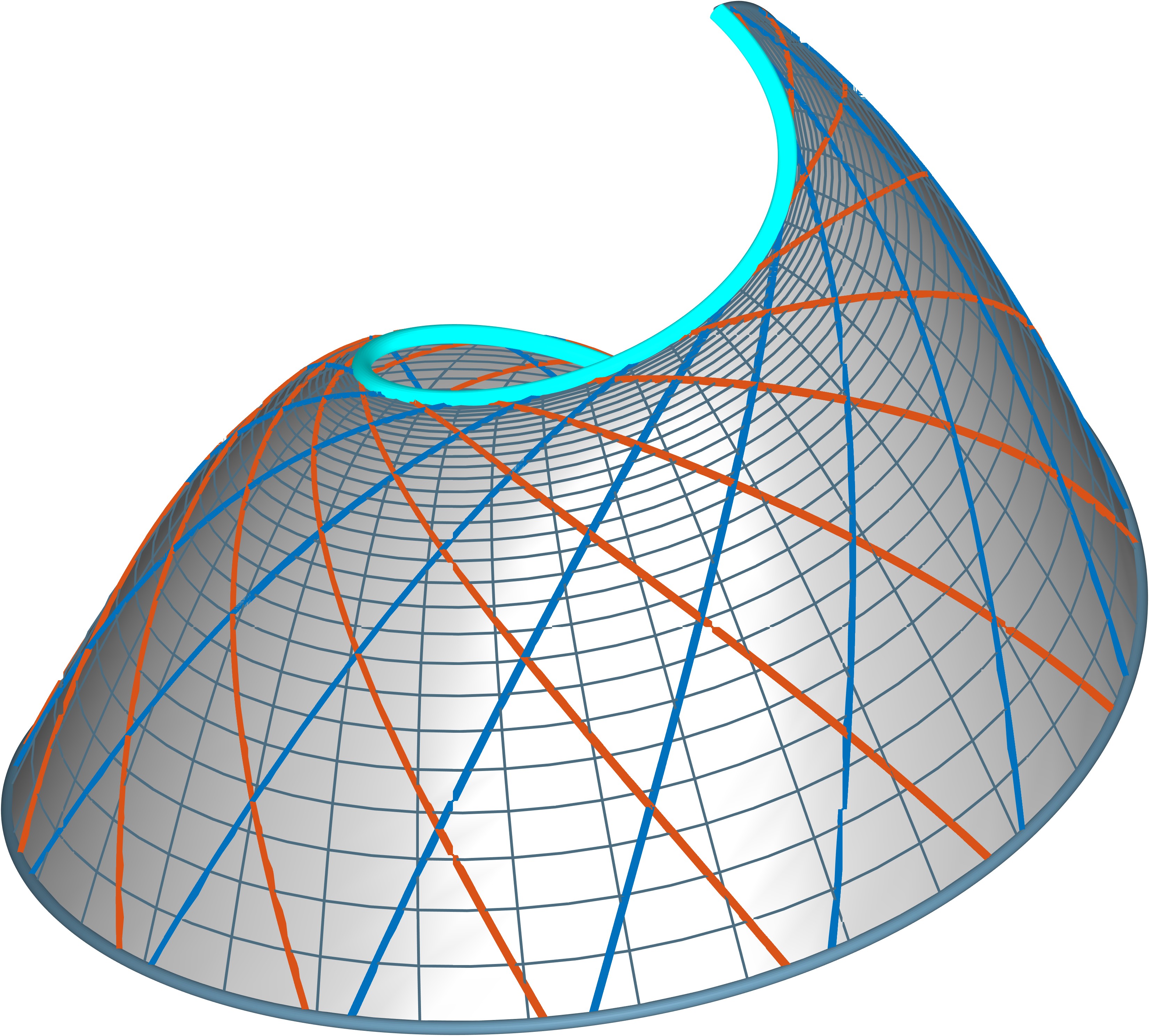}
        \caption{$\lambda = 1.45$}
    \end{subfigure}\hfill
    \begin{subfigure}[b]{.24\textwidth}
        \centering
        \includegraphics[height = 35mm]{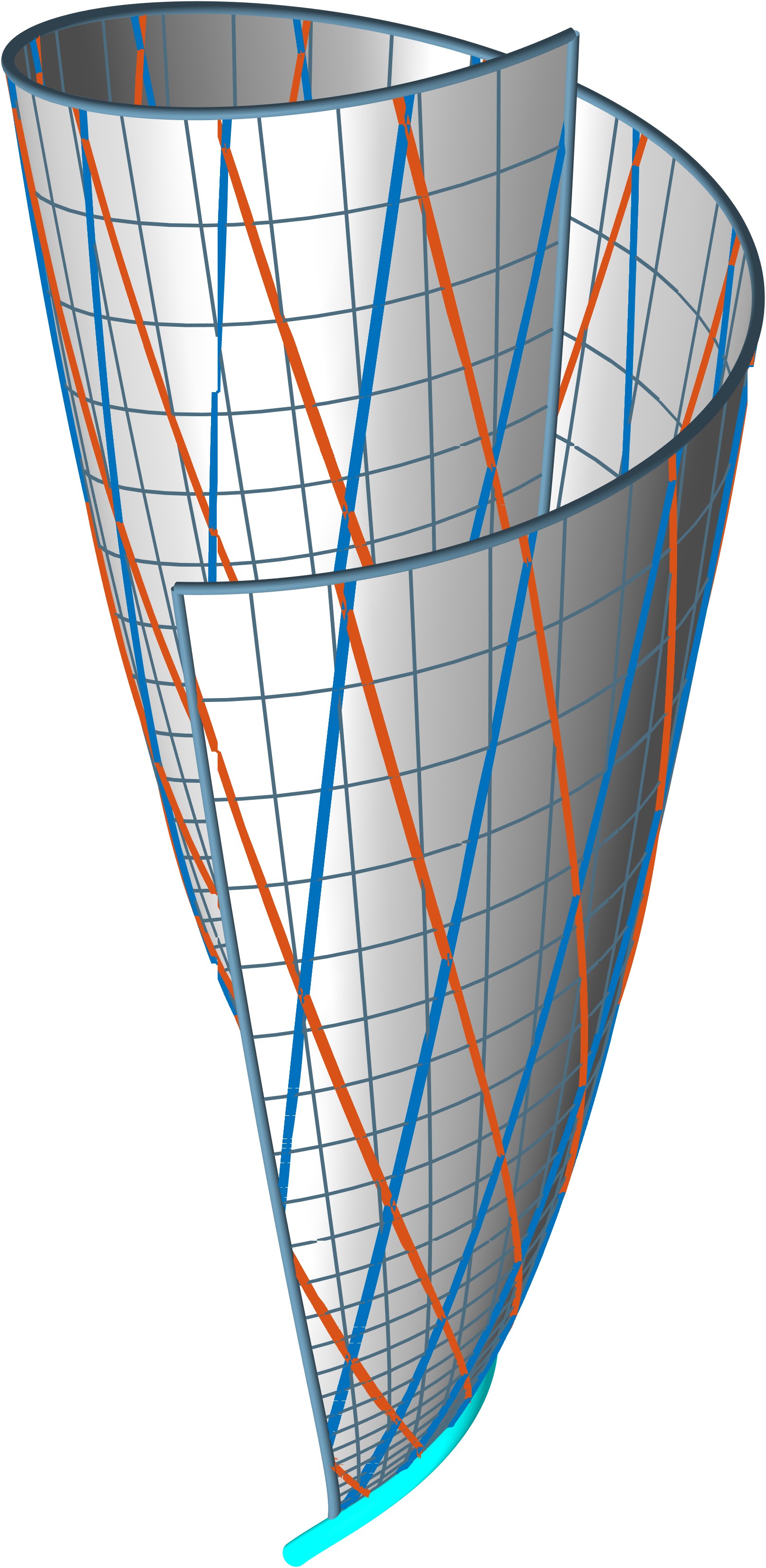}
        \caption{$\lambda = 1.45$}
    \end{subfigure}\hfill
    \begin{subfigure}[b]{.24\textwidth}
        \centering
        \includegraphics[height = 37mm]{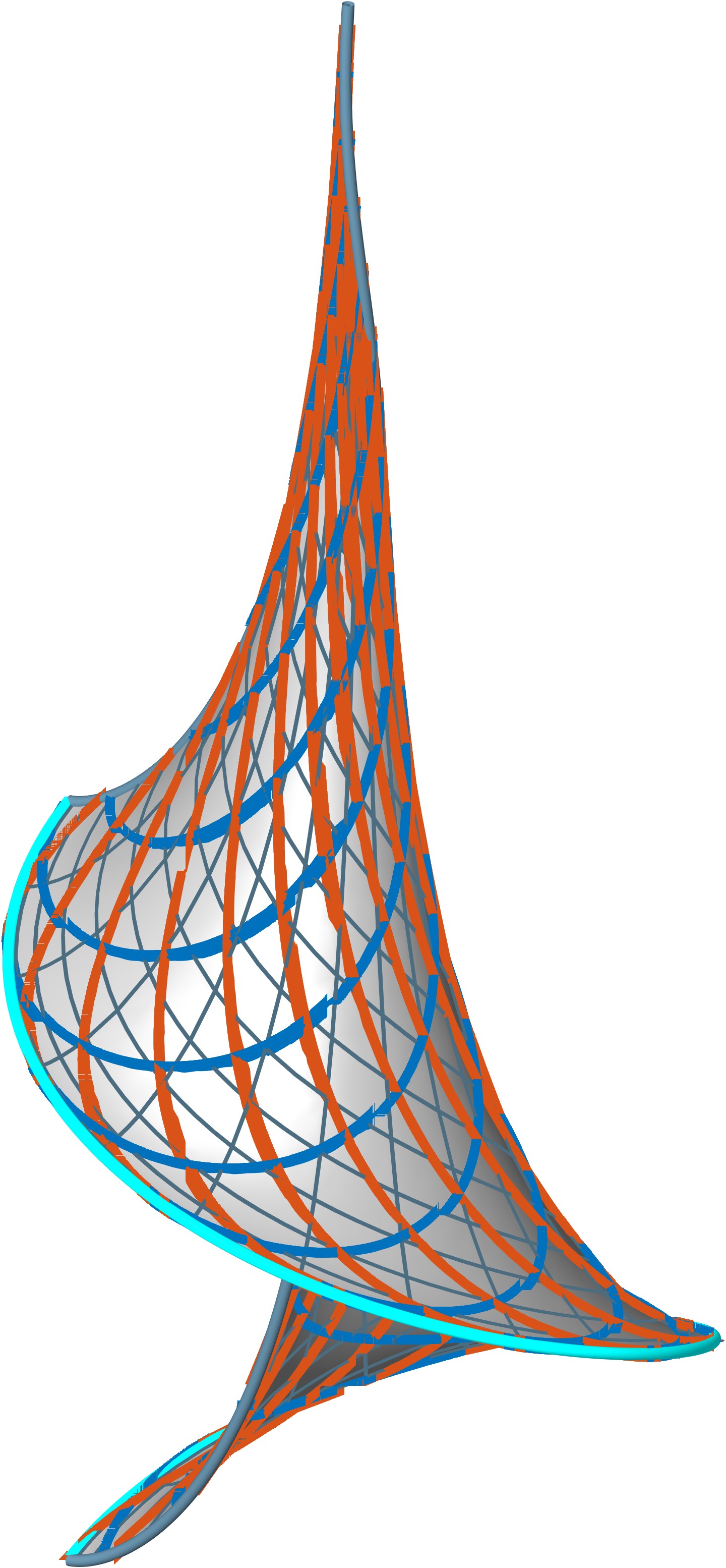}
        \caption{$\lambda = 1.45$}
    \end{subfigure}

    \begin{subfigure}[b]{.24\textwidth}
        \centering
        \includegraphics[height = 35mm]{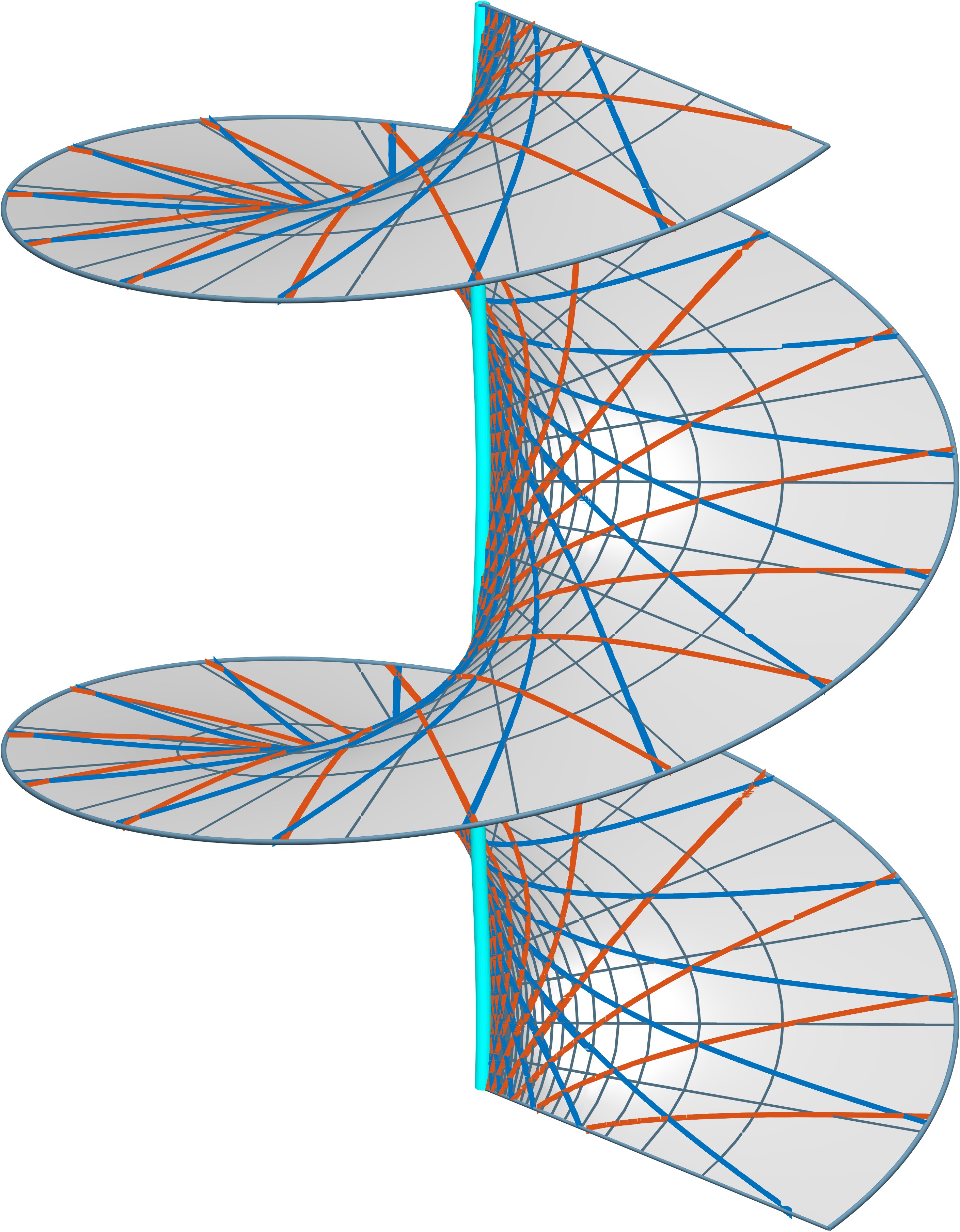}
        \caption{$\lambda = 1$}
    \end{subfigure}\hfill
    \begin{subfigure}[b]{.24\textwidth}
        \centering
        \includegraphics[height = 25mm]{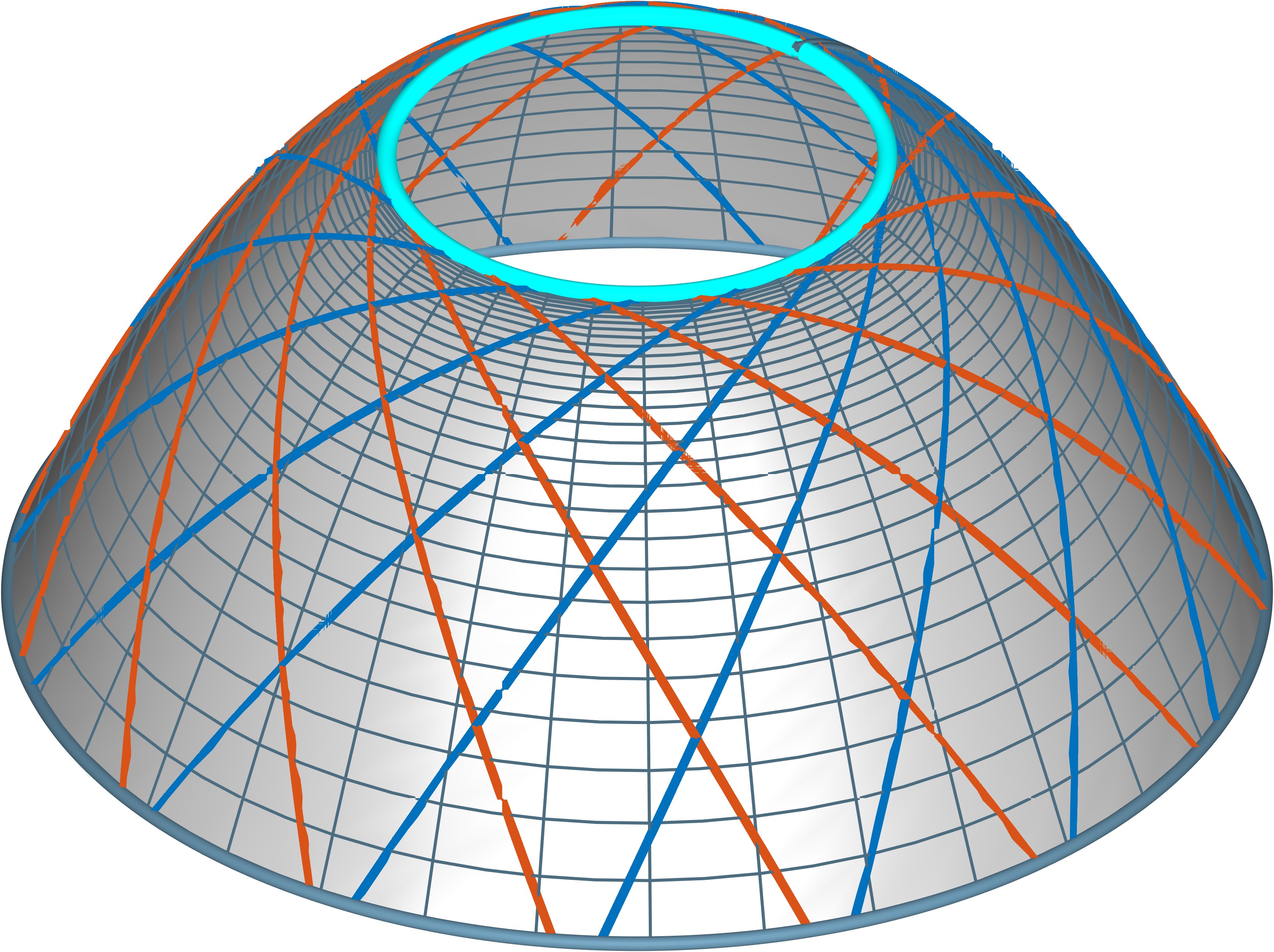}
        \caption{$\lambda = 1$}
    \end{subfigure}\hfill
    \begin{subfigure}[b]{.24\textwidth}
        \centering
        \includegraphics[height = 37mm]{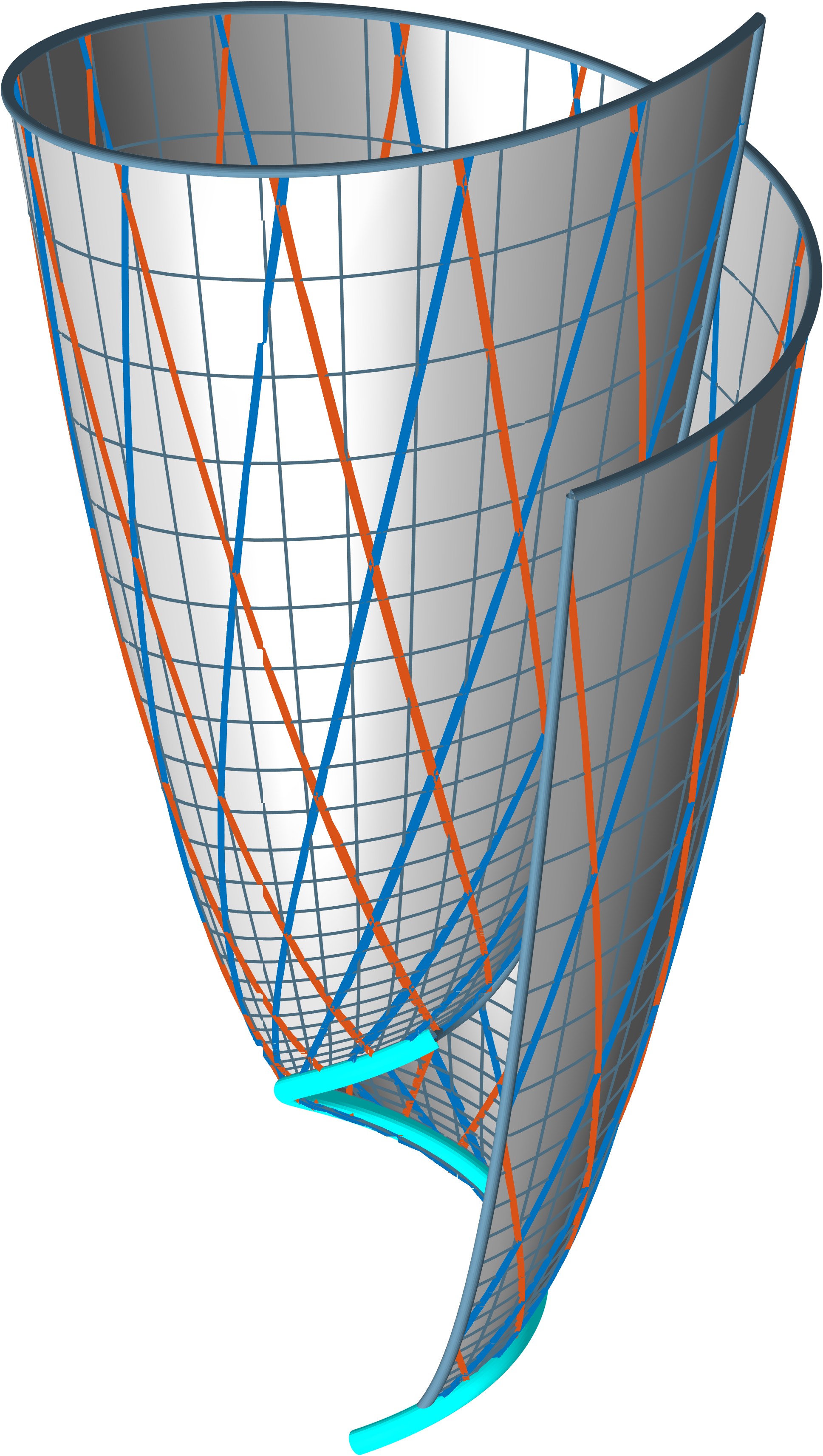}
        \caption{$\lambda = 1$}
    \end{subfigure}\hfill
    \begin{subfigure}[b]{.24\textwidth}
        \centering
        \includegraphics[height = 30mm]{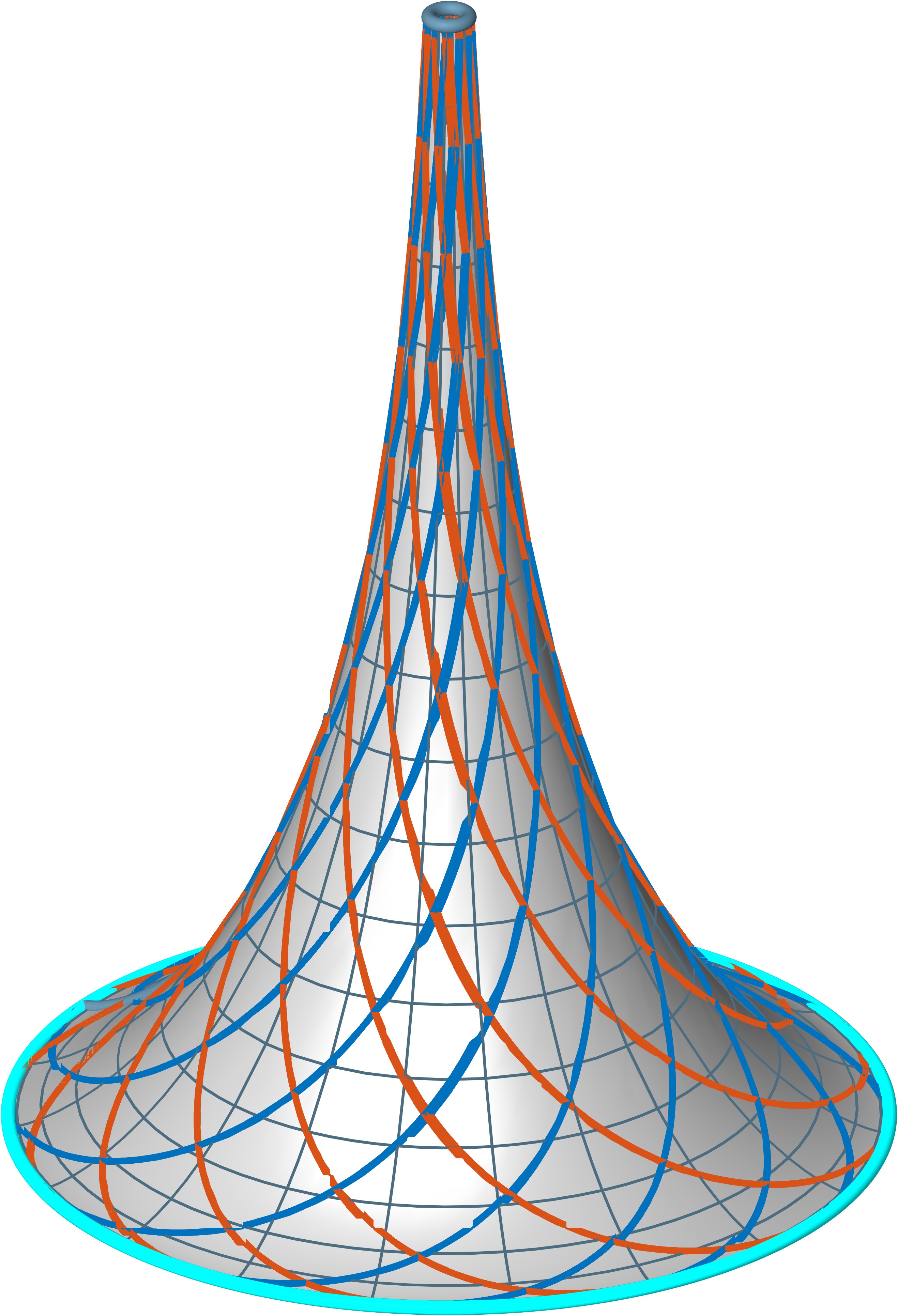}
        \caption{$\lambda = 1$}
    \end{subfigure}

    \begin{subfigure}[b]{.24\textwidth}
        \centering
        \includegraphics[height = 36mm]{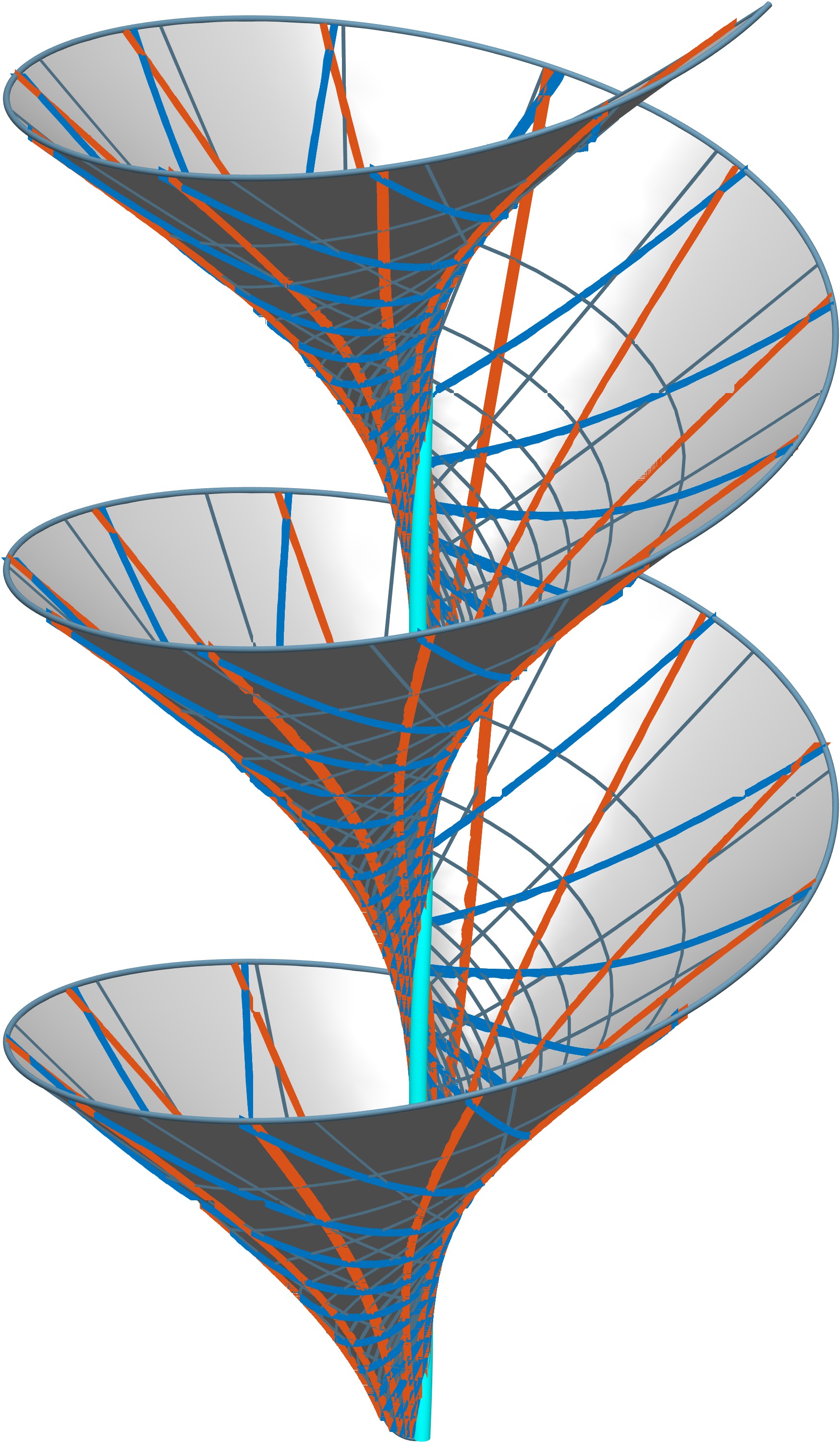}
        \caption{$\lambda = 0.45$}
    \end{subfigure}\hfill
    \begin{subfigure}[b]{.24\textwidth}
        \centering
        \includegraphics[height = 35mm]{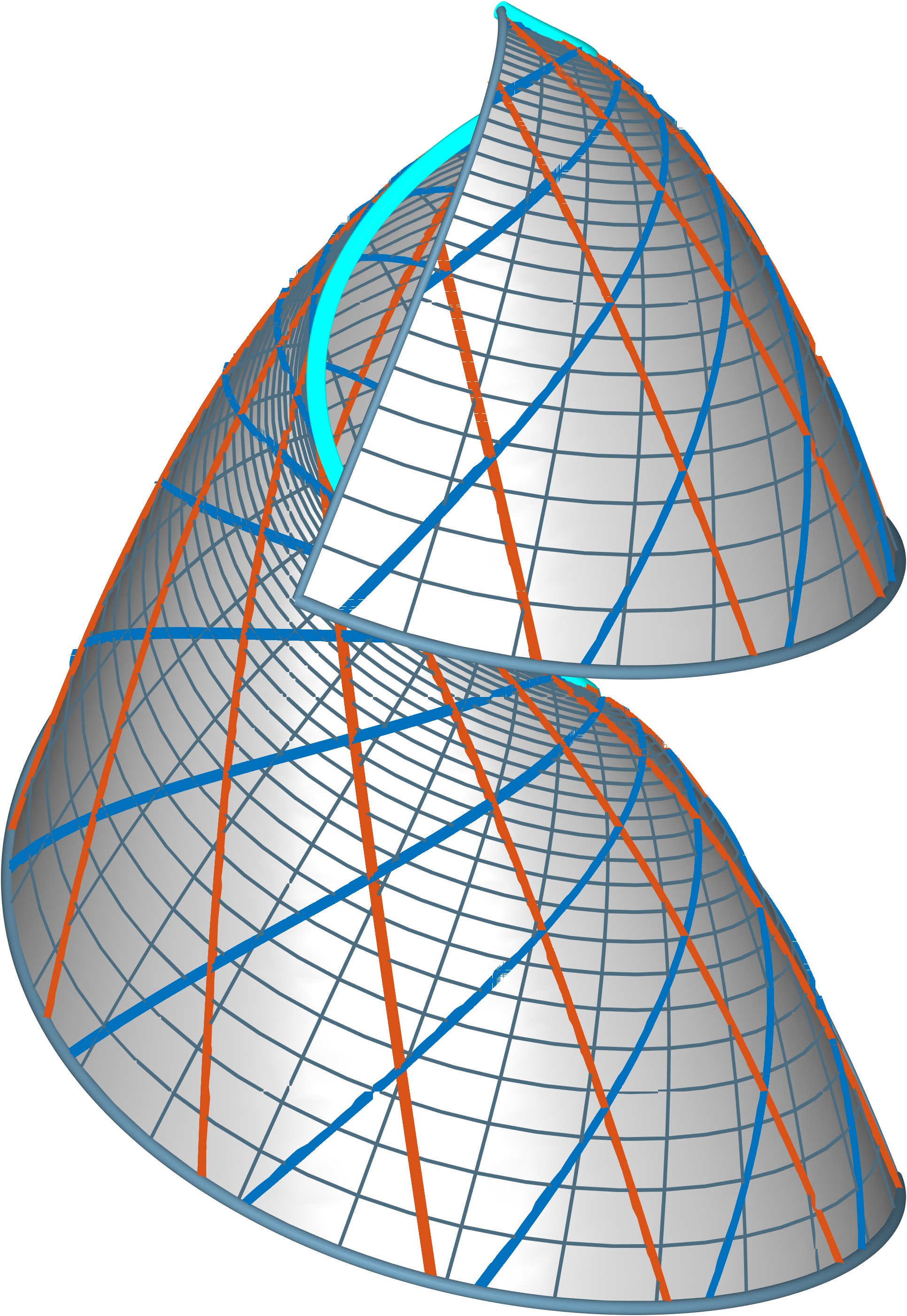}
        \caption{$\lambda = 0.45$}
    \end{subfigure}\hfill
    \begin{subfigure}[b]{.24\textwidth}
        \centering
        \includegraphics[height = 35mm]{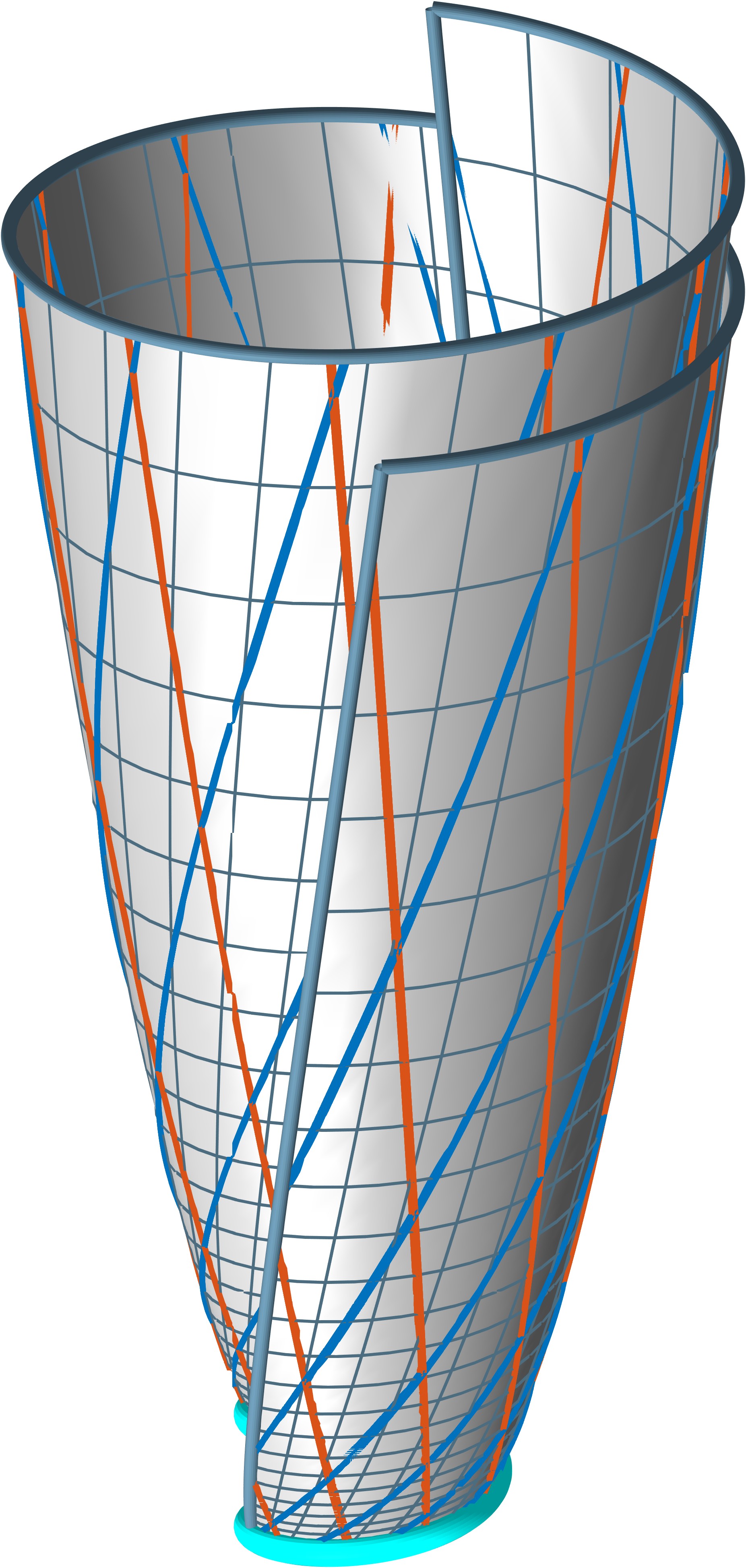}
        \caption{$\lambda = 0.45$}
    \end{subfigure}\hfill
    \begin{subfigure}[b]{.24\textwidth}
        \centering
        \includegraphics[height = 37mm]{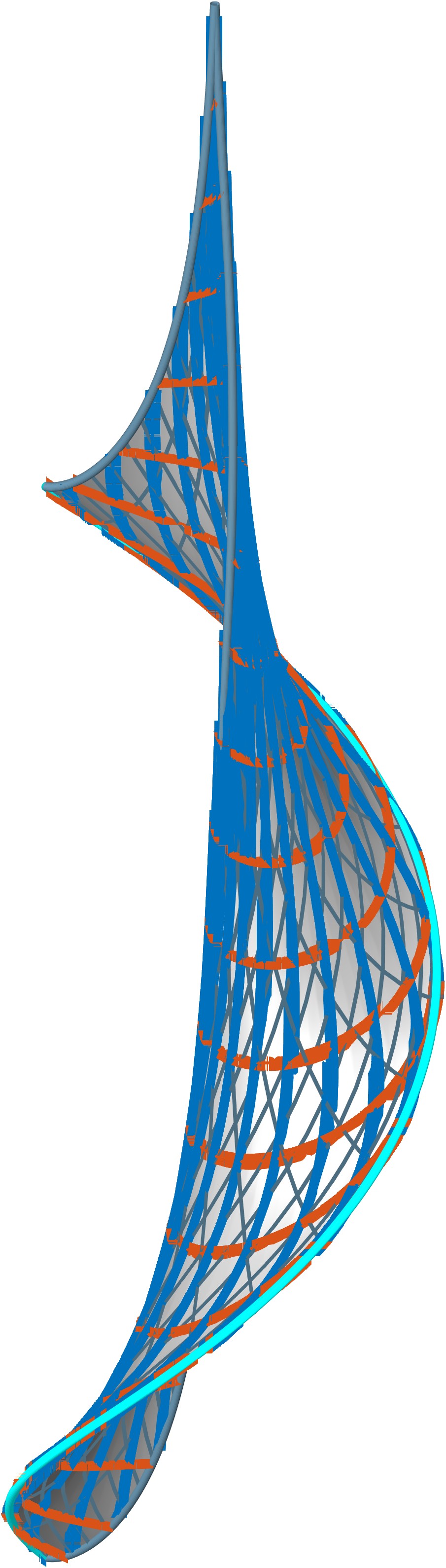}
        \caption{$\lambda = 0.45$}
    \end{subfigure}
    \caption{Illustration of the isometric deformation of alignable V-nets of the first kind with $k = 1$, of negative-alignable (a,e,i), positive-alignable (b,f,j) and linear combination (c,g,k) along with the corresponding spectral deformation of the K-net of the parabolic pseudosphere (d,h,l).}
    \label{fig:iso:all}
\end{figure*}
Since we now have an explicit immersion formula for the rotation fields, our strategy is to substitute into it the profile curves of \(K\)-surfaces of revolution.
From \eqref{eq:K:net:immersion} extract the profile curve $x\mapsto (f(x),0,g(x))$ where $f(x) = k^{-1}\,\dn\left(x\mid k^2\right)$ and $g(x) = k^{-1}\left(\mathcal{E}\left( x\mid k^2\right) - x\right)$. Furthermore, let us show $\eta^{\pm} = (\eta_1,\eta_2,\eta_3)$. Then, using Proposition\,\ref{prop:rotation:field:formula} for the \emph{positive type} gives
\begin{equation*}
    \eta_3^\prime(x) = \frac{-g^\prime(x)}{f(x)^2} = k\,\left(\frac{1}{\dn(x\mid k^2)} - 1\right) = \left(\frac{1}{1-k^2\,\sin^2{(\phi)}} - 1\right).
\end{equation*}
 where $\phi = \mathrm{am}(x \mid k^2)$. Since $\sn(x\mid k^2) = \sin(\phi)$, $\dn(x\mid k^2) = \sqrt{1 - k^2\,\sin^2{(\phi)}}$ using $\mathrm{d}x = \mathrm{d}\mathcal{F}(\phi\mid k^2) = \mathrm{d}\phi/\sqrt{1 - k^2\,\sin^2{(\phi)}}$ and $\eta_3(0) = 0$ gives
\begin{equation*}
    \eta_3(x) = k\,\Pi\left(k^2;\phi\mid k^2\right) - k\,\underbrace{\mathcal{F}\left(\phi\mid k^2\right)}_{=\,x} = k\,\left(\Pi\left(k^2;\mathrm{am}(x \mid k^2)\mid k^2\right) - x\right).
\end{equation*}
so the final result is
\begin{equation*}
    \eta^{+}(x,y) = \left(\begin{array}{>{\displaystyle}c >{\displaystyle}c >{\displaystyle}c}
        \frac{k\cos(ky)}{\dn(x\mid k^2)}&
        \frac{k\sin(ky)}{\dn(x\mid k^2)}&
        k\,\left(\Pi\left(k^2;\mathrm{am}(x \mid k^2)\mid k^2\right) - x\right),
    \end{array}\right)
\end{equation*}
which is evidently a surface of revolution for every $k \in \mathbb{R}^+$.\\
For the \emph{negative type}, after $ky \mapsto ky - \frac{\pi}{2}$ gives
\begin{equation}\label{eq:helicoid:immersion}
    \eta^{-}(x,y) = \left(\begin{array}{>{\displaystyle}c>{\displaystyle}c>{\displaystyle}c}
    \frac{\cn(x\mid k^2)}{\sn(x\mid k^2)}\,\cos(ky)&
    \frac{\cn(x\mid k^2)}{\sn(x\mid k^2)}\,\sin(ky) &
    -ky 
    \end{array}\right),
\end{equation}
which gives open strips of helicoids.  The Bour immersions of \eqref{eq:bour:immersion} are given for a starting surface of revolution $\eta^{1,0}$. Consequently, we can take the approach of diagram \ref{diag:bour} to extract the immersion formula for our helicoid. From Example~\ref{example:bour} we assume that our helicoid is sitting at $(s,t) = (1,1)$ in the Bour family of a catenoid and we extract the corresponding catenoid. Since the helicoid can be written as $(r(u)\,\cos(v),r(u)\,\sin(v),-v)$ with $r(u) = \sqrt{f(u)^2 -1}$. Now, we have
\begin{equation*}
    f(u) = \sqrt{1 + r(u)^2} = \sqrt{1 + \frac{\cn^2(u\mid k )}{\sn^2(u\mid k )}} = \frac{1}{\sn(u\mid k )}.
\end{equation*}
to produce a pure helicoid, the integrand in the integral of \eqref{eq:bour:immersion} must vanish, which imposes
\begin{equation*}
    -f^\prime(u)^2 + g^\prime(u)^2\,(f(u)^2-1)=0\,\quad\implies\quad g^\prime(u) = \pm\frac{f^\prime(u)}{\sqrt{f(u)^2 -1}} = \mp\frac{\dn(u\mid k )}{\sn(u\mid k )}.
\end{equation*}
Now, integrating gives $g(u) = \ln\left({\sn(u\mid k )}/{(1 - \cn(u\mid k ))}\right) + \const$, which by simply assuming $\const = 0$ and after substituting $v = k\,y$ we get formulation of the corresponding catenoid to our helicoid.
\begin{figure}[!t]  
    \centering
    \begin{tikzpicture}[>=latex,thick]
        \tikzset{every node/.style={text=black}, every path/.style={draw=black}}
        \node (A) at (0,0)   {$\eta^{1,0}$};
        \node (B) at (4,0)   {$\eta^{s,0}$};
        \node (C) at (0,3)   {$\eta^{1,1}$};
        \node (D) at (4,3)   {$\eta^{s,t}$};
        \draw[->] (A) -- node[below]{Wrapping}  (B);
        \draw[->] (C) -- node[above]{Wrapping \& Sliding}  (D);
        \draw[<-] (A) -- node[left]{Sliding}  (C);
        \draw[->] (B) -- node[right]{Sliding} (D);
    \end{tikzpicture}
    \caption{Bour family diagram for extracting the formulas of immersion of negative-alignable V-nets of the first kind.}
    \label{diag:bour}
\end{figure}
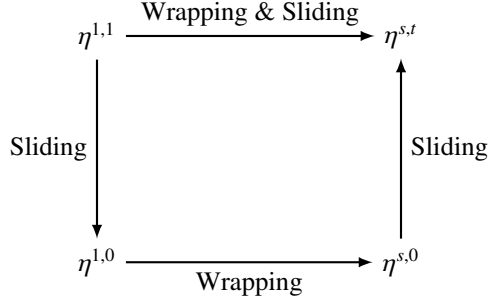
Now we have the base surfaces \(\eta^{1,0}\) of the Bour family, and Theorem~\ref{theorem:main:first} tells us that the V-net preserving isometric deformations form a one-parameter subfamily of the Bour family generated from $V^0$. To get the rest of the members of the isometric family $\{V^t\}$ we need $(s(\lambda),t(\lambda)))$ which is already provided by Corollary\,\ref{coro:s:t}. Finally, we can wrap up all our findings into the following major Theorem:
\begin{theorem}[\textbf{Formulas of Immersion}]\label{theorem:main:immersion:V}
    Up to a rigid motion and a box reparametrization, all alignable V-nets of the first kind are pairs $V^\pm = (\eta^{\pm}_k, \Omega_k)$ where $\Omega_k$ is given in \eqref{eq:Omega:u:v} and $\eta^{\pm}_k(u,v,\lambda)$ is \eqref{eq:bour:immersion} with
    \begin{equation*}
        (u,v,k,t) \in \Omega_k \times \mathbb{R}^+ \times I^{\pm}_\epsilon\quad\text{with}\quad 
        I^{+}_\epsilon = \mathbb{R}\quad\text{and}\quad
        I^{-}_\epsilon = \left(-\infty,-1\right]\cup \left[1,\infty\right)
    \end{equation*}
    and the following substitutions
    \begin{itemize}
        \item {Positive-alignable:}
        \begin{equation*}
            s(\lambda) = k^2 + t(\lambda)^2,\quad
            t(\lambda) = \frac{1}{2}(\lambda - \lambda^{-1}),\quad
            f(x) = \dn^{-1}(x\mid k^2),\quad
            g(x) = \Pi\left(k^2;\mathrm{am}(x \mid k^2)\mid k^2\right) - x.
        \end{equation*}
        \item {Negative-alignable:}
        \begin{equation*}
            s(\lambda) = k^{-2}\,(k^2 + t(\lambda)^2 - 1),\quad
            t(\lambda) = -\frac{1}{2}(\lambda + \lambda^{-1}),\quad
            f(x) = \sn^{-1}(x\mid k^2),\quad
            g(x) = \ln\left(\frac{\sn(x\mid k^2)}{1-\cn(x\mid k^2)}\right).
        \end{equation*} 
    \end{itemize}
    where $(x,y) = (u + v, u - v)$. 
\end{theorem}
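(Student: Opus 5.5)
The proof assembles results already in place; the work is in checking that the pieces compose correctly.

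\emph{Step 1: reduction to a base surface.} By Theorem~\ref{theorem:main:first}, every alignable V-net of the first kind is, up to rigid motion and box reparametrization (Remark~\ref{remark:box:reparametrization}), a member of a one-parameter subfamily $\{V^t\}$ of the Bour family of a base surface $V^0$. This base surface is the rotation field of a K-surface of revolution $K^0$ under instantaneous sliding (positive case) or instantaneous wrapping (negative case). By Proposition~\ref{prop:K:surfLrevolution} and the immersion \eqref{eq:K:net:immersion}, $K^0$ ranges over $(\psi_k,\Omega_k)$ with $k\in\mathbb{R}^+$. It therefore suffices to prove two things for each $k$: first, that the Bour immersion \eqref{eq:bour:immersion} with the stated profile $(f,g)$ at $(s,t)=(1,0)$ reproduces $V^0$; second, that the map $\lambda\mapsto(s(\lambda),t(\lambda))$ traces exactly the V-net preserving subfamily.

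\emph{Step 2: base surfaces.} I will substitute $f(x)=k^{-1}\dn(x\mid k^2)$ and $g(x)=k^{-1}(\mathcal{E}(\mathrm{am}(x\mid k^2)\mid k^2)-x)$ into Proposition~\ref{prop:rotation:field:formula}.

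\textbf{Positive case.} The computation preceding the theorem gives $\eta^+$ as a surface of revolution with radius proportional to $\dn^{-1}$ and height proportional to $\Pi(k^2;\mathrm{am}\mid k^2)-x$. After a homothety (absorbed in the constant $C$ of Corollary~\ref{coro:b:c}) and the rescaling $y\mapsto ky$, this is exactly the profile $(f,g)$ listed for the positive case.

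\textbf{Negative case.} $\eta^-$ is the helicoid \eqref{eq:helicoid:immersion}. Following diagram~\ref{diag:bour} and Example~\ref{example:bour}, I identify it with the member $(1,1)$ of the Bour family of the catenoid with $f=\sn^{-1}$ and $g=\ln(\sn/(1-\cn))$. I will verify directly that the integrand in \eqref{eq:bour:immersion} vanishes at $(s,t)=(1,1)$ for this profile.

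\emph{Step 3: parameters and ranges.} The parameter maps come from Corollary~\ref{coro:s:t}. With the normalization $C=1$, the positive case gives $s=k^2+t^2$ and $t=\tfrac12(\lambda-\lambda^{-1})$. In the negative case the base point of the Bour family is the catenoid rather than the helicoid. Its profile is scaled by $k^{-1}$ relative to the helicoid's radius, so I must redo the conversion: the relation $s=k^2+t^2-1$ becomes $s=k^{-2}(k^2+t^2-1)$, and the shift by the helicoid's $(1,1)$ position gives the sign and the expression $t=-\tfrac12(\lambda+\lambda^{-1})$. I expect this to be the main obstacle. The rescaling $u\mapsto ku$ between the $\sn(\cdot\mid k)$ and $\sn(\cdot\mid k^2)$ conventions, together with the composition of sliding and wrapping in diagram~\ref{diag:bour}, must be tracked carefully. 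I will check the result by confirming that $II(\partial_x+\partial_y,\partial_x-\partial_y)=0$ holds identically in $\lambda$, as in the proof of Corollary~\ref{coro:s:t}.

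For the ranges, $|\tfrac12(\lambda+\lambda^{-1})|\ge 1$ forces $I^-_\epsilon=(-\infty,-1]\cup[1,\infty)$, while $\tfrac12(\lambda-\lambda^{-1})$ is onto $\mathbb{R}$, giving $I^+_\epsilon=\mathbb{R}$. Finally, I will check that $s f^2-t^2>0$ on $\Omega_k$, so that \eqref{eq:bour:immersion} is real and regular there.
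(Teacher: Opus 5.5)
Your outline is the same assembly the paper performs before stating the theorem: Theorem~\ref{theorem:main:first}, Proposition~\ref{prop:rotation:field:formula}, the catenoid of diagram~\ref{diag:bour}, and Corollary~\ref{coro:s:t}. Steps~1--2 are fine, provided the Bour angle is taken to be $v=ky$. The gap is the step you defer, and it is not bookkeeping that can simply be finished. The pairs $(s,t)$ of Corollary~\ref{coro:s:t} are the first integrals of Proposition~\ref{prop:bour:family}, not the parameters of \eqref{eq:bour:immersion}. Substituting them directly, as you and the statement do, contradicts your own Step~2:
\begin{itemize}
\item In the positive case the substitution gives $(s,t)=(k^2,0)$ at $\lambda=1$, whereas you identified $V^0$ with the member $(1,0)$.
\item The member $(k^2,0)$ is the wrapped surface of revolution with radius $kf$ and angle $v/k$. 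On it $N$ has changed while $LN$ has not, so for $k\neq1$ the directions $\partial_x\pm\partial_y$ are no longer conjugate.
\item Your planned check $II(\partial_x+\partial_y,\partial_x-\partial_y)\equiv0$ therefore already fails at $\lambda=1$.
\end{itemize}

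Three effects have to be tracked. Write $t=MG$ and $s=GN^2+G_1^2/(4G)+t^2/G$ with $G=I(Y,Y)$.
\begin{itemize}
\item[(i)] Replacing $Y$ by $cY$ sends $(s,t)\mapsto(c^2s,c^2t)$. Corollary~\ref{coro:s:t} uses $Y=\partial_y$ although the rotation angle is $ky$; this is why $V^0$ sits at $s=k^2$ there.
\item[(ii)] A homothety by $\mu$ sends $(s,t)\mapsto(s,\mu t)$. Rescaling the profile therefore cannot produce the factor $k^{-2}$ you invoke in the negative case. That factor comes from (i), and it must then be applied in the positive case too. Note also that the listed positive profile is $k$ times the $C=1$ surface, not the $C=1$ surface itself.
\item[(iii)] On the member $(s',t')$ of \eqref{eq:bour:immersion} one has $\partial_v=s'^{-1/2}(\mathbf a\times p-t'\mathbf a)$, with $\mathbf a$ the unit axis. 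Expanding $\mathbf a$ in $(E_1,E_2,\nu)$, using $D_X\partial_v=s'^{-1/2}\,\mathbf a\times X$, gives $s'\bigl(GN^2+GM^2+G_1^2/(4G)\bigr)=1$ and $|MG|=|t'|/s'$. Hence $s=1/s'$.
\end{itemize}
Carrying out (i)--(iii) with $v=ky$ and the listed profiles gives, up to orientation signs,
\[
(s',t')=\Bigl(\tfrac{1}{1+T^2},\tfrac{T}{1+T^2}\Bigr),\ T=\tfrac{\lambda-\lambda^{-1}}{2k}\ \ (+);\qquad
(s',t')=\Bigl(\tfrac{1}{S},\tfrac{T}{S}\Bigr),\ T=\tfrac{\lambda+\lambda^{-1}}{2},\ S=\tfrac{T^2+k^2-1}{k^2}\ \ (-).
\]
So the statement's negative $s(\lambda)$ is the correctly normalized first integral $S$. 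Inserted directly into \eqref{eq:bour:immersion}, however, it is right only at $\lambda=1$; for large $|T|$ it even makes the radicand negative.

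Two smaller points:
\begin{itemize}
\item As printed, \eqref{eq:bour:immersion} is isometric only for $s=1$. The radicands need $s^2f^2f_w^2$ in place of $f^2f_w^2$; at $t=0$ the printed meridian speed squared is $sf'^2+\|\gamma'\|^2-f'^2/s$.
\item For $\lambda>0$ the expression $-\tfrac12(\lambda+\lambda^{-1})$ covers only $(-\infty,-1]$. The other half-line of $I^-_\epsilon$ is the mirrored branch, not something forced by $|\tfrac12(\lambda+\lambda^{-1})|\ge1$.
\end{itemize}
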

\begin{remark}[\textbf{Linear Combination of V-nets}]
    Since $\mathrm{R}(K)$ is a vector space, any linear combination of rotation fields of $K$ is again a rotation field of $K$. Hence, for any scalars $\alpha,\beta$, the field $\alpha\,\eta^{+}+\beta\,\eta^{-}$ yields a new V-net, provided that $\det(I_V)\neq 0$. In particular, this produces a two-dimensional subspace of $\mathrm{R}(K)$ consisting of helicoidal V-surfaces, for which immersion formulas are available (see Fig.~\ref{fig:iso:all}-(c,g,k)).
\end{remark}

\begin{remark}[\textbf{Infinitely Many V-nets on a Helicoid}]
    As shown by Theorem~\ref{theorem:main:immersion:V} and Proposition~\ref{prop:rotation:field:formula}, every negative alignable V-net of the first kind lies on a helicoid. In particular, the helicoid carries infinitely many V-nets, as already noted by Gambier \cite{Gambier18}. The key point is the kinematic mechanism encoded in \eqrefi{eq:rotation:fields:SR:H}{2}: for any surface of revolution, independent of its profile curve, the rotation field corresponding to the instantaneous velocity field of a curvature-line preserving isometric deformation is always a helicoid. Consequently, by varying the alignability parameter \(k\) in the profile curves of the K-net of revolution (see \eqref{eq:K:net:immersion}), we effectively generate a corresponding family of V-nets on the helicoid. 
\end{remark}
\section{Alignable V-surfaces of the Second Kind}\label{sec:2nd:kind}

Let $\mathscr K$ be an \emph{Amsler surface} in large, i.e.\,a pseudospherical surface carrying two non-parallel straight asymptotic lines
$\mathscr L_1$ and $\mathscr L_3$ intersecting at a point $\mathscr O\in\mathscr K$ \cite{BobenkoAmsler}.
The point $\mathscr O$ splits each line into two rays, denoted $\mathscr L_i^\pm$. Fix a choice of \emph{positive} rays $\mathscr L_1^+$ and $\mathscr L_3^+$ and let $k\in(0,\pi)$ be the angle between these rays at $\mathscr O$\footnote{We adopt the numbering $(1,3)$ instead of $(1,2)$ for $\mathcal{L}_i$, since $(E_1,E_3)$ are the coordinate vector fields of our \(K\)-nets.}. In \cite{BobenkoAmsler} the surface $\mathscr K$ is constructed by a weakly regular asymptotic-line parametrization obtained from the \emph{Sym formula}, in which the pre-images of the rays $\mathscr L_1^\pm,\mathscr L_3^\pm$ are the $u$ and $v$ coordinate axes. 

In this article we work \emph{locally} in the sector bounded by $\mathscr L_1^+$ and $\mathscr L_3^+$, i.e.\ over the open first quadrant
\begin{equation*}
    \Omega_1:=\{(u,v)\in\mathbb R^2\mid u>0,\ v>0\},
\end{equation*}
and we denote by $(\psi_k,\Omega_1)$ the corresponding first-quadrant patch, where $\psi_k$ is a weakly regular K-net and $\omega(u,v)=\omega_k(uv)$ is the related angle function. Finally, the boundary rays
\begin{equation*}
    \Gamma_1=\{(u,0)\mid u>0\},\qquad\qquad
    \Gamma_3=\{(0,v)\mid v>0\}
\end{equation*}
are understood as boundary traces (one-sided limits) of $\psi_k$ on $\Omega_1$ corresponding to $\mathcal{L}^+_1$ and $\mathcal{L}^+_3$. Under these assumptions and due to Corollary~\ref{coro:b:c} the $(X,Y)$ of \eqref{eq:X:Y} converts to 
\begin{equation}\label{eq:II:killing:coordinates}
    X = u\partial_u + v\partial_v,\qquad\qquad
    Y = u\partial_u - v\partial_v.
\end{equation}
Hence, $Y(\omega) = 0$ (see \eqref{eq:Y:omega}) turns to the \emph{similarity relation} $u\,\omega_u = v\,\omega_v$, which when combined with the sine-Gordon equation (see Theorem~\ref{theorem:classify:K-nets}) implies that the angle function $\omega(u,v) = \omega_k(uv)$ is a solution of the third Painlevé equation (see \cite{BobenkoAmsler}): 
\begin{equation}\label{eq:P:III}
     \omega_{rr} + r^{-1}\,\omega_r - \sin{\omega} = 0,\qquad\text{with}\qquad \omega(0) = k,\quad \frac{\d}{\d r}\omega\Big|_{r=0} = 0,
\end{equation}
where $r = \sqrt{4uv}$ and $0<k<{\pi}$. The unique solution of the initial value problem of \eqref{eq:P:III} is real-analytic. Furthermore, an Amsler K-surface is uniquely characterized by the angle $k$ between its straight asymptotic lines. Consequently, our Amlser K-surface is a real-analytic surface element.

Since the Amsler surface develops cuspidal edge curves (where the principal curvatures vanish), one cannot take all of $\Omega_1$ as an immersion domain. More precisely, between any pair of rays $(\mathscr L_1^\pm,\mathscr L_3^\pm)$ there is an infinite sequence of cusp curves,
whose pre-images are level sets of the similarity variable, i.e.\ hyperbolas of the form $4uv=(r^k_n)^2$ for $n \in \mathbb{N}$ \cite[Corollary~6]{BobenkoAmsler}.
These singular curves occur exactly when the angle function hits a multiple of $\pi$. Therefore, to obtain a genuine \emph{surface element} we restrict to the connected component of $\Omega_1$ adjacent to the axes and bounded by the first cusp curve; we denote this open set by $\Omega_1^k\subset\Omega_1$ and call the surface element $K^k := (\psi^k,\Omega_1^k)$ the \emph{Amsler K-surface}.
\begin{figure*}[t!]
    \centering
    \begin{subfigure}[b]{.32\textwidth}
        \centering
        \includegraphics[height = 25mm]{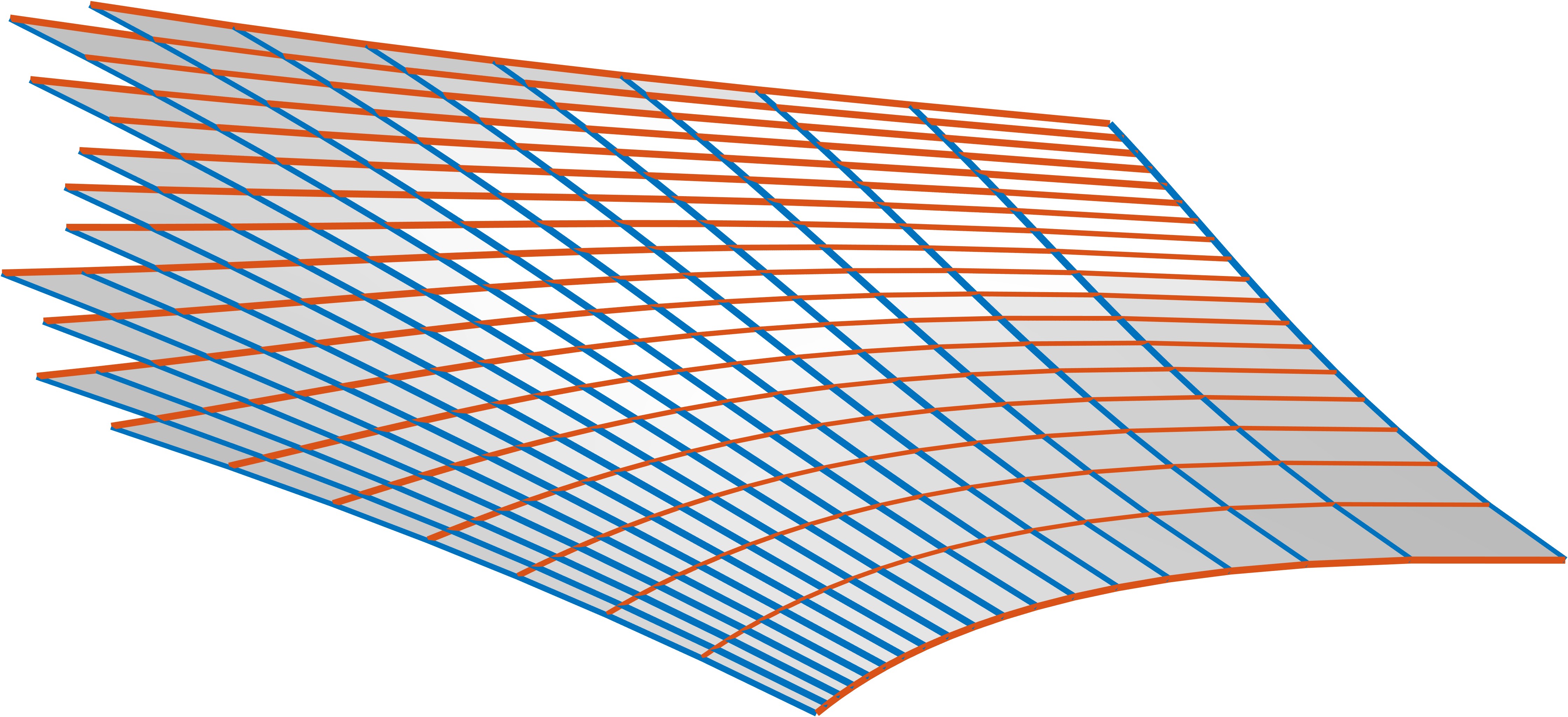}
        \caption{$k = \frac{\pi}{4}$}
    \end{subfigure}\hfill
    \begin{subfigure}[b]{.32\textwidth}
        \centering
        \includegraphics[height = 30mm]{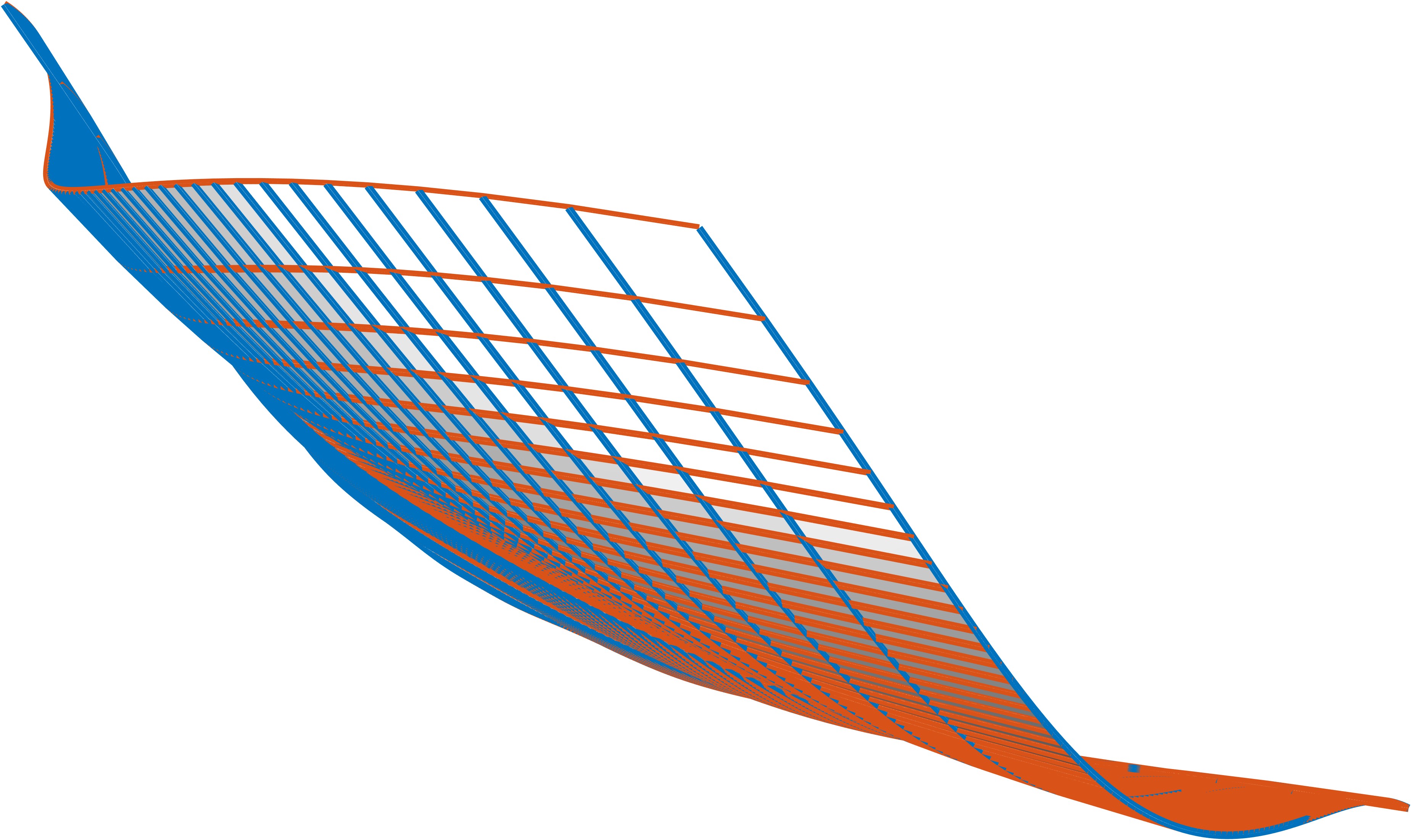}
        \caption{$k = \frac{\pi}{4}$}
    \end{subfigure}\hfill
    \begin{subfigure}[b]{.32\textwidth}
        \centering
        \includegraphics[height = 35mm]{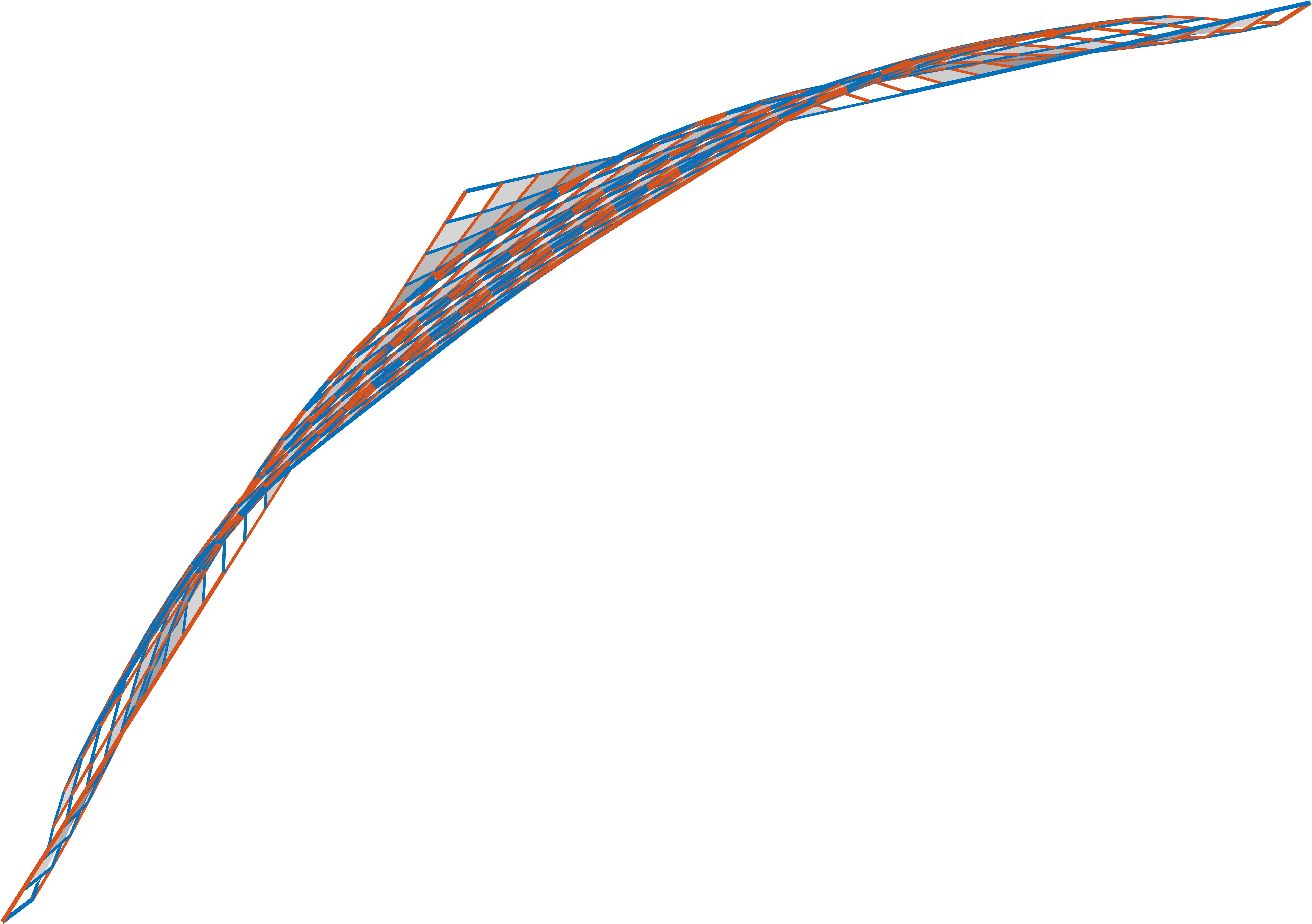}
        \caption{$k = \frac{\pi}{4}$}
    \end{subfigure}

    \begin{subfigure}[b]{.32\textwidth}
        \centering
        \includegraphics[height = 35mm]{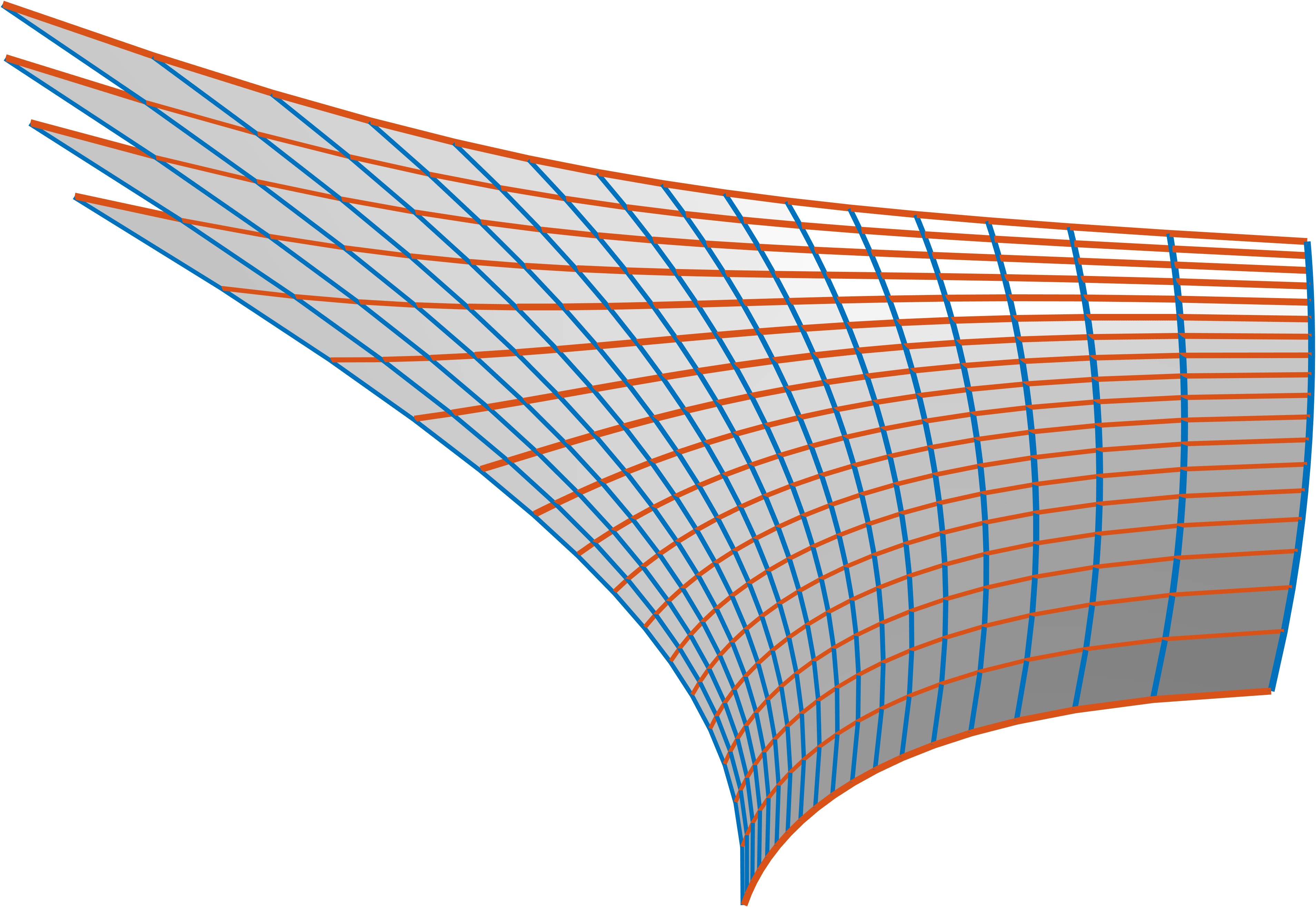}
        \caption{$k = \frac{\pi}{2}$}
    \end{subfigure}\hfill
    \begin{subfigure}[b]{.32\textwidth}
        \centering
        \includegraphics[height = 35mm]{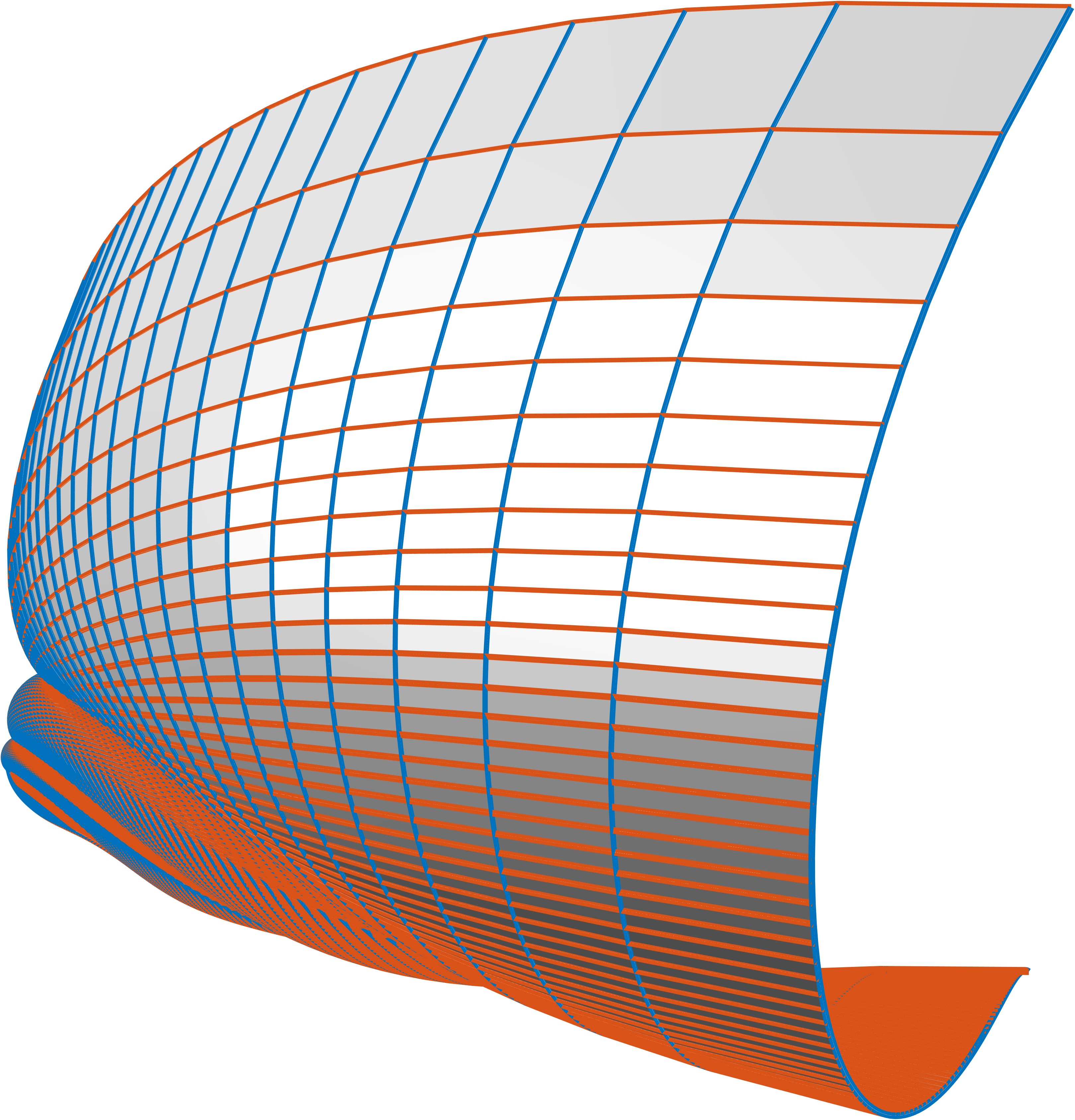}
        \caption{$k = \frac{\pi}{2}$}
    \end{subfigure}\hfill
    \begin{subfigure}[b]{.32\textwidth}
        \centering
        \includegraphics[height = 40mm]{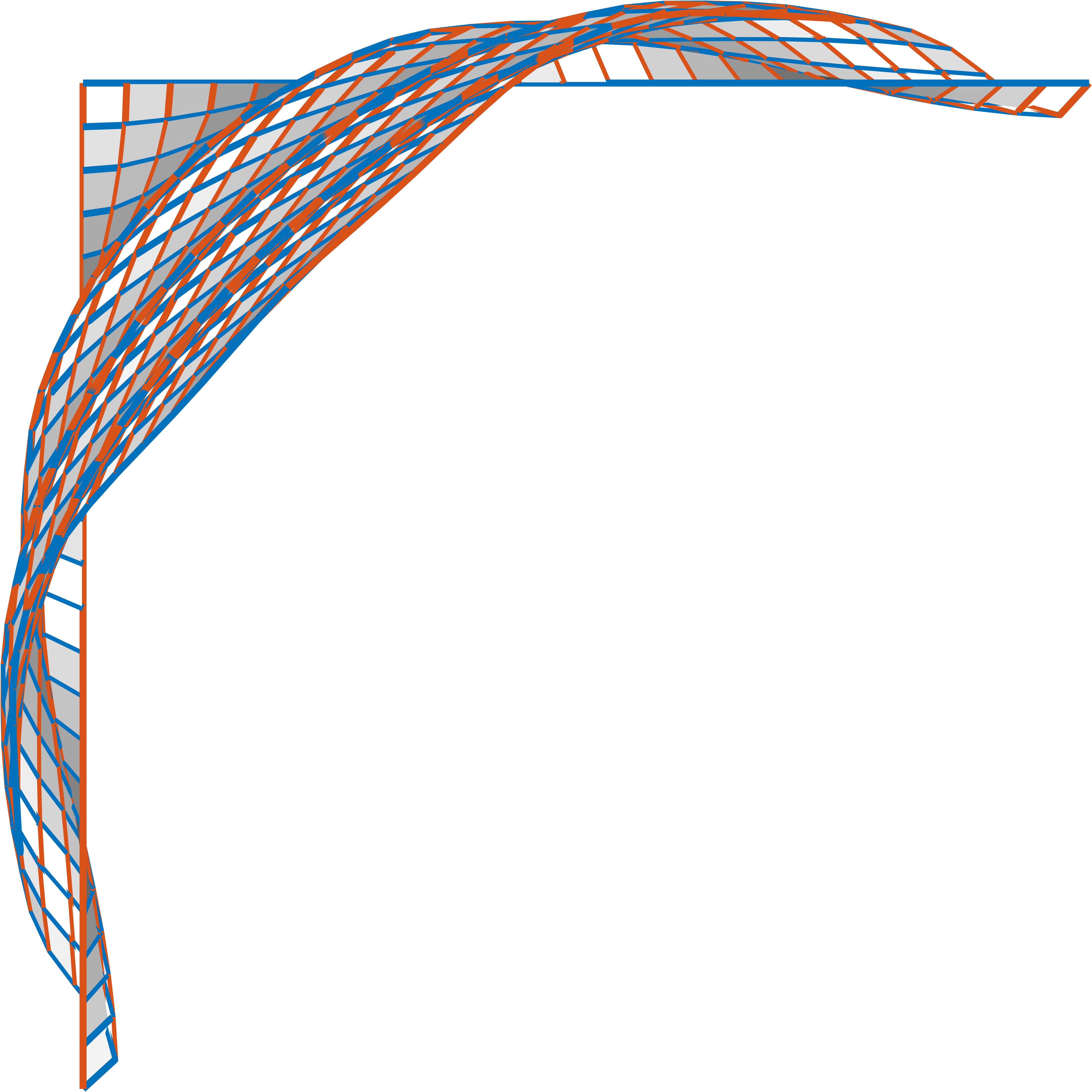}
        \caption{$k = \frac{\pi}{2}$}
    \end{subfigure}

    \begin{subfigure}[b]{.32\textwidth}
        \centering
        \includegraphics[height = 35mm]{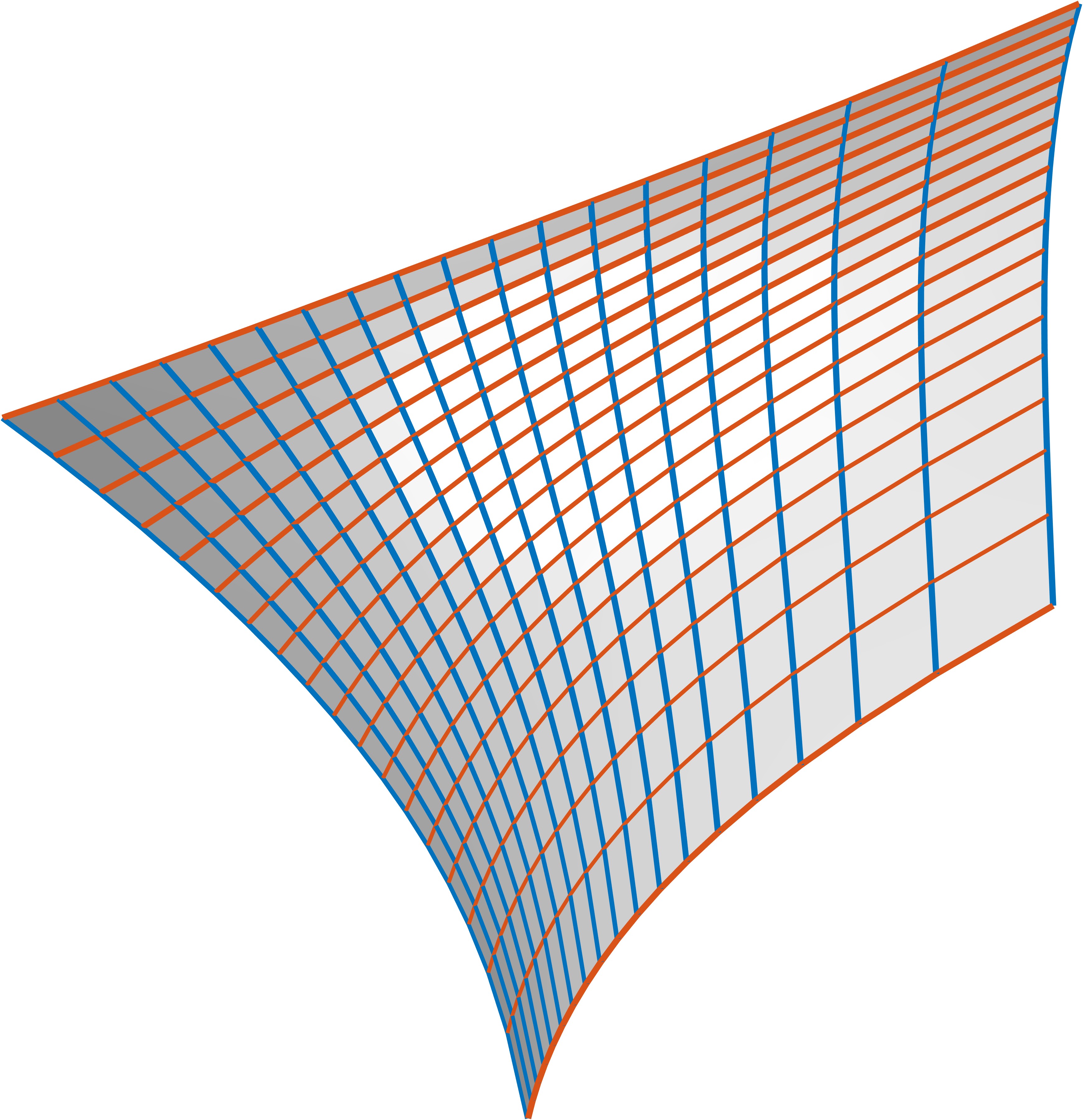}
        \caption{$k = \frac{3\pi}{4}$}
    \end{subfigure}\hfill
    \begin{subfigure}[b]{.32\textwidth}
        \centering
        \includegraphics[height = 35mm]{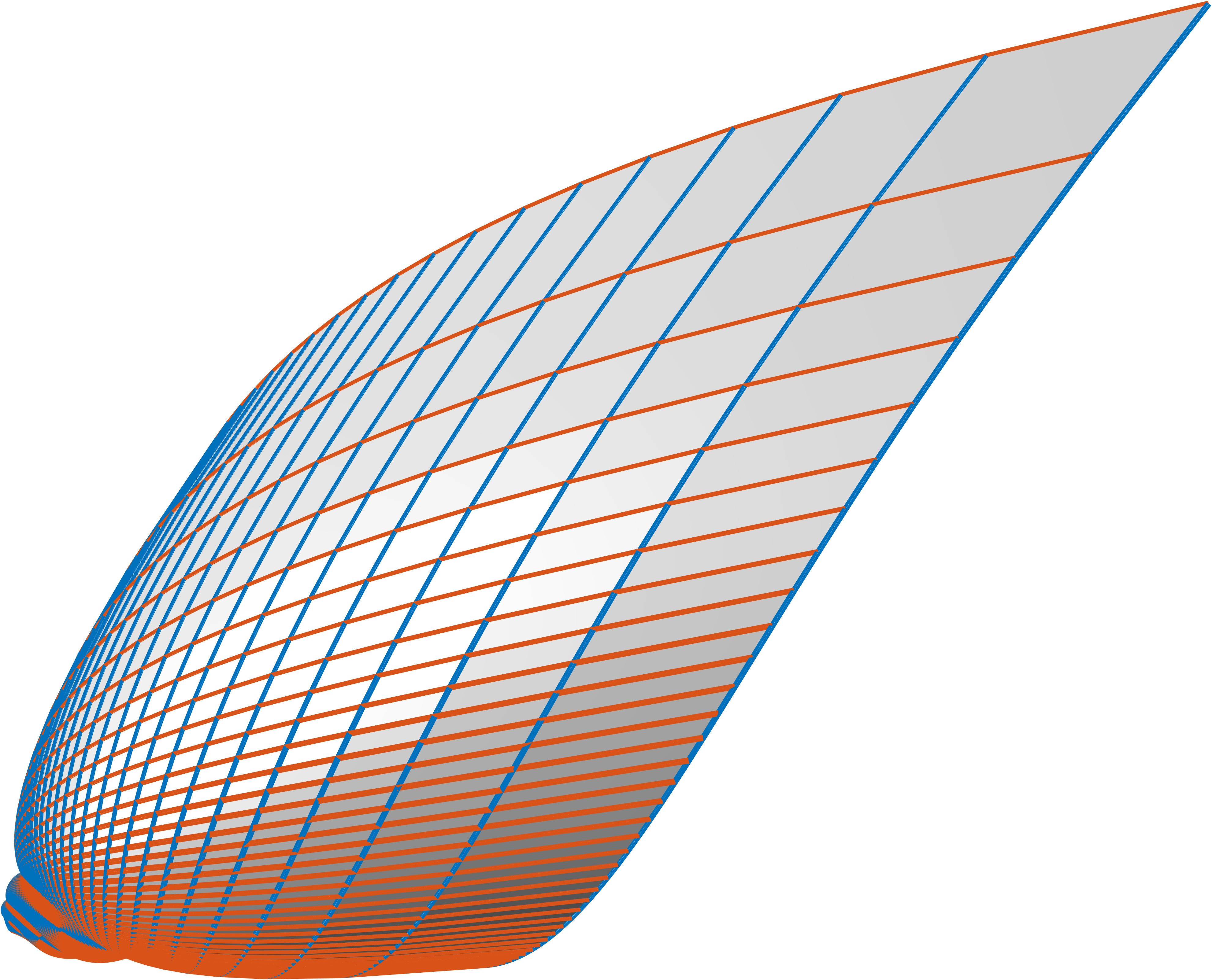}
        \caption{$k = \frac{3 \pi}{4}$}
    \end{subfigure}\hfill
    \begin{subfigure}[b]{.32\textwidth}
        \centering
        \includegraphics[height = 37mm]{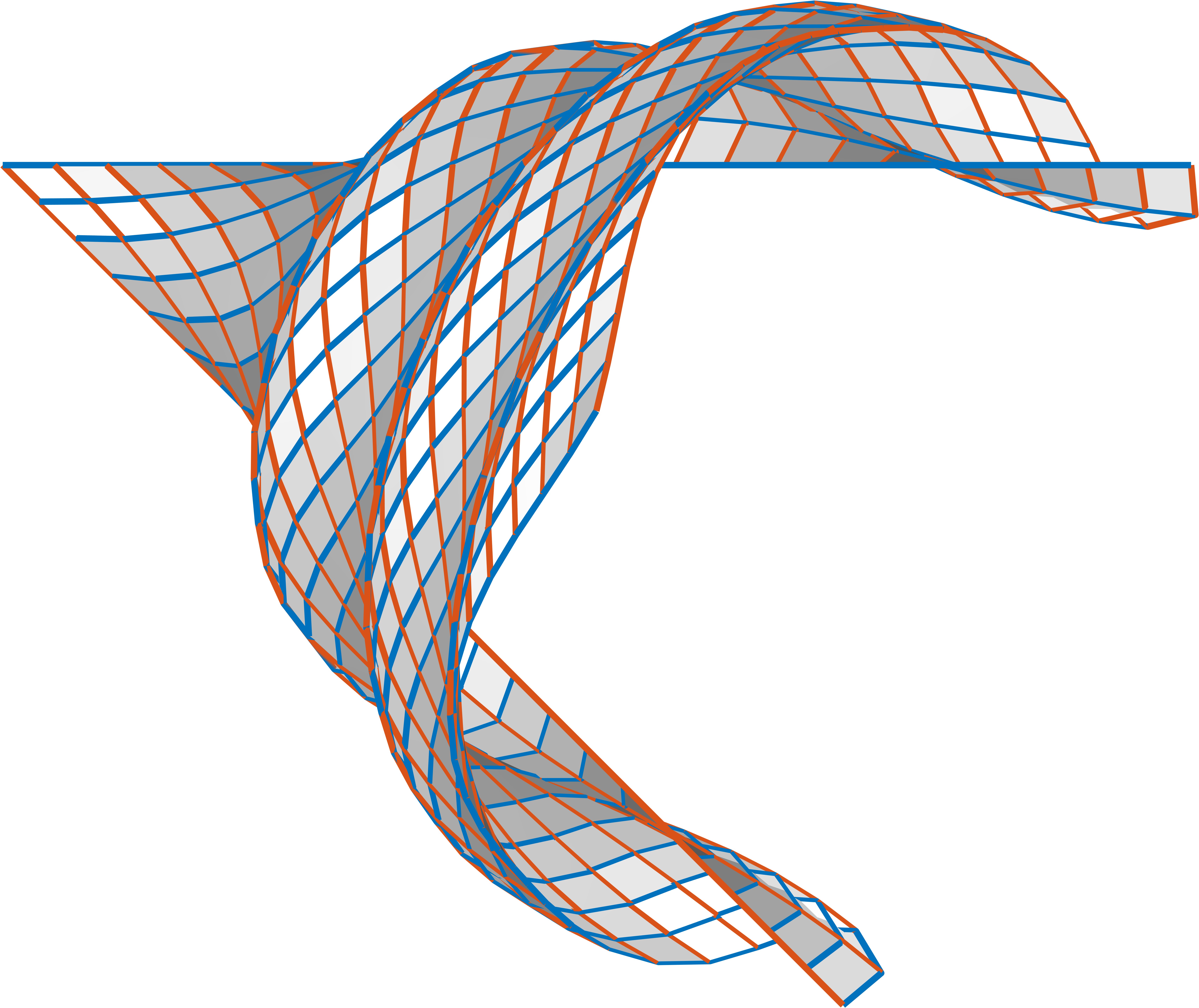}
        \caption{$k = \frac{3\pi}{4}$}
    \end{subfigure}
    \caption{Illustration of the alignable deformation of alignable V-nets of the second kind of the negative-alignable type (a,e,i), the positive-alignable type (c,g,k) along with the corresponding Amsler K-nets (d,h,l). In cases (a, d), the portions of the surface extending beyond the boundaries of the figure have been cut off.}
    \label{fig:alignable:deformation:all:2nd}
\end{figure*}
\begin{remark}
    In the context of alignable surfaces, $k$ of \eqrefi{eq:P:III}{2} is in fact the alignability parameter of the Amsler surface. Fig~\ref{fig:alignable:deformation:all:2nd}-(c,f,i) depicts an alignable deformation of one quadrant of the Amsler surface according to the variation of $k$.
\end{remark}

Recall that a regular curve $\gamma\colon I \to \mathbb{R}^3$ is called a \emph{general helix} if $\tau/\kappa$ is constant. Then we have: 
\begin{proposition}\label{prop:V:general:helix}
Let $(\eta,\Omega)$ be a V-net with $(u,v)$ parameters such that
there exist a $u$-coordinate line $\Gamma_1$ (i.e.\ $v=\mathrm{const}$) and a $v$-coordinate line $\Gamma_3$ (i.e.\ $u=\mathrm{const}$),
such that the space curves $\eta(\Gamma_1)$ and $\eta(\Gamma_3)$ are regular general helices.

Assume moreover that $\Gamma_1$ and $\Gamma_3$ share a common end $p_0$ and that the associated angle function $\phi$ between the coordinate lines of $\eta$ admits a continuous extension to $p_0$. Then, after possibly shrinking $\Omega$, there exists $k\in(0,\pi)$ such that $(\eta,\Omega)$ is the rotation field of an Amsler K-surface with parameter $k$.

Conversely, for every $k\in(0,\pi)$ and every $V\in\mathrm{R}(K^k)$, the restrictions of $V$ to the two straight asymptotic rays of $K^k$
(in the sense of the corresponding boundary rays) are general helices.
\end{proposition}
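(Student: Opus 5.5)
Both directions will be read off from the curvature/torsion formulas \eqref{eq:kappa:tau} of Theorem~\ref{theorem:I:II:III:V:surface}. Along a V-net coordinate line (a geodesic) we have $\tau_1/\kappa_1=-\cot\omega$ and $\tau_3/\kappa_3=\cot\omega$, where $\omega$ is the angle of the reciprocal-parallel K-net. By Corollary~\ref{coro:angle}, $\omega$ agrees with $\varphi$ or $\pi-\varphi$ according to the sign of $\mathrm K_V$. Hence a coordinate line of $\eta$ is a general helix exactly when $\omega$ is constant along the corresponding asymptotic line of $K$. The plan is to translate the helix hypothesis into constancy of $\omega$ on two asymptotic lines, and then into straightness of those lines.

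\textbf{Direct part.} Let $K=(\psi,\Omega)$ be the reciprocal-parallel K-net of $\eta$, using the remark following Lemma~\ref{lem:V:net}. On $\Gamma_1$ the ratio $\tau_1/\kappa_1$ is constant, so $\d\omega(E_1)=0$ along $\Gamma_1$. In the K-frame the geodesic curvatures of the asymptotic lines are $\kappa^g_1=-\d\omega(E_1)$ and $\kappa^g_3=\d\omega(E_3)$; this is Lemma~\ref{lem:cartan:geodesics} specialized to the Chebyshev net, as used in the proof of Proposition~\ref{prop:main}. Thus $\psi(\Gamma_1)$ is an asymptotic line with vanishing geodesic curvature. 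Its normal curvature also vanishes, so its curvature is zero and $\psi(\Gamma_1)$ is a straight segment. The same argument makes $\psi(\Gamma_3)$ straight.

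Since $\Gamma_1,\Gamma_3$ are coordinate lines with a common end $p_0$, their images are two segments on rays issuing from a common point $\psi(p_0)$. The continuous extension of $\varphi$ to $p_0$ gives a well-defined limiting angle $k=\lim\omega\in(0,\pi)$ between them. This limit is not $0$ or $\pi$ because the net is regular, and since $\omega$ is constant along each ray the angle equals $k$ everywhere on them; in particular the two lines are non-parallel. The characteristic initial value problem for sine-Gordon with data $\omega\equiv k$ on the two axes has a unique solution, namely the similarity solution $\omega_k(uv)$ of \eqref{eq:P:III}. Hence $\omega=\omega_k(uv)$ near the corner after a box reparametrization $u\mapsto u-u_0$, $v\mapsto v-v_0$ (Remark~\ref{remark:box:reparametrization}), and after shrinking $\Omega$ into $\Omega^k_1$, $K$ is congruent to the Amsler patch $K^k$. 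By Definition~\ref{def:V:surface}, $\eta$ is then a rotation field of $K^k$.

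\textbf{Converse.} For $V\in\mathrm R(K^k)$, Lemma~\ref{lem:V:net} shows that the K-net induces a V-net. On the boundary rays $\Gamma_1,\Gamma_3$ we have $\omega\equiv k$, since $\omega=\omega_k(uv)$ and $uv=0$ there. By continuity, \eqref{eq:kappa:tau} gives $\tau_1/\kappa_1=-\cot k$ and $\tau_3/\kappa_3=\cot k$, which are constants, so both boundary curves are general helices. Two conditions must be justified on the boundary traces. First, $b,c$ extend nonvanishingly, so that $\kappa_i\neq0$ and the curves are regular. Second, real-analyticity of $\omega_k$ and of the characteristic initial data lets the formulas of Theorem~\ref{theorem:I:II:III:V:surface} pass to the limit.

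The main obstacle is the boundary and corner analysis. In the direct part, the helix lines may lie only at the edge of the domain, so one must show that the Chebyshev coordinates of $K$ extend up to $\Gamma_1\cup\Gamma_3$ and that the corner angle is nondegenerate; only then does the Goursat uniqueness for sine-Gordon apply. I would first try to handle this by working in the closure of a small rectangle adjacent to $p_0$, where continuity of $\varphi$ at $p_0$ controls the data. In the converse the corresponding difficulty is regularity of $b,c$ up to the axes, which I would obtain from analytic characteristic data.
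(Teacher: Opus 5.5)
Your proposal is correct and follows the paper's own route. Both directions use $\tau_1/\kappa_1=-\cot\omega$ and $\tau_3/\kappa_3=\cot\omega$ from \eqref{eq:kappa:tau}, together with Corollary~\ref{coro:angle}, to turn the helix condition into constancy of $\omega$ along the two rays; continuity at $p_0$ then equates the two constants, and $\kappa^g_1=-\mathrm{d}\omega(E_1)$, $\kappa^g_3=\mathrm{d}\omega(E_3)$ make the asymptotic rays straight. Beyond the paper you add a Goursat-uniqueness step identifying $\omega$ with $\omega_k(uv)$, where the paper simply invokes the characterization of Amsler surfaces by two intersecting straight asymptotic lines, and you flag the boundary-trace regularity explicitly, which the paper handles only by reading restrictions as one-sided traces.
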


\begin{proof}
    We interpret restrictions to the rays $\Gamma_1=\{(u,0):u>0\}$ and $\Gamma_3=\{(0,v):v>0\}$
    in the sense of one--sided boundary traces from $\Omega_1$.
    
    \smallskip
    Let $V\in \mathrm{R}(K^k)$. On the Amsler K-surface $K^k$, the two identifying straight asymptotic rays are precisely the boundary traces of the Chebyshev patch, and $\omega$ is constant and equal to $k$ along both.
    Hence the torsion-curvature formulas \eqref{eq:kappa:tau} give constant ratios $\tau_i/\kappa_i$ on the
    corresponding boundary curves of $V$, so they are general helices.
    
    \smallskip
    Let $(\eta,\Omega)$ be a V-net with two transverse coordinate lines that are general helices. Let $(\psi,\Omega)$ be the reciprocal-parallel K-net to $(\eta,\Omega)$. By \eqref{eq:kappa:tau} and the helix assumption and Corollary\,\ref{coro:angle}, $\phi$ and then consequently the angle function of $\psi$, $\omega$ is constant along each boundary ray; write these constants as $k_1$ and $k_3$. Since $\omega$ admits a continuous trace at the common endpoint of the rays, we have $k_1=k_3=:k$. Along a canonical K-net the geodesic curvatures of the coordinate curves satisfy $\kappa_1^g = -E_1(\omega)$ and $\kappa_3^g = E_3(\omega)$. Therefore $\kappa_1^g=\kappa_3^g=0$ along the two rays. Since these rays are asymptotic lines, their normal curvatures also vanish, so both are straight lines in $\mathbb R^3$. Thus $\psi$ is a K-net carrying two intersecting straight asymptotic lines with angle $k$, hence it is a net on an Amsler K-surface $K^k$. Consequently $(\eta,\Omega)\in \mathrm{R}(K^k)$.
\end{proof}

\begin{remark}
    Since the two straight asymptotic lines of the Amsler K-surface are unique (see \cite{BobenkoAmsler} Lemma~2) the corresponding two generalized helices on the corresponding reciprocal-parallel V-surfaces are also unique.
\end{remark}

The above proposition is key to identifying alignable V-surfaces of the second kind. At the same time, it is independently important for identifying a broader class of V-surfaces. 
\begin{remark}
    Note that our notion of a general helix also includes planar curves. Consequently, every V-net generated from two planar boundary curves intersecting one another orthogonally is a rotation field of an Amsler surface with $k = \frac{\pi}{2}$, and its V-net is reciprocal-parallel to the Amsler surface’s K-net.
\end{remark}

\begin{definition}\label{def:spatial:Nielsen}
    A general helix $\gamma$ is called a \emph{Nielsen helix} if there exist real constants $\kappa_0 > 0$, $\alpha \in \mathbb{R}$ such that $\kappa(s) = \kappa_0\,e^{\alpha s}$ where $s$ denotes the arc-length parameter of $\gamma$.
\end{definition}
Consequently, in a Nielsen helix one can write the torsion as $\tau(s) = \tau_0\,e^{\alpha s}$ with $\tau_0 \in \mathbb{R}$ as a real constant.
\begin{example}
    The Nielsen spiral (a.k.a Sici spiral) is a special case of Nielsen helix which is obtained by $\tau_0 = 0$.
\end{example}

\begin{theorem}\label{theorem:classify:coordinate:free:2nd:kind} 
    A V-surface $V$ is alignable of the second kind if and only if there exists a K-surface $K$ such that $V \in \mathrm{R}(K)$, for which the restrictions $V|_{\mathscr{L}_1}$ and $V|_{\mathscr{L}_3}$ to two transversely intersecting asymptotic lines $\mathscr{L}_1,\mathscr{L}_3 \subset K$ are Nielsen helices.
\end{theorem}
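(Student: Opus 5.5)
The plan is to prove both directions by working in the first‑quadrant Amsler chart of Section~\ref{sec:2nd:kind}. There the similarity field of Corollary~\ref{coro:b:c} takes the form \eqref{eq:II:killing:coordinates}, so $U=u$ and $V=v$. The two straight asymptotic rays are the boundary traces $\Gamma_1$ ($v=0$) and $\Gamma_3$ ($u=0$), and along them $\omega\equiv k$. The key quantities are the curvature formulas \eqref{eq:kappa:tau} and the arc‑length elements $\bar\theta^{\,1}=c\,\mathrm{d}u$ and $\bar\theta^{\,3}=b\,\mathrm{d}v$ from \eqref{eq:e1:e3:bare1:bare3}. Along a $u$-line, $\kappa_1=\sin\omega/c$ and $\tau_1/\kappa_1=-\cot\omega$. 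Along a $v$-line, $\kappa_3=\sin\omega/b$ and $\tau_3/\kappa_3=\cot\omega$.

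\emph{($\Rightarrow$).} Let $V$ be alignable of the second kind. By Proposition~\ref{prop:main}, $V\in\mathrm{R}(K^k)$ for an Amsler surface, and by Corollary~\ref{coro:b:c}, with $U=u$ and $V=v$,
\[
(b^+,c^+)=\Bigl(\tfrac{C}{v}\csc^2\tfrac{\omega}{2},\ \tfrac{C}{u}\csc^2\tfrac{\omega}{2}\Bigr),\qquad (b^-,c^-)=\Bigl(\tfrac{C}{v}\sec^2\tfrac{\omega}{2},\ -\tfrac{C}{u}\sec^2\tfrac{\omega}{2}\Bigr).
\]
Restrict to $\Gamma_1$, where $\omega=k$. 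Then $c=\gamma/u$ with $\gamma=\pm C\csc^2(k/2)$ or $\mp C\sec^2(k/2)$. The arc length is $s=\int c\,\mathrm{d}u=\gamma\log u+\mathrm{const}$, so $u=e^{s/\gamma}$ up to a constant factor. Hence $\kappa_1=\sin(k)\,u/\gamma=\kappa_0e^{s/\gamma}$, and the torsion $\tau_1=-\cot(k)\,\kappa_1$ has the same exponential form. So $V|_{\mathscr L_1}$ is a Nielsen helix with $\alpha=1/\gamma$. The same computation with $b$ on $\Gamma_3$ handles $\mathscr L_3$.

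\emph{($\Leftarrow$).} A Nielsen helix is a general helix. The two helices meet at the common point of $\mathscr L_1$ and $\mathscr L_3$, and $\omega$ is continuous there. Proposition~\ref{prop:V:general:helix} therefore shows that $K=K^k$ is Amsler, with the $\mathscr L_i$ its straight rays.

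The rotation fields of $K^k$ are determined by the characteristic initial data $c(u,0)$ and $b(0,v)$ of \eqref{eq:codazzi:V:I:II}. So it suffices to show that these data coincide with those of some alignable pair from Corollary~\ref{coro:b:c}. Uniqueness then gives $V$ alignable via Lemma~\ref{lem:alignability:cond:II}. The data need not have equal constants on both rays: the constants $C/U$ and $C/V$ are independent, since rescaling $U$ and $V$ amounts to a box reparametrization combined with the $\lambda$‑scaling of Theorem~\ref{theorem:V:isometry}.

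Translate the Nielsen condition into the data. Along $\Gamma_1$ we have $\kappa_1=\sin k/c$ and $\mathrm{d}s=c\,\mathrm{d}u$, so $\kappa_1=\kappa_0e^{\alpha s}$ gives $\tfrac{\mathrm{d}}{\mathrm{d}u}(1/c)=\alpha/\sin(k)$, that is, $c(u,0)=1/(\alpha' u+\beta)$. The sign of $\alpha$ (equivalently of $bc$, i.e.\ $\mathrm K_V$ via Corollary~\ref{coro:angle}) decides whether we match $(b^+,c^+)$ or $(b^-,c^-)$.

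The main obstacle is the additive constant $\beta$. Alignability requires $c\propto 1/u$, i.e.\ $\beta=0$ (and likewise on $\Gamma_3$). I will try to force this from the requirement that the whole ray $u\in(0,\infty)$, ending at the Amsler vertex, carries the Nielsen helix with its exponential curvature law. For $\beta\neq0$, $c$ stays bounded as $u\to0$, so the vertex lies at finite arc length. For $\beta=0$, $\int c\,\mathrm{d}u$ diverges logarithmically, so the vertex sits at $s=-\infty$, the asymptotic pole of the spiral. So the plan is to read the statement as requiring the helix's asymptotic end to be the intersection point, which gives $\beta=0$. If this normalization cannot be justified intrinsically, the fallback is to note that the origin of $u$ is pinned by the straight lines, and to check directly that $\beta\neq0$ violates \eqref{eq:alignability:cond:II} near $\Gamma_1$.
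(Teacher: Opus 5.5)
Your forward direction is the paper's argument: Proposition~\ref{prop:main} and Corollary~\ref{coro:b:c} with $U=u$, $V=v$, then $s=\gamma\log u$ and $\kappa_1\propto u$. It is fine. Your converse also follows the paper's route: Proposition~\ref{prop:V:general:helix}, the ODE giving $c(u,0)=1/(\alpha'u+\beta)$, then uniqueness for the characteristic problem. It breaks exactly where you say, and neither of your remedies repairs it. The paper does not repair it either; it simply writes ``assuming that $u_1=v_1=0$ and through a box reparametrization, if necessary''.

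\textbf{The offset $\beta$.} Reading the statement so that the vertex is the asymptotic end of the helix adds a hypothesis; it is not a proof. Your fallback, once carried out, proves the opposite of what you need. Because the straight lines pin the origin, $\beta$ cannot be translated away, and for $\beta>0$ the data are regular at the corner. The simplest case is $c(u,0)\equiv c_0$, $b(0,v)\equiv b_0$. Its traces are circular helices, which are Nielsen helices with exponent $0$ under Definition~\ref{def:spatial:Nielsen}. The Goursat problem for \eqref{eq:codazzi:V:I:II} then has a smooth solution near $(0,0)$, so you get $V\in\mathrm{R}(K^k)$ with Nielsen traces. Write $\omega=\varpi(uv)$; the sine-Gordon equation on the axes gives $\varpi'(0)=\sin k\neq0$. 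As $v\to0$, $b\,\omega_u=b\,v\,\varpi'\to0$, while $c\,\omega_v\to c(u,0)\,u\sin k\neq0$. So neither $b\,\omega_u=c\,\omega_v$ nor $b\,\omega_u=-c\,\omega_v$ holds near $\Gamma_1$, and the induced net is not alignable.

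\textbf{The constants on the two rays.} You closed this second hole incorrectly: the two constants are \emph{not} independent. In the canonical Amsler chart, $U\omega_u=V\omega_v$ becomes $Uv=Vu$. Hence $U=\kappa u$ and $V=\kappa v$ with a single constant $\kappa$; this is the condition $E_1(U)=E_3(V)$ in Corollary~\ref{coro:b:c}. A direct check agrees. Inserting $(C_3v^{-1}\csc^2(\omega/2),\,C_1u^{-1}\csc^2(\omega/2))$ into the first equation of \eqref{eq:codazzi:V:I:II} gives $C_3\cot(\omega/2)=C_1\csc\omega+C_3\cot\omega$, that is, $C_1=C_3$.

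Both parts of your justification fail. $b$ and $c$ are the lengths of $\mathrm{d}\eta(E_3)$ and $\mathrm{d}\eta(E_1)$ for the \emph{unit} K-frame, so a box rescaling of the V-net leaves the canonical chart. The $\lambda$-scaling of Theorem~\ref{theorem:V:isometry} produces a rotation field of $K^t$, not of $K^k$. In the canonical coordinates $(\tilde u,\tilde v)=(\lambda u,\lambda^{-1}v)$ of $K^t$, its data are again $C\tilde v^{-1}\csc^2(\omega/2)$ and $C\tilde u^{-1}\csc^2(\omega/2)$.

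Here is a concrete failure. Since $\mathrm{R}(K^k)$ is a vector space, you may take $\mu V^++\nu V^-$ with $\mu\nu\neq0$. It has $c(u,0)=C(\mu\csc^2(k/2)-\nu\sec^2(k/2))/u$ and $b(0,v)=C(\mu\csc^2(k/2)+\nu\sec^2(k/2))/v$. Both traces are Nielsen helices with the vertex at infinite arc length, but their rates differ in absolute value. The induced net satisfies neither alignability condition. Each condition is linear in $(b,c)$, is satisfied by one of $V^\pm$, and is violated at every point by the other.

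So your argument, like the paper's, proves the converse only under two added hypotheses:
\begin{enumerate}
\item the intersection point is the asymptotic end of both helices;
\item the two Nielsen rates, measured from that point, coincide.
\end{enumerate}
The second condition is scale-invariant, so no reparametrization can produce it.
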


\begin{proof}
    Let $V$ be an alignable V-surface of the second kind. Then Proposition~\ref{prop:main} gives that $V \in \mathrm{R}(K)$ where $K$ is an Amsler K-surface. Consequently, Proposition~\ref{prop:V:general:helix} implies that $V|_{\mathscr{L}_1}$ and $V|_{\mathscr{L}_3}$ are general helices.\\
    Now, we denote the V-net induced on $V$ from $K$ by $\eta(u,v)$. Since $\eta(u,v)$ is alignable, it is corresponding to one of the two solutions of Corollary~\ref{coro:b:c}.
    Neglecting the scaling by putting $C= 1$ and considering that $\omega_k(u,v) = \omega_k(uv)$, in the setting of this section they are read as: 
    \begin{equation}\label{eq:b:c:coordinates}
        (b_k^+,c_k^+)=(v^{-1}\csc^2(\omega_k(uv)/2),\,u^{-1}\csc^2(\omega_k(uv)/2)),\quad
        (b^-_k,c^-_k)=(v^{-1}\sec^2(\omega_k(uv)/2),\,-u^{-1}\sec^2(\omega_k(uv)/2)).
    \end{equation}
    W.l.o.g we choose $(b^+_k(u,v),c^+_k(u,v))$. Now, let $s$ be the arclength parameter of the curve $\eta|_{\Gamma_1}: (u_0,\infty)\times \{0\} \rightarrow V$ with $u_0 > 0$, then we have
    \begin{equation}\label{eq:s:u}
        s = \int^u_{u_0} c^+_k(\tilde{u},0)\,\d\tilde{u} = \csc^2{\left(\frac{k}{2}\right)}\,\int^u_{u_0} \frac{1}{\tilde{u}}\,\d\tilde{u} = \csc^2{\left(\frac{k}{2}\right)}\ln{\left(\frac{u}{u_0}\right)}\quad\implies\quad
        u = u_0\,\mathrm{e}^{\alpha_0\,s},
    \end{equation}
    for some constant $\alpha_0$. On the other hand, the curvature of $\eta|_{\Gamma_1}$ (see \eqrefi{eq:kappa:tau}{1}) reads as 
    \begin{equation}\label{eq:kappa1:u}
        \kappa_1(u) = \frac{\sin(\omega_k(0))}{c^+_k(u,0)} = u\,\sin{(k)}\sin^2{\left(\frac{k}{2}\right)}.
    \end{equation}
    Substituting \eqref{eq:kappa1:u} in \eqref{eq:s:u} and the earlier observation that $\eta|_{\Gamma_1}$ is a general helix shows that $\Gamma_1$ is a Nielsen helix. A similar approach can be applied to $\Gamma_3$. 
    \smallskip
    Let $V$ in such a way that $V|_{\mathscr{L}^\pm_1}$ and $V|_{\mathscr{L}^{\pm}_2}$ are Nielsen helices. Since Nielsen helices are general helices, Proposition~\ref{prop:V:general:helix} implies that for some $k \in(0,\frac{\pi}{2}]$, $V\in\mathrm{R}(K^k)$ such that $K^k$ is an Amsler K-surface. So it just remains to show that the induced V-net $(\eta,\Omega)$ on $V$ is alignable. Define $s(u)$ as the arclength parameter of the curve $\eta|_{\Gamma_1}$, then from Theorem~\ref{theorem:I:II:III:V:surface} we obtain that along $\Gamma_1$
    \begin{equation}\label{eq:c(u)}
        \frac{\sin k}{c(u,0)} = \kappa_1(u,0) = \kappa_0\,\mathrm{e}^{\alpha s(u,0)},
        \qquad\qquad
        s(u) = \int_{u_0}^u c(\tilde{u},0)\,\mathrm{d}\tilde{u},
    \end{equation}
    for some $u_0>0$ and $\kappa_0>0$.
    Substituting the expression for $s(u)$ from \eqrefi{eq:c(u)}{2} into the curvature relation of \eqrefi{eq:c(u)}{1} and taking logarithms gives
    \begin{equation}\label{eq:b:c:II}
        \frac{\sin k}{c(u,0)} = \kappa_0\exp\!\left(\alpha\int_{u_0}^u c(\tilde{u},0)\,\mathrm{d}\tilde{u}\right), \quad\implies\quad
        \frac{1}{\alpha}\,\ln\!\left(\frac{\sin k}{\kappa_0\,c(u,0)}\right)
    = \int_{u_0}^u c(\tilde{u},0)\,\mathrm{d}\tilde{u}.
    \end{equation}
    Differentiating with respect to $u$ and simply showing $c(u,0)$ with $c(u)$ yields the Riccati equation 
    \begin{equation*}
        \frac{\mathrm{d}}{\mathrm{d}u}c(u) + \alpha\,c(u)^2 = 0,
    \end{equation*}
    which upon solving yields $c(u,0) = (\alpha u + u_1)^{-1}$ with $\alpha$ and $u_1$ being constants. A similar approach for $\Gamma_3$ produces $b(u,v_1) = (\beta v + v_1)^{-1}$. Assuming that $u_1 = v_1 = 0$ and through a box reparametrization, if necessary, we write $c(u,0) = C\,u^{-1}$ and $b(0,v) = C\,v^{-1}$. But this is the characteristic initial data for the Codazzi system of \eqref{eq:codazzi:V:I:II} and, according to the discussion after Theorem~\ref{theorem:I:II:III:V:surface}, provides a unique solution $(b(u,v),c(u,v))$. Since 
    \begin{equation}\label{eq:sol}
        b(u,v) = {C}{v}^{-1}\csc{\left(\frac{\omega(u,v)}{2}\right)}^2,\qquad\qquad
        c(u,v) = {C}{u}^{-1}\csc{\left(\frac{\omega(u,v)}{2}\right)}^2.
    \end{equation}
    satisfies \eqref{eq:codazzi:V:I:II}, it is that real-analytic solution. On the other hand, \eqref{eq:sol} also satisfies $b_u = c_v$ (see \eqref{eq:alignability:cond:III}) meaning that the related V-net is alignable.
\end{proof}

As a result of the above Theorem and \eqref{eq:b:c:coordinates} we can wrap up our findings in the following classification theorem:
\begin{theorem}\label{theorem:second:kind}
    The Alignable V-nets of the second kind, up to a box reparametrization, are two $1$-parameter families of V-nets $\{\eta^+_k\}$ and $\{\eta^-_k\}$ where 
    \begin{equation*}
        \eta : \left(0,\frac{\pi}{2}\right) \times \Omega^k \longrightarrow \mathbb{R}^3
    \end{equation*}
    where $k$ is the alignablity parameter. The members of these families are classified through the following first and  second fundamental forms
    \begin{equation}\label{eq:I:II:+:2ndkind}
        {I}_{\,\eta^{+}} = \csc^4{\left(\frac{\omega}{2}\right)}\,\left(\frac{1}{u^2}\mathrm{d}u^2 + 2\,\frac{\cos{\omega}}{uv}\,\mathrm{d}u\,\mathrm{d}v + \frac{1}{v^2}\,\mathrm{d}v^2\right),\quad
        {II}_{\,\eta^{+}} = 2\,\cot{\left(\frac{\omega}{2}\right)}\,\left(\frac{\lambda}{u}\mathrm{d}u^2 + \frac{1}{v\,\lambda}\,\mathrm{d}v^2\right)
    \end{equation}
    for the positive-alignable and
    \begin{equation}\label{eq:I:II:-:2ndkind}
        {I}_{\,\eta^{-}} = \sec^4{\left(\frac{\omega}{2}\right)}\,\left(\frac{1}{u^2}\mathrm{d}u^2 - 2\,\frac{\cos{\omega}}{uv}\,\mathrm{d}u\,\mathrm{d}v + \frac{1}{v^2}\,\mathrm{d}v^2\right),\quad 
        {II}_{\,\eta^{-}} = 2\,\tan{\left(\frac{\omega}{2}\right)}\,\left(\frac{\lambda}{u}\mathrm{d}u^2 - \frac{1}{v\,\lambda}\,\mathrm{d}v^2\right)
    \end{equation}
    for the negative-alignable where $C \in \mathbb{R}^+$ is a global scaling and $\lambda = \mathrm{e}^t$, while $\omega(u,v) = \omega(uv)$ is a solution of \eqref{eq:P:III}. Finally, the Gaussian curvatures and mean curvatures of $\eta^{\pm}$ are  
    \begin{align}
        \mathrm{K}^{+} &= {uv}\,\sin^{4}\left(\frac{\omega}{2}\right), & \mathrm{K}^{-} &= -{uv}\,\cos^{4}\left(\frac{\omega}{2}\right), \\
        \mathrm{H}^{+} &= \frac{1}{4}\,\left(\lambda\,u + \frac{v}{\lambda}\right)\,\tan\left(\frac{\omega}{2}\right), & \mathrm{H}^{-} &= \frac{1}{4}\,\left(\lambda\,u - \frac{v}{\lambda}\right)\,\cot\left(\frac{\omega}{2}\right),
    \end{align}
    respectively.
\end{theorem}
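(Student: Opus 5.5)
The proof will mirror the derivation of Theorem~\ref{theorem:classification:I}, replacing the helicoidal input by the Amsler data of this section.

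First, Theorem~\ref{theorem:classify:coordinate:free:2nd:kind} together with Proposition~\ref{prop:main} gives that every alignable V-surface of the second kind lies in $\mathrm{R}(K^k)$ for an Amsler K-surface $K^k$. We work on the patch $(\psi_k,\Omega_1^k)$ with canonical K-frame $(E_1,E_3)=(\partial_u,\partial_v)$. Here $\omega=\omega_k(uv)$ solves \eqref{eq:P:III}, and the fields of Corollary~\ref{coro:b:c} are $U=u$, $V=v$ by \eqref{eq:II:killing:coordinates}; these satisfy $E_1(U)=E_3(V)=1\neq 0$, consistent with the second kind. Corollary~\ref{coro:b:c} then forces the coefficient pair to be one of \eqref{eq:b:c:coordinates}, with the scaling normalized to $C=1$. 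Conversely, we check directly that these pairs solve the Codazzi system \eqref{eq:codazzi:V:I:II}. For $b^+=v^{-1}\csc^2(\omega/2)$ we have $\partial_u b^+=-v^{-1}\csc^2(\omega/2)\cot(\omega/2)\,\omega_u$. The right-hand side, after using $u\omega_u=v\omega_v$, equals $v^{-1}\csc^2(\omega/2)\,\omega_u(\csc\omega+\cot\omega)$, and the identity $\csc\omega+\cot\omega=\cot(\omega/2)$ closes it. The same computation handles $c^+$, and the negative pair follows by $\omega\mapsto\pi-\omega$. Alignability $b_u=c_v$ then holds by Lemma~\ref{lem:alignability:cond:II}. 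Uniqueness from the characteristic initial data (the discussion after Theorem~\ref{theorem:I:II:III:V:surface}) shows that these are the only alignable solutions up to box reparametrization and scaling.

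Second, we substitute these pairs into \eqref{eq:I:II:sigma}. For the positive case,
\[
I=\csc^4(\omega/2)\bigl(u^{-2}\d u^2+2\cos\omega\,(uv)^{-1}\d u\,\d v+v^{-2}\d v^2\bigr),
\]
and $II=\sin\omega\,\csc^2(\omega/2)(u^{-1}\d u^2+v^{-1}\d v^2)=2\cot(\omega/2)(\cdots)$, since $\sin\omega\csc^2(\omega/2)=2\cot(\omega/2)$. The negative case works the same way with $\sin\omega\sec^2(\omega/2)=2\tan(\omega/2)$, and the cross term acquires the sign from $bc<0$. To insert the isometric deformation parameter, we apply Theorem~\ref{theorem:V:isometry} in the form \eqref{eq:corresponding:V:t}: $I$ is unchanged while the two coefficients of $II$ scale by $\lambda$ and $\lambda^{-1}$. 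The theorem linking the isometric family to the associate family guarantees that each deformed member still lies in $\mathrm{R}(K^{k,t})$ and remains alignable, since alignability is metric.

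Finally, the curvatures follow from \eqref{eq:Gaussian:mean:v:net:A:net}. We have $\mathrm{K}=1/(bc)=uv\sin^4(\omega/2)$, and for the deformed pair $\mathrm{H}=(\lambda b+\lambda^{-1}c)/(2bc\sin\omega)$ evaluates to $\tfrac14(\lambda u+v/\lambda)\tan(\omega/2)$; the assignment of $\lambda$ to the $u$-term follows the convention of \eqref{eq:corresponding:V:t}. The negative case is analogous. The main obstacle is the completeness claim rather than these computations: we must show that every alignable V-net of the second kind is, after box reparametrization, exactly one of these. The plan is to use the proof of Proposition~\ref{prop:main}, which shows that $U,V$ are affine, $U=ku+a$ and $V=kv+b'$. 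A translation of the parameters, which is a box reparametrization allowed by Remark~\ref{remark:box:reparametrization}, followed by scaling gives $U=u$, $V=v$, and this places the intersection point of the straight asymptotic lines at the origin. We also need to restrict $k$ to the stated range and the domain to $\Omega^k$ bounded by the first cusp curve, so that $\det I\cdot\det II\neq 0$ there.
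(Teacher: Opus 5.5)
Your proposal is correct and follows the paper's route. The paper obtains the theorem by taking the Amsler data $U=u$, $V=v$ in Corollary~\ref{coro:b:c} to get the pairs \eqref{eq:b:c:coordinates}, substituting them into \eqref{eq:I:II:sigma} with the $\lambda$-scaling of \eqref{eq:corresponding:V:t}, and reading off $\mathrm{K}$ and $\mathrm{H}$ from \eqref{eq:Gaussian:mean:v:net:A:net}, with completeness coming from Proposition~\ref{prop:main} and Theorem~\ref{theorem:classify:coordinate:free:2nd:kind}. One cosmetic point: substituting $(b^-_k,c^-_k)$ literally gives $II_{\eta^-}$ and $\mathrm{H}^-$ with the opposite overall sign to the statement, which only reflects the choice of unit normal (the paper makes the same sign choice in Theorem~\ref{theorem:classification:I}).
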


Few surfaces have the property that one can take a piece of surface and slide it around the surface without distortion or stretching it on the surface.
In the nineteenth century, geometers often illustrated such phenomenon using leather patches. In \cite{Pinkall} one can see multiple examples of such leather patches moving around surfaces such as \emph{Enneper} and \emph{Beltrami pseudospehre}. It turns out that the alignable V-surfaces of the second kind have this property as well.
\begin{proposition}
    Let $V$ be an alignable V-surface of the second kind, let the surface patch $(\eta, \Omega)\subset V$ be an alignable V-net of the second kind where $\eta$ is an immersion. Then its isometric deformation $(\eta^t, \Omega)\subset V$ where $\eta \circ {\left(\eta^{\,t}\right)}^{-1}$ is given by the following squeeze map 
    \begin{equation}\label{eq:squeeze}
        (u,v) \longmapsto (\lambda\,u, \lambda^{-1}\,v),
    \end{equation}
    where $\lambda = \exp(t)$.
\end{proposition}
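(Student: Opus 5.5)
The plan is to show that the squeeze map \eqref{eq:squeeze} is a self-isometry of $V$ which carries the V-net $\eta$ to the isometrically deformed V-net $\eta^t$ of Theorem~\ref{theorem:V:isometry}. Concretely, I will compare the fundamental forms of Theorem~\ref{theorem:second:kind} at parameter $\lambda$ with the pull-backs of those at $\lambda=1$ under $(u,v)\mapsto(\lambda u,\lambda^{-1}v)$. The structural input is that the Amsler angle function depends only on the similarity variable, $\omega=\omega_k(uv)$, and $uv$ is invariant under the squeeze map. By \eqref{eq:II:killing:coordinates}, the generator of the squeeze is the field $Y=u\partial_u-v\partial_v$, and Corollary~\ref{coro:b:c} already gives $\mathcal{L}_Y I_V=0$ for the second kind. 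So the squeeze flow is an intrinsic isometry, and the work lies in matching the second fundamental forms.

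\textbf{Step 1.} Substitute $\tilde u=\lambda u$, $\tilde v=\lambda^{-1}v$ into $I_{\eta^\pm}$ from \eqref{eq:I:II:+:2ndkind}--\eqref{eq:I:II:-:2ndkind}. Since $\mathrm{d}\tilde u/\tilde u=\mathrm{d}u/u$, $\mathrm{d}\tilde v/\tilde v=\mathrm{d}v/v$ and $\tilde u\tilde v=uv$, the first fundamental form is invariant, so $I$ does not depend on $\lambda$.

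\textbf{Step 2.} Pull back $II$ at $\lambda=1$ under the squeeze. We get $\mathrm{d}\tilde u^2/\tilde u=\lambda\,\mathrm{d}u^2/u$ and $\mathrm{d}\tilde v^2/\tilde v=\lambda^{-1}\mathrm{d}v^2/v$, while the factor $\cot(\omega/2)$ (respectively $\tan(\omega/2)$) is unchanged. This is exactly $II$ at parameter $\lambda$, i.e.\ the scaling $b\mapsto\lambda b$, $c\mapsto\lambda^{-1}c$ of \eqref{eq:b:c:lambda}. The same computation applies verbatim to $\eta^-$.

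\textbf{Step 3.} By the fundamental theorem of surfaces (Bonnet), equal $I$ and $II$ determine the immersion up to rigid motion. Hence $\eta^t=\phi\circ\eta\circ S_\lambda$ for a rigid motion $\phi$, where $S_\lambda$ is the squeeze. Modulo that rigid motion, $\eta^t$ parametrizes a patch of the same surface $V$, and $\eta\circ(\eta^t)^{-1}$ is the squeeze map.

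The main obstacle is the domain. The squeeze must map $\Omega$ into the maximal immersion domain $\Omega_1^k$, which is bounded by the first cusp hyperbola $4uv=(r_1^k)^2$. Because that boundary is a level set of $uv$, it is invariant under the squeeze, so $\Omega_1^k$ itself is invariant; the image patch $S_\lambda(\Omega)$ therefore stays inside the regular part of $V$, and the statement is read on $\Omega\cap S_\lambda^{-1}(\Omega)$. A secondary point is checking that the rigid motion $\phi$ fixes the surface rather than just congruently moving the patch. To settle it, I would extend the identity $\eta\circ S_\lambda=\phi^{-1}\circ\eta^t$ by real-analyticity (as noted after \eqref{eq:P:III}) over the connected domain $\Omega_1^k$, and conclude that both sides describe the same global V-surface.
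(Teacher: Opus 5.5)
Your proposal is correct and follows essentially the paper's route. The squeeze is the flow of $Y=u\partial_u-v\partial_v$; it preserves $I$ because $\omega=\omega_k(uv)$ is squeeze-invariant, and it changes $II$ exactly as in \eqref{eq:b:c:lambda}. The paper argues this infinitesimally, via $\mathcal{L}_Y I_V=0$ and the conjugacy of the V-frame along the flow. Your finite pull-back plus Bonnet identifies the deformed member directly, and it also handles the squeeze-invariance of $\Omega_1^k$, which the paper leaves implicit. Your ``secondary point'' needs no extra argument: $\eta^t$ is only determined up to a rigid motion, so you may simply take $\eta^t:=\eta\circ S_\lambda$, and Bonnet uniqueness on a connected domain suffices without real-analyticity.
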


\begin{proof}
     From Corollary~\ref{coro:b:c} we know that $\mathcal{L}_Y I_V = 0$, which implies that the vector field $Y$ is in fact a killing vector field and along its flow the first fundamental form remains invariant. 
    In the coordinate setting of Theorem~\ref{theorem:second:kind}, the vector field $Y$ of Proposition~\ref{prop:main} takes the form $Y = u\,\partial_u - v\,\partial_v$. This gives
    \begin{equation}\label{eq:II:neq:zero}
        \mathcal{L}_{Y}{II} = 2\,\cot\!\left(\frac{\omega_k(uv)}{2}\right)\left(\frac{1}{u}\,du^{2}-\frac{1}{v}\,dv^{2}\right) \neq 0,
    \end{equation}
    which, the inequality is due tho the fact that $\Omega_k$ does not include the cusps of the V-net. \eqref{eq:II:neq:zero} implies that in addition to the fact that $Y$ is a Killing vector field implies that along the flow of $Y$ the 
    meaning that considering a patch on $V$ and moving it along its flow, the patch isometrically deforms but stays on $V$. Now, since $(\mathcal{L}_Y II_V)(\bar{E}_1,\bar{E}_3) = 0$, the V-frame along the flow of $Y$ remains conjugate and therefore the aforementioned flow isometrically deforms a V-net patch along it preserving the V-net property.
    Let $\phi^t(u_0,v_0) = (u(t),v(t))$ be the flow of $Y$ then with the initialization $u(0) = u_0$ and $v(0) = v_0$ we have 
    \begin{equation*}
        \left\{
        \begin{array}{l}
            \frac{\d}{\d t}u(t) = u(t) \\[0.2cm]
            \frac{\d}{\d t}v(t) = -v(t)
        \end{array}
        \right. \qquad\implies\qquad
        \left\{
        \begin{array}{l}
            u(t) = \mathrm{e}^t\,u_0 \\[0.2cm]
            v(t) = \mathrm{e}^{-t}\,v_0
        \end{array}
        \right.. \qedhere
    \end{equation*}
\end{proof}

In another word to get an isometric deformation of the second alignable V-nets at the level of the fundamental forms one must only reparametrize \eqref{eq:I:II:+:2ndkind} with \eqref{eq:squeeze}.

\begin{remark}\label{remark:V:rotation:field:V:II}
    Consider $V^+ = (\eta^+, \Omega_k)$ and $V^- =(\eta^-, \Omega_k)$ for some $k \in (0,\pi)$. Combining the results of Theorem\,\ref{theorem:second:kind} and Proposition\,\ref{prop:rotation:field:iff} we get 
    \begin{equation*}
        \det\left(\mathbf{II}_{\eta^+},\mathbf{II}_{\eta^-}\right) = 0,\qquad\qquad
        \mathbf{III}_{\eta^+} = \mathbf{III}_{\eta^-},
    \end{equation*}
    where the fundamental forms are written in the matrix form in the basis $\partial_u = (1,0)$ and $\partial_v = (0,1)$. This implies that the V-surface $V^+$, in addition to the Amlser K-surface, serves as the rotation field of $V^-$ and vice-versa.
\end{remark}

\subsection{A Counter Example for Eisenhart's Theorem}\label{sec:counterexample}
In \cite{EisenhartAssociate}, Eisenhart claims the following
\begin{quote}
    ``Minimal surfaces and certain associates of the pseudospherical surfaces of revolution are the only Voss surfaces with an isothermal-conjugate system of geodesic lines.''
\end{quote}
In Eisenhart’s terminology, the associate surfaces of a surface are precisely its rotation fields (see \cite{EisenhartTreatise, EisenhartAssociate, Eisenhart1914, Eisenhart1915, Eisenhart1917}.
It turns out that our Theorem~\ref{theorem:second:kind} provides a counterexample to Eisenhart’s claim. Consider the \emph{box reparametrization} 
\begin{equation*}
    (u,v)\longmapsto (\tilde{u},\tilde{v}) = (2\sqrt{u},2\sqrt{v}).
\end{equation*}
Now, consider $(\eta^+,\Omega^k_1)$ and $(\eta^-,\Omega^k_1)$ from Theorem~\ref{theorem:second:kind}. In the coordinates $(\tilde{u},\tilde{v})$, the fundamental forms of $\eta^{\pm}$ become the following
\begin{equation}
    {I}_{\,\eta^{+}} = 4\csc^4{\left(\frac{\tilde{\omega}_k}{2}\right)}\left(
    \frac{1}{\tilde{u}^2}\d\tilde{u}^2 + \frac{2}{\tilde{u}\tilde{v}}\cos\tilde{\omega}_k\,\mathrm{d}\tilde{u}\,\mathrm{d}\tilde{v} + \frac{1}{\tilde{v}^2}\mathrm{d}\tilde{v}^2\right),\qquad
    {II}_{\,\eta^{+}} = 2\,\cot{\left(\frac{\tilde{\omega}_k}{2}\right)}\,\left(\mathrm{d}\tilde{u}^2 + \mathrm{d}\tilde{v}^2\right),
\end{equation}
for the positive-alignable case and
\begin{equation}
    {I}_{\,\eta^{-}} = 4\sec^4{\left(\frac{\tilde{\omega}_k}{2}\right)}\left(
    \frac{1}{\tilde{u}^2}\d\tilde{u}^2 - \frac{2}{\tilde{u}\tilde{v}}\cos\tilde{\omega}_k\,\mathrm{d}\tilde{u}\,\mathrm{d}\tilde{v} + \frac{1}{\tilde{v}^2}\mathrm{d}\tilde{v}^2\right),\qquad 
    {II}_{\,\eta^{-}} = 2\,\tan{\left(\frac{\tilde{\omega}_k}{2}\right)}\,\bigl(\mathrm{d}\tilde{u}^2 - \mathrm{d}\tilde{v}^2\bigr),
\end{equation}
for the negative-alignable case where $\tilde{\omega}_k(\tilde{u},\tilde{v}) = \omega_k(\frac{\tilde{u}^2\tilde{v}^2}{16})$.  
Consequently, the Gaussian curvatures and mean curvatures of $\eta^{\pm}$ are  
\begin{equation*}
    \mathrm{K}^{+} = \left(\frac{\tilde{u}\tilde{v}}{4}\right)^2\sin^{4}\left(\frac{\tilde{\omega}_k}{2}\right),\quad
    \mathrm{K}^{-} = -\left(\frac{\tilde{u}\tilde{v}}{4}\right)^2\cos^{4}\left(\frac{\tilde{\omega}_k}{2}\right),\quad
    \mathrm{H}^{+} = \left(\frac{\tilde{u}^2 + \tilde{v}^2}{16}\right)\,\tan\left(\frac{\tilde{\omega}_k}{2}\right),\quad
    \mathrm{H}^{-} = \left(\frac{\tilde{u}^2 - \tilde{v}^2}{16}\right)\,\cot\left(\frac{\tilde{\omega}_k}{2}\right),
\end{equation*}
respectively. 
Thus, we obtain isothermal–conjugate V-nets that are not among those listed by Eisenhart: they are neither minimal nor associates (rotation fields) of pseudospherical surfaces of revolution, as can be seen from their angle functions.

The reason is that Eisenhart does not account for the possibility that a box reparametrization—which preserves the V-net property—can convert a V-net that is not isothermal–conjugate into one that is. Since Eisenhart begins his analysis with a canonical K-net, he does not necessarily arrive at an isothermal–conjugate V-net automatically.

Indeed, the reciprocal-parallel K-net corresponding to our V-net in $(\tilde{u},\tilde{v})$ has the following non-canonical fundamental forms:
\begin{equation}
    {I}_{\,\psi}
    = \frac{1}{4}\left(\tilde{u}^2\,\d\tilde{u}^{2}
    + 2\,\tilde{u}\tilde{v}\cos(\tilde{\omega}_k)\,\d\tilde{u}\,\d\tilde{v}
    + \tilde{v}^2\,\,\mathrm{d}\tilde{v}^{2}\right),\qquad\qquad
    {II}_{\,\psi}
    = \frac{\tilde{u}\tilde{v}}{2}\,\sin(\tilde{\omega}_k)\,\d\tilde{u}\,\d\tilde{v}.
\end{equation}
in which $\partial_{\tilde{u}}$ and $\partial_{\tilde{v}}$ are not unit vectors.

As a result of this discussion and the counterexample, it is natural to ask whether further isothermal–conjugate V-nets exist beyond those treated in this article and in \cite{EisenhartAssociate}.
\subsection{V-surface in Large Conjecture}\label{sec:surface:in:large}
\begin{figure*}[t!]
  \centering
  \begin{overpic}[width=\textwidth,trim=0mm 0mm 10mm 0mm,clip]{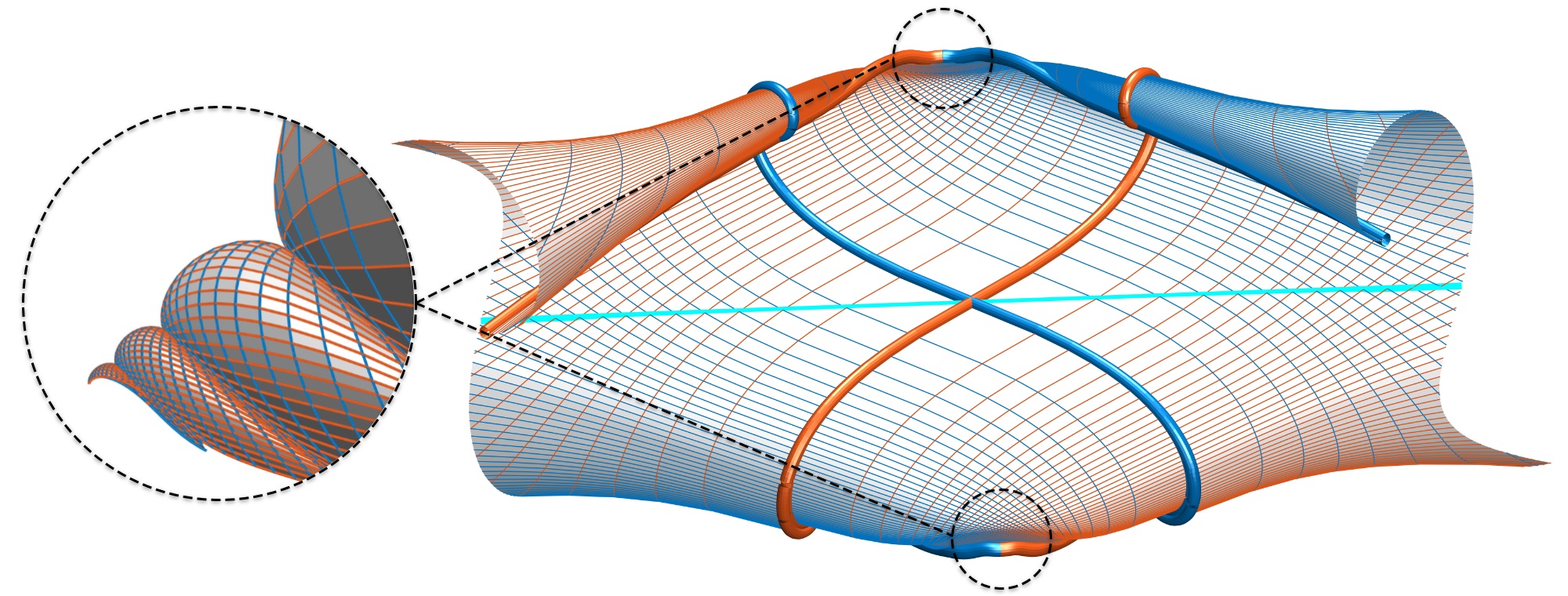}
    \put(67,37.5){positive-alignable}
    \put(68,36.5){\line(0,-1){6}}
    \put(70,0){positive-alignable}
    \put(85,5){negative-alignable}
    \put(90,7){\line(0,1){5}}
    \put(6,2){sequence of cusps}
    \put(73,2){\line(0,1){5}}
    \put(74,15.5){zero mean curvature}
    \put(86,17){\line(0,1){2}}
    \put(55,27){parabolic points}
    \put(62,25){\line(1,-1){4}}
    \put(62,25){\line(-1,-1){4}}
    \put(59,12){flat point}
    \put(63,14){\line(0,1){5}}
    \put(30,32.5){Nielsen spiral}
    \put(42,33){\line(1,0){5}}
    \put(83,33.5){Nielsen spiral}
    \put(82.5,34){\line(-1,0){7}}
    \put(30,2){negative-alignable}
    \put(36,4){\line(0,1){4}}
  \end{overpic}
  \caption{Illustration of an alignable V-surface of the second kind \emph{in large} for $k = \frac{\pi}{2}$ with all four quadrants involved.}
  \label{fig:all}
\end{figure*}
Let $\Omega_i$ denote the open $i$-th quadrant. An interesting phenomenon appears when one considers not only the first-quadrant sector of an Amsler surface, but rather the global surface obtained by continuing across the coordinate axes (pre-images of the Nielsen helices). Following the approach of \cite{BobenkoAmsler}, we restrict to $k\in\bigl(0,\frac{\pi}{2}\bigr]$. Then we have the following symmetries for $\omega_k(uv)$:
\begin{proposition}\label{prop:symmetries:omega}
    Let $\omega_k(r)$ be the unique solution of the initial value problem of \eqref{eq:P:III}. For $r, k \in \mathbb{R}$, $\omega_k(r)$ is a real-analytic function of the aforementioned variables with the following symmetry properties:
    \begin{equation}\label{eq:omega:symmetries}
        \omega_k(r) = \omega_k(-r),\qquad
        \omega_{-k}(r) = - \omega_k(r),\qquad
        \omega_{k\pm 2\pi}(r) = \pm 2\pi + \omega_k(-r),\qquad
        \omega_k(i r) = \pi + \omega_{k - \pi}(r).
    \end{equation} 
\end{proposition}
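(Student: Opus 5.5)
The plan is to trade the radial variable $r$ for the similarity variable $z:=uv=r^2/4$ and argue everything at the level of power series in $z$. Writing $\omega(u,v)=f(uv)$, the sine-Gordon equation $\omega_{uv}=\sin\omega$ becomes
\begin{equation*}
    z\,f''(z)+f'(z)=\sin f(z),\qquad f(0)=k .
\end{equation*}
This is exactly \eqref{eq:P:III} after the substitution $z=r^2/4$. The condition $\frac{\d}{\d r}\omega|_{r=0}=0$ is automatic once $f$ is analytic at $z=0$.

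\textbf{Step 1: existence, uniqueness and analyticity.} I would first establish that for every $k\in\mathbb{R}$ (indeed $k\in\mathbb{C}$) this singular initial value problem has a unique solution $f_k(z)=\sum_{n\ge0}a_n(k)\,z^n$ that is analytic in $z$ near $0$. Inserting the series gives the recursion
\begin{equation*}
    n^2\,a_n=\bigl[\sin f\bigr]_{n-1},\qquad n\ge1 .
\end{equation*}
The right-hand side depends only on $a_0=k,a_1,\dots,a_{n-1}$, so the coefficients are uniquely determined and each $a_n$ is an entire function of $k$.

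Convergence would follow from a majorant argument of Briot--Bouquet/Cauchy type: the factor $n^{-2}$ makes $\sum a_n z^n$ dominated by the solution of an analytic majorant equation, locally uniformly in $k$. This yields joint real-analyticity in $(z,k)$. Continuation to all real $z$ is then standard ODE theory away from $z=0$, and the identities below persist by analytic continuation.

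Because $\omega_k(r)=f_k(r^2/4)$ depends only on $r^2$, this step also gives the first symmetry $\omega_k(r)=\omega_k(-r)$ and makes sense of $\omega_k(ir)=f_k(-r^2/4)$.

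\textbf{Step 2: the remaining identities via uniqueness.} Each identity follows by exhibiting a function that solves the same problem and invoking Step 1.
\begin{itemize}
    \item Since $\sin$ is odd, $-f_k$ solves the equation with initial value $-k$. Hence $\omega_{-k}=-\omega_k$.
    \item Since $\sin$ is $2\pi$-periodic, $f_k\pm2\pi$ solves the equation with initial value $k\pm2\pi$. Hence $\omega_{k\pm2\pi}(r)=\pm2\pi+\omega_k(r)=\pm2\pi+\omega_k(-r)$, using evenness.
    \item For the last identity set $h(z):=f_{k-\pi}(-z)$. A direct computation gives $z h''+h'=-\bigl((-z)f''(-z)+f'(-z)\bigr)=-\sin h$. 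Then $\pi+h$ solves $z(\pi+h)''+(\pi+h)'=\sin(\pi+h)$ with initial value $\pi+(k-\pi)=k$. Uniqueness gives $f_k(-z)=\pi+f_{k-\pi}(z)$. Substituting $z=r^2/4$, so that $-z$ corresponds to $ir$, yields $\omega_k(ir)=\pi+\omega_{k-\pi}(r)$.
\end{itemize}

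\textbf{Main obstacle.} The only non-formal point is Step 1: the equation is singular at $z=0$, so Picard--Lindel\"of does not apply directly. The care lies in the convergence and uniqueness of the series solution among all solutions that are continuous at $0$, and in its analytic dependence on $k$. I would handle this with the explicit majorant series, using $|[\sin f]_{n-1}|\le[\sinh F]_{n-1}$ for a majorant $F$. An alternative is to quote the corresponding statement from \cite{BobenkoAmsler} for the Painlev\'e III reduction.
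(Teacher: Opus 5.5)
The paper gives no proof of this proposition. It is simply stated; the only supporting text is the earlier assertion that \eqref{eq:P:III} has a unique real-analytic solution, with the Painlev\'e~III reduction attributed to \cite{BobenkoAmsler}. So there is nothing to compare against, and your argument is a correct, self-contained proof. The reduction $z=r^2/4$ to $(zf')'=\sin f$, the recursion $n^2a_n=[\sin f]_{n-1}$, and all three uniqueness arguments check out. In particular, $g(z)=\pi+f_{k-\pi}(-z)$ does satisfy $zg''+g'=\sin g$ with $g(0)=k$. Reading $\omega_k(ir)$ as $f_k(-r^2/4)$, i.e.\ as the value of the holomorphic extension $\zeta\mapsto f_k(\zeta^2/4)$, is exactly the quadrant bookkeeping for $uv<0$ that the paper intends.

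Two remarks tighten Step~1.

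First, your majorant bound $|[\sin f]_{n-1}|\le[\sinh F]_{n-1}$ is valid provided $F$ majorizes $f$ including the constant term. Take $F'=\sinh F$ with $F(0)=K\ge|k|$ (explicitly $\tanh(F/2)=\tanh(K/2)\,e^{z}$). Induction then gives $|a_n|\le A_n/n\le A_n$, with a radius of convergence depending only on $K$. This yields joint analyticity in $(z,k)$ near $z=0$.

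Second, the obstacle you flag is milder than it looks. Step~2 only needs uniqueness among power-series solutions, and your recursion already gives that, since $-f_k$, $f_k\pm2\pi$ and $\pi+f_{k-\pi}(-z)$ are all analytic at $0$. Uniqueness among merely bounded solutions is needed only to identify $f_k(r^2/4)$ with the paper's $\omega_k$, and it is elementary:
\begin{itemize}
\item In $zf'(z)=c+\int_0^z\sin f$, boundedness at $0$ forces $c=0$, since otherwise $f\sim c\ln|z|$.
\item This leaves the Volterra equation $f(z)=k+\int_0^z\frac{1}{s}\int_0^s\sin f(t)\,\d t\,\d s$.
\item Its $n$-th iterates contract like $|z|^n/(n!)^2$.
\end{itemize}
That gives global existence and uniqueness on all of $\mathbb{R}$, for both signs of $z$. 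The identities of Step~2 therefore hold on the whole line directly, without appeal to analytic continuation.
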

The complex number $i$ in \eqref{eq:omega:symmetries} is just there for the sake of book keeping about the quadrants which due to \eqrefi{eq:omega:symmetries}{4} is convertable to a real valued function. This yields the following symmetry relations for the corresponding solutions $(b^\pm,c^\pm)$ of \eqref{eq:b:c:coordinates} (see Corollary\,\ref{coro:b:c}):
\begin{equation}\label{eq:b:c:symmetry}
    b^+_k(-u,v) = b^+_k(-u,-v) = -\,b^-_{\pi-k}(u,v),\qquad
    b^+_k(u,-v) = b^-_{\pi-k}(u,v),\qquad
    v\,c^\pm_k(u,v) = \pm u\,b^\pm_k(u,v).
\end{equation}

Applying \eqref{eq:b:c:symmetry} together with Proposition~\ref{prop:symmetries:omega}, the fundamental forms \eqref{eq:I:II:+:2ndkind} and \eqref{eq:I:II:-:2ndkind} satisfy the quadrant-wise identities
\begin{equation*}
    \left.{I}^{\,k}_{V^+}\right|_{\Omega_{1}}=\left.{I}^{\,k}_{V^+}\right|_{\Omega_{3}}
    = \left.{I}^{\,\pi-k}_{V^-}\right|_{\Omega_{2}}=\left.{I}^{\,\pi-k}_{V^-}\right|_{\Omega_{4}},\qquad\qquad
    \left.{II}^{\,k}_{V^+}\right|_{\Omega_{1}}=\left.{II}^{\,k}_{V^+}\right|_{\Omega_{3}}
    = \left.{II}^{\,\pi-k}_{V^-}\right|_{\Omega_{2}}=\left.{II}^{\,\pi-k}_{V^-}\right|_{\Omega_{4}}.
\end{equation*}

These identities suggest the existence of an alignable V-surface of the second kind \emph{in large}, obtained by gluing analytic quadrant patches: the restrictions to $\Omega_1\cup\Omega_3$ agree with the \emph{positive} alignable patch $(V^+)^{k}$, whereas the restrictions to $\Omega_2\cup\Omega_4$ agree with the \emph{negative} alignable patch $(V^-)^{\pi-k}$. In this picture, \eqref{eq:I:II:+:2ndkind} may be viewed as the ``global'' description, with the change of alignability type across the coordinate axes dictated by the symmetries of $\omega_k(uv)$.

We therefore conjecture that there exists a surface which is real-analytic on each quadrant and admits at least a $C^{1}$-extension across the Nielsen helices separating the sectors. A complete proof of this global gluing statement is beyond the scope of the present paper and is left to ongoing work.

\begin{example}
    Fig.~\ref{fig:all} illustrates an alignable V-surface of the second kind in large for \(k=\frac{\pi}{2}\) across all four quadrants. In this case the \emph{Nielsen helices} become planar and in fact they become the \emph{Nielsen spirals} across which the Gaussian curvature vanishes. Furthermore, the positive-alignable and negative-alignable quadrants have positive and negative Gaussian curvatures respectively.
    Furthermore, the surface has infinitely many cusps on each quadrant which are exactly the ones inherited from the Amsler surface in large.
    Visualizing such a surface is challenging; the figure was therefore constructed using \emph{discrete differential geometry} methods. In the smooth setting, the flat point at the apparent intersection of the Nielsen spirals should be omitted, since the $x$- and $y$-axes are asymptotes of the above curves, so that the spirals do not actually intersect there in the real domain.
\end{example}
\section{Conclusion \& Future Perspective}\label{sec:conclude}
As a first step, this article revisits classical results on Voss surfaces and reformulates them in the language of Cartan's moving frames. Throughout, the analysis is carried out primarily at the level of the underlying surfaces that carry Voss nets using the concept of surface element, rather than the geodesic-conjugate net equations.

Building on this framework, the article then addresses the classification problem for alignable Voss surfaces. We begin by identifying the reciprocal-parallel related pseudospherical surfaces, and then derive solutions that recover the corresponding Voss surfaces from these data. We find that alignable Voss surfaces fall into two classes.

The analysis is presented first in a coordinate-free form and then, in geodesic--conjugate coordinates, at the level of fundamental forms. One of the paper's unexpected results is a counterexample to an almost century-old theorem of Eisenhart on isothermal--conjugate Voss nets. We show that Eisenhart's proof overlooks a particular case, and that this omission is precisely what permits the counterexample.

Finally, the coordinate-free viewpoint proves effective for deriving immersion formulas for one of the two classes, explicitly in terms of the deformation and alignability parameters. 
Since the resulting immersions involve non-elementary functions, they motivate two further directions: developing discrete counterparts better suited to applications requiring sensitive engineering of such nets, and deriving immersion formulas for the remaining class. Both projects, in addition to the open questions of Sections~\ref{sec:counterexample} and \ref{sec:surface:in:large} are currently underway by the author and collaborators.
\newcommand{\SubW}{0.155\textwidth} 

\newcommand{\TwoByTwoSigns}[4]{%
\begin{tikzpicture}[x=1cm,y=1cm,line cap=round,line join=round]
    \fill[gray!15] (0,0) rectangle (2,2);

    \draw[line width=0.5pt] (0,0) rectangle (2,2);
    \draw[line width=0.5pt] (1,0) -- (1,2);
    \draw[line width=0.5pt] (0,1) -- (2,1);

    \node at (0.5,1.5) {$#1$};
    \node at (1.5,1.5) {$#2$};
    \node at (0.5,0.5) {$#3$};
    \node at (1.5,0.5) {$#4$};
\end{tikzpicture}%
}

\newcommand{\PlaceholderCell}{%
\fbox{%
\begin{minipage}[c][\linewidth][c]{\linewidth}
\centering\scriptsize Placeholder
\end{minipage}}%
}
\section*{Declaration of competing interest}
The author declares that he has no known competing financial interests or personal relationships that could have
appeared to influence the work reported in this paper.
\section*{Acknowledgements}
This work is co-financed by the European Union, Sächsische Aufbaubank – Förderbank (application number 100750317). For open access purposes, the author has applied a CC BY public copyright license to any author-accepted manuscript arising from this submission. Additionally, the author wishes to thank Igor Khavkine, Hui Wang, Jannik Steinmeier and Ru\v{z}ica Rasoulzadeh-Miji\'{c} for constructive scientific discussions.
\appendix
\bibliography{references} 
\end{document}